\documentclass[11pt]{amsart}

\ExplSyntaxOn
\msg_redirect_name:nnn { kernel } { variant-same-as-base } { info }
\ExplSyntaxOff

\usepackage{amsmath,amssymb,amsthm,url,scalerel}
\usepackage[utf8]{inputenc}
\usepackage[T1]{fontenc}
\usepackage{libertine}
\usepackage[libertine,cmintegrals,cmbraces]{newtxmath}
\usepackage{verbatim}
\usepackage[shortlabels]{enumitem}
\usepackage{stmaryrd}
\usepackage{mathtools}
\usepackage{microtype}
\usepackage{tabto}
\usepackage{xcolor}
\usepackage{tikz}
\usepackage{tikz-cd}
\tikzcdset{arrow style=tikz,
           diagrams={>=Straight Barb}
           }
\usepackage{nicefrac}
\usepackage{tabularx}
\usepackage{dsfont}
\usepackage{bbold}
\usepackage{todonotes}
\usepackage{quiver}
\usepackage{a4wide}
\usepackage{euscript}
\usepackage[all]{xy}
\usepackage{mathrsfs,comment}
\numberwithin{equation}{subsection}
\usepackage{adjustbox}
\usepackage{multicol}
\usepackage{listings}
\usepackage[table]{xcolor}
\usepackage[dvipsnames]{xcolor}
\usepackage{BOONDOX-calo}
\usepackage{xfrac}
\usepackage{booktabs}
\usepackage{float}
\usepackage{epigraph}

\usepackage[pagebackref]{hyperref}
  
\hypersetup{%
  bookmarksnumbered=true,
  colorlinks=true,%
  linkcolor=black,%
  citecolor=black,%
  filecolor=blue,%
  menucolor=black,%
  urlcolor=blue,%
  pdfnewwindow=true,%
  pdfstartview=FitBH}
\usepackage[capitalise]{cleveref}

\usepackage{etoolbox}

\makeatletter
\apptocmd{\thebibliography}{%
  \item[]\hspace*{-\leftmargin}%
  \begin{minipage}{\dimexpr\linewidth+\leftmargin\relax}
  \small
  \textbf{Note on publication data.}
  In the spirit of the \textit{Cost of Knowledge} campaign, journal information is included in this bibliography only as metadata identifying published versions of the cited works. No credit is intended to accrue to the journals themselves: the research cited here is due to the authors, and the labour of peer review belongs to the research community.
  \end{minipage}
  \vspace{1em}
}{}{}
\makeatother

\newtheorem{thm}{Theorem}[subsection]

\newtheorem{lem}[thm]{Lemma}

\newtheorem*{thm*}{Theorem}

\theoremstyle{definition}
\newtheorem{definition}[thm]{Definition}
\newtheorem{ex}[thm]{Example}

\newtheorem{rem}[thm]{Remark}
\newtheorem{ques}[thm]{Question}

\newcommand{\conn}{\operatorname{conn}}

\title{Metastable homotopy theory via Tate coalgebras}
\author{Marco Nervo}
\date{}

\begin{document}

\begin{abstract}
We develop foundations for metastable homotopy theory using Heuts' $2$-truncated Tate coalgebras. 
\end{abstract}

\maketitle
\tableofcontents

\section*{Introduction}

\begin{flushright}
\begin{minipage}{0.65\textwidth}
\small\itshape
But it is not real, you know. It is not stable, not solid -- nothing is. Things change, change.

\medskip
\hfill --- Ursula K. Le Guin, \textit{The Dispossessed}
\end{minipage}
\end{flushright}

\medskip

In recent years, there has been growing interest in building algebraic models for the category of spaces. Here by "algebraic" we mean, roughly, describing the category of spaces in terms of extra structure on a stable category.

Examples include Mandell's \textit{$\mathbb{E}_\infty$-algebras over $\overline{\mathbb{F}}_p$} \cite{Mandell}, Heuts' \textit{Tate coalgebras} \cite{Gijsapprox}, Yuan's \textit{Frobenius-fixed $\mathbb{E}_\infty$-rings} \cite{Yuan}, Horel's \textit{cosimplicial binomial rings} \cite{Horel}, Kubrak--Shuklin--Zakharov's \textit{derived binomial rings} \cite{KubrakShuklinZakharov}, and Antieau's \textit{perfect derived $\lambda$-rings with trivialized Adams operations} \cite{Antieau}. Other related algebraic models or approximations appear in work of Bachmann--Burklund \cite{BachmannBurklund}, Ekedahl \cite{Ekedahl}, Blomquist--Harper \cite{BlomquistHarper}, Riedel \cite{Riedel}, Lucio \cite{Lucio}, Nikolaus \cite{NikolausNotes,NikolausFrobenius}, Rubio--Sergeraert \cite{RubioSergeraert}, and Toën \cite{Toen}, among others.

It seems to us that, despite the proliferation of models, comparatively little has been done to \textit{exploit} them in the study of spaces. We focus on Heuts' Tate coalgebras because, among the available algebraic models, this one comes with a natural filtration
$$\mathcal{S}_*^{\geq 2} \simeq \mathrm{coAlg}_\mathrm{Tate}\mathcal{Sp}^{\geq 2} \longrightarrow \cdots \longrightarrow  P_m\mathcal{S}_* \longrightarrow P_{m-1}\mathcal{S}_*  \longrightarrow \cdots \longrightarrow P_2\mathcal{S}_* \longrightarrow P_1\mathcal{S}_* \simeq \mathcal{Sp}$$

From this point of view, the category of spectra appears as the first approximation to the category of pointed spaces. This approximation has already been studied extensively by stable homotopy theorists. The goal of this paper is to lay the foundations for studying the next stage, namely the category $P_2\mathcal{S}_*$. Even though the definition of this category is only about a decade old, it captures a variety of phenomena that were once grouped under the heading of \textit{metastable homotopy theory}. For this reason, we refer to $P_2\mathcal{S}_*$ as the metastable category.

The objects of the metastable category are spectra $X$ equipped with a metastable structure (def. \ref{metstructure}), namely a lift

\[\begin{tikzcd}
	& {(X^{\otimes 2})^{hC_2}} \\
	X & {(X^{\otimes 2})^{tC_2}}
	\arrow["{\mathrm{can}}", from=1-2, to=2-2]
	\arrow[dashed, from=2-1, to=1-2]
	\arrow["\Delta"', from=2-1, to=2-2]
\end{tikzcd}\]
of the Tate diagonal $\Delta$ along the canonical map from homotopy fixed points to the Tate construction.

The main example to keep in mind is that of suspension spectra, whose metastable structure arises from the space-level diagonal (ex. \ref{suspensionspectra}). On the other hand, if a spectrum admits a metastable structure, then its $\mathbb{F}_2$-cohomology must be unstable (lem. \ref{unstableobs}). In particular, non-connective bounded-below $2$-complete spectra do not admit a metastable structure (lem. \ref{connectobs}). The first section is devoted to further examples and non-examples, as well as to the classification of metastable structures.

There is also another equivalent description that is useful to keep in mind. A metastable structure is equivalent to giving a section of the canonical map $$(X^{\otimes 2})^{C_2} \longrightarrow (X^{\otimes 2})^{\Phi C_2} \simeq X$$ Hence spectra with a metastable structure are the same as spectra $X$ equipped with a tom Dieck splitting for the $C_2$-equivariant spectrum $X^{\otimes 2}$ (lem. \ref{metsect}). This perspective is closely related to Klein's work \cite{Kleinmoduli}, whose content overlaps with ours in some places.

Section 2 shows that these objects do not merely form a collection: they fit naturally into a category, namely the metastable category $P_2\mathcal{S}_*$ described above. There we present the construction of this category, which is taken from Heuts' \cite{Gijsapprox}. We then upgrade this construction to a symmetric monoidal category (lem. \ref{forgetmonoidal}), which is novel, and obtain symmetric monoidal colimit-preserving functors (lem. \ref{monoidalfromspaces})
\[\begin{tikzcd}
	{(\mathcal{S}_*, \wedge)} & {(P_2\mathcal{S}_*, \otimes)} & {(\mathcal{Sp}, \otimes)}
	\arrow["{\Sigma^\infty_2}", from=1-1, to=1-2]
	\arrow["{\Sigma^\infty}"', curve={height=18pt}, from=1-1, to=1-3]
	\arrow["{\Sigma^\infty_{}}", from=1-2, to=1-3]
\end{tikzcd}\]
This diagram exhibits the category of spectra as the stabilization of the metastable category (lem. \ref{forgetstabilization}). To complete the picture, we also compute its costabilization, which we prove to be equivalent to the category of spectra with trivial $C_2$ Tate diagonal (lem. \ref{cosp}). Along the way, we also compute the Picard group (lem. \ref{pic}), which we show to be trivial.

Once this category is available, one can begin to do mathematics inside it. In particular, one can form limits and colimits of metastable spectra and ask which familiar unstable phenomena survive in this setting. We end Section 2 by studying coproducts (lem. \ref{coprodp2}) and suspensions (lem. \ref{suspensionp2}). In particular, regarding suspensions, we prove (lem. \ref{suspensionp2gen}) that for every metastable spectrum $X$ the following are equivalent:

\begin{enumerate}
		\item $X$ is a suspension.
		\item $X$ is a co-H object.
		\item $X \to (X \otimes X)^{hC_2} \to X \otimes X$ is null.
	\end{enumerate}

recovering an old result of Berstein--Hilton \cite[thm.~A]{coHsusp}. 

Section 3 is probably the most fun and useful part, because it is where the theory starts producing computations. There we show that for every metastable spectrum $X$ and every natural number $n$ there is a fiber sequence (lem. \ref{intermediateres})
$$
X \xlongrightarrow{E^n} \Omega^n\Sigma^n X \xlongrightarrow{H^n} \Omega^\infty\Sigma^\infty(S(n\rho)_+ \otimes X^{\otimes 2})_{hC_2}
$$
where $\rho$ denotes the sign representation. This has several important specializations.

The case $n=\infty$ gives the canonical resolution (lem. \ref{canonicalres})
$$
X \xlongrightarrow{E^\infty} \Omega^\infty\Sigma^\infty X \xlongrightarrow{H^\infty} \Omega^\infty\Sigma^\infty X^{\otimes 2}_{hC_2}
$$
This should be thought of as the analogue of a resolution of a $2$-step nilpotent group in terms of abelian groups. We refer to the map $H^\infty$ as the James--Hopf map, since it induces the classical James--Hopf map after passing to pointed spaces.

The case $n=2$ gives a description of the fiber of the double suspension in terms of the swap map $\tau$ (lem. \ref{doublesuspp2})
$$
X \xlongrightarrow{E^2} \Omega^2\Sigma^2 X \xlongrightarrow{H^2} \Omega^\infty\Sigma^\infty X^{\otimes 2}/(1-\tau)
$$
This will be used to prove an exponent theorem (lem. \ref{exponentp2}): if the swap map on $X^{\otimes 2}$ is given by $-1$, then multiplication by $4^n$ on $\Omega^{2n}X$ factors through $X$.

The case $n=1$ gives the metastable EHP sequence (lem. \ref{EHP})
$$
X \xlongrightarrow{E} \Omega\Sigma X \xlongrightarrow{H} \Omega^\infty\Sigma^\infty X^{\otimes 2}
$$
This will be used to construct a spectral sequence for computing homotopy groups (lem. \ref{metSS}).

In order to state these computations, one first needs to define the spheres in the metastable category. For every $n$, we define $S^n$ to be the image of the classical sphere under the functor $\Sigma^\infty_2$ (def. \ref{spheresp2}). For $n > 0$, this is the only metastable structure on the spectrum $\mathbb{S}^n$, but the case $n = 0$ is different. In that case, $\mathbb{S}$ also admits exotic metastable structures (lem. \ref{exotics0}, def. \ref{spacelike}). This leads us to consider exotic $0$-spheres $S^{0,k}$, where $k$ is an odd natural number (def. \ref{exoticspheresp2}). To make the notation uniform for $n > 0$, we sometimes write $S^{n,0}$ for $S^n$. Having these spheres at hand, we define the bigraded homotopy groups by
$$
\pi_{n,j}X := [S^{n,j},X]
$$

The main interest is then to compute these bigraded homotopy groups for the spheres themselves. We compute them explicitly for the spheres $S^{0,k}$ and $S^1$, while for the spheres $S^n$ with $n>1$ the answer is organized by a spectral sequence.

We show that the bigraded homotopy groups of the exotic $0$-spheres $S^{0,k}$ are given by (lem. \ref{homgroupsS0})
$$
\pi_{n,j}S^{0,k} \cong
\begin{cases}
	\mathbb{Z}/2 & \text{if } n=0 \text{ and } j \nmid k \\
	\mathbb{Z}/2 \times \{0,k/j\} & \text{if } n=0 \text{ and } j \mid k \\
	\pi_{n+1}\mathbb{RP}^\infty_{-1} \oplus \pi_{n+1}(\mathbb{S}/k) & \text{if } n>0
\end{cases}
$$

where $\mathbb{RP}^\infty_{-1}$ is the spectrum $(\mathbb{S}^{-\rho})_{hC_2}$. 

Next, the bigraded homotopy groups of $S^1$ are given by (lem. \ref{homgroupsS1})
$$
\pi_{n,j}S^1 \cong
\begin{cases}
	0 & \text{if } n=0 \\
	\mathbb{Z} & \text{if } n=1 \\
	\pi_{n+1}\mathbb{Sp}^2 \oplus \pi_{n-1}\mathbb{S}[\sfrac{1}{2}] & \text{if } n>1
\end{cases}
$$

where $\mathbb{Sp}^2$ is the $2$-local symmetric square spectrum. 

For the spheres $S^n$ with $n > 1$ we use the metastable EHP sequence above to obtain a spectral sequence whose $E_1$-page is given by
$$
E^1_{t,m} :=
\begin{cases}
	\mathbb{Z} & m = 0,\ t = 0 \\
	\pi_{t+1}\mathbb{Sp}^2 & m = 0,\ t > 0 \\
	\pi_{t-m}^s & 0 < m < n \\
	0 & m > n
\end{cases}
$$
and which converges to $\pi_{n+t}S^n$. A companion paper displays and analyzes this spectral sequence in the setting of the Goodwillie tower of the identity \cite[sec. 3.2]{NervoCircle}. As a nice application of the computations developed here, we prove that no metastable sphere is a loop space, except possibly $S^7$ (lem. \ref{noloopspaces}). This concludes Section 3.

Section 4 collects a mix of further results and future directions. For example, we develop a notion of Mahowald invariant that works beyond maps on spheres (def. \ref{newmahowald}). In particular, we prove a generalization of Jones' theorem (lem. \ref{newjones}) which says the following: if $\alpha: X \to Y$ is a map of spectra and $X$ admits a metastable structure under which it is an $n$-suspension, then the Mahowald invariant raises the degree by at least $n$.

We also prove a Hilton--Milnor-like theorem (lem. \ref{hiltonmilnor}), namely that for any two metastable spectra $X_1$ and $X_2$, there is an equivalence
	$$
	\Omega (X_1 \vee X_2) \simeq \Omega X_1 \times \Omega X_2 \times \Omega^{\infty + 2}\Sigma^\infty(X_1 \otimes X_2)
	$$

We use this to compute the homotopy groups of $S^1 \vee S^1$ as (ex. \ref{homgroupsS1vS1})
$$
	\pi_{n, j}(S^1 \vee S^1) \cong
	\begin{cases}
		0 & n = 0 \\
		\mathrm{Heis}(\mathbb{Z}) & n = 1 \\
		\bigl(\pi_{n+1}\mathbb{Sp}^2 \oplus \pi_{n-1}\mathbb{S}[\sfrac{1}{2}]\bigr)^{\oplus 2} \oplus \pi_{n-1}\mathbb{S} & n > 1
	\end{cases}
	$$

where $\mathrm{Heis}(\mathbb{Z})$ denotes the integral Heisenberg group.
	
We conclude by discussing generalizations to an odd-primary analogue and by wondering about an internal metastable Adams spectral sequence. More generally, we hope that this work can serve as a modern and accessible foundation for metastable homotopy theory, and that others may build on it in the future.

\subsection*{Acknowledgments}

I would like to thank my supervisor Gijs Heuts for his support, trust, and for giving me the freedom to pursue my own ideas and interests, as well as my colleagues at Utrecht University and in the broader homotopy theory community around Utrecht, Amsterdam, and Nijmegen.

This research was supported by the European Research Council through the grant \textit{Chromatic homotopy theory of spaces} (grant no.~950048). It also benefited from Hood Chatham's Steenrod algebra and Adams spectral sequence calculators, from Nathanael Arkor's \textit{Quiver}, and from tools developed by OpenAI, which helped improve the exposition of the paper.

\section*{Background}

\subsection*{Conventions}

All categories in this paper are understood to be $\infty$-categories. Our standard references for this material are \cite{HTT} and \cite{HA}. We write $\mathrm{Cat}$ for the category of small categories, $\mathrm{Cat}_*$ for the category of pointed categories, $\mathrm{Cat}_*^\omega$ for the category of compactly generated pointed categories, and $\mathrm{Pr}^L$ for the category of \textit{presentable categories} and colimit-preserving functors. A basic feature of presentable categories is the \textit{adjoint functor theorem}: every colimit-preserving functor between presentable categories admits a right adjoint \cite[cor. 5.5.2.9]{HTT}. We will use this repeatedly throughout the paper.

If $\mathcal{C}$ is a category and $X,Y$ are objects of $\mathcal{C}$, we denote by $\mathrm{Map}_{\mathcal{C}}(X,Y)$ the mapping space from $X$ to $Y$. When $\mathcal{C}$ is stable, we denote by $\mathrm{map}_{\mathcal{C}}(X,Y)$ the corresponding mapping spectrum, so that
$$
\Omega^\infty \mathrm{map}_{\mathcal{C}}(X,Y) \simeq \mathrm{Map}_{\mathcal{C}}(X,Y)
$$

The main categories appearing throughout the paper are the category of spaces $\mathcal{S}$, the category of pointed spaces $\mathcal{S}_*$, and the category of spectra $\mathcal{Sp}$. These are symmetric monoidal with respect to, respectively, the cartesian product $\times$, the smash product $\wedge$, and the tensor product $\otimes$. Among them, $\mathcal{Sp}$ is stable.

We write $S^n$ for spheres in $\mathcal{S}$, and we will use the same notation for the corresponding spheres in the metastable category. More generally, since the metastable category should be thought of as an approximation to the category of spaces, we will often carry over notation from spaces to the metastable setting.

If $E$ and $X$ are spectra, we write $E_*X := \pi_*(E \otimes X)$
for the $E$-homology of $X$, and $E^*X := \pi_{-*}\mathrm{map}(X,E)$
for the $E$-cohomology of $X$. We will use this notation also when $E$ is an Eilenberg--MacLane spectrum, for instance $E = \mathbb{F}_p$, in place of the more classical notation $H_*(X;\mathbb{F}_p)$ and $H^*(X;\mathbb{F}_p)$.

Among the spectra that will play a role in this paper are the sphere spectrum $\mathbb{S}$, Eilenberg--MacLane spectra such as $\mathbb{Z}$ and $\mathbb{F}_p$, the stunted projective spectra $\mathbb{RP}^\infty_n := \Sigma^\infty RP^\infty / RP^{n-1}$, Morava $K$-theories $K(n)$, and the telescope spectra $T(n)$. We write $L_n$ for localization at $K(0) \oplus \cdots \oplus K(n)$ and $L_n^f$ for localization at $T(0) \oplus \cdots \oplus T(n)$. In either setting, we will say that a spectrum is \textit{monochromatic} of height $n$ if it is $L_n$-local and $L_{n-1}$-acyclic, or $L_n^f$-local and $L_{n-1}^f$-acyclic, respectively.

At several points we will work implicitly $p$-locally for a fixed prime $p$. In tables and spectral sequences, $\square$ denotes a copy of $\mathbb{Z}$, while $\bullet_r$ denotes a copy of $\mathbb{Z}/p^r$. When $r=1$, we omit the subscript.

\subsection*{Equivariant tools}

Let $G$ be a finite group. By \textit{Elmendorf's theorem} \cite{Gspaces}, the homotopy theory of topological $G$-spaces can be modeled by the category
$$
\mathrm{Fun}(\mathcal{O}(G)^{op}, \mathcal{S})
$$
where $\mathcal{O}(G)$ denotes the \textit{orbit category} of $G$, whose objects are the transitive $G$-sets $G/H$.

There are several equivariant versions of the category of spectra. The one most relevant for us is the naive category of \textit{spectra with $G$-action}, namely the functor category
$$
\mathcal{Sp}^{BG} \simeq \mathrm{Fun}(BG,\mathcal{Sp})
$$
We will also occasionally refer to the category $\mathcal{Sp}^G$ of \textit{genuine $G$-spectra}, obtained from pointed $G$-spaces by inverting all representation spheres. In this paper, however, all equivariant constructions will take place in the naive setting unless explicitly stated otherwise.

If $X$ is a spectrum with $G$-action, we write
$$
X_{hG} := \mathrm{colim}_{BG} X
\qquad\qquad
X^{hG} := \mathrm{lim}_{BG} X
$$
for its \textit{homotopy orbits} and \textit{homotopy fixed points}. These come equipped with natural maps
$$
X \xlongrightarrow{\pi} X_{hG}
\qquad\qquad
X^{hG} \xlongrightarrow{i} X
$$
There is also a canonical map
$$
\mathrm{nm} \colon X_{hG} \longrightarrow X^{hG}
$$
called the \textit{norm map} \cite[sec. 6.1.6]{HA} \cite[ex. I.1.11]{NikolausScholze}. It is characterized by the property that the composite
$$
X \xlongrightarrow{\pi} X_{hG} \xlongrightarrow{\mathrm{nm}} X^{hG} \xlongrightarrow{i} X
$$
agrees with the sum of the automorphisms $\tau_g$ induced by the $G$-action \cite[rem. 6.1.6.23]{HA}. The cofiber of the norm map is called the \textit{Tate construction} and is denoted by $X^{tG}$ \cite[def. 6.1.6.24]{HA}. Thus one has a canonical cofiber sequence
$$
X_{hG} \xlongrightarrow{\mathrm{nm}} X^{hG} \xlongrightarrow{\mathrm{can}} X^{tG}
$$

The Tate construction will play an important role throughout the paper. One useful feature is that it vanishes on spectra built out of free $G$-cells \cite[lem. I.3.8]{NikolausScholze}. Another is \textit{chromatic blueshift}: if $X$ is $L_n$-local, then $X^{tG}$ is $L_{n-1}$-local \cite{HoveySadofsky}. We will also use the analogous \textit{telescopic blueshift}, which asserts that if $X$ is $L_n^f$-local, then $X^{tG}$ is $L_{n-1}^f$-local \cite[thm. 1.5]{telescopicblue}.

In the special case $G = C_p$, we will frequently consider the \textit{Tate power construction}
$$
((-)^{\otimes p})^{tC_p}
$$
where $C_p$ acts on $(-)^{\otimes p}$ by cyclic permutation. This defines an exact functor \cite[prop. III.1.1]{NikolausScholze}, and it comes equipped with a natural transformation
$$
1 \xlongrightarrow{\Delta} ((-)^{\otimes p})^{tC_p}
$$
called the \textit{Tate diagonal} \cite[def. III.1.4]{NikolausScholze}. By the spectral Yoneda lemma \cite[prop. III.1.2]{NikolausScholze}, this transformation is determined by its value on the sphere spectrum. The resulting map
$$
\mathbb{S} \longrightarrow \mathbb{S}^{tC_p}
$$
is the composite
$$
\mathbb{S} \longrightarrow \mathbb{S}^{hC_p} \xrightarrow{\mathrm{can}} \mathbb{S}^{tC_p}
$$
where the first map is induced by equipping $\mathbb{S}$ with the trivial $C_p$-action. We will use the \textit{Segal conjecture} in the form stating that, for bounded-below spectra, the Tate diagonal identifies with $p$-completion \cite[thm. III.1.7]{NikolausScholze}.

We will also use representation spheres. If $V$ is a finite-dimensional real $G$-representation, we denote by $S^V$ its one-point compactification and by $S(V)$ the unit sphere in $V$. These fit into a cofiber sequence
$$
S(V)_+ \longrightarrow S^0 \longrightarrow S^V
$$
which is often useful in calculations.

The case $G = C_2$ will occur repeatedly. We write $\rho$ for the \textit{sign representation}. Then $S(n\rho)$ is the ordinary sphere $S^{n-1}$ equipped with the antipodal $C_2$-action, and in particular it is a free $C_2$-space. It follows that its ordinary orbits agree with its homotopy orbits. Passing to homotopy orbits in the cofiber sequence above, one obtains an identification
$$
(S^{n\rho})_{hC_2} \simeq RP^\infty/RP^{n-1} =: RP^\infty_n
$$
Motivated by this, for $n < 0$ we define the spectra
$$
\mathbb{RP}^\infty_n := (\mathbb{S}^{-n\rho})_{hC_2}
$$
These spectra are no longer suspension spectra.

\section{Metastable structures}

This section develops the basic theory of \emph{metastable structures} on spectra.  A metastable structure on a spectrum $X$ is, by definition, a lift of the Tate diagonal
\[
X \xlongrightarrow{\Delta} (X^{\otimes 2})^{tC_2}
\]
to the homotopy fixed points $(X^{\otimes 2})^{hC_2}$.  Suspension spectra provide the basic source of examples, and the section
examines to what extent the existence of a metastable structure forces a
spectrum to share structural features with suspension spectra.

In the first subsection we introduce the definition, discuss fundamental examples, and establish basic closure properties of the class of spectra admitting a metastable structure.

In the second subsection we develop obstructions to the existence of metastable structures. In particular, we obtain strong $2$-local constraints, leading to a range of explicit nonexamples.

In the third subsection we study classification.  We define the space of metastable structures and compute its connected components in several cases, exhibiting in particular infinitely many exotic metastable structures on the sphere spectrum.

In the fourth subsection we give an equivariant reformulation, identifying metastable structures with a tom Dieck splitting for the genuine $C_2$-spectrum $X^{\otimes 2}$ equipped with the swap action.

\subsection{Definition and examples}

In this subsection we introduce the notion of a metastable structure on a spectrum. This consists of a choice of lift of the Tate diagonal to the homotopy fixed points of the tensor square.

We then discuss examples of spectra admitting a metastable structure. The basic class of examples to keep in mind is given by suspension spectra, where the metastable structure is induced by the space-level diagonal. We also record further examples, including degenerate cases arising from triviality of the Tate diagonal.

Finally, we establish several closure properties of spectra admitting a metastable structure, showing in particular that this class is closed under suspensions, sums, and retracts, but not under arbitrary finite colimits.

Let $C_2$ act on the tensor square $X^{\otimes 2}$ by permutation of the factors. Associated to this action are the homotopy orbits $(X^{\otimes 2})_{hC_2}$, the homotopy fixed points $(X^{\otimes 2})^{hC_2}$, and the Tate construction $(X^{\otimes 2})^{tC_2}$. These fit into a canonical cofiber sequence
\[
(X^{\otimes 2})_{hC_2}
\xlongrightarrow{\mathrm{nm}}
(X^{\otimes 2})^{hC_2}
\xlongrightarrow{\mathrm{can}}
(X^{\otimes 2})^{tC_2}
\]
where $\mathrm{nm}$ is the \emph{norm map} \cite[ex. I.1.11]{NikolausScholze} and $\mathrm{can}$ is the
\emph{canonical map}.

Moreover, the Tate construction on the tensor square is an exact functor and it admits a natural map
\[
X \xlongrightarrow{\Delta} (X^{\otimes 2})^{tC_2}
\]
called the \emph{Tate diagonal} \cite[5.1]{Gijsapprox} \cite[def. III.1.4]{NikolausScholze} or, originally, the \emph{topological Singer $\epsilon$-map} \cite[def. 5.10]{Singercons}.
With these constructions in place, we are now ready to give our main definition.

\begin{definition}
	\label{metstructure}
A \emph{spectrum with a metastable structure} consists of a spectrum $X$
together with a map $X \to (X^{\otimes 2})^{hC_2}$ - called the \emph{metastable structure} - such that the following diagram commutes:
\[\begin{tikzcd}
	& {(X^{\otimes 2})^{hC_2}} \\
	X & {(X^{\otimes 2})^{tC_2}}
	\arrow["{\mathrm{can}}", from=1-2, to=2-2]
	\arrow[from=2-1, to=1-2]
	\arrow["\Delta"', from=2-1, to=2-2]
\end{tikzcd}\]
\end{definition}

\begin{rem}
The notion of a metastable structure coincides with that of a
\emph{$2$-truncated Tate coalgebra} in the sense of \cite[ex. 3.1]{Gijsapprox}.
\end{rem}

\begin{ex}
	\label{suspensionspectra}
Every suspension spectrum $\Sigma^\infty X$ admits a canonical metastable structure,
induced by the space-level diagonal map \cite[ex. 3.1]{Gijsapprox} \cite[lem. IV.1.3]{NikolausScholze} 
\[
X \longrightarrow X \times X
\longrightarrow X \wedge X
\]
In particular, the $n^\mathrm{th}$ suspension of the sphere spectrum $\mathbb{S}^n$
admits a metastable structure for every $n \geq 0$.
\end{ex}

From the definition, one immediately obtains the following simple criterion
for the existence of a metastable structure.

\begin{lem}
	\label{metnull}
A spectrum $X$ admits a metastable structure if and only if the composite
\[
X \longrightarrow (X^{\otimes 2})^{tC_2}
\longrightarrow \Sigma (X^{\otimes 2})_{hC_2}
\]
is null-homotopic.
\end{lem}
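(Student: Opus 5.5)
The plan is to rotate the norm cofiber sequence and use the long exact sequence of mapping spaces. In the stable category $\mathcal{Sp}$, the cofiber sequence
\[
(X^{\otimes 2})_{hC_2} \xlongrightarrow{\mathrm{nm}} (X^{\otimes 2})^{hC_2} \xlongrightarrow{\mathrm{can}} (X^{\otimes 2})^{tC_2}
\]
is also a fiber sequence. It extends to the right by the boundary map $\partial \colon (X^{\otimes 2})^{tC_2} \to \Sigma (X^{\otimes 2})_{hC_2}$, and the sequence
\[
(X^{\otimes 2})^{hC_2} \xlongrightarrow{\mathrm{can}} (X^{\otimes 2})^{tC_2} \xlongrightarrow{\partial} \Sigma (X^{\otimes 2})_{hC_2}
\]
is again a fiber sequence. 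The second map in the statement is this $\partial$.

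Next, apply the exact functor $\mathrm{map}(X,-)$, or simply $\mathrm{Map}(X,-)$, which preserves fiber sequences. This gives a fiber sequence of spaces
\[
\mathrm{Map}(X,(X^{\otimes 2})^{hC_2}) \longrightarrow \mathrm{Map}(X,(X^{\otimes 2})^{tC_2}) \longrightarrow \mathrm{Map}(X,\Sigma(X^{\otimes 2})_{hC_2}),
\]
where the fiber is taken over the zero map. By definition \ref{metstructure}, a metastable structure is a point of the fiber of the first map (post-composition with $\mathrm{can}$) over the point $\Delta$. Equivalently, it is a lift of $\Delta$ along $\mathrm{can}$ together with a homotopy.

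The fiber sequence identifies the homotopy fiber of $\mathrm{can}_*$ over $\Delta$ with the space of paths from $\partial\circ\Delta$ to $0$ in $\mathrm{Map}(X,\Sigma(X^{\otimes 2})_{hC_2})$, i.e. the space of null-homotopies of $\partial \circ \Delta$. This uses the fact that in a stable category a fiber sequence of mapping spectra is also a cofiber sequence, so no connectivity hypotheses are needed. Consequently the space of metastable structures is nonempty if and only if $\partial\circ\Delta$ is null-homotopic. At the level of $\pi_0$ the same conclusion follows from exactness of
\[
[X,(X^{\otimes 2})^{hC_2}] \to [X,(X^{\otimes 2})^{tC_2}] \to [X,\Sigma(X^{\otimes 2})_{hC_2}],
\]
which is all the statement needs.

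There is no real obstacle. The only point requiring care is to identify the composite in the statement with the connecting map of the norm cofiber sequence, and to note that exactness in the middle of the long exact sequence gives both directions at once. As a bonus, the argument shows that the space of metastable structures is a torsor-like object: it is the space of null-homotopies of $\partial\circ\Delta$, and when nonempty it is a torsor for $\Omega^\infty\Sigma^{-1}\mathrm{map}(X,\Sigma(X^{\otimes 2})_{hC_2}) \simeq \mathrm{Map}(X,(X^{\otimes 2})_{hC_2})$. This will presumably be useful in the classification subsection.
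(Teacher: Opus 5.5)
Your argument is correct and matches the paper's: the paper states this lemma as immediate from the definition via the norm cofiber sequence $(X^{\otimes 2})_{hC_2}\to(X^{\otimes 2})^{hC_2}\to(X^{\otimes 2})^{tC_2}$, which is exactly your rotate-and-map-out argument. Your identification of the space of lifts with the space of null-homotopies of $\partial\circ\Delta$ is also the reasoning the paper later uses to show $\overline{\mathcal{M}}_X \simeq \mathrm{Map}(X, X^{\otimes 2}_{hC_2})$.
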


\begin{rem}
	The composite map above is precisely the map $\delta_X$ appearing in Klein's work \cite[def.~3.1, thm.~A]{Kleinmoduli}. 
	Thus, the obstruction to the existence of a metastable structure considered here agrees with Klein's obstruction.
	A detailed comparison will be given later.
\end{rem}

This lemma provides an alternative proof that suspensions of the sphere spectrum
admit metastable structures.

\begin{ex}
For $X = \mathbb{S}^n$, the composite above takes the form
\[
\mathbb{S}^n \longrightarrow (\mathbb{S}^n)^{\wedge}_2
\longrightarrow \Sigma^{n + 1}\mathbb{RP}^\infty_n
\]
which is trivial for $n \geq 0$ since $\mathbb{RP}^\infty_n$ is $n$-connective.
\end{ex}

The lemma may also be used to produce a class of metastable structures that are,
in a sense, degenerate, arising whenever the Tate diagonal is trivial.
The following are examples of this flavour.

\begin{ex}
	\label{2inv}
Every spectrum on which multiplication by $2$ is invertible admits a (trivial)
metastable structure. In particular, every rational spectrum admits a metastable
structure.
Indeed, if $X$ is an $\mathbb{S}/2$-acyclic spectrum, then $X^{\otimes 2}$ is also
$\mathbb{S}/2$-acyclic. Since the Tate construction kills
spectra on which 2 acts invertibly, it follows that $(X^{\otimes 2})^{tC_2} \simeq *$.
\end{ex}

\begin{ex}
	\label{0square}
Every spectrum with vanishing tensor square admits a (trivial) metastable
structure. In particular, the Brown--Comenetz dual $I_{\mathbb{Q}/\mathbb{Z}}$
admits a metastable structure.
Indeed, if $X$ is a spectrum such that $X^{\otimes 2} \simeq *$, then
$(X^{\otimes 2})^{tC_2} \simeq *$ as well.
\end{ex}

\begin{ex}
	\label{monochrom}
Every monochromatic spectrum admits a metastable structure as a consequence of
chromatic blueshift. In particular, $K(n)$ and $T(n)$ admit a metastable structure for every $n$.
Indeed, if $X$ is an $L_n$-local spectrum, then $X^{\otimes 2}$ is also $L_n$-local,
and the Tate construction $(X^{\otimes 2})^{tC_2}$ is $L_{n-1}$-local by blueshift \cite{HoveySadofsky}.
Consequently, the Tate diagonal factors through $L_{n-1}X$, which is trivial if
$X$ is assumed to be monochromatic.
The same conclusion holds in the telescopic setting, by telescopic blueshift \cite[thm. 1.5]{telescopicblue}\cite[cor. 2.7]{telescopicblueshort}.
\end{ex}

The following lemma records a simple dimension-connectivity criterion for the existence of a metastable structure.

\begin{lem}
	\label{existencemeta}
	If $X$ is a spectrum and $k \geq \dim X - 2\conn X$, then $\Sigma^k X$ admits a metastable structure. In particular, if $\dim X \leq 2\conn X$, then $X$ admits a metastable structure.
\end{lem}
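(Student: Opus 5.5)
The plan is to apply the criterion of Lemma \ref{metnull} to $\Sigma^k X$ and show that the obstruction map
\[
\Sigma^k X \longrightarrow ((\Sigma^k X)^{\otimes 2})^{tC_2} \longrightarrow \Sigma ((\Sigma^k X)^{\otimes 2})_{hC_2}
\]
is null for connectivity reasons alone. I interpret $\dim X$ as the top cell dimension, so that $X$ is built from cells of dimension $\le \dim X$ and $[X,Y]=0$ whenever $Y$ is $(\dim X+1)$-connective. I write $c=\conn X$ for the connectivity, meaning $X$ is $c$-connective, i.e. $\pi_i X = 0$ for $i<c$ (the lowest cell sits in degree $c$). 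Note that the hypothesis $k \ge \dim X - 2c$ can only hold for $\Sigma^k X$ with $X$ bounded, so I restrict to finite (or at least finite-dimensional, bounded-below) $X$.

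\textbf{Step 1.} Compute the connectivity of the target. Write $Y=\Sigma^k X$. Then $Y$ is $(c+k)$-connective, so $Y^{\otimes 2}$ is $2(c+k)$-connective. Homotopy orbits preserve connectivity, so $(Y^{\otimes 2})_{hC_2}$ is $2(c+k)$-connective, and $\Sigma (Y^{\otimes 2})_{hC_2}$ is $(2c+2k+1)$-connective.

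\textbf{Step 2.} Compare with the dimension of the source. $Y$ has cells in dimensions $\le \dim X + k$. Any map from $Y$ into a $(\dim X+k+1)$-connective spectrum is null. So it suffices that $2c+2k+1 \ge \dim X + k + 1$, i.e. $k \ge \dim X - 2c$, which is exactly the hypothesis. Hence the obstruction vanishes and Lemma \ref{metnull} gives the metastable structure. The case $k=0$ gives the second claim.

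\textbf{Main obstacle.} The only real subtlety is the convention for $\conn X$. If $\conn X$ means "$(c)$-connected," i.e. $\pi_i=0$ for $i\le c$, then the bound gains an extra $2$ and becomes weaker, so the argument still works a fortiori. The other point to check is that the cellular vanishing statement $[Y,Z]=0$ holds for $Z$ $(\dim Y+1)$-connective. This is standard: use a cell structure, or note that $\mathrm{map}(Y,Z)$ is connective after shifting by $-\dim Y - 1$. So the expected hard step is only bookkeeping of indices; no equivariant input beyond the connectivity of homotopy orbits is needed.
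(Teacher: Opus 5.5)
Your argument is correct and is essentially the paper's proof: the paper first reduces to $k=0$ via $\conn \Sigma^k X = k + \conn X$ and $\dim \Sigma^k X = k + \dim X$, then uses cellular approximation to factor the obstruction map of Lemma~\ref{metnull} through the $\dim X$-skeleton of $\Sigma(X^{\otimes 2})_{hC_2}$, which is contractible by the same connectivity count you perform. Working directly with $\Sigma^k X$ and using that maps from a spectrum with cells of dimension at most $d$ into a $(d+1)$-connective spectrum are null is just a repackaging of that step.
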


\begin{proof}
	We first prove the special case $k = 0$, namely the case $\dim X \leq 2\conn X$. The general statement then follows from the identities
	$$\conn \Sigma^k X = k + \conn  X \qquad \dim \Sigma^k X = k + \dim X$$
	Assume that $X$ is an $n$-dimensional spectrum. By cellular approximation, we obtain the following diagram:

\[\begin{tikzcd}
	X & {(X^{\otimes 2})^{tC_2}} & {\Sigma(X^{\otimes 2})_{hC_2}} \\
	&& {\mathrm{sk}_n\Sigma(X^{\otimes 2})_{hC_2}}
	\arrow[from=1-1, to=1-2]
	\arrow[dashed, from=1-1, to=2-3]
	\arrow[from=1-2, to=1-3]
	\arrow[from=2-3, to=1-3]
\end{tikzcd}\]
	If $n \leq 2\conn X$, then $$n \leq 2\conn X \leq \conn  (X^{\otimes 2})_{hC_2} < \conn  \Sigma (X^{\otimes 2})_{hC_2}$$	In particular, the $n$-skeleton of $\Sigma(X^{\otimes 2})_{hC_2}$ is contractible, so the vertical map in the diagram is null-homotopic. Consequently, the horizontal composite is also null-homotopic, and $X$ admits a metastable structure.
\end{proof}

\begin{rem}
	The previous lemma does not lead to new examples beyond suspension spectra. Indeed, if a spectrum $X$ satisfies $\dim X \leq 2\conn X$, then $X$ is automatically a suspension spectrum by a result of Freudenthal \cite[cor.~2.2]{Kleinmoduli}. We include the lemma for completeness, as its proof is self-contained and does not use this observation.
\end{rem}

\begin{ques}
	Can one give an explicit example of a finite spectrum which is not a suspension spectrum but nevertheless admits a metastable structure?
\end{ques}

\begin{rem}
	The previous lemma implies that every finite-dimensional spectrum admits a metastable structure after a sufficiently large suspension. This conclusion no longer holds if the finite-dimensional hypothesis is dropped. For instance, as we shall see later, the spectrum
	\[
		\bigvee_{n \in \mathbb{Z}} \mathbb{S}^n \;\simeq\; \Sigma^k \bigvee_{n \in \mathbb{Z}} \mathbb{S}^n
	\]
	does not admit a metastable structure.
\end{rem}

We now record some basic closure properties of spectra admitting a metastable
structure.

\begin{lem}
The class of spectra admitting a metastable structure is closed under suspensions.
\end{lem}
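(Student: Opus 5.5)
The plan is to build the metastable structure on $\Sigma X$ directly from one on $X$, using the $C_2$-equivariant inclusion of fixed points $S^0 \to S^{\rho}$. Fix a metastable structure $s \colon X \to (X^{\otimes 2})^{hC_2}$. There is a $C_2$-equivariant identification
\[
(\Sigma X)^{\otimes 2} \simeq \Sigma^{1+\rho} X^{\otimes 2},
\]
where $C_2$ swaps the two suspension coordinates, and $S^1\wedge S^1$ with the swap is $S^{1+\rho}$. Tensoring the map $S^0 \to S^{\rho}$ (inclusion of the fixed points $\{0,\infty\}$) with $\Sigma X^{\otimes 2}$ gives an equivariant map
\[
e \colon \Sigma X^{\otimes 2} \longrightarrow \Sigma^{1+\rho}X^{\otimes 2} \simeq (\Sigma X)^{\otimes 2},
\]
with trivial action on the suspension coordinate of the source. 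The candidate structure on $\Sigma X$ is the composite
\[
\Sigma X \xrightarrow{\Sigma s} \Sigma (X^{\otimes 2})^{hC_2} \simeq (\Sigma X^{\otimes 2})^{hC_2} \xrightarrow{e^{hC_2}} ((\Sigma X)^{\otimes 2})^{hC_2}.
\]

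The next step is to check that this composite lifts $\Delta_{\Sigma X}$. By naturality of $\mathrm{can}$ and the identity $\mathrm{can}\circ s \simeq \Delta_X$, the composite followed by $\mathrm{can}$ equals
\[
\Sigma X \xrightarrow{\Sigma\Delta_X} \Sigma (X^{\otimes 2})^{tC_2} \xrightarrow{e^{tC_2}} ((\Sigma X)^{\otimes 2})^{tC_2}.
\]
The map $e^{tC_2}$ is an equivalence. Indeed, the cofiber of $S^0 \to S^\rho$ is $\Sigma (C_2)_+$, and tensoring it with $\Sigma X^{\otimes 2}$ gives an induced spectrum, on which the Tate construction vanishes \cite[lem. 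I.3.8]{NikolausScholze}. It therefore remains to identify $e^{tC_2}\circ\Sigma\Delta_X$ with $\Delta_{\Sigma X}$.

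This identification is the main obstacle. The Tate diagonal is a natural transformation between exact functors, so $\Delta_{\Sigma X}$ agrees with $\Sigma \Delta_X$ under the exactness equivalence $\Sigma (X^{\otimes 2})^{tC_2} \simeq ((\Sigma X)^{\otimes 2})^{tC_2}$ of \cite[prop. III.1.1]{NikolausScholze}. The real content is to check that this exactness equivalence is the one induced by $e$. I would first try to argue via the spectral Yoneda lemma \cite[prop. III.1.2]{NikolausScholze}: both sides are natural in $X$, so it suffices to compare them for $X = \mathbb{S}$. There the claim amounts to $e^{tC_2}$ agreeing with the standard suspension isomorphism $\Sigma\mathbb{S}^{tC_2} \simeq (\mathbb{S}^{1+\rho})^{tC_2}$, which one can read off from how the exact structure on $X \mapsto (X^{\otimes 2})^{tC_2}$ is produced: the cross term $X\otimes Y \oplus Y \otimes X$ in $(X\oplus Y)^{\otimes 2}$ is induced and is killed by the Tate construction.

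If this direct comparison proves awkward, I would fall back on \cref{metnull}. The obstruction $\delta$ is natural with respect to the same maps, so the construction above shows that $\delta_{\Sigma X}$ agrees, up to the equivalence $e^{tC_2}$, with a map factoring through $\Sigma\delta_X \simeq 0$. This is the same compatibility check in another guise, so either way the crux is matching the suspension equivalence of the Tate square with the Euler-class map $e$.
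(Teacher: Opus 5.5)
Your proposal is correct and is essentially the paper's argument. Your map $e$ (the fixed-point inclusion $S^0\to S^\rho$ tensored with $\Sigma X^{\otimes 2}$) is precisely the colimit comparison map $\Sigma(X^{\otimes 2})\to(\Sigma X)^{\otimes 2}$ that the paper invokes, and your fallback via lemma~\ref{metnull} is exactly how the paper proceeds: it transports the nullhomotopy of $X\to\Sigma(X^{\otimes 2})_{hC_2}$ rather than the lift itself, which is equivalent by the cofiber sequence defining that obstruction. The compatibility you flag as the main obstacle is formal once you recognize $e$ as this comparison map, so no spectral Yoneda computation is needed: any natural transformation of reduced functors, in particular $\Delta\colon 1\Rightarrow(-^{\otimes 2})^{tC_2}$, commutes with the comparison maps $\Sigma F(X)\to F(\Sigma X)$, and the comparison map of $(-^{\otimes 2})^{tC_2}$ is $e^{tC_2}$ up to the exactness equivalence of $(-)^{tC_2}$.
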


\begin{proof}
	Let $X$ be a spectrum with a metastable structure. Then the composite \[
X \longrightarrow (X^{\otimes 2})^{tC_2}
\longrightarrow \Sigma (X^{\otimes 2})_{hC_2}
\]
	is null-homotopic. Suspending, we obtain a null-homotopic composite \[
\Sigma X \longrightarrow \Sigma(X^{\otimes 2})^{tC_2}
\longrightarrow \Sigma^2 (X^{\otimes 2})_{hC_2}
\]
	By the colimit comparison map, we have the following commutative diagram
\[\begin{tikzcd}
	{\Sigma X} & {\Sigma (X^{\otimes 2})^{tC_2}} & { \Sigma^2(X^{\otimes 2})_{hC_2}} \\
	{\Sigma X} & {((\Sigma X)^{\otimes 2})^{tC_2}} & {\Sigma((\Sigma X)^{\otimes 2})_{hC_2}}
	\arrow[from=1-1, to=1-2]
	\arrow[equals, from=1-1, to=2-1]
	\arrow[from=1-2, to=1-3]
	\arrow[equals, from=1-2, to=2-2]
	\arrow[from=1-3, to=2-3]
	\arrow[from=2-1, to=2-2]
	\arrow[from=2-2, to=2-3]
\end{tikzcd}\]
	Since the top horizontal composite is null-homotopic, so is the bottom one. Hence $\Sigma X$ admits a metastable structure. 
\end{proof}

\begin{lem}
	The class of spectra admitting a metastable structure is closed under sums.
\end{lem}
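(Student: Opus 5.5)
Given spectra $X$ and $Y$ with metastable structures $s_X \colon X \to (X^{\otimes 2})^{hC_2}$ and $s_Y \colon Y \to (Y^{\otimes 2})^{hC_2}$, I will write down a metastable structure on $X \oplus Y$ directly; this seems cleaner than working with the obstruction of Lemma \ref{metnull}. The approach is to decompose the tensor square equivariantly. As spectra with $C_2$-action,
\[
(X \oplus Y)^{\otimes 2} \simeq X^{\otimes 2} \oplus Y^{\otimes 2} \oplus \bigl(X \otimes Y \oplus Y \otimes X\bigr)
\]
where $C_2$ acts on the first two summands by the swap and permutes the last two. The last summand is therefore the induced spectrum $\mathrm{Ind}_e^{C_2}(X \otimes Y) \simeq C_{2+} \otimes (X \otimes Y)$.

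The key steps, in order, are as follows.

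\textbf{Step 1.} Since $(-)^{hC_2}$ and $(-)^{tC_2}$ are exact, both preserve finite sums. Because the Tate construction vanishes on induced spectra, I get
\[
((X\oplus Y)^{\otimes 2})^{tC_2} \simeq (X^{\otimes 2})^{tC_2} \oplus (Y^{\otimes 2})^{tC_2}
\]
For the homotopy fixed points, $(\mathrm{Ind}(X\otimes Y))^{hC_2} \simeq X \otimes Y$, so I get
\[
((X\oplus Y)^{\otimes 2})^{hC_2} \simeq (X^{\otimes 2})^{hC_2} \oplus (Y^{\otimes 2})^{hC_2} \oplus X\otimes Y
\]
The canonical map $\mathrm{can}$ is compatible with these decompositions and is zero on the last summand.

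\textbf{Step 2.} The Tate diagonal on $X \oplus Y$ decomposes as $\Delta_X \oplus \Delta_Y$ under the identification of Step 1. I will deduce this from naturality of $\Delta$, applied to the inclusions and projections of $X \oplus Y$. The composite $X \to X\oplus Y \to ((X\oplus Y)^{\otimes 2})^{tC_2}$ equals the inclusion $X \to X \oplus Y$ followed by the functorial map it induces on Tate squares. That functorial map lands in the summand $(X^{\otimes 2})^{tC_2}$ and restricts there to $\Delta_X$. The same argument applies to $Y$, and a map out of a sum is determined by its restrictions to the summands.

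\textbf{Step 3.} I then define the metastable structure on $X \oplus Y$ as $s_X \oplus s_Y$, followed by inclusion into the first two summands, with zero component into $X \otimes Y$. By Steps 1 and 2, composing with $\mathrm{can}$ gives $\Delta_X \oplus \Delta_Y = \Delta_{X\oplus Y}$, which is what Definition \ref{metstructure} requires.

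As an alternative, I could instead use Lemma \ref{metnull}. The obstruction for $X \oplus Y$ restricts on $X$ to the obstruction for $X$ followed by the map induced by the inclusion on $\Sigma(-^{\otimes 2})_{hC_2}$, which is null. The same holds for $Y$, so the obstruction on the sum is null.

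The main obstacle I expect is Step 2, the careful identification of the Tate diagonal on a sum. Naturality handles the restriction to each summand, but one has to check that the cross term contributes nothing. That follows because the cross term is induced and its Tate construction vanishes. Everything else is formal from the exactness of the constructions involved.
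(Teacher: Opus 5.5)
Your argument is correct for finite sums, and your main route differs from the paper's. The paper's proof is your closing alternative. It uses the obstruction of lemma \ref{metnull}, sums the null composites $X_i \to (X_i^{\otimes 2})^{tC_2} \to \Sigma(X_i^{\otimes 2})_{hC_2}$, and pushes this along the colimit comparison maps. By naturality of $\Delta$ and of the boundary map, the result is the obstruction for $\bigoplus_i X_i$.

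Your Steps 1--3 instead write down the lift explicitly: $s_X \oplus s_Y$ with zero component on the induced summand $X \otimes Y$. This is exactly the structure on the coproduct in $P_2\mathcal{S}_*$ that the paper describes later in lemma \ref{coprodp2}. Your route buys a concrete structure and shows the cross component can be taken to be zero. The paper's route needs no equivariant decomposition and works for an arbitrary family $\bigoplus_i X_i$, which is how the paper states it.

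That difference matters. Step 1 relies on exactness of $(-)^{hC_2}$ and $(-)^{tC_2}$, so it only covers finite sums; neither functor commutes with infinite direct sums in general. To get the infinite case along your lines, drop the decomposition. Take the structure map to be $\bigoplus_i s_{X_i}$ followed by the comparison map $\bigoplus_i (X_i^{\otimes 2})^{hC_2} \to ((\bigoplus_i X_i)^{\otimes 2})^{hC_2}$. Then check compatibility with $\Delta$ summand by summand, using naturality of $\mathrm{can}$ and $\Delta$ and the fact that $[\bigoplus_i X_i, Z] \cong \prod_i [X_i, Z]$. Your alternative via lemma \ref{metnull} already works for any family as stated.

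The worry in your last paragraph is unnecessary. Naturality alone makes $\Delta_{X \oplus Y}$, restricted to $X$, factor through $(X^{\otimes 2})^{tC_2}$. The vanishing of the Tate construction on the induced summand is only needed to split the target, not to verify the lift.
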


\begin{proof}
	Let $X_i$ be spectra with a metastable structure. Then the compositions \[
X_i \longrightarrow (X_i^{\otimes 2})^{tC_2}
\longrightarrow \Sigma (X_i^{\otimes 2})_{hC_2}
\]
	are null-homotopic. Taking their direct sum yields a null-homotopic composite
	$$
\textstyle\bigoplus\nolimits_i X_i \longrightarrow \textstyle\bigoplus\nolimits_i (X_i^{\otimes 2})^{tC_2} 
\longrightarrow \textstyle\bigoplus\nolimits_i \Sigma (X_i^{\otimes 2})_{hC_2} 
$$
	By the colimit comparison map, we have the following commutative diagram
\[\begin{tikzcd}
	{\bigoplus_i X_i} & {\bigoplus_i (X_i^{\otimes 2})^{tC_2} } & { \bigoplus_i \Sigma(X_i^{\otimes 2})_{hC_2} } \\
	{\bigoplus_i X_i} & {((\bigoplus_i X_i)^{\otimes 2})^{tC_2}} & {\Sigma((\bigoplus_i X_i)^{\otimes 2})_{hC_2}}
	\arrow[from=1-1, to=1-2]
	\arrow[equals, from=1-1, to=2-1]
	\arrow[from=1-2, to=1-3]
	\arrow[from=1-2, to=2-2]
	\arrow[from=1-3, to=2-3]
	\arrow[from=2-1, to=2-2]
	\arrow[from=2-2, to=2-3]
\end{tikzcd}\]
	Since the top horizontal composite is null-homotopic, so is the bottom one. Hence $\bigoplus_i X_i$ admits a metastable structure. 
\end{proof}

\begin{rem}
	The class of spectra admitting a metastable structure is \textit{not} closed under arbitrary finite colimits. For example, as we are going to see afterwards, the Moore spectrum $\mathbb{S}/2$ does not admit a metastable structure. This may seem surprising at first, since one can formally write down a diagram
\[\begin{tikzcd}
	{\mathrm{colim}_i X_i} & {\mathrm{colim}_i (X_i^{\otimes 2})^{tC_2}} & {\mathrm{colim}_i \Sigma(X_i^{\otimes 2})_{hC_2}} \\
	{\mathrm{colim}_i X_i} & {((\mathrm{colim}_i X_i)^{\otimes 2})^{tC_2}} & {\Sigma((\mathrm{colim}_i X_i)^{\otimes 2})_{hC_2}}
	\arrow[from=1-1, to=1-2]
	\arrow[equals, from=1-1, to=2-1]
	\arrow[from=1-2, to=1-3]
	\arrow[equals, from=1-2, to=2-2]
	\arrow[from=1-3, to=2-3]
	\arrow[from=2-1, to=2-2]
	\arrow[from=2-2, to=2-3]
\end{tikzcd}\]
	However, the top horizontal composite need not be null-homotopic, since in general a colimit of null-homotopic maps need not be null-homotopic. This phenomenon does not occur in the special cases of suspensions and sums considered above.
\end{rem}

\begin{lem}
	The class of spectra admitting a metastable structure is closed under retracts.
\end{lem}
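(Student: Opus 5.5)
The argument follows the pattern of the two preceding closure lemmas and runs through the criterion of Lemma \ref{metnull}. Let $Y$ be a spectrum with a metastable structure, and let $X$ be a retract of $Y$, with maps $i\colon X \to Y$ and $r\colon Y \to X$ such that $r \circ i \simeq \mathrm{id}_X$. Write $\delta_Z$ for the composite $Z \to (Z^{\otimes 2})^{tC_2} \to \Sigma (Z^{\otimes 2})_{hC_2}$. By Lemma \ref{metnull}, $\delta_Y$ is null-homotopic, and it suffices to show that $\delta_X$ is null-homotopic.

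The key input is that $\delta$ is natural in $Z$. The Tate diagonal $\Delta$ is a natural transformation $1 \to ((-)^{\otimes 2})^{tC_2}$. The cofiber sequence $(Z^{\otimes 2})_{hC_2} \to (Z^{\otimes 2})^{hC_2} \to (Z^{\otimes 2})^{tC_2}$ is functorial in $Z$, being the cofiber sequence of the natural norm transformation applied to the functor $Z \mapsto Z^{\otimes 2}$ with its swap action. Hence its boundary map $(Z^{\otimes 2})^{tC_2} \to \Sigma (Z^{\otimes 2})_{hC_2}$ is natural as well. Composing, $\delta$ is a natural transformation from the identity to $\Sigma((-)^{\otimes 2})_{hC_2}$. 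For the map $r$, naturality gives a commutative square
\[
\delta_X \circ r \simeq \Sigma((r^{\otimes 2})_{hC_2}) \circ \delta_Y .
\]

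We then precompose with $i$:
\[
\delta_X \simeq \delta_X \circ r \circ i \simeq \Sigma((r^{\otimes 2})_{hC_2}) \circ \delta_Y \circ i .
\]
The right-hand side factors through the null-homotopic map $\delta_Y$, so it is null-homotopic. Therefore $\delta_X$ is null, and $X$ admits a metastable structure.

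The only point needing care is the naturality of the boundary map, which holds because everything is formed at the level of functors $\mathcal{Sp} \to \mathcal{Sp}$. Equivalently, one can avoid boundary maps entirely. Given a lift $s\colon Y \to (Y^{\otimes 2})^{hC_2}$ of $\Delta_Y$, set $s_X := (r^{\otimes 2})^{hC_2} \circ s \circ i$. Then $\mathrm{can} \circ s_X \simeq (r^{\otimes 2})^{tC_2} \circ \Delta_Y \circ i \simeq \Delta_X \circ r \circ i \simeq \Delta_X$, by naturality of $\mathrm{can}$ and of $\Delta$. This gives an explicit metastable structure on $X$.
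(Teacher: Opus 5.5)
Your proof is correct and follows the paper's argument: both use Lemma~\ref{metnull} and the naturality of the composite $Z \to (Z^{\otimes 2})^{tC_2} \to \Sigma(Z^{\otimes 2})_{hC_2}$ to factor the obstruction for the retract through the null obstruction of the ambient spectrum. The paper only assumes that $r \circ i$ is an equivalence rather than the identity, which makes no difference; your closing variant, which transports the lift directly as $(r^{\otimes 2})^{hC_2} \circ s \circ i$, is a valid and slightly more explicit alternative that the paper does not spell out.
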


\begin{proof}
	Let $X$ be a spectrum with a metastable structure. Then the composite\[
X \longrightarrow (X^{\otimes 2})^{tC_2}
\longrightarrow \Sigma (X^{\otimes 2})_{hC_2}
\]
	is null-homotopic. Suppose that $Y$ is a retract of $X$. Thus there exist an equivalence $Y \to X \to Y$ and, by functoriality of the constructions, we obtain the following commutative diagram

\[\begin{tikzcd}
	Y & {(Y^{\otimes 2})^{tC_2}} & { \Sigma(Y^{\otimes 2})_{hC_2} } \\
	X & {(X^{\otimes 2})^{tC_2}} & {\Sigma(X^{\otimes 2})_{hC_2}} \\
	Y & {(Y^{\otimes 2})^{tC_2}} & { \Sigma(Y^{\otimes 2})_{hC_2} }
	\arrow[from=1-1, to=1-2]
	\arrow[from=1-1, to=2-1]
	\arrow["\simeq"{description}, curve={height=18pt}, from=1-1, to=3-1]
	\arrow[from=1-2, to=1-3]
	\arrow[from=1-2, to=2-2]
	\arrow[from=1-3, to=2-3]
	\arrow[from=2-1, to=2-2]
	\arrow[from=2-1, to=3-1]
	\arrow[from=2-2, to=2-3]
	\arrow[from=2-2, to=3-2]
	\arrow[from=2-3, to=3-3]
	\arrow[from=3-1, to=3-2]
	\arrow[from=3-2, to=3-3]
\end{tikzcd}\]
	Since the middle composite is null-homotopic, so is the bottom one. Hence $Y$ admits a metastable structure.
\end{proof}

\subsection{Obstructions}

In this subsection we develop obstructions to the existence of metastable structures.

We first show that if a bounded-below spectrum admits a metastable structure, then its $2$-completion must be connective. This already excludes many examples, such as negative suspensions of the sphere spectrum.

We then refine this obstruction by showing that spectra admitting a metastable structure have unstable $\mathbb{F}_2$-cohomology. This excludes further examples, including the mod $2$ Moore spectrum, Eilenberg--MacLane spectra, and several classical connective spectra arising in chromatic homotopy theory.

\begin{lem}
	\label{connectobs}
If $X$ is a bounded-below spectrum with a metastable structure, then $X^\wedge_2$
is connective.
\end{lem}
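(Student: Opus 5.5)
We will use the Segal conjecture in the form recalled in the background: for bounded-below $X$, the Tate diagonal $X \to (X^{\otimes 2})^{tC_2}$ identifies with $2$-completion $X \to X^\wedge_2$. A metastable structure therefore gives a factorization of the $2$-completion map through $(X^{\otimes 2})^{hC_2}$. Tensoring this factorization with $\mathbb{S}/2$, or directly applying $2$-completion to the diagram, shows that the identity of $X^\wedge_2$ factors, after completion, through $((X^{\otimes 2})^{hC_2})^\wedge_2 \to (X^{\otimes 2})^{tC_2} \simeq X^\wedge_2$. Here we use that the target $(X^{\otimes 2})^{tC_2}$ is already $2$-complete, so the map out of $X$ extends uniquely over $X^\wedge_2$. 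The upshot is that $X^\wedge_2$ is a retract of $((X^{\otimes 2})^{hC_2})^\wedge_2$, with the retraction given by $\mathrm{can}$.

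Now suppose $X$ is $n$-connective with $n$ minimal and $n<0$. We then compare connectivities. The spectrum $X^{\otimes 2}$ is $2n$-connective, but homotopy fixed points do not preserve connectivity, so the retract statement alone does not suffice. Instead, we use the cofiber sequence $(X^{\otimes 2})_{hC_2} \to (X^{\otimes 2})^{hC_2} \to (X^{\otimes 2})^{tC_2}$. The composite $X \to (X^{\otimes 2})^{tC_2} \to \Sigma (X^{\otimes 2})_{hC_2}$ is null by lem. \ref{metnull}. Under Segal, this is the connecting map $X^\wedge_2 \to \Sigma(X^{\otimes 2})_{hC_2}$.

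The cleanest route is the test case $X^\wedge_2$ with bottom homotopy group in degree $n<0$. Write $X^{(n)}$ for the bottom cell piece: there is a map $X \to \Sigma^n H\pi_n X$, and also a map from a Moore-type spectrum $\Sigma^n M \to X$ that is nonzero mod $2$ on $\pi_n$. By naturality of the whole cofiber sequence (everything here is functorial in $X$), we reduce to computing the boundary map in the Tate sequence on the bottom cell. There, $(\mathbb{S}^n)^{\otimes 2}$ with the swap action is $\mathbb{S}^{n+n\rho}$, so $(\mathbb{S}^{2n})^{tC_2}$ twisted becomes $\Sigma^n \mathbb{RP}^\infty_{-\infty}$-type Tate spectra. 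By Lin's theorem, the composite $\mathbb{S}^n \to \Sigma^{n+1}\mathbb{RP}^\infty_n$ is the inclusion of the bottom cell of $\Sigma^{n+1}\mathbb{RP}^\infty_{n}$ twisted appropriately. This is essentially the map $\mathbb{S}^{-1}\to \mathbb{RP}^\infty_{-\infty}\to \Sigma\mathbb{RP}^\infty_{n}$. For $n<0$ it is nonzero on $\pi_n$ mod $2$, since Lin's theorem identifies $\mathbb{S}^\wedge_2$ with $\Sigma\mathbb{RP}_{-\infty}^\infty$ and the bottom class maps nontrivially to the cell in degree $n$ when $n<0$. This is the same computation as in the example for $X = \mathbb{S}^n$, but now for negative $n$, where $\mathbb{RP}^\infty_n$ has cells in degrees below zero. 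Then, using that $\pi_n X^\wedge_2 \to \pi_n$ of the obstruction is computed on the bottom homotopy via a Hurewicz/mod-$2$ argument, we conclude that the obstruction is nonzero on $\pi_n$. This contradicts the nullity, so $n \ge 0$.

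The main obstacle is the bottom-cell reduction for a general bounded-below $X$ rather than for a sphere. To handle it, I would compute the effect of the obstruction map on $\mathbb{F}_2$-homology in the lowest degree, where everything is determined by $H_n(X;\mathbb{F}_2)$ and the Tate/orbit cofiber sequence of the $2n$-connective spectrum $X^{\otimes 2}$. In that range, $H_*((X^{\otimes 2})_{hC_2})$ in degree $n$ contains the classes $Q^{k}(x)$ with $k=n$, equivalently $x\otimes x \otimes e_{-n}$. This is a Dyer–Lashof class $Q^n x$ with $n<0$, which is exactly the classical statement that $Q^i x$ for $i<|x|$ need not vanish spectrally. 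So the map sends $x \mapsto Q^{n}x \ne 0$ for $x$ the mod-$2$ Hurewicz image of the bottom class. That the Tate diagonal realizes the "$Q^{|x|}x = x$" identity is again a form of the Segal/Lin input, and it is the step I would need to verify carefully.
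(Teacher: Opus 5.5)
There is a genuine gap. The whole content of the lemma is that, for bounded-below $X$, the obstruction $\delta_X\colon X\to\Sigma(X^{\otimes 2})_{hC_2}$ cannot vanish when $X^\wedge_2$ is nonconnective. Your proposal establishes this only for $X=\mathbb{S}^n$, where the appeal to Lin's theorem is essentially fine. For general $X$ you explicitly leave the decisive identity unverified, and the reduction around it does not work as written.

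\emph{The target is wrong.} You set up a contradiction to conclude that $X$ itself is connective ($n\ge 0$). This is false: $\Sigma^{-1}H\mathbb{Q}$ admits a metastable structure (example~\ref{2inv}). The argument must be keyed to the lowest nonvanishing degree of $X/2$, equivalently of $H_*(X;\mathbb{F}_2)$. You also cannot switch to $X^\wedge_2$, because you have not shown that $X^\wedge_2$ inherits a metastable structure. When $\pi_nX$ is $2$-divisible there is no map $\Sigma^nM\to X$ that is nonzero mod $2$ on $\pi_n$, yet $X^\wedge_2$ can still be nonconnective: $X=\Sigma^{-3}H(\mathbb{Z}/2^\infty)$ has $2$-completion $\Sigma^{-2}H\mathbb{Z}_2$.

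\emph{The naturality goes the wrong way.} Naturality along a map $f$ \emph{into} $X$ only gives $\Sigma(f\otimes f)_{hC_2}\circ\delta_{\Sigma^nM}\simeq 0$. That is no contradiction unless you show $(f\otimes f)_{hC_2}$ is injective on the relevant class. For $n<-1$ the relevant group $\pi_{n-1}(X^{\otimes 2})_{hC_2}$ is not the bottom group of the $2n$-connective spectrum $(X^{\otimes 2})_{hC_2}$, so several lines of the homotopy orbit spectral sequence contribute.

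\emph{The homology formula is off by one.} $\delta_X$ carries $H_n(X;\mathbb{F}_2)$ to $H_{n-1}((X^{\otimes 2})_{hC_2};\mathbb{F}_2)$. So the target class is $x\otimes x\otimes e_{-n-1}$ (in upper indexing $Q^{-1}x$), not $x\otimes x\otimes e_{-n}$.

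The paper's proof computes on the bottom homotopy group instead. After suspending so that $X$ is $(-1)$-connective, $\pi_{-1}\Sigma(X^{\otimes 2})_{hC_2}$ is the bottom group $(G\otimes G)_{C_2}$ of the homotopy orbit spectral sequence, with $G=\pi_{-1}X$ and the sign-twisted swap. The obstruction on $\pi_{-1}$ is $g\mapsto g\wedge g$, and an elementary algebraic lemma shows this vanishes only if $G/2=0$. Note that this computation by itself controls only the lowest homotopy group of $X$.

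The way to make your homological idea rigorous, for arbitrary bounded-below $X$ and with no bottom-cell reduction, is to dualize and use naturality along maps \emph{out of} $X$, exactly as in lemma~\ref{unstableobs}. Take $x\colon X\to\Sigma^m\mathbb{F}_2$ with $m<0$. Nullity of $\delta_X$ forces $Sq^ix=0$ for all $i>m$, so $x=Sq^0x=0$. The input you flagged as needing verification is precisely the fact that the Tate-valued Frobenius of $\mathbb{F}_2$ is $\prod_i Sq^i$ with $Sq^0=1$ \cite[thm.~IV.1.15]{NikolausScholze}. It follows that $\mathbb{F}_2^mX=0$ for all $m<0$. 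For bounded-below $X$ this makes $X/2$, and hence $X^\wedge_2$, connective, as in the remark following lemma~\ref{unstableobs}.
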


\begin{proof}
Since the class of spectra admitting a metastable structure is closed under
suspensions, it suffices to consider the case where $X$ is $(-1)$-connective.
We will show that, in this case, if the composite
\[
X \longrightarrow (X^{\otimes 2})^{tC_2}
\longrightarrow \Sigma (X^{\otimes 2})_{hC_2}
\]
is null-homotopic, then $\pi_{-1}X$ is $2$-divisible, and hence $X^\wedge_2$ is
connective.

Set $G := \pi_{-1}X$. By the homotopy orbit spectral
sequence, we obtain
\[
\pi_{-1}\Sigma (X^{\otimes 2})_{hC_2} \cong (G \otimes G)_{C_2}
\]
where $C_2=\langle\sigma\rangle$ acts on $G\otimes G$ by $\sigma\cdot(x\otimes y) = -\, y\otimes x$. The minus sign appears because $G$ is in odd degree.
It follows that
\[
\pi_{-1}\Sigma (X^{\otimes 2})_{hC_2} \cong G \wedge G
\]
where $G \wedge G$ is the alternating square of $G$. Under this identification, the induced map on $\pi_{-1}$ is given by the linear map
\[
G \longrightarrow G \wedge G \qquad g \longmapsto g \wedge g
\]
This map is null if and only if $G/2=0$, as shown by the following algebraic lemma.
\end{proof}

\begin{lem}
Let $G$ be an abelian group. Then the following are equivalent:
\begin{enumerate}
\item $G/2=0$.
\item $G \to G\wedge G$ is zero.
\item $\mathrm{Hom}(G,C_2)=0$.
\end{enumerate}
\end{lem}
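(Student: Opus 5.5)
I would prove the three equivalences directly by elementary algebra, with $G\wedge G$ understood as in the proof of \cref{connectobs}. There it arises as the coinvariants $(G\otimes G)_{C_2}$ for the action $\sigma(x\otimes y)=-\,y\otimes x$. So $G\wedge G$ is the quotient of $G\otimes G$ by the subgroup generated by the elements $x\otimes y+y\otimes x$, and $g\wedge g$ denotes the class of $g\otimes g$. Note that $g\wedge g$ need not vanish here; only $2(g\wedge g)=0$ holds. As a preliminary, I would record that $g\mapsto g\wedge g$ is a homomorphism: expanding $(g+h)\otimes(g+h)$ gives $g\otimes g+h\otimes h+(g\otimes h+h\otimes g)$, and the last term dies in the quotient. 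I would also note that every $g\wedge g$ is $2$-torsion, since $2(g\otimes g)=g\otimes g+g\otimes g$ is one of the relations.

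For $(1)\Leftrightarrow(3)$ I would use $\mathrm{Hom}(G,C_2)\cong\mathrm{Hom}_{\mathbb{F}_2}(G/2,\mathbb{F}_2)$. An $\mathbb{F}_2$-vector space is zero if and only if its dual is zero, since any nonzero vector extends to a basis and hence admits a nonzero functional.

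For $(1)\Rightarrow(2)$: if $G=2G$, write $g=2h$. Linearity gives $g\wedge g=2(h\wedge h)=0$, because $h\wedge h$ is $2$-torsion, so the map is zero.

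For $(2)\Rightarrow(3)$, equivalently $\neg(3)\Rightarrow\neg(2)$, I would take a nonzero homomorphism $f\colon G\to\mathbb{Z}/2$. The map $G\otimes G\to\mathbb{Z}/2$, $x\otimes y\mapsto f(x)f(y)$, sends each relation $x\otimes y+y\otimes x$ to $2f(x)f(y)=0$. It therefore descends to a homomorphism $\bar f\colon G\wedge G\to\mathbb{Z}/2$. Choosing $g$ with $f(g)=1$ gives $\bar f(g\wedge g)=f(g)^2=1$, so $g\wedge g\neq 0$ and the map in (2) is nonzero.

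The only point requiring care is this last construction of a detecting functional on $G\wedge G$. One must check that the relations really vanish in $\mathbb{Z}/2$; this works precisely because the sign twist becomes invisible mod $2$. Everything else is routine.
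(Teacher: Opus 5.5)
Your proof is correct and takes essentially the same route as the paper: your detecting functional $\bar f(x\wedge y)=f(x)f(y)$ is exactly the map $f\wedge f\colon G\wedge G\to C_2\wedge C_2\cong C_2$ in the paper's naturality square, and your $(1)\Leftrightarrow(3)$ is the paper's duality argument for $G/2$. Your explicit check that $g\mapsto g\wedge g$ is additive fills in a step the paper uses tacitly, as does your remark that $G\wedge G$ here means the coinvariants rather than the exterior square, where $g\wedge g=0$ would make the lemma false.
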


\begin{proof} We prove the equivalence by showing the implications
\begin{enumerate}[leftmargin=2.4cm]
\item[$(1)\Rightarrow(2)$]
Since $2(g\wedge g)=0$ for every $g\in G$, the map $G\to G\wedge G$ factors through
$G/2$, and hence is zero if $G/2=0$. 

\item[$(2)\Rightarrow(3)$]
Given a homomorphism $G\to C_2$ we obtain a commutative diagram
\[\begin{tikzcd}
	G & {G \wedge G} \\
	{C_2} & {C_2 \wedge C_2}
	\arrow[from=1-1, to=1-2]
	\arrow[from=1-1, to=2-1]
	\arrow[from=1-2, to=2-2]
	\arrow["\cong", from=2-1, to=2-2]
\end{tikzcd}\]
and the bottom horizontal map is an isomorphism. Thus any map $G\to C_2$ must be
zero.

\item[$(3)\Rightarrow(1)$]
The group $G/2$ is an $\mathbb{F}_2$-vector space with trivial linear dual, since
$$\mathrm{Hom}(G/2,\mathbb{F}_2)=\mathrm{Hom}(G,\mathbb{F}_2)=0$$
This implies $G/2=0$ (using the axiom of choice). \qedhere
\end{enumerate}
\end{proof}

Among the spectra excluded by this lemma, the following class deserves special
attention.

\begin{ex}
The desuspensions of the sphere spectrum $\mathbb{S}^n$ do not admit a metastable structure for any $n<0$.
\end{ex}

\begin{rem}
	This shows that the class of spectra admitting a metastable structure is not closed under desuspensions and, in particular, is not stable.
\end{rem}

We can use this observation to construct an infinite-dimensional spectrum which does not admit a metastable structure.

\begin{ex}
	The spectrum $\bigvee_{n \in \mathbb{Z}} \mathbb{S}^n$ does not admit a metastable structure. Indeed, it contains negative dimensional spheres as retracts, and the class of spectra admitting a metastable structure is closed under retracts.
\end{ex}

We can refine this criterion further. A metastable structure on a spectrum imposes strong restrictions on its $\mathbb{F}_2$-cohomology. To formulate these restrictions, recall that an $\mathcal{A}$-module is said to be \emph{unstable} if $Sq^i x = 0$ whenever $i > |x|$.

We briefly recall how Steenrod squares arise from the Tate diagonal.
Let $E$ be an $\mathbb{E}_\infty$-ring spectrum. The Tate diagonal can be used to construct the Tate valued Frobenius \cite[def. IV.1.1]{NikolausScholze}, defined as the composite
$$
E \xlongrightarrow{\Delta} (E^{\otimes 2})^{tC_2} \xlongrightarrow{\mu} E^{tC_2}
$$
where $E \otimes E \xrightarrow{\mu} E$ denotes the multiplication map. In the special case $E = \mathbb{F}_2$, there is an equivalence
$$
\mathbb{F}_2^{tC_2} \simeq \prod_{i \in \mathbb{Z}} \Sigma^i \mathbb{F}_2
$$
Under this identification, the Tate valued Frobenius is given by the product of all the maps $Sq^i$ for $i \geq 0$, and is zero on the remaining factors \cite[thm. IV.1.15]{NikolausScholze}.

\begin{lem}
	\label{unstableobs}
	If $X$ is a spectrum with a metastable structure, then $\mathbb{F}_2^*X$ is an unstable $\mathcal{A}$-module. 
\end{lem}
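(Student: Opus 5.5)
Given a class $x \in \mathbb{F}_2^{n}X$, represented by a map $x\colon X \to \Sigma^n\mathbb{F}_2$, I will express $\sum_i Sq^i x$ as a composite that factors through the metastable structure, and then read off degrees. By naturality of the Tate diagonal, the square formed by $\Delta_X$, $\Delta_{\Sigma^n\mathbb{F}_2}$, $x$ and $((x^{\otimes 2})^{tC_2})$ commutes. Composing with multiplication on $\mathbb{F}_2$ gives the following description: the map
\[
X \xrightarrow{\Delta} (X^{\otimes 2})^{tC_2} \xrightarrow{(x^{\otimes 2})^{tC_2}} ((\Sigma^n\mathbb{F}_2)^{\otimes 2})^{tC_2} \xrightarrow{\mu} (\Sigma^{2n}\mathbb{F}_2)^{tC_2}
\]
equals the Tate-valued Frobenius of $\mathbb{F}_2$ (suitably shifted) applied to $x$. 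Here $C_2$ acts on $\Sigma^{2n}\mathbb{F}_2 = (\Sigma^n\mathbb{F}_2)^{\otimes 2}$ through the swap of the suspension coordinates, that is, on $\mathbb{F}_2\otimes S^{n+n\rho}$. Since the action on $\mathbb{F}_2$-homology is trivial, the Tate construction is $\prod_i \Sigma^i\mathbb{F}_2$. Using \cite[thm.~IV.1.15]{NikolausScholze}, which I extend to the shifted case by exactness and the spectral Yoneda lemma (or by invoking the classical Steenrod construction), the component of this composite in degree $n+i$ is $Sq^{i}x$. It is nonzero only for $0\le i\le$ whatever, and the indexing is arranged so that the $i$-th component lands in $\Sigma^{n+i}\mathbb{F}_2$.

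Now use the metastable structure. Since $\Delta_X = \mathrm{can}\circ s$ for a lift $s\colon X \to (X^{\otimes 2})^{hC_2}$, and $\mathrm{can}$ is natural, the whole composite factors as
\[
X \xrightarrow{s} (X^{\otimes 2})^{hC_2} \to (\Sigma^{2n}\mathbb{F}_2)^{hC_2} \xrightarrow{\mathrm{can}} (\Sigma^{2n}\mathbb{F}_2)^{tC_2}.
\]
It therefore suffices to understand $\mathrm{can}\colon (\mathbb{F}_2\otimes S^{n+n\rho})^{hC_2} \to (\mathbb{F}_2\otimes S^{n+n\rho})^{tC_2}$ on homotopy. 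On the source, the homotopy fixed point spectral sequence is concentrated in the nonpositive-cohomological range. Concretely, $\pi_*(\Sigma^{2n}\mathbb{F}_2)^{hC_2}$ is $\mathbb{F}_2$ in each degree $\le 2n$, while $\pi_*(\Sigma^{2n}\mathbb{F}_2)^{tC_2}$ is $\mathbb{F}_2$ in every degree. The map $\mathrm{can}$ is the evident inclusion, an isomorphism in degrees $\le 2n$, and its cofiber $\Sigma(\Sigma^{2n}\mathbb{F}_2)_{hC_2}$ is $2n$-connected. Hence the projection of $(\Sigma^{2n}\mathbb{F}_2)^{tC_2}$ onto any factor $\Sigma^{m}\mathbb{F}_2$ with $m>2n$ kills the image of $\mathrm{can}$. (For the twisted action I need a little care, but the conclusion holds since the homotopy orbits are $2n$-connective whatever the action.) To justify this I will use the cofiber sequence $(-)_{hC_2}\to(-)^{hC_2}\to(-)^{tC_2}$ and the fact that a map $(\Sigma^{2n}\mathbb{F}_2)^{hC_2}\to\Sigma^m\mathbb{F}_2$ with $m>2n$ is null, because the source is a module over $\mathbb{F}_2$ with homotopy in degrees $\le 2n$, so the map factors through an $\mathbb{F}_2$-module map.

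Putting the two steps together, $Sq^i x$, which is the $\Sigma^{n+i}$-component, vanishes whenever $n+i>2n$, that is whenever $i>n=|x|$. This is exactly instability.

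The main obstacle is the first step: identifying the components of the Frobenius for the shifted $\mathbb{F}_2$-module $\Sigma^n\mathbb{F}_2$ precisely, with the right indexing and for negative $n$ as well. I would first try to deduce this from the $n=0$ case of \cite[thm.~IV.1.15]{NikolausScholze} by using $\Sigma^n\mathbb{F}_2\simeq \mathbb{F}_2\otimes\mathbb{S}^n$, together with the fact that the Tate diagonal of $\mathbb{S}^n$ is $2$-completion; the extra factor contributes a shift by $S^{n\rho}$ which reindexes the $Sq^i$. A second, more minor point is checking that the $\mathbb{F}_2$-linearity used in the degree argument is compatible with the twisted action, which I would handle via the Thom isomorphism $\mathbb{F}_2\otimes S^{n\rho}\simeq\Sigma^n\mathbb{F}_2$ as spectra with $C_2$-action, valid mod $2$.
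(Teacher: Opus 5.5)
Your proposal is correct and is essentially the paper's proof: both combine naturality of the Tate diagonal with the identification of the Frobenius of $\mathbb{F}_2$ as $(Sq^i)_i$ from \cite[thm.~IV.1.15]{NikolausScholze}. The only difference is cosmetic: the paper uses the nullhomotopy of $X \to (X^{\otimes 2})^{tC_2} \to \Sigma(X^{\otimes 2})_{hC_2}$ and reads $\Sigma(\Sigma^{2n}\mathbb{F}_2)_{hC_2}$ as the factor $\Sigma^n\prod_{i>n}\Sigma^i\mathbb{F}_2$, whereas you factor through the lift to $(\Sigma^{2n}\mathbb{F}_2)^{hC_2}$. One step needs sharper wording: the composite of $\mathrm{can}$ with the projection $\mathrm{pr}_m$ onto the factor $\Sigma^m\mathbb{F}_2$, $m>2n$, vanishes because this composite is itself $\mathbb{F}_2$-linear (both $\mathrm{can}$ and the splitting of the Tate construction are $\mathbb{F}_2$-module maps), not merely because its source is an $\mathbb{F}_2$-module with homotopy in degrees $\le 2n$, since e.g.\ $Sq^{m-2n}$ on the summand $\Sigma^{2n}\mathbb{F}_2$ is a nonzero spectrum map out of that source.
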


\begin{proof}
	Let $X \to \Sigma^n \mathbb{F}_2$ be a map. Then the following diagram commutes by naturality.
\[\begin{tikzcd}
	X & {\Sigma^n\mathbb{F}_2} \\
	{(X^{\otimes 2})^{tC_2}} & {((\Sigma^n\mathbb{F}_2)^{\otimes 2})^{tC_2}} & {(\Sigma^{2n}\mathbb{F}_2)^{tC_2}} & {\Sigma^n\prod_{i \in \mathbb{Z}} \Sigma^i\mathbb{F}_2} \\
	{\Sigma(X^{\otimes 2})_{hC_2}} & {\Sigma((\Sigma^n\mathbb{F}_2)^{\otimes 2})_{hC_2}} & {\Sigma(\Sigma^{2n}\mathbb{F}_2)_{hC_2}} & {\Sigma^n\prod_{i > n} \Sigma^i\mathbb{F}_2}
	\arrow[from=1-1, to=1-2]
	\arrow[from=1-1, to=2-1]
	\arrow[from=1-2, to=2-2]
	\arrow["{ (Sq^i)_{i \in \mathbb{N}}}", from=1-2, to=2-4]
	\arrow[from=2-1, to=2-2]
	\arrow[from=2-1, to=3-1]
	\arrow["\mu"{description}, from=2-2, to=2-3]
	\arrow[from=2-2, to=3-2]
	\arrow[equals, from=2-3, to=2-4]
	\arrow[from=2-3, to=3-3]
	\arrow[two heads, from=2-4, to=3-4]
	\arrow[from=3-1, to=3-2]
	\arrow["\mu"{description}, from=3-2, to=3-3]
	\arrow[equals, from=3-3, to=3-4]
\end{tikzcd}\]
	If $X$ has a metastable structure, the composite along the left column is null-homotopic. It follows that the composition $$X \longrightarrow \Sigma^n \mathbb{F}_2 \xlongrightarrow{Sq^i} \Sigma^{n+i} \mathbb{F}_2$$
	is null-homotopic whenever $i > n$.
\end{proof}

We are now in a position to exhibit examples of $2$-complete connective spectra that do not admit a metastable structure. Furthermore, these examples demonstrate that the class of spectra admitting a metastable structure is not closed under cofibers, as announced earlier.

\begin{ex}
	The cofibers of the Hopf maps $\mathbb{S}/2$, $\mathbb{S}/\eta$, $\mathbb{S}/\nu$, and $\mathbb{S}/\sigma$ do not admit a metastable structure. Indeed, their $\mathbb{F}_2$-cohomology is of the form
	\[
	\mathbb{F}_2\langle \iota, Sq^{2^n}\iota \rangle
	\]
	with $|\iota| = 0$ and, respectively, $n = 0, 1, 2, 3$. This is not an unstable $\mathcal{A}$-module.
\end{ex}

\begin{rem}
	The suspensions of the mod $2$ Moore spectrum $\mathbb{S}^n/2$ \textit{do} admit a metastable structure for every $n > 0$, since they are suspension spectra.
\end{rem}

Among the spectra excluded by the above lemma, the following class deserves special attention.

\begin{ex}
	Spectra whose $\mathbb{F}_2$-cohomology is of the form $\mathcal{A}/\!/B$ for some
proper augmented subalgebra $B$ do not admit a metastable
structure. In particular, this applies to the 2-local spectra $\mathbb{F}_2$, $\mathbb{Z}$, $ko$, $ku$, $tmf$, $k(n)$, $BP\langle n \rangle$, $BP$ as summarized in the following table.
\begin{center}
\begin{tabular}{|c|c|c|c|c|c|c|c|c|}
\hline
$X$
& $\mathbb{F}_2$ 
& $\mathbb{Z}$ 
& $ko$ 
& $ku$ 
& $tmf$ 
& $k(n)$ 
& $BP\langle n\rangle$  
& $BP$ \\
\hline
$\mathbb{F}_2^*X$ 
& $\mathcal{A}$ 
& $\mathcal{A}/\!/A(0)$ 
& $\mathcal{A}/\!/A(1)$ 
& $\mathcal{A}/\!/E(1)$ 
& $\mathcal{A}/\!/A(2)$ 
& $\mathcal{A}/\!/E(Q_n)$ 
& $\mathcal{A}/\!/E(n)$ 
& $\mathcal{A}/\!/E$ \\
\hline
\end{tabular}
\end{center}
	Indeed, since $B$ is a proper subalgebra, there exists a Steenrod operation
$Sq^i \notin B$ acting nontrivially on the unit class in degree zero.
This violates the instability condition, and hence $\mathcal{A}/\!/B$ is not
unstable.
\end{ex}

\begin{ex}
	The complex cobordism spectrum $MU$ does not admit a metastable structure. Indeed, after $2$-localization the spectrum $BP$ is a retract of $MU$, and metastable structures are closed under retracts.
\end{ex}

\begin{rem}
	The previous lemma recovers our first criterion. Indeed, let $X$ be a bounded below spectrum whose $\mathbb{F}_2$-cohomology is unstable. Then $\mathbb{F}_2^*X$ vanishes in every negative degree. Explicitly, if $x \in \mathbb{F}_2^* X$, instability implies $x = Sq^0 x = 0$ whenever $0 > |x|$. Since $X$ is bounded below, it follows that the $2$-completion of $X$ is connective.
\end{rem}

Not only does the previous lemma recover the first criterion, but it also strengthens it. In particular, the existence of a metastable structure not only forces the $2$-completion of $X$ to be connective, but also implies that \(\pi_0X\) does not contain \(\mathbb{Z}/2\) as a direct summand.

\begin{lem}
If \(X\) is a bounded-below spectrum with a metastable structure, then \(X^\wedge_2\)
is connective and \(\pi_0X\) does not contain \(\mathbb{Z}/2\) as a direct summand.
\end{lem}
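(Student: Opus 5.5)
The first half of the statement is exactly \cref{connectobs}, so only the assertion about $\pi_0X$ needs proof. I would argue by contradiction, using the instability criterion of \cref{unstableobs}. By \cref{connectobs}, $X^\wedge_2$ is connective. Since $\mathbb{F}_2$-cohomology only sees the $2$-completion, I may replace $X$ by $X^\wedge_2$ when computing $\mathbb{F}_2^*X$, and assume $X$ is connective.

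Now suppose $\pi_0X \cong \mathbb{Z}/2 \oplus A$. The plan is to produce a class $x \in \mathbb{F}_2^0X$ with $Sq^1x \neq 0$. This contradicts instability, because $Sq^1 x$ must vanish when $1 > |x| = 0$.

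To build $x$, I use that $X$ is connective, so the Postnikov truncation gives a map $X \to \tau_{\leq 0}X \simeq H\pi_0X$. Composing with the projection $\pi_0X \to \mathbb{Z}/2$ gives $f\colon X \to H\mathbb{Z}/2 = \mathbb{F}_2$. I set $x = f$.

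The main obstacle is showing $Sq^1x \neq 0$, i.e.\ that $X \xrightarrow{f} \mathbb{F}_2 \xrightarrow{Sq^1} \Sigma\mathbb{F}_2$ is essential. I would detect this on $\pi_0$ using the splitting. The summand inclusion $\mathbb{Z}/2 \to \pi_0X$ is represented by a map $\mathbb{S}\to X$, but what I actually need is a map $\mathbb{S}/2 \to X$. To get one:
\begin{itemize}
\item Take the element $e\in\pi_0X$ generating the $\mathbb{Z}/2$ summand. Since $2e=0$, it extends to a map $g\colon \mathbb{S}/2 \to X$.
\item The composite $f\circ g\colon \mathbb{S}/2 \to \mathbb{F}_2$ is nonzero on $\pi_0$, so it is the generator $\iota$ of $\mathbb{F}_2^0(\mathbb{S}/2)$.
\item Then $g^*(Sq^1 x) = Sq^1 \iota \neq 0$ in $\mathbb{F}_2^1(\mathbb{S}/2)$, because $Sq^1$ detects the degree-$2$ attaching map of $\mathbb{S}/2$.
\end{itemize}
Hence $Sq^1x \neq 0$. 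This contradicts \cref{unstableobs}, which finishes the proof.

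The direct-summand hypothesis is used precisely to build $f$ restricting nontrivially to $e$. If $\mathbb{Z}/2$ were only a subgroup, for example inside $\mathbb{Z}/4$, then no map $\pi_0X \to \mathbb{Z}/2$ would detect it, and indeed $Sq^1$ would vanish on the relevant class.

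One technical point needs a check: whether passing to $X^\wedge_2$ preserves the summand. It does, since $\pi_0(X^\wedge_2)$ surjects onto $\pi_0X/2$, and the $\mathbb{Z}/2$ summand survives there. Alternatively, one can avoid completion entirely: working $2$-locally, Postnikov sections of a bounded-below spectrum with $\mathbb{F}_2^{<0}X = 0$ still give the required map $X\to H\pi_0X$ after $2$-completion.
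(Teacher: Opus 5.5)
Your proof is correct and is essentially the paper's argument. Both apply lemma~\ref{unstableobs} to the degree-$0$ class $\theta\colon X\to H\pi_0X\to\mathbb{F}_2$ given by projecting onto the $\mathbb{Z}/2$ summand. The paper deduces from $Sq^1\theta=0$, via the Bockstein fiber sequence $H\mathbb{Z}/4\to\mathbb{F}_2\xrightarrow{Sq^1}\Sigma\mathbb{F}_2$, that $\pi_0X\to\mathbb{Z}/2$ factors through $\mathbb{Z}/4$ and so has no section. You instead detect $Sq^1\theta\neq 0$ by pulling back along $\mathbb{S}/2\to X$, which is the dual form of the same observation.

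You are more careful than the paper about non-connective $X$; the paper silently uses a map $X\to H\pi_0X$. However, the fact you cite there is not quite the right one, since there is no natural surjection $\pi_0(X^\wedge_2)\to\pi_0X/2$. What you actually need is that the natural map $\pi_0X/2\to\pi_0(X^\wedge_2)/2$ is injective. This holds because composing it with $\pi_0(X^\wedge_2)/2\cong\pi_0(X/2)$ (which uses $\pi_{-1}(X^\wedge_2)=0$) gives the injection $\pi_0X/2\hookrightarrow\pi_0(X/2)$. Hence the image of $e$ is not divisible by $2$, and it is detected by a class in $\mathbb{F}_2^0(X^\wedge_2)=\mathrm{Hom}(\pi_0X^\wedge_2,\mathbb{Z}/2)$.
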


\begin{proof}
	We already know from lemma \ref{connectobs} that \(X^\wedge_2\) is connective. We now use that \(\mathbb{F}_2^*X\) is an unstable \(\mathcal{A}\)-module to prove that \(\pi_0X\) does not contain \(\mathbb{Z}/2\) as a direct summand.

	Consider a map \(\pi_0X \to \mathbb{Z}/2\). We want to prove that it factors through \(\mathbb{Z}/4\), and therefore cannot admit a section. Indeed, consider the class \(\theta\) in degree \(0\) given by the composite
	$$
	X \longrightarrow \pi_0X \longrightarrow \mathbb{Z}/2
	$$
	Since \(\mathbb{F}_2^*X\) is an unstable \(\mathcal{A}\)-module, we have \(Sq^1\theta = 0\), so \(\theta\) factors as
\[\begin{tikzcd}
	&& {\mathbb{Z}/4} & \\
	X & {\pi_0X} & {\mathbb{Z}/2} & {\Sigma\mathbb{Z}/2}
	\arrow[from=1-3, to=2-3]
	\arrow[dashed, from=2-1, to=1-3]
	\arrow[from=2-1, to=2-2]
	\arrow["\theta"{description}, curve={height=12pt}, from=2-1, to=2-3]
	\arrow[from=2-2, to=2-3]
	\arrow["{Sq^1}"', from=2-3, to=2-4]
\end{tikzcd}\]
	Passing to $\pi_0$ then gives the desired factorization.
\end{proof}

\begin{rem}
	We do not know whether this can be strengthened to show that $\pi_0X$ cannot contain any $2$-torsion. In particular, it is not known whether the Moore spectra $\mathbb{S}/2^n$ admit a metastable structure for any $n > 1$. More generally, we do not know whether $\mathbb{S}/\alpha$ admits a metastable structure for any nonzero, noninvertible $2$-local map $\alpha$.
\end{rem}

\begin{ques}
	Do the Moore spectra $\mathbb{S}/2^n$ admit a metastable structure for $n > 1$?
\end{ques}

\subsection{Classification}

In this subsection we study the problem of classifying metastable structures on a
spectrum. We introduce the space of metastable structures and compute its
connected components in several examples. These computations show that, for
suspension spectra, not every metastable structure arises from the space-level
diagonal. This motivates the introduction of the notion of exotic metastable
structures, and we conclude by exhibiting exotic structures on the sphere
spectrum.

\begin{definition}
	The \emph{space of metastable structures} $\overline{\mathcal{M}}_X$ on a spectrum $X$ is defined as the pullback
\[\begin{tikzcd}
	{\overline{\mathcal{M}}_X} & {\mathrm{Map}(X, (X^{\otimes 2})^{hC_2})} \\
	{*} & {\mathrm{Map}(X, (X^{\otimes 2})^{tC_2})}
	\arrow[from=1-1, to=1-2]
	\arrow[from=1-1, to=2-1]
	\arrow["\lrcorner"{anchor=center, pos=0.125}, draw=none, from=1-1, to=2-2]
	\arrow["{\mathrm{can}_*}", from=1-2, to=2-2]
	\arrow["\Delta"', from=2-1, to=2-2]
\end{tikzcd}\]
	The \emph{set of metastable structures} $\mathcal{M}_X$ on $X$ is defined as its connected components $\mathcal{M}_X := \pi_0 \overline{\mathcal{M}}_X$.
\end{definition}

\begin{lem}
	Let $X$ be a spectrum admitting a metastable structure. Then there are equivalences $$\overline{\mathcal{M}}_X \simeq \mathrm{Map}(X, X^{\otimes 2}_{hC_2}) \qquad \mathcal{M}_X \cong [X, X^{\otimes 2}_{hC_2}]$$
\end{lem}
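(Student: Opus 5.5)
The plan is to identify $\overline{\mathcal{M}}_X$ as a torsor over the fiber of $\mathrm{can}_*$ and then trivialize it using the given metastable structure. Apply the exact functor $\mathrm{map}(X,-)$ to the cofiber sequence
\[
(X^{\otimes 2})_{hC_2} \xlongrightarrow{\mathrm{nm}} (X^{\otimes 2})^{hC_2} \xlongrightarrow{\mathrm{can}} (X^{\otimes 2})^{tC_2}.
\]
This produces a fiber sequence of mapping spectra, and applying $\Omega^\infty$ (which preserves fiber sequences) gives a fiber sequence of infinite loop spaces
\[
\mathrm{Map}(X, X^{\otimes 2}_{hC_2}) \longrightarrow \mathrm{Map}(X,(X^{\otimes 2})^{hC_2}) \xlongrightarrow{\mathrm{can}_*} \mathrm{Map}(X,(X^{\otimes 2})^{tC_2}).
\]
Here $\mathrm{can}_*$ is a map of infinite loop spaces, i.e.\ of grouplike $\mathbb{E}_\infty$-spaces, with fiber over the basepoint $0$ equal to $\mathrm{Map}(X,X^{\otimes 2}_{hC_2})$.

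By definition, $\overline{\mathcal{M}}_X$ is the fiber of $\mathrm{can}_*$ over the point $\Delta$ rather than over $0$. The key step is to show that fibers of a map of grouplike $\mathbb{E}_\infty$-spaces over any two points in the image component are equivalent.

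The hypothesis that $X$ admits a metastable structure gives a point $s$ with $\mathrm{can}_*(s)\simeq \Delta$, together with a chosen path. Translation by $s$ (addition in the infinite loop space $\mathrm{Map}(X,(X^{\otimes 2})^{hC_2})$) is an equivalence, and it covers translation by $\Delta$ downstairs, since $\mathrm{can}_*$ is additive. So addition of $s$ restricts to an equivalence from the fiber over $0$ to the fiber over $\mathrm{can}_*(s)$. Transport along the chosen path $\mathrm{can}_*(s)\simeq\Delta$ then identifies the latter with the fiber over $\Delta$.

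Concretely, the pullback square defining $\overline{\mathcal{M}}_X$ is equivalent to the one defining the fiber over $0$. This yields $\overline{\mathcal{M}}_X \simeq \mathrm{Map}(X, X^{\otimes 2}_{hC_2})$, and taking $\pi_0$ gives the bijection $\mathcal{M}_X \cong [X, X^{\otimes 2}_{hC_2}]$.

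An alternative formulation that avoids explicit translation arguments is to work in spectra. The point $\Delta$ is the image of a point of $\mathrm{map}(X,(X^{\otimes 2})^{hC_2})$, so the fiber over $\Delta$ is a torsor for the fiber over $0$. That torsor is trivialized by the chosen lift.

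The only subtle point, and the one I expect to be the main obstacle, is making the translation equivalence genuinely homotopy-coherent, i.e.\ producing an equivalence of pullbacks rather than merely a bijection on components. I would handle it by noting that for a map $f\colon A\to B$ of spectra and a point $a\in\Omega^\infty A$, the square
\[
\begin{tikzcd}
\Omega^\infty A \arrow[r,"+a"] \arrow[d,"f"'] & \Omega^\infty A \arrow[d,"f"] \\
\Omega^\infty B \arrow[r,"+f(a)"'] & \Omega^\infty B
\end{tikzcd}
\]
commutes, because $f$ is a map of $\mathbb{E}_\infty$-groups. Both horizontal maps are equivalences, so the square is a pullback, and pasting it with the defining pullback of $\overline{\mathcal{M}}_X$ completes the argument.

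The equivalence depends on the choice of the base metastable structure. This is harmless, since the statement only asserts the existence of some equivalence.
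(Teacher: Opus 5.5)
Your proposal is correct and follows the same route as the paper: you identify $\overline{\mathcal{M}}_X$ as a fiber of $\mathrm{can}_*$, then apply $\mathrm{Map}(X,-)$ to the norm cofiber sequence to obtain $\mathrm{Map}(X, X^{\otimes 2}_{hC_2})$. The paper compresses the re-basing step into the phrase ``this pullback may be taken in the category of pointed spaces''; your translation-by-$s$ argument in the infinite loop space makes explicit the identification of the fiber over $\Delta$ with the fiber over $0$ that this phrase tacitly requires.
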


\begin{proof}
	The second statement follows immediately from the first. For the first, recall
	that $\overline{\mathcal{M}}_X$ is defined by a pullback diagram. Since $X$
	admits a metastable structure, this pullback may be taken in the category of
	pointed spaces. It follows that
	\begin{align*}
		\overline{\mathcal{M}}_X &\simeq \mathrm{fib}(\mathrm{Map}(X, (X^{\otimes 2})^{hC_2}) \xlongrightarrow{\mathrm{can}_*}\mathrm{Map}(X, (X^{\otimes 2})^{tC_2})) \\
		&\simeq \mathrm{Map}(X, \mathrm{fib}((X^{\otimes 2})^{hC_2} \xlongrightarrow{\mathrm{can}} (X^{\otimes 2})^{tC_2})) \\
		&\simeq \mathrm{Map}(X, X^{\otimes 2}_{hC_2})
	\end{align*} 
	which completes the proof.
\end{proof}

\begin{rem}
	The reader may wish to compare again this discussion with Klein's work \cite[thm.~B]{Kleinmoduli}.
\end{rem}

\begin{ex}\label{metstructsn}
The $n^\mathrm{th}$ suspension of the sphere spectrum $\mathbb{S}^n$ has set of metastable structures given by
\[
\mathcal{M}_{\mathbb{S}^n}
\cong
[\mathbb{S}^n,\Sigma^{n}\mathbb{RP}^\infty_n]
\cong
\pi_0\mathbb{RP}^\infty_n
\cong
\begin{cases}
\mathbb{Z} & n=0 \\
0 & n>0
\end{cases}
\]
Indeed, this follows from the discussion above together with the identification
\[
(\mathbb{S}^n)^{\otimes 2}_{hC_2}
\simeq
\Sigma^{n}\mathbb{RP}^\infty_n
\]
In particular, $\mathbb{S}^n$ admits a unique metastable structure for $n>0$, and infinitely many for $n=0$. As we will see below, only one of the metastable structures on $\mathbb{S}$ arises from a space-level diagonal.
\end{ex}

\begin{ex}
The direct sum of $k$ copies of the sphere spectrum $\mathbb{S}^{\oplus k}$ has set of metastable structures given by
\[
\mathcal{M}_{\mathbb{S}^{\oplus k}}
\cong
\big[\mathbb{S}^{\oplus k},
(\mathbb{RP}^\infty_+)^{\oplus k}
\oplus
\mathbb{S}^{\oplus \binom{k}{2}}
\big]
\cong
\mathbb{Z}^{\frac{k^2(k+1)}{2}}
\]
Indeed, using the decomposition
\[
(X\oplus Y)^{\otimes 2}_{hC_2}
\simeq
X^{\otimes 2}_{hC_2}
\oplus
Y^{\otimes 2}_{hC_2}
\oplus
X\otimes Y
\]
one obtains
\[
(\mathbb{S}^{\oplus k})^{\otimes 2}_{hC_2}
\simeq
(\mathbb{RP}^\infty_+)^{\oplus k}
\oplus
\mathbb{S}^{\oplus \binom{k}{2}}
\]
and the claim follows by taking maps out of $\mathbb{S}^{\oplus k}$.
In particular, taking direct sums produces a rapid increase in the number of metastable structures.
\end{ex}

\begin{ex}
The suspension spectrum of the product of two $n$-spheres $\Sigma^\infty(S^n\times S^n)$ with $n>0$ has set of metastable structures given by
\begin{align*}
\mathcal{M}_{\Sigma^\infty(S^n\times S^n)}
&\cong
\mathbb{Z}
\oplus
\pi_n\mathbb{RP}^\infty_n
\oplus
\pi_n\mathbb{RP}^\infty_n \\
&\cong
\begin{cases}
\mathbb{Z}^{\oplus 3} & n \text{ even} \\
\mathbb{Z} \oplus (\mathbb{Z}/2)^{\oplus 2} & n \text{ odd}
\end{cases}
\end{align*}
Indeed, using the stable splitting
\[
\Sigma^\infty(S^n\times S^n)
\simeq
\mathbb{S}^n \oplus \mathbb{S}^n \oplus \mathbb{S}^{2n}
\]
together with the above decomposition formula for homotopy orbits, one reduces to computing
\[
[\mathbb{S}^{2n},
\Sigma^{n}\mathbb{RP}^\infty_n
\oplus
\Sigma^{n}\mathbb{RP}^\infty_n
\oplus
\mathbb{S}^{2n}]
\]
which yields the stated description.
In particular, as observed by Klein \cite[sec.~8]{Kleinmoduli}, all these metastable structures arise from suspension spectra.
\end{ex}

\begin{ex}
The suspension of the Moore spectrum $\mathbb{S}^n/2$ with $n>0$ has set of metastable structures given by
\[
\mathcal{M}_{\mathbb{S}^n/2}
\cong
\pi_{n+1}\big((\mathbb{S}^n/2)^{\otimes 2}_{hC_2}\otimes \mathbb{S}/2\big)
\cong
\begin{cases}
\mathbb{Z}/2 & n=1 \\
0 & n>1
\end{cases}
\]
Indeed, this follows from
\[
\overline{\mathcal{M}}_{\mathbb{S}^n/2}
\simeq
\mathrm{Map}(\mathbb{S}^n/2,
(\mathbb{S}^n/2)^{\otimes 2}_{hC_2})
\simeq
\Omega^n
\mathrm{Map}(\mathbb{S}/2,
(\mathbb{S}^n/2)^{\otimes 2}_{hC_2})
\]
and the general identification
\[
\mathrm{Map}(\mathbb{S}/2,X)
\simeq
\Omega^\infty \mathrm{map}(\mathbb{S}/2,X)
\simeq
\Omega^{\infty+1}X/2
\]
In particular, $\mathbb{S}^n/2$ admits a unique metastable structure for $n>1$, and exactly two for $n=1$.
\end{ex}

\begin{ex}
The suspension $\mathbb{S}^n/\alpha$ of the cofiber of a map $\alpha\in\pi_k\mathbb{S}$ with $n>1$ - whenever it admits a metastable structure - has set of metastable structures given by
\[
\mathcal{M}_{\mathbb{S}^n/\alpha}
\cong
\pi_{k+1}\mathbb{RP}^\infty_n
\]
Indeed, $(\mathbb{S}^n/\alpha)^{\otimes 2}_{hC_2}$ is $(2n-1)$-connected. Since $\mathbb{S}^n/\alpha$ has cells in degrees $n$ and $n+k+1$, and since $n+1\le 2n-1$, the bottom cell contributes no maps, so
\[
\mathcal{M}_{\mathbb{S}^n/\alpha}
\cong
[\mathbb{S}^{k+n+1},
(\mathbb{S}^n/\alpha)^{\otimes 2}_{hC_2}]
\]
Moreover, the cofiber of
\[
(\mathbb{S}^n)^{\otimes 2}_{hC_2}
\longrightarrow
(\mathbb{S}^n/\alpha)^{\otimes 2}_{hC_2}
\]
is $(2n+k)$-connected, and since $k+n+2\le 2n+k$ the induced map gives an isomorphism on maps out of $\mathbb{S}^{k+n+1}$. Thus one reduces to
\[
[\mathbb{S}^{k+n+1},\Sigma^n\mathbb{RP}^\infty_n]
\]
In particular, if $n>k+1$, the metastable structure is unique. The first nontrivial case occurs when $n=k+1$, in which case
\[
\mathcal{M}_{\mathbb{S}^{k+1}/\alpha}
\cong
\begin{cases}
\mathbb{Z} & k \text{ odd} \\
\mathbb{Z}/2 & k \text{ even}
\end{cases}
\]
As observed by Klein \cite[sec.~8]{Kleinmoduli}, these metastable structures arise from suspension spectra.
\end{ex}

The previous example illustrates that suspending a finite spectrum sufficiently often forces the metastable structure to be unique. The following lemma shows that this is a general phenomenon, and in fact strengthens lemma \ref{existencemeta}. We omit the proof, since it is entirely analogous to that of lemma \ref{existencemeta}.

\begin{lem}
	If $X$ is a spectrum and $k > \dim X - 2\conn X$, then $\Sigma^k X$ admits a unique metastable structure. In particular, if $\dim X < 2\conn X$, then $X$ admits a unique metastable structure.
\end{lem}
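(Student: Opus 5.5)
We follow the proof of lemma \ref{existencemeta}. As there, we reduce to the case $k=0$ using
$$\conn \Sigma^k X = k + \conn X \qquad \dim \Sigma^k X = k + \dim X,$$
so the hypothesis $k > \dim X - 2\conn X$ becomes $\dim \Sigma^k X < 2\conn \Sigma^k X$. It therefore suffices to prove that if $X$ is $n$-dimensional with $n < 2\conn X$, then $X$ admits a unique metastable structure.

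\textbf{Existence.} Since $n < 2\conn X$ implies $n \le 2\conn X$, lemma \ref{existencemeta} already shows that $X$ admits a metastable structure. Existence is needed here because the identification of the space of structures below only holds once a structure exists.

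\textbf{Uniqueness.} By the lemma identifying the space of metastable structures, we have
$$\mathcal{M}_X \cong [X, X^{\otimes 2}_{hC_2}].$$
We show this group vanishes by cellular approximation. The spectrum $X^{\otimes 2}$ is $2\conn X$-connective, and taking homotopy orbits preserves connectivity. Hence $\conn (X^{\otimes 2})_{hC_2} \ge 2\conn X > n$.

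Any map from the $n$-dimensional spectrum $X$ factors through the $n$-skeleton of $(X^{\otimes 2})_{hC_2}$. That skeleton is contractible because $n$ is strictly below the connectivity. So every map $X \to X^{\otimes 2}_{hC_2}$ is null, and $\mathcal{M}_X$ is a point.

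\textbf{Where the care is needed.} The only delicate step is the strict inequality. In lemma \ref{existencemeta} the target was $\Sigma(X^{\otimes 2})_{hC_2}$, and the extra suspension supplied a spare degree, so $\le$ was enough. Here the target is $(X^{\otimes 2})_{hC_2}$ itself, with no suspension, and this is exactly why the hypothesis must be strengthened to $<$.

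We also fix the convention: "$\conn X = c$" means $X$ is $c$-connective, i.e.\ $\pi_i X = 0$ for $i < c$. This is the convention under which lemma \ref{existencemeta} was proved. With it, the vanishing of $[X,Y]$ for $X$ of dimension $n$ and $Y$ $(n+1)$-connective is standard obstruction theory on cells.
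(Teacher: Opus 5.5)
Your proposal is correct and is essentially the argument the paper intends: the paper omits the proof as ``entirely analogous'' to lemma~\ref{existencemeta}, and you carry out exactly that analogy. You reduce to $k=0$, get existence from lemma~\ref{existencemeta} (which is needed for the identification $\mathcal{M}_X \cong [X, X^{\otimes 2}_{hC_2}]$), and kill that group by cellular approximation, with the strict inequality $n < 2\conn X$ making up for the suspension that is no longer present.
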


Although all the spectra appearing in the examples above are suspension spectra, they admit metastable structures that do not arise from the underlying suspension spectrum structure. This motivates the following definition.

\begin{definition}
	\label{spacelike}
	A metastable structure on a suspension spectrum $\Sigma^\infty X$ is called
	\emph{spacelike} if it is induced by the space-level diagonal
	$X \to X \wedge X$. A metastable structure on a suspension spectrum which is not spacelike is called
	\emph{exotic}.
\end{definition}

We conclude this subsection by showing that the sphere spectrum $\mathbb{S}$ admits
infinitely many exotic metastable structures. The key input is the following lemma.

\begin{lem}
	\label{spacelikes0}
	For any spacelike metastable structure on $\mathbb{S}$ the composite
	\[
	\mathbb{S} \longrightarrow (\mathbb{S}^{\otimes 2})^{hC_2}
	\longrightarrow \mathbb{S}^{\otimes 2} \simeq \mathbb{S}
	\]
	is the identity.
\end{lem}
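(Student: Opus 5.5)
Since $\mathbb{S} = \Sigma^\infty S^0$, I would make the spacelike structure explicit and then compute. By definition \ref{spacelike} and example \ref{suspensionspectra}, for a pointed space $Y$ the spacelike structure on $\Sigma^\infty Y$ is the map
\[
\Sigma^\infty Y \longrightarrow \Sigma^\infty (Y\wedge Y)^{hC_2} \longrightarrow \big((\Sigma^\infty Y)^{\otimes 2}\big)^{hC_2}.
\]
The first map is obtained by applying $\Sigma^\infty$ to the diagonal $Y \to Y\wedge Y$, which is $C_2$-equivariant for the trivial action on the source and the swap action on the target. The second map is the lax monoidal comparison $\Sigma^\infty(Z^{hC_2})\to (\Sigma^\infty Z)^{hC_2}$ combined with $\Sigma^\infty(Y\wedge Y)\simeq (\Sigma^\infty Y)^{\otimes 2}$. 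Equivalently, it is adjoint to the equivariant map $\Sigma^\infty Y \to (\Sigma^\infty Y)^{\otimes 2}$ with trivial action on the source. This is the construction of \cite[lem.~IV.1.3]{NikolausScholze}.

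The key observation is that the composite with $i\colon (X^{\otimes 2})^{hC_2}\to X^{\otimes 2}$ simply forgets the equivariance. It therefore equals the underlying nonequivariant map
\[
\Sigma^\infty(\Delta)\colon \Sigma^\infty Y \longrightarrow \Sigma^\infty(Y\wedge Y)\simeq (\Sigma^\infty Y)^{\otimes 2}.
\]
I would justify this by naturality of the unit/counit of the adjunction between trivial action and homotopy fixed points: for any equivariant map $f\colon A^{\mathrm{triv}}\to B$ with adjoint $\tilde f\colon A\to B^{hC_2}$, one has $i\circ\tilde f = f$ on underlying objects.

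Specialising to $Y=S^0$, the reduced diagonal $S^0\to S^0\wedge S^0\simeq S^0$ is a homeomorphism (the identity under the canonical identification), so $\Sigma^\infty$ of it is the identity of $\mathbb{S}$. There is exactly one spacelike structure, namely the one coming from $S^0$. Still, I would phrase the argument for every pointed space $Y$ with $\Sigma^\infty Y\simeq\mathbb{S}$ together with the induced structure, and observe that the underlying composite is always the image of the diagonal. Under any equivalence $\Sigma^\infty Y\simeq \mathbb{S}$, the diagonal's image agrees with the transported one by naturality, since the comparison maps above are natural in $Y$.

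The main obstacle is the bookkeeping of the identification $\mathbb{S}^{\otimes 2}\simeq\mathbb{S}$. The composite is only the identity once we use the unit equivalence $\mathbb{S}\otimes\mathbb{S}\simeq\mathbb{S}$, and we must check that this matches $\Sigma^\infty(S^0\wedge S^0)\simeq\Sigma^\infty S^0$ through the symmetric monoidal structure of $\Sigma^\infty$. I would handle this by noting that $\Sigma^\infty$ is symmetric monoidal, so it sends the unit isomorphism $S^0\wedge S^0\cong S^0$ to the unit equivalence of $\mathbb{S}$. Alternatively, since $[\mathbb{S},\mathbb{S}]\cong\mathbb{Z}$ is detected on $\pi_0$, it suffices to check that the composite induces the identity on $\pi_0=H_0$. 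There the diagonal acts by $1\mapsto 1\otimes 1$, which is clearly the identity.
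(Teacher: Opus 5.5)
\textbf{There is a gap, in the case the lemma is really about.} Your treatment of $\Sigma^\infty S^0$ is fine. The triangle identity for the adjunction (trivial action) $\dashv (-)^{hC_2}$ shows that $i\circ L$ is the underlying map $\Sigma^\infty(\Delta)$, and for $S^0$ this is the unit equivalence.

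The problem is the case of a general pointed space $Y$ with $\Sigma^\infty Y\simeq\mathbb{S}$. Such $Y$ need not be $S^0$; for example, $A_+$ with $A$ acyclic. Your sentence ``there is exactly one spacelike structure, namely the one coming from $S^0$'' is precisely what has to be proved. You handle it by ``naturality in $Y$'', but the diagonal is natural only along maps of spaces. An equivalence of spectra $\Sigma^\infty Y\simeq\Sigma^\infty S^0$ is in general not $\Sigma^\infty$ of a map of spaces, and diagonals do not commute with arbitrary stable maps. For $f$ of degree $n$ on $\mathbb{S}$ one has $(f\otimes f)\circ\Delta=n^2$ but $\Delta\circ f=n$. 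Your claim already fails for $Y=S^0$ and the equivalence $-1$: transporting along it gives the diagonal $(-1)^{\otimes 2}\circ 1\circ(-1)^{-1}=-1$, not the identity.

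\textbf{The fix, which is how the paper argues, is to realise the identification by a map of spaces.}
\begin{enumerate}
\item Since $\tilde H_*(Y)\cong\tilde H_*(S^0)$, the space $Y$ has exactly two path components.
\item Choose a point $y$ in the non-basepoint component. This gives a pointed map $s\colon S^0\to Y$ with $\tilde H_0(s)\colon 1\mapsto[y]-[*]$, a generator. So $s$ is an $H_*$-isomorphism, and $\Sigma^\infty s$ is an equivalence of connective spectra.
\item Naturality of the space-level diagonal along $s$ gives $\Delta_Y\circ\Sigma^\infty s=(\Sigma^\infty s)^{\otimes 2}\circ\Delta_{S^0}$. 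Hence, transported along $(\Sigma^\infty s)^{-1}$, the diagonal of $\Sigma^\infty Y$ is the identity of $\mathbb{S}$.
\end{enumerate}
As the example of $-1$ shows, the identification $\Sigma^\infty Y\simeq\mathbb{S}$ must be normalised in this way. The paper does this implicitly by asking that the map $S^0\to Y$ induce the identity on $H_0$.

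Alternatively, your own $H_0$ check closes the gap if you run it on $Y$ rather than on $S^0$. The group $\tilde H_0(Y)\cong\mathbb{Z}$ is generated by $[y]-[*]$, and the diagonal sends it to $[(y,y)]-[*]\in\tilde H_0(Y\wedge Y)$. That class is the cross product of the generator with itself. Since $[\mathbb{S},\mathbb{S}]$ is detected on $H_0$, this gives degree $1$ for the normalised identification, and degree $-1$ for the other one.
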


\begin{proof}
	Take a spacelike metastable structure on $\mathbb{S}$. By definition,
	there exists a space $X$ such that $\Sigma^\infty X \simeq \mathbb{S}$ and such
	that the composite
	\[
	\mathbb{S} \longrightarrow (\mathbb{S}^{\otimes 2})^{hC_2}
	\longrightarrow \mathbb{S}^{\otimes 2} \simeq \mathbb{S}
	\]
	coincides with the suspension of the space-level diagonal
	\[
	\Sigma^\infty X \longrightarrow \Sigma^\infty(X \wedge X)
	\simeq \Sigma^\infty X \otimes \Sigma^\infty X
	\]
	By homological considerations, $X$ has two connected components. Hence, there
	exists a map $S^0 \to X$ inducing the identity on $H_0$. In particular, after
	applying $\Sigma^\infty$, this map is the identity. By functoriality of the
	diagonal, we obtain the following commutative diagram:
\[\begin{tikzcd}
	{\Sigma^\infty S^0} & {\Sigma^\infty X} \\
	{\Sigma^\infty S^0 \otimes \Sigma^\infty S^0} & {\Sigma^\infty X\otimes \Sigma^\infty X}
	\arrow[equals, from=1-1, to=1-2]
	\arrow[equals, from=1-1, to=2-1]
	\arrow[from=1-2, to=2-2]
	\arrow[equals, from=2-1, to=2-2]
\end{tikzcd}\]
	This proves that the right vertical map has to be the identity.
\end{proof}

\begin{rem}
	It is worth recalling that there exist spaces $X \not\simeq S^0$ such that $\Sigma^\infty X \simeq \mathbb{S}$. For example, if $A$ is an acyclic space, then $X := A_+$ satisfies this property. The lemma shows that, regardless of this choice of $X$, any metastable structure arising from a space-level diagonal induces the same self-map of $\mathbb{S}$.
\end{rem}

\begin{lem}
	\label{exotics0}
	The sphere spectrum $\mathbb{S}$ admits infinitely many metastable structures. These are distinguished by the degree of the induced diagonal
	\[
	\mathbb{S} \longrightarrow (\mathbb{S}^{\otimes 2})^{hC_2} \longrightarrow \mathbb{S}^{\otimes 2} \simeq \mathbb{S}
	\]
	which can be any odd integer. In particular, the spacelike structure is the unique one with diagonal of degree $1$, and the exotic structures are exactly those with diagonal of odd degree different from $1$.
\end{lem}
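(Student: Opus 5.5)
By Example \ref{metstructsn}, the set $\mathcal{M}_{\mathbb{S}}$ is a torsor for $[\mathbb{S}, \mathbb{S}^{\otimes 2}_{hC_2}] \cong \pi_0 \mathbb{RP}^\infty_0 = \pi_0 \Sigma^\infty_+ RP^\infty \cong \mathbb{Z}$. Two metastable structures $s, s'$ differ by a class $\mathrm{nm}\circ c$, where $c \in \pi_0(\mathbb{S}^{\otimes 2})_{hC_2}$. My plan is to compute the induced diagonal $i \circ s \in \pi_0\mathbb{S} \cong \mathbb{Z}$ as a function of $c$. The statement will then follow once this function is shown to be injective with image the odd integers.

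The key identity is the defining property of the norm map: the composite $\mathbb{S}^{\otimes 2} \xrightarrow{\pi} (\mathbb{S}^{\otimes 2})_{hC_2} \xrightarrow{\mathrm{nm}} (\mathbb{S}^{\otimes 2})^{hC_2} \xrightarrow{i} \mathbb{S}^{\otimes 2}$ equals $1 + \tau$. On $\mathbb{S}^{\otimes 2} \simeq \mathbb{S}$ the swap $\tau$ is homotopic to the identity, since it is the swap on $S^0 \wedge S^0$. So this composite is multiplication by $2$.

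Next I need $\pi_0$ of $(\mathbb{S}^{\otimes 2})_{hC_2} \simeq \Sigma^\infty_+ BC_2$. It is $\mathbb{Z}$, and the map $\pi_0\mathbb{S} \to \pi_0 \Sigma^\infty_+ BC_2$ induced by $\pi$ is an isomorphism, because it is $H_0$ of $\mathrm{pt} \to BC_2$. Hence every $c$ has the form $\pi \circ (m \cdot \mathrm{id})$ with $m \in \mathbb{Z}$, and therefore
\[
i \circ (s + \mathrm{nm}\circ c) = i\circ s + 2m .
\]
The map $c \mapsto 2m$ is injective, so the structures are distinguished by their degree, and the degrees fill out one coset of $2\mathbb{Z}$.

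It remains to identify that coset. By Lemma \ref{spacelikes0}, the spacelike structure from Example \ref{suspensionspectra} has degree $1$. So the set of degrees is exactly the set of odd integers, and there are infinitely many structures. The spacelike structure is the unique one of degree $1$: any spacelike structure has degree $1$ by Lemma \ref{spacelikes0}, and degree determines the structure. The exotic structures are therefore exactly those of odd degree different from $1$.

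The main obstacle is the bookkeeping in the torsor identification. I must check that the difference of two lifts is indeed $\mathrm{nm}$ composed with a map to homotopy orbits, which is the fiber-sequence argument in the preceding lemma. I must also make sure that $\pi_0$ of the norm composite on $\mathbb{S}$ is $2$ and not $0$; this is precisely where the triviality of the swap on $S^0\wedge S^0$ enters, in contrast to the sign $-1$ that appears for odd spheres.
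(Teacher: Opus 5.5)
Your proposal is correct and follows essentially the same route as the paper: write every structure as $L + k\,\mathrm{nm}\circ\pi$ relative to the spacelike structure $L$, use $i\circ\mathrm{nm}\circ\pi = 1+\tau = 2$ on $\mathbb{S}$ to get degree $2k+1$, and pin down the coset of degrees via Lemma \ref{spacelikes0}. Your version is slightly more explicit than the paper's in two places: you justify via $H_0$ that $\pi$ generates $\pi_0\mathbb{RP}^\infty_+$, and you spell out that $k \mapsto 2k+1$ is injective, which is exactly what ``distinguished by degree'' requires.
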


\begin{proof}
	Recall from example \ref{metstructsn} that $\mathcal{M}_{\mathbb{S}} \cong \mathbb{Z}$, and hence that the sphere spectrum $\mathbb{S}$ admits infinitely many metastable structures. Let $L$ denote the metastable structure on $\mathbb{S}$ arising from the space-level diagonal on $S^0$. Then any metastable structure on $\mathbb{S}$ is of the form
	\[
	L' = L + k\,\mathrm{nm} \circ \pi
	\]
	where $k \in \mathbb{Z}$ and $\pi$ is the projection map
	\[
	\mathbb{S} \longrightarrow \mathbb{S}_{hC_2} \simeq \mathbb{RP}^\infty_+
	\]
	We claim that the composite
	\[
	\mathbb{S} \xlongrightarrow{L'}
	(\mathbb{S}^{\otimes 2})^{hC_2}
	\longrightarrow
	\mathbb{S}^{\otimes 2} \simeq \mathbb{S}
	\]
	has degree $2k + 1$. In particular, by the previous lemma, $L'$ is exotic for every $k \neq 0$.

	To prove the claim, recall that for any spectrum $X$, the composite
	\[
	X^{\otimes 2}
	\xlongrightarrow{\pi}
	(X^{\otimes 2})_{hC_2}
	\xlongrightarrow{\mathrm{nm}}
	(X^{\otimes 2})^{hC_2}
	\xlongrightarrow{i}
	X^{\otimes 2}
	\]
	coincides with the map
	\[
	X^{\otimes 2} \xlongrightarrow{1 + \tau} X^{\otimes 2}
	\]
	where $\tau$ denotes the swap map.

	In the case $X = \mathbb{S}$, the swap map is the identity, so the above composite is multiplication by $2$. This is illustrated by the diagram
\[
\begin{tikzcd}
	{\mathbb{S}} & {\mathbb{S}_{hC_2}} & {\mathbb{S}^{hC_2}} & {\mathbb{S}}
	\arrow["\pi", from=1-1, to=1-2]
	\arrow["2"', curve={height=18pt}, from=1-1, to=1-4]
	\arrow["{\mathrm{nm}}", from=1-2, to=1-3]
	\arrow["i", from=1-3, to=1-4]
\end{tikzcd}
\]
	It follows that the contribution of $k\,\mathrm{nm}\circ\pi$ to the above composite has degree $2k$, while the contribution of $L$ has degree $1$. Hence the total degree is $2k + 1$, as claimed.
\end{proof}

\begin{rem}
	Every metastable structure on $\mathbb{S}$ has induced diagonal of odd degree. In particular, this diagonal is never null. As we will see later, this implies that no metastable structure on $\mathbb{S}$ desuspends.
\end{rem}

\subsection{Equivariant reformulation} 

The goal of this subsection is to reinterpret metastable structures in equivariant
terms. We will show that the existence of a metastable structure on a spectrum
$X$ is equivalent to a splitting property of the genuine $C_2$-spectrum
$X^{\otimes 2}$ endowed with the swap action.

To make this reformulation precise, we briefly recall the relevant fixed point
constructions for genuine $C_2$-spectra.

Let $Y$ be a genuine $C_2$-spectrum. Besides the homotopy fixed points $Y^{hC_2}$, it also comes equipped with the \textit{genuine fixed points} $Y^{C_2}$ and the \textit{geometric fixed points} $Y^{\Phi C_2}$. Moreover, there is a comparison map
\[
Y^{C_2} \longrightarrow Y^{\Phi C_2}
\]
With these constructions in place, we are now ready to give the following definition

\begin{definition}
	A genuine $C_2$-spectrum $Y$ is said to admit a \textit{tom Dieck splitting} if the comparison map $Y^{C_2} \longrightarrow Y^{\Phi C_2}$ admits a section.
\end{definition}

To relate this to the classical tom Dieck splitting, recall that the above map
fits into the \textit{Tate square} \cite{Tate}

\[
\begin{tikzcd}
	{Y^{C_2}} & {Y^{hC_2}} \\
	{Y^{\Phi C_2}} & {Y^{tC_2}}
	\arrow[from=1-1, to=1-2]
	\arrow[from=1-1, to=2-1]
	\arrow["\lrcorner"{anchor=center, pos=0.125}, draw=none, from=1-1, to=2-2]
	\arrow[from=1-2, to=2-2]
	\arrow[from=2-1, to=2-2]
\end{tikzcd}
\]
In particular, taking vertical fibers yields a fiber sequence
\[
Y_{hC_2} \longrightarrow Y^{C_2} \longrightarrow Y^{\Phi C_2}
\]
From this it follows immediately that

\begin{lem}
	A $C_2$-spectrum $Y$ admits a tom Dieck splitting if and only if
	\[
	Y^{C_2} \simeq Y^{\Phi C_2} \oplus Y_{hC_2}
	\]
\end{lem}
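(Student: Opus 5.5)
The plan is to use the fiber sequence
\[
Y_{hC_2} \xlongrightarrow{\;j\;} Y^{C_2} \xlongrightarrow{\;p\;} Y^{\Phi C_2},
\]
obtained from the Tate square by taking vertical fibers, and to apply the standard splitting principle for cofiber sequences in a stable category. Since $\mathcal{Sp}$ is stable, this fiber sequence is also a cofiber sequence. Splitting then comes down to the existence of a section of $p$.

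\textbf{Forward direction.} Suppose $s \colon Y^{\Phi C_2} \to Y^{C_2}$ satisfies $p \circ s \simeq \mathrm{id}$. I will consider the map
\[
(s, j) \colon Y^{\Phi C_2} \oplus Y_{hC_2} \longrightarrow Y^{C_2}
\]
and show that it is an equivalence. Both sides sit in fiber sequences over $Y^{\Phi C_2}$ with fiber $Y_{hC_2}$. On the source this is the split sequence
\[
Y_{hC_2} \to Y^{\Phi C_2} \oplus Y_{hC_2} \to Y^{\Phi C_2};
\]
on the target it is the sequence above. The map $(s,j)$ is compatible with these sequences: composing with $p$ gives $(\mathrm{id}, p\circ j) = (\mathrm{id}, 0)$, using a chosen null-homotopy of $p \circ j$, which exists because we have a fiber sequence. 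On fibers it restricts to the identity of $Y_{hC_2}$. The five lemma on homotopy groups, or stability directly, then shows $(s,j)$ is an equivalence. The only subtle point is making the null-homotopy coherent; I will avoid this by working in the homotopy category, where the triangulated splitting lemma applies: a distinguished triangle $A \to B \to C \xrightarrow{0} \Sigma A$ splits whenever $B \to C$ admits a section.

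\textbf{Converse.} The interpretation matters here. If $Y^{C_2} \simeq Y^{\Phi C_2} \oplus Y_{hC_2}$ is read as an abstract equivalence, it is not obviously enough. So I will read the statement as saying that the equivalence is compatible with the fiber sequence, meaning $p$ corresponds to the projection onto the first summand. With that reading, the inclusion of the summand $Y^{\Phi C_2}$ is the required section of $p$. I will state this interpretation explicitly, matching the way the lemma is meant: a splitting of the fiber sequence. This is the main obstacle, and it is one of phrasing rather than mathematics.
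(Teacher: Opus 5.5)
Your proposal is correct and follows the paper's route: the paper only observes that the lemma follows immediately from the fiber sequence $Y_{hC_2} \to Y^{C_2} \to Y^{\Phi C_2}$ obtained from the Tate square, which is your splitting argument in a stable category, and your homotopy-group check already shows that $(s,j)$ is an equivalence without any coherence issue. Your caution about the converse is justified, since an abstract equivalence $Y^{C_2} \simeq Y^{\Phi C_2} \oplus Y_{hC_2}$ need not split the sequence, and the paper implicitly uses the same compatible reading that you adopt.
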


This splitting coincides with the classical tom Dieck splitting whenever $Y$ is a suspension spectrum. The main goal of this subsection is to relate metastable structures to tom Dieck splittings.

\begin{lem}
	\label{metsect}
	Let $X$ be a spectrum. Then the following are equivalent
	\begin{enumerate}
		\item $X$ admits a metastable structure.
		\item $X^{\otimes 2}$ admits a tom Dieck splitting.
	\end{enumerate}
\end{lem}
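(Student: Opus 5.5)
The plan is to use the Tate square for the genuine $C_2$-spectrum $Y := X^{\otimes 2}$. Here $Y$ carries the swap action, regarded as a genuine $C_2$-spectrum via the Hill--Hopkins--Ravenel norm $N_e^{C_2}X$. The key input is the identification of its geometric fixed points, $Y^{\Phi C_2} \simeq X$. Under this identification, the bottom horizontal map of the Tate square
\[
X \simeq Y^{\Phi C_2} \longrightarrow Y^{tC_2} = (X^{\otimes 2})^{tC_2}
\]
is the Tate diagonal $\Delta$. This is the standard description of the Tate diagonal through the norm; see \cite[sec. III.1]{NikolausScholze}, where the Tate diagonal is shown to be the unique natural exact transformation of this form by the spectral Yoneda lemma. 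I would invoke this identification rather than reprove it, checking only that the two maps agree on $\mathbb{S}$, where both become $\mathbb{S} \to \mathbb{S}^{hC_2} \to \mathbb{S}^{tC_2}$.

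With this in hand, the Tate square reads
\[
\begin{tikzcd}
	{(X^{\otimes 2})^{C_2}} & {(X^{\otimes 2})^{hC_2}} \\
	{X} & {(X^{\otimes 2})^{tC_2}}
	\arrow[from=1-1, to=1-2]
	\arrow[from=1-1, to=2-1]
	\arrow[from=1-2, to=2-2]
	\arrow["\Delta"', from=2-1, to=2-2]
\end{tikzcd}
\]
and it is a pullback. By the universal property of pullbacks, a section $s$ of the left vertical map $(X^{\otimes 2})^{C_2} \to X$ is the same as a map $X \to (X^{\otimes 2})^{C_2}$ whose composite with the projection to $X$ is the identity. Such a map amounts to a map $X \to (X^{\otimes 2})^{hC_2}$ together with a homotopy between its image under $\mathrm{can}$ and $\Delta \circ \mathrm{id}_X = \Delta$. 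This is exactly a metastable structure in the sense of definition \ref{metstructure}. The argument gives more than a bijection on existence: it yields an equivalence between the space of sections and $\overline{\mathcal{M}}_X$, since both are the fiber over $\mathrm{id}_X$ of the same pullback of mapping spaces. Concretely, $\mathrm{Map}(X,(X^{\otimes 2})^{C_2})$ is the pullback of $\mathrm{Map}(X,(X^{\otimes 2})^{hC_2})$ and $\mathrm{Map}(X,X)$ over $\mathrm{Map}(X,(X^{\otimes 2})^{tC_2})$, and fibering over $\mathrm{id}_X$ recovers the defining pullback of $\overline{\mathcal{M}}_X$. Taking $\pi_0$ gives (1)$\Leftrightarrow$(2).

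The main obstacle is the identification of the bottom map of the Tate square with $\Delta$, together with the equivalence $(X^{\otimes 2})^{\Phi C_2} \simeq X$. Everything else is formal. I would handle it as follows. Both $X \mapsto (N^{C_2}X)^{\Phi C_2}$ and the composite $X \mapsto (N^{C_2}X)^{tC_2}$ are exact functors of $X$. The first is exact because geometric fixed points of the norm are symmetric monoidal and preserve colimits, giving the identity functor. The Tate square map is therefore a natural transformation $\mathrm{id} \to ((-)^{\otimes 2})^{tC_2}$. By \cite[prop. III.1.2]{NikolausScholze}, such a transformation is determined by its value on $\mathbb{S}$. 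There the genuine fixed points are the Burnside-ring spectrum $\mathbb{S}^{C_2} \simeq \mathbb{S} \oplus \mathbb{S}_{hC_2}$, and the map to $\mathbb{S}^{tC_2}$ restricted to the $\Phi$-summand is the unit composite $\mathbb{S} \to \mathbb{S}^{hC_2} \to \mathbb{S}^{tC_2}$. This is by definition the Tate diagonal on $\mathbb{S}$, so the two transformations agree.
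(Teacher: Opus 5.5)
Your proposal is correct and follows the paper's proof: you identify the Tate square of $X^{\otimes 2}$, with $(X^{\otimes 2})^{\Phi C_2}\simeq X$ and bottom map $\Delta$, and then use the universal property of the pullback to match sections of $(X^{\otimes 2})^{C_2}\to X$ with lifts of $\Delta$ along $\mathrm{can}$. Where the paper simply cites the Singer construction for the identification of the bottom map, you verify it via the spectral Yoneda lemma on $\mathbb{S}$, and you also upgrade the bijection on existence to an equivalence between the space of sections and $\overline{\mathcal{M}}_X$; both are harmless refinements rather than a different route.
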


To prove this, recall that for $Y = X^{\otimes 2}$ with the swap action, the
geometric fixed points satisfy
\[
(X^{\otimes 2})^{\Phi C_2} \simeq X
\]
and under this identification the canonical map
\[
(X^{\otimes 2})^{\Phi C_2} \longrightarrow (X^{\otimes 2})^{tC_2}
\]
coincides with the Tate diagonal
\[
 X \xlongrightarrow{\Delta} (X^{\otimes 2})^{tC_2}
\]
This description agrees with the original construction of the Tate diagonal as
the topological Singer $\epsilon$-map \cite[def.~5.10]{Singercons}. Hence the
Tate square specializes to

\[
\begin{tikzcd}
	{(X^{\otimes 2})^{C_2}} & {(X^{\otimes 2})^{hC_2}} \\
	X & {(X^{\otimes 2})^{tC_2}}
	\arrow[from=1-1, to=1-2]
	\arrow[from=1-1, to=2-1]
	\arrow["\lrcorner"{anchor=center, pos=0.125}, draw=none, from=1-1, to=2-2]
	\arrow["{\mathrm{can}}", from=1-2, to=2-2]
	\arrow["\Delta"', from=2-1, to=2-2]
\end{tikzcd}
\]
\begin{proof}
	The equivalence of the two statements is illustrated by the following diagram

	\[
	\begin{tikzcd}
		X \\
		& {(X^{\otimes 2})^{C_2}} & {(X^{\otimes 2})^{hC_2}} \\
		& X & {(X^{\otimes 2})^{tC_2}}
		\arrow[dashed, from=1-1, to=2-2]
		\arrow[dashed, from=1-1, to=2-3]
		\arrow[equals, from=1-1, to=3-2]
		\arrow[from=2-2, to=2-3]
		\arrow[from=2-2, to=3-2]
		\arrow["\lrcorner"{anchor=center, pos=0.125}, draw=none, from=2-2, to=3-3]
		\arrow["{\mathrm{can}}", from=2-3, to=3-3]
		\arrow["\Delta"', from=3-2, to=3-3]
	\end{tikzcd}
	\]
	By the universal property of the pullback, the choice of one of the dotted arrows determines the other.
\end{proof}

As we have seen throughout this section, spectra with metastable structure already behave in many ways like suspension spectra. We conclude with a theorem of Klein, rephrased in the language used here, which shows that in a relatively low-dimensional range this is not just an analogy.

\begin{thm}[Klein, see {\cite[thm.~A]{Kleinmoduli}}]
If $X$ is a finite spectrum with a metastable structure and satisfies $\dim X \leq 3\conn X - 1$, then $X$ is a suspension spectrum.
\end{thm}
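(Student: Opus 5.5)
I would prove this by building a space $Y$ with $\Sigma^\infty Y \simeq X$ one step at a time. The natural tool is the Goodwillie/Freudenthal picture of $\Omega^\infty$ in the metastable range. Set $c = \conn X$ and $d = \dim X$. Replacing $X$ by a suspension only helps, since $3\conn - \dim$ grows under suspension, so I may assume $c \geq 1$. The inequality $d \leq 3c-1$ then still holds.

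\textbf{Step 1: the space $\Omega^\infty X$ through dimension $3c$.} For a $c$-connected spectrum $X$, the first two Goodwillie layers of $\Sigma^\infty\Omega^\infty$ give a fiber sequence, valid through dimension roughly $3c$ (the cube term is $3c$-connected):
\[
\Sigma^\infty\Omega^\infty X \longrightarrow X \longrightarrow \Sigma (X^{\otimes 2})_{hC_2}.
\]
This is the Kuhn/Klein picture. The connecting map $X \to \Sigma(X^{\otimes 2})_{hC_2}$ is, up to sign, the composite $X \to (X^{\otimes 2})^{tC_2} \to \Sigma(X^{\otimes 2})_{hC_2}$ of lemma \ref{metnull}. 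I would identify the two by comparing with the Tate square of lemma \ref{metsect}, using that the Tate diagonal is the canonical map out of geometric fixed points.

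\textbf{Step 2: use the metastable structure.} By lemma \ref{metnull} the connecting map is null. So the counit $\epsilon\colon \Sigma^\infty\Omega^\infty X \to X$ admits a section $s$ through dimension $3c$, meaning on $(3c-1)$-skeleta. Let $Y$ be the $(d)$-skeleton of the $c$-connected space $\Omega^\infty X$. Because $d \leq 3c-1$, cellular approximation lets the finite $d$-dimensional spectrum $X$ map into $\Sigma^\infty Y$ via $s$. Here $Y$ is a CW complex with cells in degrees $c+1, \dots, d+1$; one extra cell above $d$ is needed to kill homotopy. The composite $X \to \Sigma^\infty Y \to X$ is the identity. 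This exhibits $X$ as a stable retract of a suspension spectrum.

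\textbf{Step 3: realize the retract unstably.} The idempotent $e = s\circ\epsilon$ on $\Sigma^\infty Y$ must be desuspended to a self-map of $Y$, or of a suspension of $Y$. I would then take a mapping telescope; since we may use $\Sigma Y$, this stays within the class of suspension spectra. Freudenthal applies because $\dim Y \le 3c$ is less than twice the connectivity plus one of the targets after one suspension. The difficulty is the range: $[\Sigma Y,\Sigma Y] \to [\Sigma^\infty Y,\Sigma^\infty Y]$ is surjective only when $\dim \le 2\conn$, and $3c$ versus $2c$ fails.

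I expect Step 3 to be the main obstacle. A cleaner route may be obstruction-theoretic. Build $Y$ cell by cell, following a cell structure of $X$. In degrees $\leq 2c$, Freudenthal gives desuspensions of the attaching maps for free. Above $2c$, the obstruction to desuspending an attaching map lies in the EHP/James–Hopf invariant group $\pi_*\Sigma(X_{\le k}^{\otimes 2})_{hC_2}$, and in the range $<3c$ that group is exactly what Step 1 controls. The null-homotopy supplied by the metastable structure kills all these obstructions simultaneously, compatibly with restriction to skeleta. So I would run the induction on skeleta $X_{\le k}$ with the inductive hypothesis that the metastable structure restricts. At each stage I would compare the Hopf invariant of the new attaching map with the restriction of $\delta_X$, via naturality of the sequence in Step 1. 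This is essentially Klein's argument, and I would follow it there.
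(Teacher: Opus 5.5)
The paper gives no argument for this statement (it only quotes Klein's Theorem~A), so your proposal has to stand on its own. As written it has a genuine gap. Steps~1--2 are fine: in the range, the metastable structure exhibits $X$ as a stable retract of $\Sigma^\infty Y$ for a finite skeleton $Y$ of $\Omega^\infty X$. Step~3, however, aims at the wrong target and, for your idempotent, cannot succeed.

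Desuspending $e=s\circ\epsilon$ only to $\Sigma Y$ and taking a telescope would show that $\Sigma X$ is a suspension spectrum, not $X$. To get $X$ you need a self-map of $Y$ itself, which forces $H^\infty(e)=0$. But $s$ and $\epsilon$ are only maps of spectra, and the counit is not a morphism of $P_2\mathcal{S}_*$:
\begin{itemize}
\item Under the splitting $(X^{\otimes 2})^{C_2}\simeq X\oplus X^{\otimes 2}_{hC_2}$ given by the metastable structure, the highly connected map $\Sigma^\infty\Omega^\infty X\to(X^{\otimes 2})^{C_2}$ from your Step~1 has components $\epsilon$ and $H^\infty(\epsilon)$.
\item Your $s$ was built precisely so that $H^\infty(\epsilon)\circ s=0$.
\item Feeding $\epsilon\circ s=1$ and $e=s\circ\epsilon$ into the composition formula for $H^\infty$ gives $(\epsilon\otimes\epsilon)_{hC_2}\circ H^\infty(e)=H^\infty(\epsilon|_{\Sigma^\infty Y})$.
\end{itemize}
The right-hand side is nonzero whenever $X^{\otimes 2}_{hC_2}$ has homotopy in the degrees covered by $Y$. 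This already happens for $X=\Sigma^\infty(S^n\vee S^{2n})$, where the conclusion is trivially true. So the retract picture discards exactly the compatibility with metastable structures that a desuspension requires.

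A smaller point: ``replacing $X$ by a suspension only helps'' runs the wrong way, since the conclusion for $\Sigma X$ says nothing about $X$. It is harmless only because $\dim X\le 3\conn X-1$ already forces $\conn X\ge 1$ for $X\neq 0$.

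The cell-by-cell route is the right idea, but there the decisive step is asserted rather than proved, and Step~1 does not control it. The obstruction to desuspending an attaching map $\phi\colon\bigvee\mathbb{S}^k\to\Sigma^\infty Y_k$ to $Y_k$ is its James--Hopf invariant computed with the \emph{spacelike} structure on $\Sigma^\infty Y_k$. By contrast, $\delta_X$ only sees the structure restricted from $X$, and above degree $2\conn X$ a skeleton can carry several metastable structures. So the inductive hypothesis must be an equivalence $\Sigma^\infty_2Y_k\simeq X_{\le k}$ in $P_2\mathcal{S}_*$, not merely of spectra. One then has to show four things:
\begin{enumerate}
\item[(i)] The structure on $X$ restricts to each $X_{\le k}$. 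The obstruction is a map from the $k$-dimensional $X_{\le k}$ into the homotopy orbits of $\mathrm{cof}(X_{\le k}^{\otimes 2}\to X^{\otimes 2})$, which has cells only above degree $k$.
\item[(ii)] For the $P_2$-map $j\colon X_{\le k}\to X_{\le k+1}$, the composition formula gives $(j\otimes j)_{hC_2}\circ H^\infty(\phi)=H^\infty(j\circ\phi)=0$, as $H^\infty(j)=0$ and $j\circ\phi\simeq 0$. The map $(j\otimes j)_{hC_2}$ is injective on $\pi_k$ because its cofiber is $(k+1)$-connected, so $\phi$ lifts to $P_2\mathcal{S}_*$.
\item[(iii)] The lift is ambiguous up to $\pi_{k+1}(X_{\le k}^{\otimes 2})_{hC_2}$, which surjects onto $\pi_{k+1}(X_{\le k+1}^{\otimes 2})_{hC_2}$ for the same reason. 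It must be chosen so that its cofiber in $P_2\mathcal{S}_*$ carries the restricted structure on $X_{\le k+1}$; otherwise (ii) is unavailable at the next stage.
\item[(iv)] The lift lies in $\pi_kP_2I(Y_k)$, and it comes from $\pi_kY_k$ because $Y_k\to P_2I(Y_k)$ is roughly $3\conn X$-connected. This is where $\dim X\le 3\conn X-1$ is used, since attaching maps occur in degrees $\le\dim X-1$.
\end{enumerate}
As $\Sigma^\infty_2$ preserves cofibers, this closes the induction. None of (i)--(iv) appears in your sketch, and deferring them to Klein defers the proof.
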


\section{The metastable category}

In the previous section we discussed metastable structures on spectra. As good homotopy theorists, we now want to organize this structure into a category.

The goal of this section is to construct the \emph{metastable category} $P_2\mathcal{S}_*$, equip it with a monoidal structure, and construct symmetric monoidal colimit-preserving functors fitting into the diagram

\[\begin{tikzcd}
	{(\mathcal{S}_*, \wedge)} & {(P_2\mathcal{S}_*, \otimes)} & {(\mathcal{Sp}, \otimes)}
	\arrow["{\Sigma^\infty_2}", from=1-1, to=1-2]
	\arrow["{\Sigma^\infty}"', curve={height=18pt}, from=1-1, to=1-3]
	\arrow["{\Sigma^\infty_{}}", from=1-2, to=1-3]
\end{tikzcd}\]
The metastable category should be viewed as the next approximation to the category of pointed spaces after spectra.

In the first subsection, we construct the metastable category together with the two colimit-preserving functors from spaces and to spectra. We show that there are infinitely many such functors from spaces, but only one that sends a space to its suspension spectrum equipped with the spacelike metastable structure. At the end of the subsection, we also give alternative descriptions of the category in order to match the discussion of the first section.

In the second subsection, we endow the metastable category with a symmetric monoidal structure and upgrade the two functors to symmetric monoidal functors. Monoidality forces uniqueness of the functor from spaces, without requiring the spacelike condition. We conclude the subsection by defining the spheres in the metastable category, including the exotic $0$-spheres, computing their tensor products, and proving that the Picard group of $P_2\mathcal{S}_*$ is trivial.

In the third subsection, we recall Heuts' theory of $n$-excisive categories and prove that the metastable category is $2$-excisive. We show that the forgetful functor to spectra exhibits spectra as its stabilization. We also describe its costabilization, which coincides with the category of spectra equipped with a trivialization of the Tate diagonal.

In the fourth subsection, we use the category we have constructed to study some of its internal colimits. In particular, we give criteria for an object to split as a coproduct and to desuspend. We conclude by showing that every object with a counital comultiplication is already a suspension in the metastable category, thereby recovering a classical result of Berstein--Hilton.

\subsection{Construction}

In this subsection we construct the metastable category. The strategy is purely formal: we first review the notion of coalgebras for an endofunctor and establish their basic properties. We then use this formalism to define categories of lifts associated to a span of endofunctors. Finally, we specialize this construction to the span
\[
1 \xlongrightarrow{\Delta} (-^{\otimes 2})^{tC_2}
\xlongleftarrow{\mathrm{can}} (-^{\otimes 2})^{hC_2}
\]
and thereby obtain the metastable category $P_2\mathcal{S}_*$ together with colimit-preserving functors $$\mathcal{S}_* \longrightarrow P_2\mathcal{S}_* \longrightarrow \mathcal{Sp}$$
In order to define the metastable category, we begin by recalling the notion of a \emph{coalgebra for an endofunctor} \cite[sec. II.5]{NikolausScholze}. In general, this construction is formulated using \emph{lax equalizers} \cite[def. II.1.4]{NikolausScholze}. However, since we will not require the full formalism of lax equalizers, we give a direct definition of coalgebras. For simplicity, we work throughout in the category of spectra $\mathcal{Sp}$.

\begin{definition}
	Let $F$ be an endofunctor of $\mathcal{Sp}$. The category of \emph{spectral $F$-coalgebras} is defined as the following pullback
\[\begin{tikzcd}
	{\mathrm{coAlg}\, F} & {\mathrm{Fun}(\Delta^1, \mathcal{Sp})} \\
	{\mathcal{Sp}} & {\mathcal{Sp} \times \mathcal{Sp}}
	\arrow[from=1-1, to=1-2]
	\arrow[from=1-1, to=2-1]
	\arrow["\lrcorner"{anchor=center, pos=0.125}, draw=none, from=1-1, to=2-2]
	\arrow["{(ev_0, ev_1)}", from=1-2, to=2-2]
	\arrow["{(1, F)}"', from=2-1, to=2-2]
\end{tikzcd}\]
	Concretely, a spectral $F$-coalgebra consists of a spectrum $X$ together with a structure map $X \to FX$.
\end{definition}

The left vertical functor $\mathrm{coAlg}\, F \to \mathcal{Sp}$ sends a spectral $F$-coalgebra $X \to FX$ to its underlying spectrum $X$. For this reason, we refer to it as the \emph{forgetful functor}.

We next record the basic structural properties of this construction.

\begin{lem}
	\label{propcoalg}
Let $F$ be an accessible reduced endofunctor of $\mathcal{Sp}$. Then
\begin{enumerate}
\item $\mathrm{coAlg}\, F$ is presentable and pointed.
\item The forgetful functor $\mathrm{coAlg}\, F \to \mathcal{Sp}$ is conservative and creates colimits.
\item A natural transformation $F \to G$ induces a colimit-preserving functor $\mathrm{coAlg}\, F \to \mathrm{coAlg}\, G$.
\end{enumerate}
\end{lem}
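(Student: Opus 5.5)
The plan is to reduce everything to Nikolaus--Scholze's treatment of coalgebras for endofunctors \cite[sec.~II.5, prop.~II.1.5]{NikolausScholze}, after identifying our pullback definition with theirs. Concretely, $\mathrm{coAlg}\,F$ is the lax equalizer of $1, F \colon \mathcal{Sp} \to \mathcal{Sp}$, and I will argue each point directly from the pullback square defining $\mathrm{coAlg}\,F$.

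For (1), I will note that the pullback is taken in $\mathrm{Cat}$, and that all four corners are presentable: $\mathrm{Fun}(\Delta^1,\mathcal{Sp})$ and $\mathcal{Sp}\times\mathcal{Sp}$ certainly are. The functors $(ev_0,ev_1)$ and $(1,F)$ are accessible, the first because it preserves all limits and colimits, the second because $F$ is accessible by assumption. Presentable categories are closed under pullbacks along accessible functors whenever one leg also preserves limits, or more precisely by \cite[prop.~5.4.6.6]{HTT}, accessible categories are closed under pullbacks along accessible functors. Combined with the existence of colimits from (2), this gives presentability. For pointedness, the zero object is $0 \to F(0) \simeq 0$, using that $F$ is reduced. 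It is initial and terminal because mapping spaces in the pullback are computed as pullbacks of mapping spaces, namely
\[
\mathrm{Map}((X,\alpha),(Y,\beta)) \simeq \mathrm{Map}(X,Y)\times_{\mathrm{Map}(X,FY)} \mathrm{Map}(X,Y),
\]
where the two maps are $f\mapsto \beta f$ and $f \mapsto F(f)\alpha$. If either $X$ or $Y$ is $0$, this space is contractible.

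For (2), conservativity follows from the same mapping space formula, or equivalently because equivalences in $\mathrm{Fun}(\Delta^1,\mathcal{Sp})$ are detected pointwise. For colimits, given a diagram $(X_i, X_i\to FX_i)$, I set $X=\mathrm{colim}\, X_i$ and define the structure map as
\[
X=\mathrm{colim}\, X_i \to \mathrm{colim}\, FX_i \to F(\mathrm{colim}\, X_i),
\]
using the colimit comparison map, exactly as in the closure lemmas of the first section. The key point is that the forgetful functor is the pullback of $ev_0$, which is a cocartesian-type situation. The cleanest route is \cite[prop.~II.1.5]{NikolausScholze}: for a lax equalizer of $F,G\colon \mathcal{C}\to\mathcal{D}$ with $F$ colimit-preserving (here $F=1$) and $G$ arbitrary, the forgetful functor creates colimits. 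I expect this verification to be the main obstacle if done by hand. One has to check the universal property via the mapping space formula, where $\mathrm{Map}(X,Y)$ and $\mathrm{Map}(X,FY)$ turn colimits in $X$ into limits, and the pullback commutes with those limits. That is routine but requires care with coherence, so I would cite NS.

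For (3), a transformation $\eta\colon F\to G$ induces a functor $\mathrm{Fun}(\Delta^1,\mathcal{Sp})\times_{\mathcal{Sp}\times\mathcal{Sp}}\mathcal{Sp} \to$ the analogous category for $G$. It sends $X\to FX$ to $X\to FX\xrightarrow{\eta_X} GX$, and is obtained by postcomposition, i.e.\ via the map of cospans induced by $\eta$. This functor commutes with the forgetful functors. Since both forgetful functors create colimits by (2) and the induced functor is the identity on underlying spectra, it preserves colimits.
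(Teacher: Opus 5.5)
Your proposal is correct and follows the paper's proof: (1) and (2) are cited from Nikolaus--Scholze's Proposition II.1.5, and for (3) you postcompose the structure map with $\eta$ and note that the resulting functor, followed by the forgetful functor of $\mathrm{coAlg}\,G$, is the forgetful functor of $\mathrm{coAlg}\,F$, so it preserves colimits because forgetful functors create colimits. One small slip: the mapping space in $\mathrm{coAlg}\,F$ is the equalizer $\mathrm{Map}(X,Y)\times_{\mathrm{Map}(X,FY)\times\mathrm{Map}(X,FY)}\mathrm{Map}(X,FY)$ of $f\mapsto\beta f$ and $f\mapsto F(f)\alpha$, not $\mathrm{Map}(X,Y)\times_{\mathrm{Map}(X,FY)}\mathrm{Map}(X,Y)$, though your pointedness and conservativity conclusions are unaffected.
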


\begin{proof}
	The first two statements are proved in \cite[prop. II.1.5(ii)(iv)]{NikolausScholze}. For the third statement, let $\eta : F \to G$ be a natural transformation. Then we obtain the following diagram:
	
\[\begin{tikzcd}
	{\mathrm{coAlg}\,F} & {\mathrm{Fun}(\Delta^1, \mathcal{Sp}) } \\
	{\mathcal{Sp}} & {\mathcal{Sp} \times \mathcal{Sp}} \\
	{\mathrm{Fun}(\Delta^1, \mathcal{Sp}) } & {\mathcal{Sp}}
	\arrow[from=1-1, to=1-2]
	\arrow[from=1-1, to=2-1]
	\arrow[from=1-2, to=2-2]
	\arrow["{ev_1}", curve={height=-30pt}, from=1-2, to=3-2]
	\arrow["{(1, F)}"', from=2-1, to=2-2]
	\arrow["\eta"', from=2-1, to=3-1]
	\arrow["{\pi_1}", from=2-2, to=3-2]
	\arrow["{ev_0}"', from=3-1, to=3-2]
\end{tikzcd}\]
	In particular, this determines the dashed arrow and hence the following diagram.

\[\begin{tikzcd}
	{\textrm{coAlg }F} & {\mathrm{Fun}(\Delta^1, \mathcal{Sp}) \times_{\mathcal{Sp}} \mathrm{Fun}(\Delta^1, \mathcal{Sp}) } & {\mathrm{Fun}(\Delta^1, \mathcal{Sp}) } \\
	{\mathcal{Sp}} && {\mathcal{Sp} \times \mathcal{Sp}}
	\arrow[dashed, from=1-1, to=1-2]
	\arrow[from=1-1, to=2-1]
	\arrow["\circ", from=1-2, to=1-3]
	\arrow["{(ev_0, ev_1)}", from=1-3, to=2-3]
	\arrow["{(1, G)}"', from=2-1, to=2-3]
\end{tikzcd}\]
	This yields a functor $\mathrm{coAlg}\, F \to \mathrm{coAlg}\, G$. To see that it preserves colimits, note that a functor into $\mathrm{coAlg}\, G$ preserves colimits if and only if its composite with the forgetful functor preserves colimits, since the forgetful functor creates colimits. 
	From the diagram above, this composite coincides with the forgetful functor $\mathrm{coAlg}\, F \to \mathcal{Sp}$, which preserves colimits.
\end{proof}

\begin{rem}
	The lemma above is conceptually straightforward. At the level of objects, colimits in $\mathrm{coAlg}\,F$ are computed by forming colimits in spectra and using the canonical colimit-comparison map. Explicitly, given a diagram of spectral $F$-coalgebras $(X_i \to F X_i)$, their colimit is
	\[
	\mathrm{colim}_i (X_i \to F X_i)
	\;\simeq\;
	(\mathrm{colim}_i X_i
	\longrightarrow
	\mathrm{colim}_i F X_i
	\longrightarrow
	F(\mathrm{colim}_i X_i))
	\]
	where the second map is the canonical colimit-comparison map.
	
	Similarly, a natural transformation $F \to G$ induces a functor on coalgebras which, at the level of objects, sends
	\[
	(X \to F X)
	\longmapsto
	(X \to F X \to G X)
	\]
	The compatibility with colimits follows from the commutativity of the evident comparison diagram.
\[\begin{tikzcd}
	{\mathrm{colim}_iX_i} & {\mathrm{colim}_iF(X_i)} & {F(\mathrm{colim}_iX_i)} \\
	& {\mathrm{colim}_iG(X_i)} & {G(\mathrm{colim}_iX_i)}
	\arrow[from=1-1, to=1-2]
	\arrow[from=1-2, to=1-3]
	\arrow[from=1-2, to=2-2]
	\arrow[from=1-3, to=2-3]
	\arrow[from=2-2, to=2-3]
\end{tikzcd}\]
	This matches the formal argument given above.
\end{rem}

The previous lemma shows that the construction of coalgebras extends to a functor $$\mathrm{coAlg}: \mathrm{End}^{acc}_*(\mathcal{Sp}) \longrightarrow \mathrm{Pr}^L_{* / \mathcal{Sp}}$$
We can also construct a functor in the opposite direction. An object of $\mathrm{Pr}^L_{* /\mathcal{Sp}}$ consists of a presentable category $C$ together with a left adjoint $L : C \to \mathcal{Sp}$. Let $R$ denote its right adjoint. The composite $L R$ is then an accessible reduced endofunctor of $\mathcal{Sp}$. In this way we obtain a functor $$K: \mathrm{Pr}^L_{*/ \mathcal{Sp}} \longrightarrow \mathrm{End}^{acc}_*(\mathcal{Sp})$$ 
This construction provides a left adjoint to the functor $\textrm{coAlg}$ defined above.

\begin{lem}
	There is an adjunction
\[\begin{tikzcd}
	{K: \mathrm{Pr}^L_{*/ \mathcal{Sp}}} & {\mathrm{End}^{acc}_*(\mathcal{Sp}): \mathrm{coAlg}}
	\arrow[""{name=0, anchor=center, inner sep=0}, shift left, from=1-1, to=1-2]
	\arrow[""{name=1, anchor=center, inner sep=0}, shift left, from=1-2, to=1-1]
	\arrow["\dashv"{anchor=center, rotate=-90}, draw=none, from=0, to=1]
\end{tikzcd}\]
\end{lem}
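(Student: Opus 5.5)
We will establish the adjunction by exhibiting a natural equivalence of mapping spaces
\[
\mathrm{Map}_{\mathrm{End}^{acc}_*(\mathcal{Sp})}(LR, F) \simeq \mathrm{Map}_{\mathrm{Pr}^L_{*/\mathcal{Sp}}}\bigl((C,L), (\mathrm{coAlg}\,F, U)\bigr),
\]
natural in $(C,L) \in \mathrm{Pr}^L_{*/\mathcal{Sp}}$ and $F \in \mathrm{End}^{acc}_*(\mathcal{Sp})$. Here $U$ is the forgetful functor, which is a left adjoint by lemma \ref{propcoalg}. The right-hand side is the space of colimit-preserving functors $\Phi\colon C \to \mathrm{coAlg}\,F$ together with an identification $U\Phi \simeq L$.

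First we unwind the right-hand side using the pullback definition of $\mathrm{coAlg}\,F$. A functor $\Phi\colon C \to \mathrm{coAlg}\,F$ with $U\Phi \simeq L$ is the same as a functor $C \to \mathrm{Fun}(\Delta^1,\mathcal{Sp})$ whose source is $L$ and whose target is $FL$. Such a functor is exactly a natural transformation $\alpha\colon L \to FL$ in $\mathrm{Fun}(C,\mathcal{Sp})$. The colimit-preservation condition is automatic, because $U$ creates colimits and $U\Phi \simeq L$ preserves them. Hence the right-hand side is equivalent to $\mathrm{Map}_{\mathrm{Fun}(C,\mathcal{Sp})}(L, FL)$.

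Second, we identify this with transformations $LR \to F$ via the adjunction $L \dashv R$ with unit $\eta$ and counit $\epsilon$.
\begin{itemize}
\item Given $\beta\colon LR \to F$, we set $\alpha := \beta L \circ L\eta\colon L \to LRL \to FL$.
\item Given $\alpha$, we set $\beta := F\epsilon \circ \alpha R\colon LR \to FLR \to F$.
\end{itemize}
The triangle identities together with naturality show these are inverse. Formally, this is the standard fact that precomposition with $L$ and the mate construction give an equivalence $\mathrm{Nat}(LR, F) \simeq \mathrm{Nat}(L, FL)$. In $\infty$-categorical language, precomposition $L^*\colon \mathrm{Fun}(\mathcal{Sp},\mathcal{Sp}) \to \mathrm{Fun}(C,\mathcal{Sp})$ has left adjoint $R^*$ (precomposition with $R$), because $L \dashv R$ induces $R^* \dashv L^*$ on functor categories. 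Then $\mathrm{Map}(L^{\ast}\mathrm{id}\circ\ldots)$ becomes $\mathrm{Map}(R^*L, F) = \mathrm{Map}(LR, F)$ versus $\mathrm{Map}(L, L^*F) = \mathrm{Map}(L, FL)$. Only accessibility of $LR$ must be checked, which holds since both $L$ and $R$ are accessible.

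The main obstacle is naturality in both variables, so that these pointwise equivalences assemble into an adjunction of $\infty$-categories rather than a bijection on objects. Naturality in $F$ is clear, since every step is postcomposition. Naturality in $(C,L)$ requires care, since a morphism $G\colon (C,L) \to (C',L')$ with $L'G \simeq L$ induces $LR \to L'R'$ only via mates. For this we would present $K$ itself via the mate construction, or equivalently exhibit $\mathrm{coAlg}$ as representing the functor $(C,L) \mapsto \mathrm{Nat}(L, F\circ L)$. The adjoint-functor criterion \cite[HTT 5.2.2]{HTT} then lets us verify only that, for each $F$, the object $(\mathrm{coAlg}\,F, U)$ together with the counit $KU = UR_U \to F$ represents $\mathrm{Map}(K(-), F)$. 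This reduces the problem to the pointwise statement above, with the universal arrow given by $\beta_{\mathrm{univ}} = F\epsilon_U\circ(\text{structure map})R_U$.
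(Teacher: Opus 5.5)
Your proposal is correct and follows the paper's proof. The pullback definition of $\mathrm{coAlg}\,F$ reduces maps $(C,L) \to (\mathrm{coAlg}\,F, U)$ to natural transformations $L \Rightarrow FL$, and the precomposition adjunction $R^* \dashv L^*$ turns these into transformations $LR \Rightarrow F$; your explicit formulas, and your remarks that colimit preservation is automatic and that $LR$ is accessible, fill in details the paper leaves implicit. On naturality, which the paper does not address, it is easier to fix $(C,L)$ and use the corepresentability form of the adjoint criterion: naturality in $F$ is immediate, and this produces the left adjoint directly, without first building the functoriality of $K$ via mates.
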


\begin{proof}
	Let $L : C \to \mathcal{Sp}$ be an object of $\mathrm{Pr}^L_{* /\mathcal{Sp}}$. A map $C \to \mathrm{coAlg}\,F$ compatible with the forgetful functor is equivalent, by the universal property of the pullback, to specifying the dashed arrow in the following diagram
\[\begin{tikzcd}
	C & {\mathrm{Fun}(\Delta^1, \mathcal{Sp}) } \\
	{\mathcal{Sp}} & {\mathcal{Sp} \times \mathcal{Sp}}
	\arrow["\eta", dashed, from=1-1, to=1-2]
	\arrow["L"', from=1-1, to=2-1]
	\arrow["{(ev_0, ev_1)}", from=1-2, to=2-2]
	\arrow["{(1, F)}"', from=2-1, to=2-2]
\end{tikzcd}\]
	That is, this amounts to giving a natural transformation $\eta : L \Rightarrow F L$. Since $L$ admits a right adjoint $R$, precomposition with $L$ and $R$ induces an adjunction
\[\begin{tikzcd}
	{R^*: \mathrm{Fun}(C, \mathcal{Sp})} & {\mathrm{End}(\mathcal{Sp}) : L^*}
	\arrow[""{name=0, anchor=center, inner sep=0}, shift left, from=1-1, to=1-2]
	\arrow[""{name=1, anchor=center, inner sep=0}, shift left, from=1-2, to=1-1]
	\arrow["\dashv"{anchor=center, rotate=-90}, draw=none, from=0, to=1]
\end{tikzcd}\]
	In particular, under this adjunction, the natural transformation $\eta$ corresponds to a natural transformation $\eta': K(C) \simeq LR \Rightarrow F$. This identifies $K$ as a left adjoint to $\textrm{coAlg}$.
\end{proof}

In particular, the previous lemma allows us to construct a colimit-preserving functor
\[
\mathcal{Sp} \longrightarrow \mathrm{coAlg}\,1
\]
given by the unit of the adjunction $1 \Rightarrow \textrm{coAlg} \circ K$ applied to the object $\mathcal{Sp} = \mathcal{Sp}$. Together with the functoriality of the construction $F \mapsto \mathrm{coAlg}\,F$, this will be used to define the category of lifts.

\begin{definition}
	Let $1 \to G \leftarrow F$ be a span of natural transformations between endofunctors of $\mathcal{Sp}$. The \emph{category of lifts} is defined as the following pullback

\[\begin{tikzcd}
	{\mathrm{Lift}(1 \to G \leftarrow F)} && {\mathrm{coAlg}\,F} \\
	{\mathcal{Sp}} & {\mathrm{coAlg}\, 1} & {\mathrm{coAlg}\,G}
	\arrow[from=1-1, to=1-3]
	\arrow[from=1-1, to=2-1]
	\arrow["\lrcorner"{anchor=center, pos=0.125}, draw=none, from=1-1, to=2-2]
	\arrow[from=1-3, to=2-3]
	\arrow[from=2-1, to=2-2]
	\arrow[from=2-2, to=2-3]
\end{tikzcd}\]
\end{definition}

As before, the category of lifts comes equipped with a forgetful functor to spectra. We have the following.

\begin{lem}
	\label{proplift}
	Let $1 \to G \leftarrow F$ be a span of natural transformations between endofunctors of $\mathcal{Sp}$. 
	If $F$ and $G$ are accessible and reduced, then $\mathrm{Lift}(1 \to G \leftarrow F)$ is presentable, pointed, and the forgetful functor is conservative and creates colimits.
\end{lem}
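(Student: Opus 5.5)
The plan is to deduce everything from lemma \ref{propcoalg} together with standard facts about limits in $\mathrm{Pr}^L$. All three functors $\mathcal{Sp} \to \mathrm{coAlg}\,1 \to \mathrm{coAlg}\,G \leftarrow \mathrm{coAlg}\,F$ appearing in the defining pullback are colimit-preserving functors between presentable categories. The first is the unit of the adjunction $K \dashv \mathrm{coAlg}$; the others come from lemma \ref{propcoalg}(3). The identity functor is accessible and reduced, so $\mathrm{coAlg}\,1$ is presentable by lemma \ref{propcoalg}(1), and the same holds for $\mathrm{coAlg}\,F$ and $\mathrm{coAlg}\,G$. Since the forgetful functor $\mathrm{Pr}^L \to \widehat{\mathrm{Cat}}$ preserves limits \cite[prop. 5.5.3.13]{HTT}, the pullback computed in $\widehat{\mathrm{Cat}}$ is presentable. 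Moreover, both projections out of it preserve colimits, and colimits in the pullback are computed componentwise. Pointedness then follows because a pullback of pointed categories along functors preserving the zero object is pointed: the zero object is the triple of zero objects, and the functors preserve initial and terminal objects since they preserve colimits and are compatible with the forgetful functors to $\mathcal{Sp}$.

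For the forgetful functor, I will first observe that the composite $\mathrm{Lift} \to \mathcal{Sp}$ (the left projection) agrees with $\mathrm{Lift} \to \mathrm{coAlg}\,F \to \mathcal{Sp}$. This holds because every functor in the diagram commutes with the forgetful functors to $\mathcal{Sp}$: this is the content of the construction in lemma \ref{propcoalg}(3), and the unit $\mathcal{Sp} \to \mathrm{coAlg}\,1$ is a section of the forgetful functor, sending $X$ to $(X \xrightarrow{\mathrm{id}} X)$. Colimit creation then follows from the componentwise computation of colimits in the pullback, combined with the fact that the forgetful functors on each coalgebra category create colimits.

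For conservativity, I will take a map $f$ in $\mathrm{Lift}$ whose underlying map of spectra is an equivalence. Its image in $\mathrm{coAlg}\,F$ has the same underlying map, so it is an equivalence by lemma \ref{propcoalg}(2); likewise its images in $\mathcal{Sp}$ and in $\mathrm{coAlg}\,G$ are equivalences. A map in a pullback of $\infty$-categories is an equivalence if and only if its projections to the factors are, since the mapping spaces are pullbacks of mapping spaces. Hence $f$ is an equivalence.

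The main obstacle is bookkeeping rather than depth. One must verify that the functor $\mathcal{Sp} \to \mathrm{coAlg}\,1$ coming from the adjunction really is the section $X \mapsto (X \xrightarrow{\mathrm{id}} X)$ over $\mathcal{Sp}$, so that the two routes to $\mathcal{Sp}$ agree. One must also check that limits in $\mathrm{Pr}^L$ are computed in $\widehat{\mathrm{Cat}}$. To settle the first point, I would unwind the counit-free description of the unit of $K \dashv \mathrm{coAlg}$: it corresponds to the identity transformation $L \Rightarrow 1 \circ L$ with $L = \mathrm{id}$.
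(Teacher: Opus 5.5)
Your proposal is correct and follows the same route as the paper. The paper regards the defining pullback as a diagram in $\mathrm{Pr}^L_{*,/\mathcal{Sp}}$, which builds in the compatibility with the forgetful functors to $\mathcal{Sp}$ that you check by hand, and then uses that this pullback is computed in $\mathrm{Pr}^L_*$ and hence in $\mathrm{Cat}_\infty$; your explicit arguments for conservativity (mapping spaces in a pullback are pullbacks) and for colimit creation (colimits computed componentwise, together with Lemma \ref{propcoalg}(2)) spell out steps the paper leaves implicit.
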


\begin{proof}
	The diagram defining the category of lifts refines to a diagram in $\mathrm{Pr}^L_{*, /\mathcal{Sp}}$ by the above discussion. The claim then follows from the fact that this category is complete and that the forgetful functor $\mathrm{Pr}^L_{*, /\mathcal{Sp}} \to \mathrm{Pr}^L_*$ preserves contractible limits and the embedding $\mathrm{Pr}^L_* \to \mathrm{Cat}_\infty$ preserves all limits.
\end{proof}

With these preparations in place, we can now define the metastable category.

\begin{definition}
	The \emph{metastable category} is the category $P_2\mathcal{S}_*$ defined by
	\[
	P_2\mathcal{S}_* 
	:= 
	\mathrm{Lift}\bigl(1 \xlongrightarrow{\Delta} (-^{\otimes 2})^{tC_2} 
	\xlongleftarrow{\mathrm{can}} (-^{\otimes 2})^{hC_2}\bigr)
	\]
\end{definition}

From the above discussion, we immediately obtain the following.

\begin{lem}
	The category $P_2\mathcal{S}_*$ is presentable, pointed and comes equipped with a functor $P_2\mathcal{S}_* \to \mathcal{Sp}$ which is conservative and creates colimits.
\end{lem}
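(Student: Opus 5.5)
The statement is a direct specialization of lemma \ref{proplift}. That lemma applies to any span $1 \to G \leftarrow F$ with $F$ and $G$ accessible and reduced. So the plan is to check these hypotheses for
\[
F = ((-)^{\otimes 2})^{hC_2}, \qquad G = ((-)^{\otimes 2})^{tC_2},
\]
with the transformations $\mathrm{can}$ and $\Delta$. Once that is done, presentability, pointedness, conservativity and creation of colimits all transfer verbatim.

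\textbf{Reducedness.} If $X \simeq 0$ then $X^{\otimes 2} \simeq 0$ as a spectrum with $C_2$-action. Hence its homotopy fixed points and its Tate construction both vanish, so $F(0) \simeq 0 \simeq G(0)$.

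\textbf{Accessibility.} I would factor each functor as a composite of accessible functors:
\begin{itemize}
\item $X \mapsto X^{\otimes 2}$, viewed as a functor $\mathcal{Sp} \to \mathcal{Sp}^{BC_2}$, preserves filtered colimits, because the tensor product preserves colimits in each variable separately.
\item $(-)^{hC_2}$ is a limit over $BC_2$. It is a right adjoint between presentable categories, hence accessible.
\item $(-)^{tC_2}$ is the cofiber of the norm map $(-)_{hC_2} \to (-)^{hC_2}$. Both source and target are accessible, and cofibers of natural transformations between accessible functors into a stable presentable category remain accessible.
\end{itemize}
A composite of accessible functors is accessible, which gives the claim for $F$ and $G$.

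\textbf{Expected obstacle.} The only point needing care is that $(-)^{hC_2}$ does not preserve all filtered colimits, so one must not claim $\omega$-accessibility. It suffices to choose a large enough regular cardinal $\kappa$ for which every functor in the composite is $\kappa$-accessible; such a $\kappa$ exists by standard results in \cite{HTT}.

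With both hypotheses verified, lemma \ref{proplift} yields that $P_2\mathcal{S}_*$ is presentable and pointed, and that the forgetful functor $P_2\mathcal{S}_* \to \mathcal{Sp}$ is conservative and creates colimits.
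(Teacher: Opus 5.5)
Your proposal is correct and follows the paper's own proof essentially verbatim: reduce to lemma \ref{proplift}, then check that the tensor square, the homotopy fixed points (as a right adjoint between presentable categories) and the Tate construction (as a cofiber of a natural transformation between accessible functors) are accessible. You additionally spell out the reducedness check and the choice of a common regular cardinal $\kappa$ (correctly noting that $(-)^{hC_2}$ is not finitary), both of which the paper leaves implicit.
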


\begin{proof}
	It suffices to prove that the span
\[
1 \xlongrightarrow{\Delta} (-^{\otimes 2})^{tC_2}
\xlongleftarrow{\mathrm{can}} (-^{\otimes 2})^{hC_2}
\]
consists of accessible reduced functors. 

The tensor square functor is accessible because the tensor product preserves colimits separately in each variable, hence the composite with the diagonal is accessible. Homotopy orbits and homotopy fixed points are accessible because they are adjoints between presentable categories, and adjoint functors between presentable categories are accessible \cite[prop. 5.4.7.7]{HTT}. The Tate construction is accessible because it is defined as the cofiber of a natural transformation between accessible functors, and cofibers of accessible functors are accessible.
\end{proof}

We will see in the following subsections that the forgetful functor exhibits the category of spectra as the stabilization of the metastable category. We now relate the metastable category to the category of pointed spaces.

\begin{lem}
	\label{functorfromspaces}
	There is a unique colimit-preserving functor $\Sigma^\infty_2: \mathcal{S}_* \to P_2\mathcal{S}_*$ such that the diagram
\[\begin{tikzcd}
	{\mathcal{S}_*} & {P_2\mathcal{S}_*} & {\mathcal{Sp}}
	\arrow["{\Sigma^\infty_2}", from=1-1, to=1-2]
	\arrow["{\Sigma^\infty}"', curve={height=18pt}, from=1-1, to=1-3]
	\arrow[from=1-2, to=1-3]
\end{tikzcd}\]
	commutes, and whose essential image consists of suspension spectra equipped with a spacelike metastable structure.
\end{lem}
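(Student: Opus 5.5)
Since $\mathcal{S}_*$ is freely generated under colimits by $S^0$, colimit-preserving functors $\mathcal{S}_* \to P_2\mathcal{S}_*$ correspond to objects of $P_2\mathcal{S}_*$, via $F \mapsto F(S^0)$. We will use this in both directions: existence and uniqueness are checked on $S^0$, and the essential image is identified by extending along colimits.

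\textbf{Uniqueness.} Any such $F$ sends $S^0$ to a metastable structure on $\mathbb{S}$. The requirement that the essential image consist of spacelike structures forces $F(S^0)$ to be spacelike. By lemma \ref{exotics0}, the spacelike structure on $\mathbb{S}$ is the unique one with diagonal of degree $1$. Hence $F(S^0)$ is determined up to equivalence, and therefore so is $F$.

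\textbf{Existence.} We would construct the functor directly rather than only specifying it on $S^0$, so that its values on arbitrary spaces are visibly spacelike.
\begin{enumerate}
\item Pass through the adjunction $K \dashv \mathrm{coAlg}$. Giving a functor $\mathcal{S}_* \to \mathrm{coAlg}\,G$ over $\mathcal{Sp}$ amounts to a natural transformation $\Sigma^\infty \Rightarrow G\Sigma^\infty$, or equivalently $\Sigma^\infty\Omega^\infty \Rightarrow G$.
\item Take $G = ((-)^{\otimes 2})^{hC_2}$. The space-level diagonal $X \to (X\wedge X)^{hC_2}$ is a natural transformation of functors $\mathcal{S}_* \to \mathcal{S}_*^{BC_2}$. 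Applying $\Sigma^\infty$ and the lax comparison $\Sigma^\infty(Y^{hC_2}) \to (\Sigma^\infty Y)^{hC_2}$ gives $\Sigma^\infty X \to ((\Sigma^\infty X)^{\otimes 2})^{hC_2}$, natural in $X$. This produces a colimit-preserving functor $\mathcal{S}_* \to \mathrm{coAlg}\,(-^{\otimes 2})^{hC_2}$ over $\mathcal{Sp}$; it preserves colimits by lemma \ref{propcoalg}, since the forgetful functor creates colimits and $\Sigma^\infty$ preserves them.
\item Similarly, $\Sigma^\infty$ together with the identity transformation gives a functor to $\mathrm{coAlg}\,1$.
\item To land in the pullback $P_2\mathcal{S}_*$, we need a homotopy, natural in $X$, between the composite to $\mathrm{coAlg}\,(-^{\otimes 2})^{tC_2}$ and $\Delta\circ\Sigma^\infty$. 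Composing with $\mathrm{can}$, this is the statement that the Tate diagonal on suspension spectra factors naturally through the space-level diagonal. That is exactly \cite[lem.~IV.1.3]{NikolausScholze}, whose naturality follows from the spectral Yoneda argument, as recorded in example \ref{suspensionspectra}.
\item The resulting functor $\Sigma^\infty_2$ preserves colimits by lemma \ref{proplift}, and it commutes with the forgetful functor by construction.
\end{enumerate}

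\textbf{Essential image.} By construction, $\Sigma^\infty_2 X$ is $\Sigma^\infty X$ with the structure induced by the diagonal of $X$, which is precisely the definition of spacelike (def. \ref{spacelike}). Conversely, every spacelike structure is of the form $\Sigma^\infty_2 X$ by definition.

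\textbf{Main obstacle.} The one genuinely nonformal step is step 4: producing the natural, coherent identification of $\mathrm{can}\circ(\text{space-level diagonal})$ with the Tate diagonal as transformations of functors $\mathcal{S}_* \to \mathcal{Sp}$, not merely objectwise. We would cite Nikolaus--Scholze for this. A subtlety to check is that the uniqueness argument relies on the degree invariant of lemma \ref{spacelikes0} and lemma \ref{exotics0}, which only pins $F(S^0)$ down up to equivalence in $P_2\mathcal{S}_*$; this is enough, because the correspondence with objects is an equivalence of categories.
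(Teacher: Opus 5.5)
Your proposal is correct and follows essentially the same route as the paper. Existence comes from specifying, via the adjunction $K \dashv \mathrm{coAlg}$, the natural transformation induced by the space-level diagonal, with compatibility with the Tate diagonal supplied by \cite[lem.~IV.1.3]{NikolausScholze}; uniqueness comes from the fact that a colimit-preserving functor out of $\mathcal{S}_*$ is determined by the image of $S^0$, together with lemmas \ref{spacelikes0} and \ref{exotics0}, exactly as in remark \ref{remfunctorfromspaces}, and your explicit (essentially tautological) check of the essential image is a point the paper leaves implicit.
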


\begin{proof}
	We must produce the dashed arrow in the diagram
\[
\begin{tikzcd}
	{\mathcal{S}_*} & {\mathrm{coAlg}\,(-^{\otimes 2})^{hC_2}} \\
	{\mathcal{Sp}} & {\mathrm{coAlg}\,(-^{\otimes 2})^{tC_2}}
	\arrow[dashed, from=1-1, to=1-2]
	\arrow["{\Sigma^\infty}"', from=1-1, to=2-1]
	\arrow["{\mathrm{can}_*}", from=1-2, to=2-2]
	\arrow["\Delta"', from=2-1, to=2-2]
\end{tikzcd}
\]
	By the adjunction $K \dashv \mathrm{coAlg}$, this is equivalent to specifying a natural transformation
	\[
	\Sigma^\infty\Omega^\infty \Rightarrow (-^{\otimes 2})^{hC_2}
	\]
	compatible with the Tate diagonal.

	The adjunction $\Sigma^\infty \dashv \Omega^\infty$ induces an adjunction
	$(\Omega^\infty)^* \dashv (\Sigma^\infty)^*$ on functor categories by precomposition. 
	Hence the natural transformation above corresponds to a natural transformation
	\[
	\Sigma^\infty \Rightarrow (\Sigma^\infty(-)^{\otimes 2})^{hC_2}
	\]
	This, in turn, corresponds to a $C_2$-equivariant natural transformation
	\[
	\Sigma^\infty \Rightarrow \Sigma^\infty(-)^{\otimes 2}
	\]
	which we take to be the one induced by the space-level diagonal
	\[
	1 \Rightarrow (-)^{\wedge 2}
	\]
	This natural transformation is compatible with the Tate diagonal by \cite[lem.~IV.1.3]{NikolausScholze}.
\end{proof}

\begin{rem}
	\label{remfunctorfromspaces}
	Alternatively, a colimit-preserving functor $\mathcal{S}_* \to P_2\mathcal{S}_*$ is determined by the image of $S^0$. Since we require its composite with the forgetful functor to coincide with the stabilization functor, $S^0$ must be sent to a metastable structure on the sphere spectrum $\mathbb{S}$. We have seen that $\mathbb{S}$ admits infinitely many metastable structures (see example \ref{metstructsn}), and hence there are infinitely many such colimit-preserving functors. However, only one of these structures is spacelike (see lemmas \ref{spacelikes0} and \ref{exotics0}), which yields the uniqueness stated above. We return to this point in the next subsection.
\end{rem}

We conclude this subsection by recasting the earlier characterizations of metastable structures in categorical terms. More precisely, we reinterpret the descriptions in terms of nullhomotopies (see lemma~\ref{metnull}) and sections (see lemma~\ref{metsect}) as instances of the construction of lifts.

\begin{definition}
	The following constructions arise as special instances of the category of lifts. 
	\begin{enumerate}
		\item Let $P \to 1$ be a natural transformation from an endofunctor $P$ to the identity. The category of \emph{sections} of $P$ is defined by
		\[
		\mathrm{Sect}(P \to 1) := \mathrm{Lift}(1 = 1 \leftarrow P)
		\]
		\item Let $1 \to Q$ be a natural transformation from the identity to an endofunctor $Q$. The category of \emph{nullhomotopies} of $Q$ is defined by
		\[
		\mathrm{Null}(1 \to Q) := \mathrm{Lift}(1 \to Q \leftarrow 0)
		\]
	\end{enumerate}
\end{definition}

\begin{lem}
	Let $1 \to G \leftarrow F$ be a span of natural transformations between endofunctors of $\mathcal{Sp}$. Let $P$ denote the pullback $1 \times_G F$ and $Q$ the cofiber of the natural transformation $F \to G$. Then there are equivalences of categories
	\begin{enumerate}
		\item $\mathrm{Sect}(P \to 1)$
		\item $\mathrm{Lift}(1 \to G \leftarrow F)$
		\item $\mathrm{Null}(1 \to Q)$
	\end{enumerate}
\end{lem}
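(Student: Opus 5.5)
The plan is to show that each of the three categories is canonically equivalent to the pullback
$$\mathcal{Sp}\times_{\mathrm{Fun}(\Delta^1,\mathcal{Sp})}\mathrm{Fun}(\Delta^1\times\Delta^1,\mathcal{Sp})$$
or to a close variant of it. I will unwind all three definitions through the pullback defining $\mathrm{coAlg}$. The first step is to describe $\mathrm{Lift}(1\to G\leftarrow F)$ concretely. An object is a spectrum $X$, a map $s\colon X\to FX$, and a homotopy in $\mathrm{coAlg}\,G$ between $\eta_G\circ s$ and $X\xrightarrow{\alpha} GX$, where $\alpha$ is the given transformation. Equivalently, it is a commutative square
$$X\xrightarrow{s} FX\to GX,\qquad X\xrightarrow{=}X\xrightarrow{\alpha}GX,$$
natural in morphisms. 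More formally, I would show that $\mathrm{Lift}$ is the pullback of $\mathrm{Fun}(\Lambda^2_2,\mathcal{Sp})$-type data along the evaluation functors. This is a diagram chase with pullbacks of functor categories, which commute with each other.

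The second step proves (1)$\simeq$(2). Here $P=1\times_G F$ is computed pointwise, so $PX = X\times_{GX}FX$. A map $X\to PX$ together with a homotopy from the composite $X\to PX\to X$ to the identity is, by the universal property of the pullback, the same as the data $(s,\text{homotopy})$ above. Functorially, I would observe that $\mathrm{Fun}(\Delta^1,\mathcal{Sp})\times_{\mathcal{Sp}\times\mathcal{Sp}}\mathcal{Sp}$, with the target constrained to be $P(-)$, is equivalent to the iterated pullback built from $F$ and $G$. This follows because $\mathrm{Map}(X,PX)\simeq \mathrm{Map}(X,X)\times_{\mathrm{Map}(X,GX)}\mathrm{Map}(X,FX)$ naturally in $X$. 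To globalize it, I would use that postcomposition with a pointwise pullback of functors gives a pullback of arrow categories, $\mathrm{Fun}(\Delta^1,-)$ preserves limits, and limits commute with limits.

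The third step proves (2)$\simeq$(3). Since $\mathcal{Sp}$ is stable, $F\to G\to Q$ is a cofiber sequence, hence also a fiber sequence, pointwise. A lift of $\alpha\colon X\to GX$ through $FX\to GX$ is therefore the same as a nullhomotopy of $X\to GX\to QX$. At the level of categories, I would use that the square with $F\to G$, $0\to Q$ is a pullback of endofunctors. Applying $\mathrm{coAlg}$, which as a right adjoint preserves limits by the adjunction $K\dashv\mathrm{coAlg}$, then gives $\mathrm{coAlg}\,F\simeq\mathrm{coAlg}\,G\times_{\mathrm{coAlg}\,Q}\mathrm{coAlg}\,0$. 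Pasting pullbacks then yields $\mathrm{Lift}(1\to G\leftarrow F)\simeq\mathrm{Lift}(1\to Q\leftarrow 0)$. The same trick handles step two: the square $P\to F$, $1\to G$ is a pullback, so $\mathrm{coAlg}\,P\simeq \mathrm{coAlg}\,1\times_{\mathrm{coAlg}\,G}\mathrm{coAlg}\,F$. Pulling back along $\mathcal{Sp}\to\mathrm{coAlg}\,1$ then gives exactly $\mathrm{Lift}(1=1\leftarrow P)$, using that $\mathrm{coAlg}\,1\to\mathrm{coAlg}\,1$ is the identity.

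The main obstacle is justifying that $\mathrm{coAlg}$ preserves these pullbacks. The adjunction was only stated with $\mathrm{End}^{acc}_*$ and the slice $\mathrm{Pr}^L_{*/\mathcal{Sp}}$, and $1$ or $P$ may need care regarding accessibility. To be safe, I would avoid the adjunction and verify directly from the defining pullback. The functor $F\mapsto\mathrm{coAlg}\,F=\mathcal{Sp}\times_{\mathcal{Sp}\times\mathcal{Sp}}\mathrm{Fun}(\Delta^1,\mathcal{Sp})$ preserves pullbacks of endofunctors, because a pullback $F\simeq G\times_Q 0$ makes the map $\mathrm{Fun}(\Delta^1,\mathcal{Sp})$-fiberwise data a pullback. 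Concretely, the space of maps $X\to FX$ is the pullback of the spaces of maps into $GX$, $QX$, and $0$, naturally in $X$. I would then check that this pointwise pullback assembles to a pullback of categories over $\mathcal{Sp}$, using that a map of right fibrations over $\mathcal{Sp}$ which is a fiberwise equivalence is an equivalence.
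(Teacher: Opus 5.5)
Your argument is essentially the paper's: you apply $\mathrm{coAlg}$, which preserves limits as a right adjoint, to the pullback squares $P\simeq 1\times_G F$ and $F\simeq G\times_Q 0$, and then paste the resulting pullbacks along $\mathcal{Sp}\to\mathrm{coAlg}\,1$; with this, your first two steps are not needed. Your worry about the hypotheses is fair, since the paper invokes the adjunction on $\mathrm{End}^{acc}_*(\mathcal{Sp})$ while the lemma does not assume accessibility. In your fallback, however, $\mathrm{coAlg}\,F\to\mathcal{Sp}$ is not a right fibration, so a fiberwise check is not enough. Instead, compare mapping spaces, which are equalizers of $\mathrm{Map}(X,Y)\rightrightarrows\mathrm{Map}(X,FY)$ and so commute with the pullback in $F$, and then check essential surjectivity directly.
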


\begin{proof}
	Consider the diagram
\[\begin{tikzcd}
	& {\mathrm{coAlg}\,P} & {\mathrm{coAlg}\, F} & {\mathrm{coAlg}\, 0} \\
	{\mathcal{Sp}} & {\mathrm{coAlg}\, 1} & {\mathrm{coAlg}\, G} & {\mathrm{coAlg}\,Q}
	\arrow[from=1-2, to=1-3]
	\arrow[from=1-2, to=2-2]
	\arrow["\lrcorner"{anchor=center, pos=0.125}, draw=none, from=1-2, to=2-3]
	\arrow[from=1-3, to=1-4]
	\arrow[from=1-3, to=2-3]
	\arrow["\lrcorner"{anchor=center, pos=0.125}, draw=none, from=1-3, to=2-4]
	\arrow[from=1-4, to=2-4]
	\arrow[from=2-1, to=2-2]
	\arrow[from=2-2, to=2-3]
	\arrow[from=2-3, to=2-4]
\end{tikzcd}\]
	The squares are pullbacks because $\mathrm{coAlg}$ preserves limits of endofunctors, being a right adjoint. The pullback of the leftmost span is, by definition, $\mathrm{Sect}(P \to 1)$, while the pullback of the left and center spans is $\mathrm{Lift}(1 \to G \leftarrow F)$, and the pullback of the entire diagram is $\mathrm{Null}(1 \to Q)$. Since these pullbacks agree, the three categories are equivalent. 
\end{proof}

Specializing the previous lemma to the span
\[
1 \xlongrightarrow{\Delta} (-^{\otimes 2})^{tC_2}
\xlongleftarrow{\mathrm{can}} (-^{\otimes 2})^{hC_2}
\]
and recalling that the corresponding pullback is $(-^{\otimes 2})^{C_2}$ and the cofiber is $\Sigma (-^{\otimes 2})_{hC_2}$, we obtain the following equivalent descriptions of the metastable category.

\begin{lem}
	\label{p2equiv}
	The metastable category can equivalently be described as
	\begin{enumerate}
		\item $\mathrm{Sect}((-^{\otimes 2})^{C_2} \longrightarrow 1)$
		\item $\mathrm{Lift}(1 \xlongrightarrow{\Delta} (-^{\otimes 2})^{tC_2} \xlongleftarrow{\mathrm{can}} (-^{\otimes 2})^{hC_2})$
		\item $\mathrm{Null}(1 \longrightarrow \Sigma (-^{\otimes 2})_{hC_2})$
	\end{enumerate}
\end{lem}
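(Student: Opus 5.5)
The plan is to obtain this as a direct specialization of the preceding lemma, applied to the span
\[
1 \xlongrightarrow{\Delta} (-^{\otimes 2})^{tC_2} \xlongleftarrow{\mathrm{can}} (-^{\otimes 2})^{hC_2}.
\]
Item (2) is the definition of $P_2\mathcal{S}_*$. So the real work is to identify, naturally in $X$, the pullback endofunctor $P = 1 \times_{(-^{\otimes 2})^{tC_2}} (-^{\otimes 2})^{hC_2}$ and the cofiber endofunctor $Q = \mathrm{cof}\bigl((-^{\otimes 2})^{hC_2} \to (-^{\otimes 2})^{tC_2}\bigr)$. We also need the resulting maps $P \to 1$ and $1 \to Q$ to be the evident ones.

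For the cofiber, I would use the natural norm cofiber sequence
\[
(X^{\otimes 2})_{hC_2} \to (X^{\otimes 2})^{hC_2} \to (X^{\otimes 2})^{tC_2}.
\]
Rotating it gives a natural equivalence $Q \simeq \Sigma(-^{\otimes 2})_{hC_2}$. The induced map $1 \to Q$ is then the composite $X \xrightarrow{\Delta} (X^{\otimes 2})^{tC_2} \to \Sigma (X^{\otimes 2})_{hC_2}$, which is the map of lemma \ref{metnull}. Since cofibers in the stable functor category $\mathrm{Fun}(\mathcal{Sp},\mathcal{Sp})$ are computed pointwise, no further coherence needs to be checked here.

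For the pullback, I would use the Tate square for the genuine $C_2$-spectrum $X^{\otimes 2}$, together with the identifications $(X^{\otimes 2})^{\Phi C_2} \simeq X$ and $\Delta$ equal to the canonical map $\Phi \to t$, as recalled before lemma \ref{metsect}. These give $P(X) \simeq (X^{\otimes 2})^{C_2}$, with $P \to 1$ the comparison map to geometric fixed points.

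The main obstacle is making this natural, that is, an equivalence of endofunctors compatible with the maps of the span, and not just objectwise. I would proceed as follows.
\begin{enumerate}
\item Use the functor $\mathcal{Sp} \to \mathcal{Sp}^{C_2}$ given by the genuine (Hill--Hopkins--Ravenel) norm $N^{C_2}_e X = X^{\otimes 2}$.
\item Apply the natural Tate square of functors $\mathcal{Sp}^{C_2} \to \mathcal{Sp}$.
\item Invoke the natural identification $\Phi^{C_2} N^{C_2}_e \simeq 1$, under which the map $\Phi \to t$ becomes the Tate diagonal.
\end{enumerate}
This last identification is exactly the Nikolaus--Scholze characterization of $\Delta$ via the spectral Yoneda lemma. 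Two exact transformations $1 \to (-^{\otimes 2})^{tC_2}$ agree once they agree on $\mathbb{S}$, where both are $\mathbb{S} \to \mathbb{S}^{hC_2} \to \mathbb{S}^{tC_2}$.

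With $P$ and $Q$ identified, I would then apply the previous lemma verbatim. It yields $\mathrm{Sect}((-^{\otimes 2})^{C_2} \to 1) \simeq P_2\mathcal{S}_* \simeq \mathrm{Null}(1 \to \Sigma(-^{\otimes 2})_{hC_2})$. Two routine checks remain: $P$ and $Q$ are accessible and reduced, as for the functors in the span, and the equivalences commute with the forgetful functors to $\mathcal{Sp}$.
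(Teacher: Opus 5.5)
Your proposal is correct and follows the paper's route: you specialize the preceding $\mathrm{Sect}$/$\mathrm{Lift}$/$\mathrm{Null}$ lemma to the span, identifying the cofiber as $\Sigma(-^{\otimes 2})_{hC_2}$ via the norm sequence and the pullback as $(-^{\otimes 2})^{C_2}$ via the Tate square for $X^{\otimes 2}$. The paper only asserts these identifications, whereas you make them natural in $X$ by using the norm $N^{C_2}_e$, the identification $\Phi^{C_2}N^{C_2}_e \simeq 1$, and the spectral Yoneda lemma; this fills in details the paper leaves implicit.
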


\subsection{Monoidality}

In this subsection we endow the metastable category with a symmetric monoidal structure and upgrade the functors
\[
(\mathcal{S}_*, \wedge) \longrightarrow (P_2\mathcal{S}_*, \otimes) \longrightarrow (\mathcal{Sp}, \otimes)
\]
to symmetric monoidal functors.

To achieve this, we show that whenever an endofunctor - respectively a span of endofunctors - is lax symmetric monoidal, the associated category of coalgebras - respectively the category of lifts - inherits a natural symmetric monoidal structure. We then apply this general construction to the span given by the Tate diagonal and the canonical comparison map, which we verify to be lax symmetric monoidal.

We then introduce the spheres in $P_2\mathcal{S}_*$, including the exotic $0$-spheres, compute their tensor products, and deduce that the Picard group of the metastable category is trivial.

In order to do this, recall that a symmetric monoidal category is given by a cocartesian fibration of operads
\[
C^\otimes \longrightarrow \mathrm{N}(\mathcal{Fin}_*)
\]
Whenever $C$ has a symmetric monoidal structure and $S$ is any simplicial set, the functor category $\mathrm{Fun}(S, C)$ inherits a symmetric monoidal structure defined by
\[
\mathrm{Fun}(S, C)^\otimes
:=
\mathrm{Fun}(S, C^\otimes)
\times_{\mathrm{Fun}(S, \mathrm{N}(\mathcal{Fin}_*))}
\mathrm{N}(\mathcal{Fin}_*)
\]
Moreover, denote by $\mathrm{Fun}(\Delta^1, C^\otimes)_{\mathrm{id}}$ the full subcategory of $\mathrm{Fun}(\Delta^1, C^\otimes)$ consisting of those morphisms which lie over identities in $\mathrm{Fun}(\Delta^1, \mathrm{N}(\mathcal{Fin}_*))$. With this notation in place, we recall the following construction from \cite[cons. IV.2.1(i)(ii)]{NikolausScholze}.

\begin{lem}
	Let $F$ be a lax symmetric monoidal endofunctor of $\mathcal{Sp}$. Then $\mathrm{coAlg}\ F$ admits a natural symmetric monoidal structure defined by the pullback
\[\begin{tikzcd}
	{\mathrm{coAlg}^\otimes F} & {\mathrm{Fun}(\Delta^1, \mathcal{Sp}^{\otimes})_{\mathrm{id}}} \\
	{\mathcal{Sp}^{\otimes}} & {\mathcal{Sp}^{\otimes} \times \mathcal{Sp}^{\otimes}}
	\arrow[from=1-1, to=1-2]
	\arrow[from=1-1, to=2-1]
	\arrow["\lrcorner"{anchor=center, pos=0.125}, draw=none, from=1-1, to=2-2]
	\arrow["{(ev_0, ev_1)}", from=1-2, to=2-2]
	\arrow["{(1, F)}"', from=2-1, to=2-2]
\end{tikzcd}\]
	and the forgetful functor is symmetric monoidal. Moreover, any lax symmetric monoidal natural transformation $F \to G$ induces a symmetric monoidal functor $\mathrm{coAlg}^\otimes F \to \mathrm{coAlg}^\otimes G$.
\end{lem}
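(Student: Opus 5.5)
The plan is to reduce everything to three facts: pullbacks of $\infty$-operads behave well, cocartesian fibrations are stable under pullback, and a lax symmetric monoidal functor is exactly a map of $\infty$-operads over $\mathrm{N}(\mathcal{Fin}_*)$. This is essentially \cite[cons.~IV.2.1]{NikolausScholze}, so I would follow their argument.

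First I would make sense of the diagram. The category $\mathcal{Sp}^\otimes \times \mathcal{Sp}^\otimes$ should be read as the fibre product over $\mathrm{N}(\mathcal{Fin}_*)$. The functor $\mathrm{Fun}(\Delta^1,\mathcal{Sp}^\otimes)_{\mathrm{id}}$ maps to it by $(ev_0,ev_1)$, and this functor lies over $\mathrm{N}(\mathcal{Fin}_*)$. The lax symmetric monoidal structure on $F$ is a map of operads $F^\otimes\colon \mathcal{Sp}^\otimes \to \mathcal{Sp}^\otimes$, which sends inert morphisms to inert morphisms. This gives the bottom arrow $(1,F^\otimes)$. Taking the pullback produces $\mathrm{coAlg}^\otimes F \to \mathrm{N}(\mathcal{Fin}_*)$.

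Next I would check that this map is a cocartesian fibration and that its fibre over $\langle n\rangle$ is $(\mathrm{coAlg}\,F)^{n}$.
\begin{enumerate}
\item The fibre identification follows from the definition, since $\mathrm{Fun}(\Delta^1,\mathcal{Sp}^\otimes)_{\mathrm{id}}$ has fibre $\mathrm{Fun}(\Delta^1,\mathcal{Sp})^n$ over $\langle n\rangle$.
\item For the cocartesian lifts, take an object $(X_i \to FX_i)_i$ over $\langle n\rangle$ and an active map $\alpha\colon\langle n\rangle\to\langle m\rangle$. The lift has source $\bigotimes_{\alpha(i)=j} X_i$, and its structure map is
\[
\textstyle\bigotimes_{\alpha(i)=j} X_i \longrightarrow \bigotimes_{\alpha(i)=j} FX_i \longrightarrow F\bigl(\bigotimes_{\alpha(i)=j} X_i\bigr),
\]
using the lax structure maps of $F$.
\item To see that these lifts really are cocartesian, I would use the fact that $\mathrm{Fun}(\Delta^1,\mathcal{Sp}^\otimes)_{\mathrm{id}} \to \mathcal{Sp}^\otimes\times\mathcal{Sp}^\otimes$ is a cartesian fibration in its target coordinate. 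I would then apply the criterion of \cite[II.1.5]{NikolausScholze} / \cite[HA 2.1]{HA} fibrewise.
\end{enumerate}

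This fibrewise verification that the pullback is cocartesian over $\mathrm{N}(\mathcal{Fin}_*)$ is the main obstacle. I would cite Nikolaus–Scholze rather than redo it.

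Symmetric monoidality of the forgetful functor is then immediate. The projection $\mathrm{coAlg}^\otimes F\to\mathcal{Sp}^\otimes$ preserves cocartesian edges, since by construction the underlying object of the lift in step 2 is the tensor product.

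For a lax symmetric monoidal transformation $\eta\colon F\to G$, I would repeat the argument of Lemma~\ref{propcoalg}(3) one level up. The transformation $\eta$ is a map $\mathcal{Sp}^\otimes\to\mathrm{Fun}(\Delta^1,\mathcal{Sp}^\otimes)_{\mathrm{id}}$ over $\mathrm{N}(\mathcal{Fin}_*)$. Composing arrows in $\mathrm{Fun}(\Delta^1,\mathcal{Sp}^\otimes)_{\mathrm{id}}$ then induces a functor $\mathrm{coAlg}^\otimes F\to\mathrm{coAlg}^\otimes G$ over $\mathcal{Sp}^\otimes$. Because the forgetful functors preserve and detect cocartesian edges, this functor preserves cocartesian edges and is therefore symmetric monoidal. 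The monoidality of $\eta$ is needed exactly here: it makes the composite structure map agree with the one built from the lax structure of $G$ in step 2.
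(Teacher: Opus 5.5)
This matches the paper, which gives no proof of its own and simply imports the statement from Nikolaus--Scholze, Construction IV.2.1(i)(ii), so your sketch is a correct unpacking of that construction. Two small fixes: in step 2 the object $\bigotimes_{\alpha(i)=j}X_i$ is the \emph{target} of the lift, not its source; and $(ev_0,ev_1)$ is cartesian in the source coordinate but \emph{cocartesian}, not cartesian, in the target coordinate --- what shows your lifts are cocartesian is postcomposition with the lax structure map $\bigotimes_i FX_i\to F(\bigotimes_i X_i)$, or more directly the equalizer description of mapping spaces.
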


We will henceforth drop the notation $\mathrm{coAlg}^\otimes F$ and simply write $\mathrm{coAlg}\,F$ for the underlying symmetric monoidal category.

\begin{rem}
	\label{objecttensor}
At the level of objects, the symmetric monoidal structure on $\mathrm{coAlg}\,F$ is given by
\[
(X_1 \to F X_1) \otimes (X_2 \to F X_2)
\simeq
\bigl(
X_1 \otimes X_2
\longrightarrow
F X_1 \otimes F X_2
\longrightarrow
F(X_1 \otimes X_2)
\bigr)
\]
where the second map is induced by the lax symmetric monoidal structure on $F$.
\end{rem}

The previous lemma shows that the construction of coalgebras extends to a functor $$\mathrm{coAlg}: \mathrm{End}^{acc}_{lax, *}(\mathcal{Sp}) \to \mathrm{CAlg}(\mathrm{Pr}^L_*)_{/\mathcal{Sp}}$$

We now construct a functor in the opposite direction. Let $L \colon C \to \mathcal{Sp}$ be a symmetric monoidal left adjoint. Then its right adjoint $R$ is lax symmetric monoidal, and the composite $LR$ is an accessible lax symmetric monoidal endofunctor of $\mathcal{Sp}$. In this way we obtain a functor $$K: \mathrm{CAlg}(\mathrm{Pr}^L_*)_{/\mathcal{Sp}} \to \mathrm{End}^{acc}_{lax, *}(\mathcal{Sp})$$

As in the previous subsection, this construction provides a left adjoint to the functor $\mathrm{coAlg}$ defined above.

\begin{lem}
	There is an adjunction
\[\begin{tikzcd}
	{K: \mathrm{CAlg}(\mathrm{Pr}^L_*)_{/ \mathcal{Sp}}} & {\mathrm{End}^{acc}_{lax, *}(\mathcal{Sp}): \mathrm{coAlg}}
	\arrow[""{name=0, anchor=center, inner sep=0}, shift left, from=1-1, to=1-2]
	\arrow[""{name=1, anchor=center, inner sep=0}, shift left, from=1-2, to=1-1]
	\arrow["\dashv"{anchor=center, rotate=-90}, draw=none, from=0, to=1]
\end{tikzcd}\]
\end{lem}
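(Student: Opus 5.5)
We will mimic the proof of the non-monoidal adjunction from the previous subsection, upgrading each step to its symmetric monoidal counterpart. Fix a symmetric monoidal left adjoint $L \colon C \to \mathcal{Sp}$ with lax symmetric monoidal right adjoint $R$, and an accessible reduced lax symmetric monoidal endofunctor $F$. The goal is a natural equivalence between the space of symmetric monoidal colimit-preserving functors $C \to \mathrm{coAlg}\,F$ over $\mathcal{Sp}$ and the space of lax symmetric monoidal natural transformations $LR \Rightarrow F$.

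First, we unwind the monoidal pullback defining $\mathrm{coAlg}^\otimes F$. A map of symmetric monoidal categories $C^\otimes \to \mathrm{coAlg}^\otimes F$ lying over $L^\otimes$ is a map $C^\otimes \to \mathrm{Fun}(\Delta^1,\mathcal{Sp}^\otimes)_{\mathrm{id}}$ whose endpoints are $L^\otimes$ and $F^\otimes \circ L^\otimes$. Such a map is the same as a natural transformation $L^\otimes \Rightarrow F^\otimes L^\otimes$ of maps of operads over $\mathrm{N}(\mathcal{Fin}_*)$ whose components lie over identities. This is precisely a lax symmetric monoidal natural transformation $\eta \colon L \Rightarrow FL$. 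Colimit preservation comes for free, as in lemma \ref{propcoalg}, because the forgetful functor creates colimits and its composite with our functor is $L$. We must also check that the induced functor is strongly, not merely laxly, monoidal. This holds because the forgetful functor $\mathrm{coAlg}\,F \to \mathcal{Sp}$ is symmetric monoidal and conservative on the relevant cocartesian edges (remark \ref{objecttensor}), and $L$ is strongly monoidal.

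Second, we transport $\eta$ across the adjunction $L \dashv R$. We want the precomposition adjunction $R^* \dashv L^*$ on functor categories to lift to the $\infty$-categories of lax symmetric monoidal functors, $\mathrm{Fun}^{\mathrm{lax}}(C,\mathcal{Sp})$ and $\mathrm{End}^{\mathrm{lax}}(\mathcal{Sp})$. Since $L$ is symmetric monoidal and $R$ is lax symmetric monoidal, the unit $1 \Rightarrow RL$ and counit $LR \Rightarrow 1$ are lax monoidal transformations. This is the standard fact that a monoidal adjunction is an adjunction in the $2$-category of symmetric monoidal categories and lax functors \cite[cor.~7.3.2.7]{HA}. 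Precomposition with $L$ and $R$, together with whiskering by this unit and counit, then gives an adjunction at the level of lax functors, with the triangle identities inherited. Consequently $\eta$ corresponds to a lax symmetric monoidal transformation $\eta' \colon LR \Rightarrow F$, given by $LR \xrightarrow{\eta R} FLR \xrightarrow{F\varepsilon} F$.

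Finally, naturality in both variables follows from the functoriality established in the lemma above for lax transformations $F \to G$, and from precomposition for maps in $\mathrm{CAlg}(\mathrm{Pr}^L_*)_{/\mathcal{Sp}}$. This identifies $K$ as left adjoint to $\mathrm{coAlg}$.

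The main obstacle is the second step: making the passage from the lax monoidal adjunction $L \dashv R$ to an adjunction on $\infty$-categories of lax monoidal functors fully coherent. The first approach to try is to phrase everything through the $(\infty,2)$-category of symmetric monoidal $\infty$-categories with lax functors, in which $L \dashv R$ is an adjunction. Any $2$-functor, here $\mathrm{Fun}^{\mathrm{lax}}(-,\mathcal{Sp})$, then sends it to an adjunction of mapping categories, which avoids hands-on coherence checks.
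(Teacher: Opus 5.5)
Your proposal is correct and follows the paper's route: the paper gives no separate proof of this lemma and only says the non-monoidal argument carries over. That argument identifies maps into $\mathrm{coAlg}\,F$ over $\mathcal{Sp}$ with transformations $L \Rightarrow FL$ via the pullback, and these correspond to transformations $LR \Rightarrow F$ under the precomposition adjunction $R^* \dashv L^*$; you carry out exactly this upgrade, and your extra checks are precisely the details the paper leaves implicit (strong monoidality, because the forgetful functor preserves cocartesian edges and, being conservative, detects them; and the lift of $R^* \dashv L^*$ to lax functors, because the unit and counit of $L \dashv R$ lie over identities).
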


This construction also gives a symmetric monoidal structure on categories of lifts.

\begin{lem}
Let $1 \to G \leftarrow F$ be a span of natural transformations between accessible, lax symmetric monoidal, reduced endofunctors of $\mathcal{Sp}$. Then $\mathrm{Lift}(1 \to G \leftarrow F)$ admits a natural symmetric monoidal structure, and the forgetful functor is symmetric monoidal.
\end{lem}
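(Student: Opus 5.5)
The plan is to mirror the construction of the non-monoidal category of lifts, replacing $\mathrm{Pr}^L_{*/\mathcal{Sp}}$ by $\mathrm{CAlg}(\mathrm{Pr}^L_*)_{/\mathcal{Sp}}$ throughout. By definition, $\mathrm{Lift}(1 \to G \leftarrow F)$ is the pullback of the cospan
\[
\mathcal{Sp} \longrightarrow \mathrm{coAlg}\,1 \longrightarrow \mathrm{coAlg}\,G \longleftarrow \mathrm{coAlg}\,F .
\]
I will show that every category and every functor in this cospan lifts canonically to $\mathrm{CAlg}(\mathrm{Pr}^L_*)_{/\mathcal{Sp}}$. The pullback can then be formed there, and the resulting symmetric monoidal structure on $\mathrm{Lift}(1 \to G \leftarrow F)$ will, by construction, make the projection to $\mathcal{Sp}$ symmetric monoidal.

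The functors $\mathrm{coAlg}\,1 \to \mathrm{coAlg}\,G$ and $\mathrm{coAlg}\,F \to \mathrm{coAlg}\,G$ come from the monoidal version of the coalgebra construction recalled above (Nikolaus--Scholze, cons.~IV.2.1). It applies as soon as the two transformations $1 \to G$ and $F \to G$ are lax symmetric monoidal, so I read this as part of the hypothesis that the span consists of lax symmetric monoidal data. The identity functor is trivially lax symmetric monoidal. The functor $\mathcal{Sp} \to \mathrm{coAlg}\,1$ is the unit of the monoidal adjunction $K \dashv \mathrm{coAlg}$ evaluated at $\mathrm{id}\colon \mathcal{Sp} \to \mathcal{Sp}$, viewed as an object of $\mathrm{CAlg}(\mathrm{Pr}^L_*)_{/\mathcal{Sp}}$. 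Since $K(\mathcal{Sp}) = \mathrm{id}\circ\mathrm{id} = 1$, this unit is a morphism $\mathcal{Sp} \to \mathrm{coAlg}\,1$ in $\mathrm{CAlg}(\mathrm{Pr}^L_*)_{/\mathcal{Sp}}$, hence a symmetric monoidal left adjoint over $\mathcal{Sp}$. In this way the whole cospan lives in $\mathrm{CAlg}(\mathrm{Pr}^L_*)_{/\mathcal{Sp}}$.

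It then remains to form the pullback there and compare it with the one in $\mathrm{Cat}$. The argument has three steps.
\begin{enumerate}
\item The forgetful functor $\mathrm{CAlg}(\mathrm{Pr}^L_*)_{/\mathcal{Sp}} \to \mathrm{CAlg}(\mathrm{Pr}^L_*)$ preserves contractible limits, in particular pullbacks.
\item The forgetful functor $\mathrm{CAlg}(\mathrm{Pr}^L_*) \to \mathrm{Pr}^L_*$ preserves all limits (HA, cor.~3.2.2.5).
\item The embedding $\mathrm{Pr}^L_* \to \mathrm{Cat}$ preserves limits, which was already used in lemma~\ref{proplift}.
\end{enumerate}
Together these show that the pullback computed in $\mathrm{CAlg}(\mathrm{Pr}^L_*)_{/\mathcal{Sp}}$ has underlying category $\mathrm{Lift}(1 \to G \leftarrow F)$. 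It thereby endows this category with a symmetric monoidal structure for which the structure map to $\mathcal{Sp}$, namely the forgetful functor, is symmetric monoidal. Naturality follows from the functoriality of $\mathrm{coAlg}$ on lax symmetric monoidal endofunctors. If one prefers a hands-on description, the monoidal structure can equivalently be written as a pullback of the operads $\mathrm{coAlg}^\otimes$ over $\mathcal{Sp}^\otimes$. The pullback of cocartesian fibrations along maps preserving cocartesian edges is again a cocartesian fibration, which gives the same result.

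I expect the main obstacle to be checking that the unit $\mathcal{Sp} \to \mathrm{coAlg}\,1$ is genuinely symmetric monoidal. That is exactly where the monoidal adjunction $K \dashv \mathrm{coAlg}$, stated just above, is used. Once that adjunction is granted, the rest is formal.
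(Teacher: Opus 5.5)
Your proposal is correct and follows the paper's proof. Like the paper, you refine the defining cospan $\mathcal{Sp} \to \mathrm{coAlg}\,1 \to \mathrm{coAlg}\,G \leftarrow \mathrm{coAlg}\,F$ to a diagram in $\mathrm{CAlg}(\mathrm{Pr}^L_*)_{/\mathcal{Sp}}$, form the pullback there, and use that the forgetful functors to $\mathrm{CAlg}(\mathrm{Pr}^L_*)$, $\mathrm{Pr}^L_*$ and $\mathrm{Cat}$ preserve it; you are, if anything, more explicit than the paper about the unit $\mathcal{Sp} \to \mathrm{coAlg}\,1$ and about needing the two transformations to be lax symmetric monoidal, which the paper only checks later in lemma~\ref{forgetmonoidal}.
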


\begin{proof}
The diagram defining the category of lifts refines to a diagram in $\mathrm{CAlg}(\mathrm{Pr}^L_*)_{/\mathcal{Sp}}$ by the above discussion. The claim then follows from the fact that this category is complete and that the forgetful functor $\mathrm{CAlg}(\mathrm{Pr}^L_*)_{/\mathcal{Sp}} \to \mathrm{CAlg}(\mathrm{Pr}^L_*)$ preserves contractible limits and the forgetful functor $\mathrm{CAlg}(\mathrm{Pr}^L_*) \to \mathrm{Pr}^L_*$ preserves all limits.
\end{proof}

From the above discussion, we immediately obtain the following.

\begin{lem}
	\label{forgetmonoidal}
	The category $P_2\mathcal{S}_*$ admits a symmetric monoidal structure, and the forgetful functor $P_2\mathcal{S}_* \to \mathcal{Sp}$ is symmetric monoidal.
\end{lem}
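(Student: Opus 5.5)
The plan is to apply the preceding lemma on symmetric monoidal structures on categories of lifts to the span
\[
1 \xlongrightarrow{\Delta} (-^{\otimes 2})^{tC_2} \xlongleftarrow{\mathrm{can}} (-^{\otimes 2})^{hC_2}.
\]
Accessibility and reducedness of these functors were already checked when we showed that $P_2\mathcal{S}_*$ is presentable and pointed. What remains is to show that both functors are lax symmetric monoidal and that $\Delta$ and $\mathrm{can}$ are lax symmetric monoidal transformations. Once this is done, the lemma gives a symmetric monoidal structure on $\mathrm{Lift}$ together with a symmetric monoidal forgetful functor.

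The first step concerns the identity functor. It is symmetric monoidal, hence lax symmetric monoidal.

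The second step concerns the two functors in the span. The tensor square $X \mapsto X^{\otimes 2}$ is symmetric monoidal as a functor $\mathcal{Sp} \to \mathcal{Sp}^{BC_2}$, since it is the composite of the diagonal $\mathcal{Sp} \to \mathcal{Sp}^{\times 2}$ with the $C_2$-equivariant tensor product. Alternatively, one can take it to be the Hill--Hopkins--Ravenel style norm $\mathcal{Sp} \to \mathcal{Sp}^{BC_2}$, which is symmetric monoidal. The homotopy fixed points functor $(-)^{hC_2} \colon \mathcal{Sp}^{BC_2} \to \mathcal{Sp}$ is right adjoint to the symmetric monoidal functor equipping a spectrum with the trivial action, so it is lax symmetric monoidal. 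The Tate construction $(-)^{tC_2}$ admits a lax symmetric monoidal structure making $\mathrm{can} \colon (-)^{hC_2} \to (-)^{tC_2}$ lax symmetric monoidal, by \cite[thm.~I.3.1]{NikolausScholze}. Precomposing with the tensor square then makes $(-^{\otimes 2})^{hC_2}$ and $(-^{\otimes 2})^{tC_2}$ lax symmetric monoidal, and makes $\mathrm{can}$ a lax symmetric monoidal transformation between them.

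The third step, which I expect to be the main obstacle, is to show that the Tate diagonal $\Delta \colon 1 \to (-^{\otimes 2})^{tC_2}$ is lax symmetric monoidal and not merely natural. Here I would invoke Nikolaus--Scholze directly: in \cite[sec.~III.1]{NikolausScholze} they show that $(-^{\otimes p})^{tC_p}$ is exact and lax symmetric monoidal, and that the Tate diagonal refines uniquely to a lax symmetric monoidal transformation. The reason is that the identity is the initial exact lax symmetric monoidal endofunctor of $\mathcal{Sp}$, which follows from the spectral Yoneda lemma \cite[prop.~III.1.2]{NikolausScholze} together with the fact that $\mathbb{S}$ is the initial $\mathbb{E}_\infty$-ring. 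The space of lax symmetric monoidal transformations from the identity to any exact lax symmetric monoidal endofunctor is therefore contractible. This also guarantees that the lax symmetric monoidal structure on $\Delta$ is compatible with the one on $\mathrm{can}$ constructed in the second step, since both are built from the same lax structure on the Tate construction. If the precise statement turns out to be phrased differently, I would instead write the lax structure on $\Delta$ as the composite of the symmetric monoidal map into genuine geometric fixed points, $X \simeq (X^{\otimes 2})^{\Phi C_2}$, with the lax symmetric monoidal map $\Phi^{C_2} \to (-)^{tC_2}$ coming from the Tate square.

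Finally, with the span established as a diagram in $\mathrm{End}^{acc}_{lax,*}(\mathcal{Sp})$, I would apply the lemma on lifts to conclude. The resulting structure is computed on objects as in remark \ref{objecttensor}: the tensor product of two metastable structures is obtained from their tensor product followed by the lax structure map of $(-^{\otimes 2})^{hC_2}$.
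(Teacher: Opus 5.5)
Your proposal is correct and follows the paper's proof: both reduce, via the lemma on symmetric monoidal structures on categories of lifts, to showing that $\mathrm{can}$ is lax symmetric monoidal (\cite[thm.~I.3.1]{NikolausScholze}, precomposed with the symmetric monoidal tensor square) and that the Tate diagonal $\Delta$ is lax symmetric monoidal. For the latter the paper cites \cite[prop.~III.3.1]{NikolausScholze}, which is the initiality statement you sketch: the identity is initial among exact lax symmetric monoidal endofunctors of $\mathcal{Sp}$. Your added remark that the two lax structures on $(-^{\otimes 2})^{tC_2}$ agree is a useful point that the paper leaves implicit.
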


\begin{proof}
	It suffices to prove that the span
\[
1 \xlongrightarrow{\Delta} (-^{\otimes 2})^{tC_2}
\xlongleftarrow{\mathrm{can}} (-^{\otimes 2})^{hC_2}
\]
consists of lax symmetric monoidal natural transformations.

The homotopy fixed points functor $-^{hC_2}$ and the Tate construction $-^{tC_2}$ are lax symmetric monoidal, and the canonical natural transformation $\mathrm{can}$ is lax symmetric monoidal by \cite[thm. I.3.1]{NikolausScholze}. This remains true after composition with the symmetric monoidal functor $-^{\otimes 2}$. The Tate diagonal $\Delta$ is lax symmetric monoidal by \cite[prop. III.3.1]{NikolausScholze}.
\end{proof}

We now also make the functor from pointed spaces symmetric monoidal.

\begin{lem}
	\label{monoidalfromspaces}
There exists a unique symmetric monoidal colimit-preserving functor $\mathcal{S}_* \to P_2\mathcal{S}_*$. Its underlying functor coincides with the functor $\Sigma^\infty_2$ of lemma~\ref{functorfromspaces}.
\end{lem}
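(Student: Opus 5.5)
The plan rests on the universal property of pointed spaces: $(\mathcal{S}_*,\wedge)$ is the free presentably symmetric monoidal pointed category on a point. Concretely, $\mathcal{S}$ is the unit of $\mathrm{Pr}^L$ and $\mathcal{S}_* \simeq \mathcal{S}\otimes \mathcal{S}_*$ is the unit of $\mathrm{Pr}^L_*$. It follows that for any presentably symmetric monoidal pointed category $\mathcal{D}$ there is an equivalence
\[
\mathrm{Fun}^{L,\otimes}(\mathcal{S}_*, \mathcal{D}) \simeq *,
\]
that is, the space of symmetric monoidal colimit-preserving functors $\mathcal{S}_*\to\mathcal{D}$ is contractible (\cite[cor. 4.8.1.12]{HA}, since $\mathcal{S}_*$ is initial in $\mathrm{CAlg}(\mathrm{Pr}^L_*)$). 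We apply this to $\mathcal{D} = P_2\mathcal{S}_*$, which is presentable and pointed by lemma~\ref{proplift}. Its tensor product preserves colimits separately in each variable, because the forgetful functor is symmetric monoidal by lemma~\ref{forgetmonoidal} and creates colimits. So $P_2\mathcal{S}_*$ lies in $\mathrm{CAlg}(\mathrm{Pr}^L_*)$, and existence and uniqueness of a symmetric monoidal colimit-preserving functor $\mathcal{S}_*\to P_2\mathcal{S}_*$ follow at once.

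The real content is identifying this functor with $\Sigma^\infty_2$ from lemma~\ref{functorfromspaces}. Call the unique functor $\Phi$. By remark~\ref{remfunctorfromspaces}, a colimit-preserving functor out of $\mathcal{S}_*$ is determined by the image of $S^0$. Being symmetric monoidal, $\Phi$ sends $S^0$ to the unit of $P_2\mathcal{S}_*$. So it suffices to show two things: the unit is the sphere spectrum with the spacelike metastable structure, and the composite $\mathcal{S}_*\to P_2\mathcal{S}_*\to\mathcal{Sp}$ is $\Sigma^\infty$.

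The second point is immediate. The forgetful functor is symmetric monoidal and colimit-preserving, so the composite is too, and by the same universal property for $\mathcal{Sp}$ it must be $\Sigma^\infty$.

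For the first point, remark~\ref{objecttensor} together with the definition of lifts shows that the unit of $P_2\mathcal{S}_*$ is $\mathbb{S}$ equipped with the lax unit map
\[
\mathbb{S}\to(\mathbb{S}^{\otimes 2})^{hC_2}
\]
of the lax symmetric monoidal functor $(-^{\otimes 2})^{hC_2}$. This map is the unit of the $\mathbb{E}_\infty$-ring $\mathbb{S}^{hC_2}$, namely the map induced by the trivial action. Its composite with $i$ to $\mathbb{S}^{\otimes 2}\simeq\mathbb{S}$ is the identity, so the induced diagonal has degree $1$. By lemma~\ref{exotics0} it is therefore the spacelike structure, which is exactly $\Sigma^\infty_2(S^0)$.

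Two colimit-preserving functors $\mathcal{S}_*\to P_2\mathcal{S}_*$ that agree on $S^0$ are then equivalent, since $\mathrm{Fun}^L(\mathcal{S}_*,\mathcal{D})\simeq\mathcal{D}$ via evaluation at $S^0$. Hence $\Phi\simeq\Sigma^\infty_2$ as underlying functors.

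The main obstacle is this identification of the monoidal unit's structure map. One must check that the lax unit of $(-^{\otimes 2})^{hC_2}$ agrees with the one used for coalgebras, and that the degree computation via $i$ is legitimate. I would first try to verify this using the lax monoidal structure of $(-)^{hC_2}$ from \cite[thm. I.3.1]{NikolausScholze}, where the unit of $\mathbb{S}^{hC_2}$ is the trivial-action map, and then invoke lemma~\ref{exotics0}.
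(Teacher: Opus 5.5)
Your proposal is correct. For existence and uniqueness it argues exactly as the paper does: $\mathcal{S}_*$ is the unit of $\mathrm{Pr}^L_*$, hence initial in $\mathrm{CAlg}(\mathrm{Pr}^L_*)$.

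You identify the underlying functor by a genuinely different route. The paper works at the level of functors. Through the symmetric monoidal version of the adjunction $K \dashv \mathrm{coAlg}$, it notes that the transformation $\Sigma^\infty \Rightarrow (\Sigma^\infty(-)^{\otimes 2})^{hC_2}$ coming from the space-level diagonal is lax symmetric monoidal by \cite[lem.~IV.1.3]{NikolausScholze}. This is the transformation used to build $\Sigma^\infty_2$ in lemma~\ref{functorfromspaces}, so $\Sigma^\infty_2$ itself becomes symmetric monoidal, and uniqueness finishes the proof.

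You work at the level of objects instead. Since $\mathrm{Fun}^L(\mathcal{S}_*,P_2\mathcal{S}_*)\simeq P_2\mathcal{S}_*$ by evaluation at $S^0$, you only need the monoidal unit to be $\Sigma^\infty_2(S^0)$. You get this in three steps:
\begin{enumerate}
\item Read off the unit of $\mathrm{coAlg}^\otimes$ as the lax unit $\mathbb{S}\to\mathbb{S}^{hC_2}$.
\item Compute its diagonal degree from the triangle identity for $\mathrm{triv}\dashv(-)^{hC_2}$.
\item Invoke the degree classification of lemma~\ref{exotics0}.
\end{enumerate}
Using that classification is a good choice: the degree detects the class in $\pi_0\overline{\mathcal{M}}_{\mathbb{S}}$, so you never have to match the homotopies to the Tate diagonal by hand.

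Your route is more elementary. It avoids the monoidal adjunction, which the paper states without proof, and it avoids the lax-monoidality input. As a byproduct it proves directly that $S^0$ is the monoidal unit, which the paper only establishes afterwards through the tensor formula for spheres. In effect you make remark~\ref{remsymfunctorfromspaces} rigorous. What the paper's route buys is that the symmetric monoidal structure on $\Sigma^\infty_2$ is visibly the one induced by the diagonal, compatibly with $\Sigma^\infty$.

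Finally, your second point, that the composite to $\mathcal{Sp}$ is $\Sigma^\infty$, is not needed. The reduction to $S^0$ already identifies the two functors into $P_2\mathcal{S}_*$.
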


\begin{proof}
Existence and uniqueness follow from the fact that the category of pointed spaces $\mathcal{S}_*$ is the unit object of $\mathrm{Pr}^L_*$ with respect to the Lurie tensor product. Consequently, for any pointed presentable symmetric monoidal category, there exists a unique symmetric monoidal colimit-preserving functor out of $\mathcal{S}_*$.

It remains to identify the underlying functor. By the adjunction $K \dashv \mathrm{coAlg}$, this amounts to verifying that the natural transformation
\[
\Sigma^\infty \Rightarrow (\Sigma^\infty(-)^{\otimes 2})^{hC_2}
\]
chosen in lemma~\ref{functorfromspaces} is lax symmetric monoidal. This is established in \cite[lem.~IV.1.3]{NikolausScholze}, and hence the resulting symmetric monoidal functor agrees with the one constructed in lemma~\ref{functorfromspaces}.
\end{proof}

\begin{rem}
	\label{remsymfunctorfromspaces}
As observed in remark~\ref{remfunctorfromspaces}, a colimit-preserving functor $\mathcal{S}_* \to P_2\mathcal{S}_*$ factoring the stabilization functor is determined by the choice of a metastable structure on the sphere spectrum $\mathbb{S}$. In order to obtain uniqueness there, one had to restrict to those functors whose essential image consists of suspension spectra equipped with the spacelike metastable structure, thereby singling out the unique spacelike structure on $\mathbb{S}$.

In the symmetric monoidal setting, this additional restriction becomes unnecessary. A symmetric monoidal functor must preserve the monoidal unit, and hence send $S^0$ to the unit object of $P_2\mathcal{S}_*$. The latter is precisely $\mathbb{S}$ endowed with its unique spacelike metastable structure, as will be established below. Thus symmetric monoidality rigidifies the choice automatically, and uniqueness follows formally from preservation of the unit.
\end{rem}

We conclude this subsection by introducing the spheres in the metastable category, computing their tensor products, and using this to determine the Picard group of $P_2\mathcal{S}_*$.

We begin with the ordinary spheres.

\begin{definition}
	\label{spheresp2}
Let $n \ge 0$. The \emph{$n$-sphere} $S^n$ in $P_2\mathcal{S}_*$ is the image of the classical $n$-sphere $S^n$ in $\mathcal{S}_*$ under the canonical symmetric monoidal functor $\Sigma^\infty_2\colon \mathcal{S}_* \to P_2\mathcal{S}_*$. Equivalently, it is the unique object of $P_2\mathcal{S}_*$ whose underlying spectrum is $\mathbb{S}^n$ and whose metastable structure is the spacelike one.
\end{definition}

We next introduce the exotic $0$-spheres. Recall that the sphere spectrum $\mathbb{S}$ admits infinitely many metastable structures, classified by the necessarily odd degree $k$ of the induced diagonal map, by lemma \ref{exotics0}. We will only consider $k>0$. Indeed, if $L$ is a metastable structure on a spectrum $X$, then the automorphism $-1$ of $X$ induces a new metastable structure $L'$ given by the composite
$$
X \xlongrightarrow{-1} X \xlongrightarrow{L} (X^{\otimes 2})^{hC_2} \xlongrightarrow{((-1)\otimes(-1))^{hC_2}} (X^{\otimes 2})^{hC_2}
$$
obtained by transporting $L$ along $-1$. It is clear that $(X,L)$ and $(X,L')$ define equivalent objects of $P_2\mathcal{S}_*$. Moreover, since $(-1)^{\otimes 2}=1$, the last map is the identity, so $L'=-L$. In particular, the metastable structures on $\mathbb{S}$ with diagonal $k$ and $-k$ define equivalent objects. Thus it suffices to consider $k>0$.

\begin{definition}
	\label{exoticspheresp2}
Let $k$ be an odd natural number. The \emph{$k$-exotic $0$-sphere} $S^{0, k}$ is the unique object of $P_2\mathcal{S}_*$ whose underlying spectrum is $\mathbb{S}$ and whose induced diagonal $\mathbb{S} \to (\mathbb{S}^{\otimes 2})^{hC_2}
\to \mathbb{S}^{\otimes 2} \simeq \mathbb{S}$ has degree $k$.
\end{definition}

\begin{rem}
For the moment, it is not obvious that for $j \neq k$ the exotic spheres $S^{0,j}$ and $S^{0,k}$ are distinct objects of the metastable category. We will use this fact in what follows, and prove it in the next section.
\end{rem}

The spacelike structure on $\mathbb{S}$ corresponds to $k=1$ (see lemma~\ref{spacelikes0}), so $S^0 \simeq S^{0,1}$. For uniformity of notation, we will sometimes write $S^{n,0} := S^n$ for $n>0$, reflecting the fact that for $n>0$ the induced diagonal $\mathbb{S}^n \to \mathbb{S}^{2n}$ is null.

With this convention in place, the tensor product of spheres is given by the following formula.

\begin{lem}
For all $m,n \ge 0$ and all odd or zero $j,k \geq 0$, there is an equivalence
\[
S^{m,j} \otimes S^{n,k} \simeq S^{m+n, jk}
\]
\end{lem}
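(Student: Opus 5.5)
The plan is to compute the tensor product at the level of underlying spectra and then identify the metastable structure through the invariant that classifies it. By lemma \ref{forgetmonoidal}, the forgetful functor $P_2\mathcal{S}_* \to \mathcal{Sp}$ is symmetric monoidal. Hence the underlying spectrum of $S^{m,j}\otimes S^{n,k}$ is $\mathbb{S}^m\otimes\mathbb{S}^n\simeq\mathbb{S}^{m+n}$. By remark \ref{objecttensor}, the metastable structure is the composite
\[
\mathbb{S}^{m+n}\simeq \mathbb{S}^m\otimes\mathbb{S}^n \xrightarrow{L_1\otimes L_2} ((\mathbb{S}^m)^{\otimes 2})^{hC_2}\otimes((\mathbb{S}^n)^{\otimes 2})^{hC_2} \longrightarrow ((\mathbb{S}^{m+n})^{\otimes 2})^{hC_2},
\]
where the last map is the lax monoidal structure of $(-^{\otimes 2})^{hC_2}$.

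The argument then splits into two cases.

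\emph{Case $m+n>0$.} By example \ref{metstructsn}, $\mathbb{S}^{m+n}$ carries a unique metastable structure, so the product is automatically $S^{m+n}=S^{m+n,0}$. The index also works out: $jk=0$ because whichever of $m,n$ is positive carries index $0$.

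\emph{Case $m=n=0$.} By lemma \ref{exotics0}, a metastable structure on $\mathbb{S}$ is determined by the degree of the induced diagonal $\mathbb{S}\to(\mathbb{S}^{\otimes 2})^{hC_2}\to\mathbb{S}^{\otimes 2}\simeq\mathbb{S}$. It therefore suffices to show that this degree is multiplicative under $\otimes$. The key point is that the forgetful map $i\colon(-)^{hC_2}\to(-)$ is a symmetric monoidal natural transformation; this is the restriction along $e\to BC_2$, which is lax monoidal compatibly with $\lim_{BC_2}$. Consequently the square comparing $i\otimes i$ with $i$, through the lax structure maps of $(-^{\otimes 2})^{hC_2}$ and of $(-)^{\otimes 2}$, commutes. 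Composing the structure map above with $i$ therefore yields $(i L_1)\otimes(i L_2)$, followed by the shuffle equivalence $\mathbb{S}^{\otimes 2}\otimes\mathbb{S}^{\otimes 2}\simeq(\mathbb{S}\otimes\mathbb{S})^{\otimes 2}$. Since everything is built from $\mathbb{S}$, the shuffle is the identity on $\pi_0$, so the degree is $j\cdot k$. Then definition \ref{exoticspheresp2} gives $S^{0,j}\otimes S^{0,k}\simeq S^{0,jk}$. Here $jk$ is odd, and it is positive because $j,k>0$.

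The main obstacle I expect is the monoidal compatibility of $i$ together with the sign of the shuffle. I would handle it by appealing to \cite[thm. I.3.1]{NikolausScholze}, where the lax structures on $(-)^{hG}$ are constructed from the monoidal structure on $\mathrm{Fun}(BG,\mathcal{Sp})$ and the restriction to the underlying spectrum is symmetric monoidal. For the sign, no Koszul signs can arise in degree $0$.
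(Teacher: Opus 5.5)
Your proposal is correct and follows the paper's proof: both identify the induced diagonal of $S^{m,j}\otimes S^{n,k}$ with the tensor product of the two diagonals, up to the shuffle $1\otimes\tau\otimes 1$. Both rest on the square in which forgetting the $C_2$-action commutes with the lax monoidal structures. Your separate handling of $m+n>0$ through uniqueness of the metastable structure on $\mathbb{S}^{m+n}$ is exactly the remark the paper makes right after the proof, and it makes the paper's $(-1)^{mn}$ sign bookkeeping unnecessary.
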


\begin{proof}
	Consider the following diagram:
\[\begin{tikzcd}
	{\mathbb{S}^{m+n} \simeq \mathbb{S}^m \otimes \mathbb{S}^n} & {((\mathbb{S}^m)^{\otimes 2})^{hC_2} \otimes ((\mathbb{S}^n)^{\otimes 2})^{hC_2} } & {((\mathbb{S}^m \otimes \mathbb{S}^n)^{\otimes 2})^{hC_2} } \\
	& {(\mathbb{S}^m)^{\otimes 2} \otimes (\mathbb{S}^n)^{\otimes 2}} & {(\mathbb{S}^m \otimes \mathbb{S}^n)^{\otimes 2}}
	\arrow[from=1-1, to=1-2]
	\arrow["jk"', from=1-1, to=2-2]
	\arrow[from=1-2, to=1-3]
	\arrow[from=1-2, to=2-2]
	\arrow[from=1-3, to=2-3]
	\arrow["{1 \otimes \tau \otimes 1}"', from=2-2, to=2-3]
	\arrow["\simeq", draw=none, from=2-2, to=2-3]
\end{tikzcd}\]
	The top row is the tensor product in $\mathrm{coAlg}$, computed as described in remark~\ref{objecttensor} The vertical maps forget the $C_2$-action. The bottom horizontal map is the canonical identification induced by the swap map $\tau$.
	
	Tracing $\mathbb{S}^{m+n} \simeq \mathbb{S}^m \otimes \mathbb{S}^n$ around the left-hand square shows that the induced diagonal
	\[
	\mathbb{S}^{m+n} \to (\mathbb{S}^{m+n})^{\otimes 2}
	\]
	is the tensor product of the diagonals of $\mathbb{S}^m$ and $\mathbb{S}^n$, up to the canonical permutation of factors. Since the degrees of the diagonals of $S^{m,j}$ and $S^{n,k}$ are $j$ and $k$, respectively, the resulting degree is $jk$ up to the degree $(-1)^{mn}$ of the permutation. This sign is irrelevant for $m+n>0$ since $jk = 0$, and equals $1$ when $m=n=0$, so the degree is $jk$.
	
	Thus $S^{m,j} \otimes S^{n,k}$ has underlying spectrum $\mathbb{S}^{m+n}$ and induced diagonal of degree $jk$, and hence identifies with the sphere $S^{m+n, jk}$ by definition.
\end{proof}

\begin{rem}
If $m+n>0$, the result is automatic: the underlying spectrum of $S^{m,j} \otimes S^{n,k}$ is $\mathbb{S}^{m+n}$, which admits a unique metastable structure. Thus the only nontrivial case in the lemma is $m=n=0$.
\end{rem}

We now make precise remark~\ref{remsymfunctorfromspaces} by identifying $S^0$ (equivalently $S^{0,1}$) as the monoidal unit of $P_2\mathcal{S}_*$.

\begin{lem}
$S^0$ is the monoidal unit of $P_2\mathcal{S}_*$.
\end{lem}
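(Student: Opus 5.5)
The plan is to identify the monoidal unit of $P_2\mathcal{S}_*$ directly from the construction of the symmetric monoidal structure, and then to match it with $S^0$ using the classification of metastable structures on $\mathbb{S}$.

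First, the forgetful functor $P_2\mathcal{S}_* \to \mathcal{Sp}$ is symmetric monoidal by lemma~\ref{forgetmonoidal}. It therefore sends the unit to the unit, so the unit $\mathbb{1}$ of $P_2\mathcal{S}_*$ has underlying spectrum $\mathbb{S}$.

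Next, we must find which metastable structure $\mathbb{1}$ carries. The unit of $\mathrm{coAlg}\,F$ for a lax symmetric monoidal $F$ is $\mathbb{S}$ equipped with the unit map $\mathbb{S} \to F(\mathbb{S})$ of the lax structure. This follows from remark~\ref{objecttensor}, or directly from the pullback description of $\mathrm{coAlg}^\otimes F$ evaluated over $\langle 0\rangle \in \mathcal{Fin}_*$. The same reasoning applies to the category of lifts, which is formed as a limit in $\mathrm{CAlg}(\mathrm{Pr}^L_*)$. Consequently, the metastable structure on $\mathbb{1}$ is the unit map
\[
\mathbb{S} \longrightarrow (\mathbb{S}^{\otimes 2})^{hC_2}
\]
of the lax symmetric monoidal functor $((-)^{\otimes 2})^{hC_2}$. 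This map is the composite of the unit of $(-)^{hC_2}$ with the identification $\mathbb{S}\simeq \mathbb{S}^{\otimes 2}$, that is, the map $\mathbb{S} \to \mathbb{S}^{hC_2}$ adjoint to the trivial action. Composing with the forgetful map $i\colon \mathbb{S}^{hC_2}\to \mathbb{S}$ gives the identity. The induced diagonal therefore has degree $1$, and lemma~\ref{exotics0} shows that this is exactly the spacelike structure. By definition~\ref{spheresp2} this object is $S^0$.

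As an alternative or cross-check, one can observe that $\Sigma^\infty_2$ is symmetric monoidal by lemma~\ref{monoidalfromspaces}, so it sends the unit $S^0$ of $\mathcal{S}_*$ to the unit of $P_2\mathcal{S}_*$. Since $\Sigma^\infty_2 S^0 = S^0$ by definition, this gives the lemma immediately. I would present this short argument as the main proof and the explicit computation above as the description that makes remark~\ref{remsymfunctorfromspaces} precise.

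The main obstacle is avoiding circularity with that remark. Lemma~\ref{monoidalfromspaces} asserts that the underlying functor of the monoidal functor agrees with the spacelike $\Sigma^\infty_2$, and this relies on \cite[lem.~IV.1.3]{NikolausScholze} rather than on the present lemma, so it is legitimately available. If one distrusts this, the fallback is the explicit unit computation, where the only real point is identifying the lax unit of $(-)^{hC_2}$ on $\mathbb{S}$ with the trivial-action map.
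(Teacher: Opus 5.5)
Your proposal is correct, but it argues differently from the paper. The paper never identifies the structure map of the unit. It only uses that the forgetful functor is symmetric monoidal, so $\mathbb{1}\simeq S^{0,k}$ for some odd $k$, and then uses the tensor formula $S^{0,j}\otimes S^{0,k}\simeq S^{0,jk}$ together with unitality to force $k=1$.

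Your main argument instead moves all the content into lemma~\ref{monoidalfromspaces}. Since $\Sigma^\infty_2$ is symmetric monoidal and $S^0$ is defined as $\Sigma^\infty_2 S^0$, the statement is immediate. You are right that this is not circular: the identification of the underlying functor in that lemma rests on \cite[lem.~IV.1.3]{NikolausScholze}, and remark~\ref{remsymfunctorfromspaces} is only commentary.

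Your fallback computes the unit directly as $\mathbb{S}$ with the lax unit $\mathbb{S}\to\mathbb{S}^{hC_2}$ of $((-)^{\otimes 2})^{hC_2}$. Its induced diagonal is the identity, so lemma~\ref{exotics0} identifies it as the spacelike structure. This gives strictly more information: it yields the explicit structure map of the unit, which the paper only states in passing later, in the example following lemma~\ref{suspensionp2}.

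What the paper's route buys is independence from the monoidal functor out of spaces and from any knowledge of lax units. It needs only the classification of structures on $\mathbb{S}$ by degree and the multiplicativity of degree under $\otimes$. On the other hand, as written it concludes $k=1$ from $S^{0,j}\simeq S^{0,jk}$ for all $j$. That step uses the pairwise distinctness of the exotic spheres, which the paper only proves in the next section. Taking $j=1$ instead, i.e.\ $\mathbb{1}\simeq S^{0,k}\simeq S^{0,1}\otimes S^{0,k}\simeq S^{0,1}$, would avoid this. Both of your arguments sidestep the issue entirely.
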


\begin{proof}
Let $\mathbb{1}$ denote the monoidal unit of $P_2\mathcal{S}_*$. Since the forgetful functor $P_2\mathcal{S}_* \to \mathcal{Sp}$ is symmetric monoidal, the underlying spectrum of $\mathbb{1}$ is $\mathbb{S}$. Hence $\mathbb{1} \simeq S^{0,k}$ for some odd natural number $k$. By unitality and the previous lemma
\[
S^{0,j} \simeq S^{0,j} \otimes S^{0,k} \simeq S^{0, jk}
\]
Thus $jk=j$ for all odd $j$, and therefore $k=1$. Hence $\mathbb{1} \simeq S^{0,1} = S^0$.
\end{proof}

We conclude by computing the Picard group of $P_2\mathcal{S}_*$, i.e. the group of equivalence classes of invertible objects under $\otimes$.

\begin{lem}
	\label{pic}
The Picard group of $P_2\mathcal{S}_*$ is trivial. 
\end{lem}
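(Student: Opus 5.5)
The plan is to push an invertible object forward along the symmetric monoidal forgetful functor, identify its underlying spectrum, and then use the classification of spheres from the previous lemmas to rule out everything except the unit.

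Let $(X,L)$ be an invertible object of $P_2\mathcal{S}_*$ with inverse $(Y,L')$. Since the forgetful functor $P_2\mathcal{S}_* \to \mathcal{Sp}$ is symmetric monoidal (lemma~\ref{forgetmonoidal}), the spectrum $X$ is invertible in $\mathcal{Sp}$. Invertible spectra are exactly the spheres $\mathbb{S}^n$, $n \in \mathbb{Z}$. By the obstruction lemma~\ref{connectobs}, a bounded-below spectrum with a metastable structure has connective $2$-completion, so $n \ge 0$. The same holds for $Y \simeq \mathbb{S}^{-n}$, which gives $-n \ge 0$. Hence $n = 0$.

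It follows that both $X$ and $Y$ have underlying spectrum $\mathbb{S}$. By lemma~\ref{exotics0} together with the reduction to positive degree (transport along $-1$), they are exotic spheres, say $X \simeq S^{0,j}$ and $Y \simeq S^{0,k}$ with $j,k$ odd natural numbers. The tensor product formula gives $S^{0,j}\otimes S^{0,k} \simeq S^{0,jk}$. This must be the unit $S^0 = S^{0,1}$, so $jk = 1$, and since $j,k$ are positive integers, $j = k = 1$. Thus every invertible object is equivalent to $S^0$, and the Picard group is trivial.

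The main obstacle is that last step, the passage from $S^{0,jk}\simeq S^{0,1}$ to $jk=1$. It needs that the degree of the induced diagonal is an invariant of the object of $P_2\mathcal{S}_*$ up to the sign ambiguity, i.e.\ that $S^{0,j}\not\simeq S^{0,k}$ for distinct odd $j,k>0$. The remark above says this is deferred to the next section. I would argue it directly: an equivalence $S^{0,j}\to S^{0,k}$ has underlying map an automorphism $\pm 1$ of $\mathbb{S}$, and it must intertwine the metastable structures up to homotopy. Composing with $(\mathbb{S}^{\otimes 2})^{hC_2}\to\mathbb{S}^{\otimes 2}$ and using $(\pm1)^{\otimes 2}=1$, the induced diagonals satisfy $\pm j = k$ as integers, which forces $j=k$ since both are positive. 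This only uses the underlying square in $\mathcal{Sp}$ commuting up to homotopy, which any morphism in the lift category supplies.
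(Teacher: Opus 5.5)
Your proof is correct and follows the paper's argument: the symmetric monoidal forgetful functor gives an underlying sphere $\mathbb{S}^n$, the absence of metastable structures on negative spheres forces $n=0$, and $S^{0,j}\otimes S^{0,k}\simeq S^{0,jk}\simeq S^{0,1}$ then forces $j=k=1$. Your direct proof that $S^{0,a}\simeq S^{0,b}$ implies $a=\pm b$ is valid and more elementary than the paper's route: you apply $i$ to the coalgebra square and use $(\pm1)^{\otimes 2}=1$, whereas the paper only defers this fact and later deduces it from $\Omega S^{0,k}\simeq\Omega^{\infty+2}(\mathbb{RP}^\infty_{-1}\oplus\mathbb{S}/k)$.
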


\begin{proof}
Let $X$ be an invertible object of $P_2\mathcal{S}_*$. Since the forgetful functor
$P_2\mathcal{S}_* \to \mathcal{Sp}$ is symmetric monoidal, the underlying spectrum of $X$ is invertible in $\mathcal{Sp}$, hence equivalent to $\mathbb{S}^n$ for some $n \in \mathbb{Z}$. As $\mathbb{S}^n$ admits a metastable structure only for $n \ge 0$, we cannot have $n>0$, since then $X$ would admit no inverse. Thus $n=0$ and $X \simeq S^{0,k}$ for some odd natural number $k$. If $S^{0,j}$ is an inverse for $S^{0, k}$, then by the tensor product formula,
\[
S^{0,1} \simeq \mathbb{1} \simeq S^{0,j} \otimes S^{0,k} \simeq S^{0,jk}
\]
Thus $jk=1$, and hence $j=k=1$. Therefore the only invertible object is $S^{0,1}$.
\end{proof}

\subsection{Polynomial approximations}

In this subsection we recall the notion of an $m$-excisive category and the construction of the universal $m$-excisive approximation $P_m$. We then prove that the metastable category is $2$-excisive and that its stabilization is equivalent to spectra. In particular, we identify the forgetful functor to spectra with the stabilization functor. Finally, we describe its costabilization as the category of spectra equipped with a trivialization of the Tate diagonal.

We start with the basic definitions.

An $(m+1)$-cube in a category $C$ is a functor $P(m+1) \to C$ where $P(m+1)$ denotes the poset of subsets of $\{1,\dots,m+1\}$. A cube is \emph{cartesian} if it is a limit diagram, and \emph{cocartesian} if it is a colimit diagram. It is \emph{strongly cocartesian} if every restriction to a $2$-dimensional face is cocartesian. It is \emph{reduced} if it is the terminal object on every singleton. A \emph{punctured} $(m+1)$-cube is a functor $P_0(m+1) \to C$ where $P_0(m+1)$ denotes the poset of \textit{non-empty} subsets of $\{1,\dots,m+1\}$. Finally, a functor is called \emph{$m$-excisive} if it sends strongly cocartesian $(m+1)$-cubes to cartesian cubes.

With this terminology in place, we adopt the following definition \cite[cor.~2.18]{Gijsapprox}.

\begin{definition}
A pointed compactly generated category $C$ is said to be  
\emph{$m$-excisive} if it satisfies the following two properties:
\begin{enumerate}
  \item The identity functor is $m$-excisive.
  \item Every reduced strongly cocartesian punctured $(m+1)$-cube in $C$
  extends to an $(m+1)$-cube which is both cartesian and strongly cocartesian.
\end{enumerate}
We write $\mathrm{Cat}^\omega_{*, \le m} \subseteq \mathrm{Cat}_*^{\omega}$
for the full subcategory spanned by the $m$-excisive categories.
\end{definition}

We next recall the notion of $m$-excisive equivalence
\cite[def.~1.2, 1.3]{Gijsapprox}.

\begin{definition}
A left adjoint $F: C \to D$ is called an \emph{$m$-excisive equivalence}
if the unit and counit of the adjunction are $P_m$-equivalences.
An $m$-excisive equivalence $F: C \to D$ is called an
\emph{$m$-excisive approximation} if $D$ is $m$-excisive.
\end{definition}

The following theorem of Heuts
\cite[thm.~1.7]{Gijsapprox}
establishes the existence and basic properties of such approximations.

\begin{thm}
The inclusion $\mathrm{Cat}^\omega_{*, \le m} \hookrightarrow \mathrm{Cat}^\omega_*$
admits a left adjoint $P_m$ with the following properties:
\begin{enumerate}
\item For every pointed compactly generated category $C$, the unit
$C \to P_m C$ is an $m$-excisive approximation of $C$.
\item The functor $P_m$ preserves finite limits.
\end{enumerate}
\end{thm}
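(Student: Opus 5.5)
The plan is to construct $P_m C$ explicitly as a localization of $C$ and then verify the universal property, following the strategy of \cite{Gijsapprox}. Let $C$ be pointed and compactly generated, and write $P_m\mathrm{id}_C$ for the $m$-excisive approximation of the identity functor of $C$, in the sense of Goodwillie calculus. Call a map $f$ in $C$ a \emph{$P_m$-equivalence} if $P_m\mathrm{id}_C(f)$ is an equivalence. I would define $P_m C$ as the full subcategory of $C$ on the objects that are local with respect to the $P_m$-equivalences between compact objects, so that $P_m C$ is the localization $C[W^{-1}]$ at the strongly saturated class generated by those maps.
\begin{enumerate}
\item Since the generating set of maps is small, $P_m C$ is an accessible localization. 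It is therefore presentable and pointed.
\item The localization functor $L\colon C \to P_m C$ is a left adjoint. It sends compact objects to compact objects because the right adjoint inclusion preserves filtered colimits: $P_m\mathrm{id}$ commutes with filtered colimits, as it is built from finite limits and sequential colimits. Hence $P_m C$ is compactly generated.
\end{enumerate}

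Next I would verify that $P_m C$ is $m$-excisive. The key input is that $P_m P_m \simeq P_m$ on functors. I would use it to show that the identity of $P_m C$ agrees with the restriction of $P_m\mathrm{id}_C$, and is therefore $m$-excisive. For condition (2) of the definition, take a reduced strongly cocartesian punctured $(m+1)$-cube in $P_m C$, form its colimit, and compare it with the limit of the punctured cube. The $m$-excisiveness of the identity should force the canonical map between them to be a $P_m$-equivalence between local objects, hence an equivalence, giving the required cartesian and strongly cocartesian extension.

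For the universal property, let $D$ be $m$-excisive and let $F\colon C \to D$ be a left adjoint preserving compact objects. Then $F$ intertwines $P_m\mathrm{id}_C$ with $P_m\mathrm{id}_D \simeq \mathrm{id}_D$. So $F$ inverts $P_m$-equivalences between compact objects and factors uniquely through $L$. This gives the left adjoint $P_m$ to the inclusion. To see that the unit $C \to P_m C$ is an $m$-excisive approximation, I would check directly that the unit and counit of the adjunction $L \dashv \iota$ are $P_m$-equivalences: the counit is an equivalence, and the unit is a $P_m$-equivalence by construction of the localization.

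Finally, for preservation of finite limits, I would use that finite limits in $\mathrm{Cat}^\omega_*$ are computed on compact objects as finite limits of small categories. The local condition is detected pointwise, and $P_m\mathrm{id}$ of a pullback category is computed componentwise, because $P_m$ is built from finite limits and filtered colimits, which commute with the relevant pullbacks. This would identify $P_m(A \times_B C)$ with $P_mA \times_{P_mB} P_mC$.

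I expect the main obstacle to be the $m$-excisiveness step, specifically showing that the identity of the localized category is $m$-excisive. This requires controlling how $P_m\mathrm{id}$ interacts with the localization, and in particular knowing that the local objects are closed under the limits and colimits appearing in $P_m$'s construction. I would first try to establish it through Goodwillie's $T_m$-construction combined with the idempotence $P_mP_m \simeq P_m$, and fall back on Heuts' Tate-coalgebra description if that proves insufficient.
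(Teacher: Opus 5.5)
\textbf{There is a genuine gap: the construction of $P_mC$ as a localization of $C$ cannot produce an $m$-excisive category.} In general $P_mC$ is \emph{not} a reflective localization of $C$, so no class of maps in $C$ yields it as a full subcategory of local objects.

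Take $m=1$ and $C=\mathcal{S}_*$. Here $P_1\mathcal{S}_*\simeq\mathcal{Sp}$ with unit $\Sigma^\infty$, and the right adjoint $\Omega^\infty$ is not fully faithful. In your setup, $W$ consists of maps $f$ of finite pointed spaces with $\Omega^\infty\Sigma^\infty f$ an equivalence, i.e.\ stable equivalences. These are integral homology equivalences, so every nilpotent space is $W$-local; in particular $S^1$ and $S^2$ are. Hence the square with corners $S^1$, $*$, $*$, $S^2$ is a pushout in your $C[W^{-1}]$. It is not a pullback there, since limits in a reflective subcategory are computed in $\mathcal{S}_*$ and $\Omega S^2\not\simeq S^1$. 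So your candidate is not $1$-excisive.

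This means the step you flagged as the main obstacle is not merely hard but false. $S^2$ lies in your $P_1\mathcal{S}_*$, yet $P_1\mathrm{id}(S^2)\simeq\Omega^\infty\Sigma^\infty S^2\not\simeq S^2$, so the identity of your category is not the restriction of $P_1\mathrm{id}$.

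Your observation that a left adjoint into an $m$-excisive $D$ inverts $P_m$-equivalences is correct. However, it only shows that $C[W^{-1}]$ maps to every $m$-excisive target. It does not produce the reflection, because $C[W^{-1}]$ is not in $\mathrm{Cat}^\omega_{*,\le m}$.

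\textbf{The missing idea is that $P_mC$ must be built outside $C$.} One has to create new morphisms, as $\mathcal{Sp}$ has new maps compared with $\mathcal{S}_*$, rather than invert existing ones. This is what Heuts' construction does. The paper cites the theorem as \cite[thm.~1.7]{Gijsapprox} and recalls the construction right after the statement: $P_mC$ is the sequential colimit in $\mathrm{Cat}^\omega_*$ of
\[
C\xrightarrow{L_m}T_mC\xrightarrow{L_m}T_mT_mC\to\cdots
\]
Here $T_mC$ consists of special strongly cocartesian punctured $(m+1)$-cubes, and the right adjoints take limits of cubes. For $m=1$ this is the stabilization
\[
\mathrm{colim}(C\xrightarrow{\Sigma}C\xrightarrow{\Sigma}\cdots)\simeq\lim(\cdots\xrightarrow{\Omega}C\xrightarrow{\Omega}C),
\]
which is exactly how the new mapping spaces of $\mathcal{Sp}$ arise. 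The $m$-excisiveness of this colimit, its universal property, and the preservation of finite limits are all established in \cite{Gijsapprox} from this explicit tower. The paper does not reprove any of it; it imports the theorem from Heuts.
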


As the notation suggests, the metastable category $P_2\mathcal{S}_*$ coincides with $P_2$ applied to the category of pointed spaces $\mathcal{S}_*$ \cite[ex.~3.1]{Gijsapprox}. More generally, Heuts identifies $P_m\mathcal{S}_*$ with the category of $m$-truncated spectral Tate coalgebras. We will not establish this identification here. Instead, we give a direct argument that the metastable category is $2$-excisive and that its stabilization is equivalent to spectra.

To this end, we briefly recall the construction of $P_m$. Let $C$ be a pointed compactly generated category. Denote by $T_m C$ the full subcategory of $\mathrm{Fun}(P_0(m+1), C)$ spanned by the special punctured strongly cocartesian $(m+1)$-cubes. We will discuss the structure of $T_m C$ in more detail later.

There is a functor $L_m : C \to T_m C$ which sends an object $X$ and a subset $S \subseteq \{1,\dots,m+1\}$ to the object $X * S$, where $*$ denotes the join. Equivalently, this may be identified with $\bigvee_{|S|-1}\Sigma X$. The functor $L_m$ admits a right adjoint given by taking the limit of the cube. If $C$ is stable, this adjunction is an equivalence, since strongly cocartesian cubes are then cartesian (see \cite[lem. 2.12 or prop. 2.16 $n = 1$]{Gijsapprox}).

The $m$-excisive approximation of $C$ is defined as the sequential colimit
\[
P_m C \simeq \mathrm{colim}\bigl( C \xlongrightarrow{L_m} T_m C \xlongrightarrow{L_m} T_m T_m C \longrightarrow \cdots \bigr)
\]
This construction is completely analogous to the classical Goodwillie calculus of functors. In fact, it can be used to define the Goodwillie approximations. Let $F : C \to D$ be a reduced functor between pointed compactly generated categories. We define $T_mF$ as the composite
\[
C \xlongrightarrow{L_m} T_mC \xlongrightarrow{F_*} T_mD \xlongrightarrow{\mathrm{lim}} D
\]
By the universal property of limits, there is a natural transformation $F \to T_mF$, and we define the $m^{\mathrm{th}}$ Goodwillie approximation by
\[
P_mF := \mathrm{colim}(F \longrightarrow T_mF \longrightarrow T_mT_mF \longrightarrow \cdots)
\]
With this in place, we can prove the following. 

\begin{lem}
Let $F$ be a reduced endofunctor of $\mathcal{Sp}$. Then
\[
P_m \mathrm{coAlg}\, F \simeq \mathrm{coAlg}\, P_m F
\]
\end{lem}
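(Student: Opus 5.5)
The plan is to compute the stages $T_m^k\,\mathrm{coAlg}\,F$ of the sequential colimit defining $P_m\,\mathrm{coAlg}\,F$ one at a time, and to recognise each of them as a coalgebra category. Write $C = \mathrm{coAlg}\,F$. By lemma~\ref{propcoalg}, $C$ is presentable and pointed, and the forgetful functor $U\colon C \to \mathcal{Sp}$ is conservative and creates colimits. The first step is an identification
\[
T_m\,\mathrm{coAlg}\,F \simeq \mathrm{coAlg}\,T_mF,
\]
compatible with the maps $L_m$ on one side and $F \to T_mF$ on the other.

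To prove this, I would analyse a special punctured strongly cocartesian $(m+1)$-cube in $C$. Since $U$ creates colimits, such a cube is the same as a punctured strongly cocartesian cube $\mathcal{X}$ in $\mathcal{Sp}$ together with compatible coalgebra maps $\mathcal{X}(S) \to F\mathcal{X}(S)$. Because $\mathcal{Sp}$ is stable, the adjunction $L_m \dashv \lim$ on $\mathcal{Sp}$ is an equivalence, so $\mathcal{X} \simeq L_m Y$ with $Y = \lim \mathcal{X}$. A family of compatible structure maps $L_mY \to F_*L_mY$ then corresponds, via the adjunction $L_m \dashv \lim$, to a single map
\[
Y \longrightarrow \lim F_*L_mY = (T_mF)(Y).
\]
This is exactly a $T_mF$-coalgebra structure on $Y$. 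I expect to make this precise formally, by writing $T_mC$ as a pullback of the functor categories defining $\mathrm{coAlg}$ and using that the pullback defining coalgebras commutes with passing to cube categories. One point needs care: $F$ is only reduced, not exact, so $F_*$ does not preserve strongly cocartesian cubes. For this reason I would work with the cube of structure maps $\mathcal{X} \to F_*\mathcal{X}$ as a lax datum, where only the source cube is required to be strongly cocartesian; that is precisely what the description above uses.

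Iterating the first step gives $T_m^k\,\mathrm{coAlg}\,F \simeq \mathrm{coAlg}\,T_m^kF$. Under this identification, the transition functors $L_m$ become the functors $\mathrm{coAlg}\,T_m^kF \to \mathrm{coAlg}\,T_m^{k+1}F$ induced (lemma~\ref{propcoalg}(3)) by the natural transformations $T_m^kF \to T_m^{k+1}F$. All these functors commute with the forgetful functors to $\mathcal{Sp}$.

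The main obstacle is the final step: commuting the colimit through $\mathrm{coAlg}$, that is, showing
\[
\mathrm{colim}_k\,\mathrm{coAlg}\,T_m^kF \simeq \mathrm{coAlg}\,(\mathrm{colim}_k T_m^kF),
\]
with the colimit on the left taken in $\mathrm{Cat}^\omega_*$ (equivalently in $\mathrm{Pr}^L$). The functor $\mathrm{coAlg}$ is a right adjoint, so it preserves limits but not obviously colimits. My first attempt would pass to right adjoints: a colimit in $\mathrm{Pr}^L$ is the limit in $\mathrm{Pr}^R$ of the right adjoints of the transition functors. I would then try to identify that limit concretely as $\mathcal{Sp}$-objects $Y$ equipped with compatible maps $Y \to T_m^kF(Y)$ for all $k$, and this data is a map into the sequential colimit only after a compactness argument. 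Concretely, I would restrict to compact objects, that is, finite spectra underlying compact coalgebras, and use that maps out of compact objects commute with filtered colimits. That shows the canonical comparison functor is fully faithful on compact generators and essentially surjective. If this route stalls, the fallback is to reduce, via the limit-preservation of $\mathrm{coAlg}$ and Heuts' characterisation of $m$-excisive approximations, to checking that $\mathrm{coAlg}\,P_mF$ is $m$-excisive and that the comparison map is an $m$-excisive equivalence.
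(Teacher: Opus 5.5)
Your $T_m$ step is the paper's argument. Cubes in $\mathrm{coAlg}\,F$ are $F_*$-coalgebras in cubes, and transporting along $L_m\colon \mathcal{Sp}\simeq T_m\mathcal{Sp}$ turns $L_mY\to F_*L_mY$ into $Y\to \lim F_*L_mY=T_mF(Y)$. Iterating, the transition functors become those induced by $T_m^kF\to T_m^{k+1}F$. Your caveat that a non-exact $F$ does not preserve special cubes is correct, and it fixes an imprecision in the paper, which writes $F_*\colon T_m\mathcal{Sp}\to T_m\mathcal{Sp}$. To close it, state two facts explicitly:
\begin{itemize}
\item $\mathrm{Map}(L_mY,\mathcal{Z})\simeq \mathrm{Map}(Y,\lim\mathcal{Z})$ holds for every \emph{reduced} punctured cube $\mathcal{Z}$, because $L_mY$ is the restriction of the cube left Kan extended from $Y$ at $\emptyset$ and $0$ at the singletons.
\item $F_*L_mY$ is reduced because $F(0)\simeq 0$.
\end{itemize}

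The gap is the interchange of $\mathrm{coAlg}$ with the colimit, which you flag but do not close. The paper also handles this in one line, saying $\mathrm{coAlg}$ preserves filtered colimits ``being defined by a pullback construction''. That is not a proof either: the colimit defining $P_m$ is taken in $\mathrm{Pr}^L$, not in $\mathrm{Cat}$, and $\mathrm{coAlg}$ is a right adjoint.

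Your right-adjoint attempt fails as described. The right adjoints of $\mathrm{coAlg}\,T_m^kF\to \mathrm{coAlg}\,T_m^{k+1}F$ change the underlying spectrum; already for $0\to G$, the right adjoint sends $(Y,b)$ to $\mathrm{fib}(b)$. So the limit is not ``spectra $Y$ with compatible maps to all $T_m^kF(Y)$''.

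The compactness route is the right repair, but only part of it comes for free. For $X$ with finite underlying spectrum you do get $\mathrm{Map}_{\mathrm{coAlg}\,P_mF}(X,Y)\simeq \mathrm{colim}_k\mathrm{Map}_{\mathrm{coAlg}\,T_m^kF}(X,Y)$, since the mapping space is an equalizer and $\mathrm{Map}(X,-)$ commutes with the pointwise colimit $P_mF=\mathrm{colim}_kT_m^kF$. You also get that every $P_mF$-structure on such $X$ factors through a finite stage. To identify the $\mathrm{Pr}^L$-colimit with $\mathrm{Ind}$ of the colimit of these objects, however, you also need them to be compact in, and to generate, each $\mathrm{coAlg}\,T_m^kF$ and $\mathrm{coAlg}\,P_mF$.
\begin{itemize}
\item Compactness follows if these functors preserve filtered colimits. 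That is not available for $(-^{\otimes 2})^{hC_2}$, which does not commute with infinite direct sums.
\item Generation, which you need for essential surjectivity, is a separate claim that you assert without argument.
\end{itemize}

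When $F$ is already $m$-excisive the problem disappears: $F\to T_mF$ is an equivalence, the tower is constant, and your first step alone suffices. This covers the $2$-excisivity half of lemma~\ref{forgetstabilization}. The stabilization half ($m=1$, $F=(-^{\otimes 2})^{hC_2}$) needs the genuine interchange, under hypotheses you should make explicit.
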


\begin{proof}
It suffices to prove that
\[
T_m \mathrm{coAlg}\, F \simeq \mathrm{coAlg}\, T_m F
\]
since $P_m$ is defined as the filtered colimit of the iterates of $T_m$, and $\mathrm{coAlg}$ preserves filtered colimits of endofunctors, being defined by a pullback construction.

We now prove the statement for $T_m$. A cube in $\mathrm{coAlg}\, F$ is equivalently a coalgebra for $F$ in the category of cubes. More precisely, for any category $C$ and endofunctor $F$ of $C$, one may define the category $\mathrm{coAlg}_C F$ of $F$-coalgebras in $C$ exactly as in the spectral case. With this notation, we obtain a natural identification
\[
T_m \mathrm{coAlg}\, F \simeq \mathrm{coAlg}_{T_m\mathcal{Sp}} F_*
\]
where $F_* : T_m\mathcal{Sp} \to T_m\mathcal{Sp}$ is given by postcomposition with $F$.

Consider the adjunction
\[\begin{tikzcd} {L_m:\mathcal{Sp}} & {T_m\mathcal{Sp}: \mathrm{lim}} \arrow[""{name=0, anchor=center, inner sep=0}, shift left, from=1-1, to=1-2] \arrow[""{name=1, anchor=center, inner sep=0}, shift left, from=1-2, to=1-1] \arrow["\dashv"{anchor=center, rotate=-90}, draw=none, from=0, to=1] \end{tikzcd}\]
Since $\mathcal{Sp}$ is stable, strongly cocartesian cubes are cartesian, and hence this adjunction is an equivalence. Using this equivalence, the endofunctor $F_* : T_m\mathcal{Sp} \to T_m\mathcal{Sp}$ corresponds to the composite
\[
\mathcal{Sp}
\xlongrightarrow{L_m}
T_m\mathcal{Sp}
\xlongrightarrow{F_*}
T_m\mathcal{Sp}
\xlongrightarrow{\mathrm{lim}}
\mathcal{Sp}
\]
which is precisely $T_mF$ by definition. Therefore
\[
\mathrm{coAlg}_{T_m\mathcal{Sp}} F_* \simeq \mathrm{coAlg}\, T_mF
\]
and the claim follows.
\end{proof}

\begin{lem}
Let $1 \to G \leftarrow F$ be a span of natural transformations between reduced endofunctors of $\mathcal{Sp}$. Then
\[
P_m\mathrm{Lift}(1 \to G \leftarrow F) \simeq \mathrm{Lift}(1 \to P_mG \leftarrow P_mF)
\]
In particular, if $F$ and $G$ are $m$-excisive functors, then the category of lifts is $m$-excisive.
\end{lem}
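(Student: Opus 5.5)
The plan is to reduce to the previous lemma, $P_m \mathrm{coAlg}\,F \simeq \mathrm{coAlg}\,P_mF$, using that $\mathrm{Lift}(1 \to G \leftarrow F)$ is defined as a pullback of coalgebra categories,
\[
\mathrm{Lift}(1 \to G \leftarrow F) \simeq \mathcal{Sp} \times_{\mathrm{coAlg}\,G} \mathrm{coAlg}\,F .
\]
The first step is to check that this pullback can be taken in $\mathrm{Cat}^\omega_*$. All functors in the diagram are colimit-preserving, by lemma \ref{propcoalg}, and lemma \ref{proplift} shows the pullback is presentable and pointed. One still has to see that it is compactly generated and that the legs preserve compact objects. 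The plan is to argue this when $F$ and $G$ are accessible, or when they preserve filtered colimits, so that the forgetful functors create filtered colimits as well. This is automatic for the metastable span, and it is the setting of \cite{Gijsapprox}.

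The second step is to apply $P_m$ to the cospan $\mathcal{Sp} \to \mathrm{coAlg}\,G \leftarrow \mathrm{coAlg}\,F$. By Heuts' theorem, $P_m$ preserves finite limits, and in particular pullbacks. Hence
\[
P_m\mathrm{Lift}(1 \to G \leftarrow F) \simeq P_m\mathcal{Sp} \times_{P_m\mathrm{coAlg}\,G} P_m\mathrm{coAlg}\,F .
\]
Since $\mathcal{Sp}$ is stable, it is $1$-excisive and therefore $m$-excisive, so $P_m\mathcal{Sp} \simeq \mathcal{Sp}$; equivalently, $L_m$ is an equivalence, as recalled above. The previous lemma then identifies the other two corners with $\mathrm{coAlg}\,P_mG$ and $\mathrm{coAlg}\,P_mF$.

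The third step is naturality. The equivalence $P_m\mathrm{coAlg}\,F \simeq \mathrm{coAlg}\,P_mF$ must be natural in $F$, so that the maps induced by $F \to G$ become the maps induced by $P_mF \to P_mG$. It must also be compatible with the map $\mathcal{Sp} \to \mathrm{coAlg}\,1$, noting that $P_m 1 = 1$. This naturality is visible from the proof of the previous lemma. There the identification $T_m\mathrm{coAlg}\,F \simeq \mathrm{coAlg}\,T_mF$ is built from postcomposition $F_*$ and the equivalence $L_m \dashv \lim$ on $\mathcal{Sp}$, and both are functorial in $F$. Passing to the sequential colimit gives the cospan $\mathcal{Sp} \to \mathrm{coAlg}\,P_mG \leftarrow \mathrm{coAlg}\,P_mF$, whose pullback is $\mathrm{Lift}(1 \to P_mG \leftarrow P_mF)$.

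For the final clause, if $F$ and $G$ are $m$-excisive then $P_mF \simeq F$ and $P_mG \simeq G$. The formula therefore gives $P_m\mathrm{Lift} \simeq \mathrm{Lift}$ via the unit, so the category of lifts lies in the essential image of $P_m$, which is the full subcategory $\mathrm{Cat}^\omega_{*,\le m}$; that is, it is $m$-excisive. I expect the main obstacle to be the first step. One must justify that the pullback defining $\mathrm{Lift}$ is a finite limit in $\mathrm{Cat}^\omega_*$, not merely in $\mathrm{Pr}^L_*$, because that is the category where Heuts' finite-limit preservation applies. The first attempt will be to show that the forgetful functor to $\mathcal{Sp}$ detects compact generators: when $F$ and $G$ preserve filtered colimits, coalgebras whose underlying spectrum is compact should be compact and should generate.
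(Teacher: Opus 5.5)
Your proposal is the paper's proof. Both view $\mathrm{Lift}(1 \to G \leftarrow F)$ as a pullback in $\mathrm{Cat}^\omega_*$, apply $P_m$ using that it preserves finite limits, and identify the corners via the previous lemma together with $P_m\mathcal{Sp} \simeq \mathcal{Sp}$ (the paper's ``$P_m 1 \simeq 1$''). The paper simply asserts that the pullback lives in $\mathrm{Cat}^\omega_*$ and says nothing about naturality, so your extra care on both points goes beyond it.

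One caution on your planned fix for the compact-generation step: it cannot rest on $F$ and $G$ preserving filtered colimits in the main application, because neither $(-^{\otimes 2})^{tC_2}$ nor $(-^{\otimes 2})^{hC_2}$ does. For instance, the Tate power sends $\mathbb{S}[\sfrac{1}{2}] = \mathrm{colim}(\mathbb{S} \xrightarrow{2} \mathbb{S} \xrightarrow{2} \cdots)$ to $0$, while the colimit of the Tate powers is $\mathbb{S}^\wedge_2[\sfrac{1}{2}] \neq 0$. So only accessibility is automatic for the metastable span. The equivalent model $\mathrm{Null}(1 \to \Sigma(-^{\otimes 2})_{hC_2})$, whose functor does preserve filtered colimits, is a more promising place to run that argument.
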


\begin{proof}
The category of lifts is defined by a pullback diagram in $\mathrm{Cat}_*^\omega$. Since $P_m$ preserves finite limits, applying it to this diagram yields
\[
P_m\mathrm{Lift}(1 \to G \leftarrow F)
\simeq
\mathrm{Lift}(1 \to P_mG \leftarrow P_mF)
\]
where we use the previous lemma to identify the coalgebra terms and $P_m1 \simeq 1$.
\end{proof}

We now apply this to the metastable category. Recall the fiber sequence of endofunctors
\[
(-^{\otimes 2})_{hC_2}
\longrightarrow
(-^{\otimes 2})^{hC_2}
\longrightarrow
(-^{\otimes 2})^{tC_2}
\]
This coincides with the Goodwillie filtration of $(-^{\otimes 2})^{hC_2}$. Indeed, $(-^{\otimes 2})^{tC_2}$ is linear \cite[prop.~III.1.1]{NikolausScholze}, while $(-^{\otimes 2})_{hC_2}$ is $2$-homogeneous. From this it follows immediately that

\begin{lem}
	\label{forgetstabilization}
The category $P_2\mathcal{S}_*$ is $2$-excisive, and the forgetful functor exhibits $\mathcal{Sp}$ as its stabilization.
\end{lem}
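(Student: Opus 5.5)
The statement has two halves, and I would prove them in order: first that $P_2\mathcal{S}_*$ is $2$-excisive, then that the forgetful functor exhibits $\mathcal{Sp}$ as its stabilization.

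For $2$-excisiveness I would invoke the last lemma directly. In the span
\[
1 \xlongrightarrow{\Delta} (-^{\otimes 2})^{tC_2} \xlongleftarrow{\mathrm{can}} (-^{\otimes 2})^{hC_2},
\]
the functors are $G=(-^{\otimes 2})^{tC_2}$ and $F=(-^{\otimes 2})^{hC_2}$, and I need both to be $2$-excisive.

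\begin{itemize}
\item $G$ is exact by \cite[prop.~III.1.1]{NikolausScholze}, hence $1$-excisive and a fortiori $2$-excisive.
\item $(-^{\otimes 2})_{hC_2}$ is $2$-homogeneous: the tensor square is the diagonal of the bilinear functor $\otimes$, hence $2$-excisive, and homotopy orbits commute with the limits appearing in excision because finite limits are colimits in $\mathcal{Sp}$.
\item $F$ sits in the fiber sequence $(-^{\otimes 2})_{hC_2}\to F\to G$. The class of $2$-excisive functors $\mathcal{Sp}\to\mathcal{Sp}$ is closed under fibers, since limits commute with limits, so $F$ is $2$-excisive.
\end{itemize}

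Therefore $P_2F\simeq F$ and $P_2G\simeq G$, and the lemma gives $P_2\,P_2\mathcal{S}_*\simeq P_2\mathcal{S}_*$. Hence $P_2\mathcal{S}_*$ lies in the essential image of $P_2$, which is $\mathrm{Cat}^\omega_{*,\le 2}$ since that is a reflective subcategory. I also need $P_2\mathcal{S}_*$ to be compactly generated. I would argue this from the pullback description: the forgetful functor creates colimits, and $F$ and $G$ preserve filtered colimits, so the left adjoint to the forgetful functor sends compact generators of $\mathcal{Sp}$ to compact generators.

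For stabilization I would apply the same mechanism with $P_1$ in place of $P_2$. Since $P_1$ preserves finite limits, the lemma gives
\[
\mathrm{Sp}(P_2\mathcal{S}_*) \simeq P_1 P_2\mathcal{S}_* \simeq \mathrm{Lift}(1 \to P_1G \leftarrow P_1F),
\]
using that the $1$-excisive approximation of a pointed compactly generated category is its stabilization \cite{Gijsapprox}. Now $P_1G=G$ because $G$ is linear. For $F$, applying the exact functor $P_1$ to the fiber sequence, and using $P_1$ of a $2$-homogeneous functor vanishes, gives $P_1F\simeq G$ with $\mathrm{can}$ becoming the identity. So the category of lifts is $\mathrm{Lift}(1\to G\xleftarrow{=}G)$. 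Since $\mathrm{coAlg}$ sends an equivalence to an equivalence, the right vertical functor in the defining pullback is an equivalence, and the pullback collapses to $\mathcal{Sp}$.

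The remaining point, and the one I expect to be the main obstacle, is checking that under these identifications the stabilization functor $P_2\mathcal{S}_*\to P_1P_2\mathcal{S}_*\simeq\mathcal{Sp}$ is the forgetful functor. I would do this by naturality of the unit $1\Rightarrow P_1$ applied to the pullback square. Its component on the bottom-left corner $\mathcal{Sp}$ is an equivalence, so the induced map on pullbacks is the projection to $\mathcal{Sp}$, which is the forgetful functor. The subtle part is bookkeeping the identification $P_1\mathrm{coAlg}\,F\simeq\mathrm{coAlg}\,P_1F$ compatibly with these units, which I would trace through the $T_m$ argument in the earlier proof.
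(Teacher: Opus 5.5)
Apart from one step, this is the paper's proof. For $2$-excisiveness you use the lemma $P_m\mathrm{Lift}(1\to G\leftarrow F)\simeq\mathrm{Lift}(1\to P_mG\leftarrow P_mF)$, together with the facts that $(-^{\otimes 2})^{tC_2}$ is exact and $(-^{\otimes 2})^{hC_2}$ is its extension by the $2$-homogeneous $(-^{\otimes 2})_{hC_2}$. For stabilization you apply $P_1$ and collapse $\mathrm{Lift}(1\to G\xleftarrow{\simeq}G)$ to $\mathcal{Sp}$.

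The identification of the unit with the forgetful functor $U$ is left implicit in the paper, and it is easier than you fear. Naturality of $1\Rightarrow P_1$ gives $P_1(U)\circ\eta\simeq\eta_{\mathcal{Sp}}\circ U$. Moreover, $P_1(U)$ is an equivalence, being a base change of the equivalence $\mathrm{coAlg}\,P_1F\to\mathrm{coAlg}\,P_1G$. So no units need to be traced through the coalgebra corners.

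The step that fails is your proof of compact generation, which the paper never attempts. There are three problems.

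\begin{enumerate}
\item The forgetful functor has no left adjoint. If it had one, it would preserve $\Omega$ as well as $\Sigma$. Being conservative, it would then make every counit $\Sigma\Omega X\to X$ an equivalence. In particular $\Omega S^0$ would have underlying spectrum $\mathbb{S}^{-1}$, which carries no metastable structure.
\item $F$ and $G$ do not preserve filtered colimits. By the Segal conjecture, $G$ is $2$-completion on bounded-below spectra, and $\bigoplus_{\mathbb N}\mathbb{S}^\wedge_2\to(\bigoplus_{\mathbb N}\mathbb{S})^\wedge_2$ is not surjective on $\pi_0$. Since $(-^{\otimes 2})_{hC_2}$ does preserve filtered colimits, $F$ fails as well.
\item No argument from the pullback description can work. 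Suppose the pullback, taken literally, were compactly generated with stabilization functor $U$. Then $U$ would preserve compact objects, as the stabilization functor of a compactly generated category always does. Hence every underlying spectrum would be a filtered colimit of finite spectra admitting metastable structures. By lemma \ref{connectobs}, such finite spectra have no rational homotopy in negative degrees, and neither does any filtered colimit of them. Yet $\Sigma^{-1}\mathbb{Q}$ lies in $P_2\mathcal{S}_*$ by example \ref{2inv}.
\end{enumerate}

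The paper's proof tacitly works in $\mathrm{Cat}^\omega_*$, that is, with Heuts' compactly generated $P_2\mathcal{S}_*$. You should adopt that convention explicitly rather than try to derive compact generation from the pullback.
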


\begin{proof}
The first statement follows from the previous lemma, since the functors defining the metastable category are $2$-excisive.

For the second statement, observe that
\[
P_1 P_2\mathcal{S}_*
\simeq
\mathrm{Lift}(1 \to P_1(-^{\otimes 2})^{tC_2} \leftarrow P_1(-^{\otimes 2})^{hC_2})
\simeq
\mathrm{Lift}(1 \to (-^{\otimes 2})^{tC_2} \xlongleftarrow{\sim} (-^{\otimes 2})^{tC_2})
\]
In general, for any endofunctor $F$, the forgetful functor
\[
\mathrm{Lift}(1 \longrightarrow F \xlongleftarrow{\sim} F)
\longrightarrow
\mathcal{Sp}
\]
is an equivalence, as it is obtained by pullback along an equivalence. This identifies $P_1 P_2\mathcal{S}_*$ with $\mathcal{Sp}$, and the claim follows.
\end{proof}

We conclude this subsection by computing the costabilization of the metastable category. Recall that for any category $C$, the costabilization \cite[def. 3.1.2]{Yuqing} is defined by
\[
\mathrm{coSp}(C) := \mathrm{Sp}(C^{\mathrm{op}})^{\mathrm{op}}
\]
and can be computed explicitly \cite[prop. 3.1.9]{Yuqing} as
\[
\mathrm{coSp}(C) \simeq \mathrm{lim}(\cdots \to C \xlongrightarrow{\Sigma} C \xlongrightarrow{\Sigma} C)
\]
We now need a dual version of Goodwillie calculus. This theory was developed by McCarthy \cite{dualcalc}, but we do not require its full strength here. We define the first dual approximation of a reduced functor $F : C \to D$ to be
\[
P^1F(X) := \mathrm{lim}_n \Sigma^n F(\Omega^n X)
\]
With this notation in place, we are now ready to prove the following.

\begin{lem}
Let $F$ be a reduced endofunctor of $\mathcal{Sp}$. Then
\[
\mathrm{coSp}(\mathrm{coAlg}\, F) \simeq \mathrm{coAlg}\, P^1F
\]
\end{lem}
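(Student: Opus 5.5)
We would dualize the proof given above for $P_m \mathrm{coAlg}\, F \simeq \mathrm{coAlg}\, P_m F$: first show that one loop-step of the inverse system defining $\mathrm{coSp}$ is again a coalgebra category, and then pass to the limit. Concretely, we use the formula
\[
\mathrm{coSp}(\mathrm{coAlg}\, F) \simeq \mathrm{lim}\bigl(\cdots \to \mathrm{coAlg}\, F \xlongrightarrow{\Sigma} \mathrm{coAlg}\, F \xlongrightarrow{\Sigma} \mathrm{coAlg}\, F\bigr).
\]
By lemma \ref{propcoalg}, the forgetful functor creates colimits. Hence $\Sigma$ in $\mathrm{coAlg}\, F$ sends $(X \to FX)$ to the composite $\Sigma X \to \Sigma FX \to F(\Sigma X)$, where the second map is the colimit-comparison (assembly) map.

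\textbf{Step 1: rewrite an object of the limit.} An object of the limit is a sequence of coalgebras $(X_n \to FX_n)$ with equivalences $\Sigma X_{n+1} \simeq X_n$ compatible with the structure maps. Since $\mathcal{Sp}$ is stable, the underlying spectra are determined by $X := X_0$ via $X_n \simeq \Omega^n X$. The structure maps are then maps $\Omega^n X \to F(\Omega^n X)$ subject to compatibility. By adjunction, these are equivalently maps
\[
X \longrightarrow \Sigma^n F(\Omega^n X)
\]
which are compatible along the transition maps
\[
\Sigma^{n+1}F(\Omega^{n+1}X) \longrightarrow \Sigma^n F(\Omega^n X)
\]
induced by the assembly map $\Sigma F(\Omega Y) \to F(Y)$ with $Y = \Omega^n X$. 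This compatible family is exactly a single map $X \to \mathrm{lim}_n \Sigma^n F(\Omega^n X) = P^1F(X)$. Objectwise, this is the content of the claim.

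\textbf{Step 2: make it functorial.} Rather than arguing objectwise, we work with the pullback definition of $\mathrm{coAlg}$. The loop functor $\Omega$ on $\mathrm{coAlg}$ is awkward, since limits are not created by the forgetful functor. We therefore avoid it, exactly as in the $T_m$ argument, and proceed as follows.
\begin{enumerate}
\item Write $\mathrm{coAlg}\, F \simeq \mathrm{coAlg}_{\mathcal{Sp}}F$, and transport the $\Sigma$-tower of $\mathcal{Sp}$ along the equivalence $\Sigma\colon \mathcal{Sp} \to \mathcal{Sp}$.
\item Observe that the $n$-th stage of the tower is equivalent to $\mathrm{coAlg}(\Sigma^n F \Omega^n)$, via the equivalence $\Omega^n$ of underlying categories.
\item Check that under this equivalence the transition functors become the functors induced, as in lemma \ref{propcoalg}(3), by the natural transformations $\Sigma^{n+1}F\Omega^{n+1} \to \Sigma^n F \Omega^n$ coming from assembly.
\end{enumerate}
The tower is thereby identified with $\mathrm{coAlg}$ applied to the tower of endofunctors $\Sigma^nF\Omega^n$.

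\textbf{Step 3: commute $\mathrm{coAlg}$ with the limit.} The functor $\mathrm{coAlg}$ is a right adjoint, by the adjunction $K \dashv \mathrm{coAlg}$, so it preserves limits of endofunctors. Alternatively, one can see this directly: $\mathrm{coAlg}$ is defined by a pullback, and $\mathrm{Fun}(\Delta^1,\mathcal{Sp})$ commutes with limits in the target variable over the fixed base $\mathcal{Sp}$. Either way, the limit of the tower is $\mathrm{coAlg}(\mathrm{lim}_n \Sigma^nF\Omega^n) = \mathrm{coAlg}\, P^1F$. There is one caveat: the limit here is a limit of categories, all of whose underlying categories are identified with $\mathcal{Sp}$ compatibly. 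We verify that this is a limit in $\mathrm{Cat}_{/\mathcal{Sp}}$ along equivalences, so that it is computed fiberwise, i.e. by the limit of the endofunctors. Accessibility of $P^1F$ is not needed for the equivalence itself.

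\textbf{Main obstacle.} We expect Step 2 to be the delicate step: identifying, coherently rather than just on objects, the suspension functor of $\mathrm{coAlg}\, F$ with the functor induced by the assembly transformation $\Sigma F \Omega \Rightarrow F$. Our first attempt would be to obtain $\Sigma$ on $\mathrm{coAlg}\, F$ as the colimit-preserving functor from the pushout $0 \leftarrow X \to 0$. Since colimits are created in $\mathcal{Sp}$ (lemma \ref{propcoalg}(2)), this reduces to the colimit-comparison square written out in the remark after lemma \ref{propcoalg}. We would then conjugate by $\Omega$ to obtain $\mathrm{coAlg}\,F \simeq \mathrm{coAlg}(\Sigma F\Omega)$, where the transition functor is the one induced by assembly.
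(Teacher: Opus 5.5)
Your proposal is correct and is essentially the paper's proof. The paper sets $T^1F := \Sigma F\Sigma^{-1}$ and identifies $\mathrm{coAlg}\,T^1F \simeq \mathrm{coAlg}\,F$ by conjugating with $\Sigma^{-1}$, so that $\Sigma$ corresponds to the functor induced by the assembly transformation $T^1F \to F$; it then concludes $\lim_n \mathrm{coAlg}\,(T^1)^{(n)}F \simeq \mathrm{coAlg}\,P^1F$ because $\mathrm{coAlg}$ is a right adjoint. These are exactly your Steps 2 and 3, with $(T^1)^{(n)}F = \Sigma^nF\Omega^n$, and the paper likewise asserts rather than proves in detail the coherence point you flag in Step 2.
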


\begin{proof}
The argument is analogous to the proof for $P_m$. Define $T^1F := \Sigma F \Sigma^{-1}$. There is an equivalence
\[
\mathrm{coAlg}\, T^1F \xlongrightarrow{\sim} \mathrm{coAlg}\, F
\]
given by adjoining $\Sigma^{-1}$, that is,
\[
(X \to \Sigma F \Sigma^{-1}X)
\longmapsto
(\Sigma^{-1}X \to F(\Sigma^{-1}X))
\]
This identifies $\mathrm{coAlg}\, T^1F$ with $\mathrm{coAlg}\, F$. Under this identification, the composite
\[
\mathrm{coAlg}\, T^1F
\longrightarrow
\mathrm{coAlg}\, F
\xlongrightarrow{\Sigma}
\mathrm{coAlg}\, F
\]
corresponds to the canonical map induced by the natural transformation $T^1F \to F$. Therefore the defining diagram for the costabilization becomes
\[
\mathrm{coSp}(\mathrm{coAlg}\, F)
\simeq
\mathrm{lim}_n \mathrm{coAlg}\,(T^1)^{(n)} F \simeq \mathrm{coAlg}\, P^1F
\]
where in the last step we use that $\mathrm{coAlg}$ preserves limits, as it is a right adjoint.
\end{proof}

\begin{lem}
Let $1 \to G \leftarrow F$ be a span of natural transformations of reduced endofunctors of $\mathcal{Sp}$. Then
\[
\mathrm{coSp}(\mathrm{Lift}(1 \to G \leftarrow F))
\simeq
\mathrm{Lift}(1 \to P^1G \leftarrow P^1F)
\]
\end{lem}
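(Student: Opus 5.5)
The plan mirrors the $P_m$ argument for lifts. The category $\mathrm{Lift}(1 \to G \leftarrow F)$ is defined as the pullback
\[
\mathcal{Sp} \times_{\mathrm{coAlg}\,G} \mathrm{coAlg}\,F .
\]
We would apply $\mathrm{coSp}$ to this pullback, commute $\mathrm{coSp}$ past it, and identify each term using the previous lemma.

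\textbf{Step 1: $\mathrm{coSp}$ commutes with the pullback.} By \cite[prop. 3.1.9]{Yuqing}, $\mathrm{coSp}(C) \simeq \lim(\cdots \xrightarrow{\Sigma} C \xrightarrow{\Sigma} C)$ is a limit. Limits commute with limits, so $\mathrm{coSp}$ commutes with pullbacks of categories. For this we need the functors in the defining diagram to commute with $\Sigma$, so that the towers assemble into a diagram of towers. The functors $\mathcal{Sp} \to \mathrm{coAlg}\,1 \to \mathrm{coAlg}\,G \leftarrow \mathrm{coAlg}\,F$ preserve colimits by lemma \ref{propcoalg}, hence preserve suspensions. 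This step is therefore formal.

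\textbf{Step 2: identify each term.} By the previous lemma, $\mathrm{coSp}(\mathrm{coAlg}\,F) \simeq \mathrm{coAlg}\,P^1F$, and likewise for $G$ and for $1$. Since $\mathcal{Sp}$ is stable, $P^1 1 \simeq 1$ and $\mathrm{coSp}(\mathcal{Sp}) \simeq \mathcal{Sp}$.

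\textbf{Step 3: naturality (the main obstacle).} We must check that the equivalences of Step 2 are natural in the endofunctor. Concretely, we need a commutative square between $\mathrm{coSp}(\mathrm{coAlg}\,F) \to \mathrm{coSp}(\mathrm{coAlg}\,G)$ and the functor $\mathrm{coAlg}\,P^1F \to \mathrm{coAlg}\,P^1G$ induced by $P^1F \to P^1G$.

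The proof of the previous lemma identified the tower $\cdots \to \mathrm{coAlg}\,F \xrightarrow{\Sigma} \mathrm{coAlg}\,F$ with the tower $\mathrm{coAlg}\,(T^1)^{(n)}F$. That identification used only the functor $\mathrm{coAlg}$ and the natural transformation $T^1F \to F$, and both are functorial in $F$. So the identification is natural along transformations $F \to G$. Passing to limits and using that $\mathrm{coAlg}$ preserves limits (as a right adjoint) gives the naturality.

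For the leg from $\mathcal{Sp}$, we would use that $\mathcal{Sp} \to \mathrm{coAlg}\,1$ is the unit of the adjunction. It is compatible with $\Sigma$, and on costabilizations it becomes the corresponding unit map for $P^1 1 \simeq 1$.

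\textbf{Conclusion.} Assembling Steps 1–3, $\mathrm{coSp}(\mathrm{Lift}(1 \to G \leftarrow F))$ is the pullback $\mathcal{Sp} \times_{\mathrm{coAlg}\,P^1G} \mathrm{coAlg}\,P^1F$. This is $\mathrm{Lift}(1 \to P^1G \leftarrow P^1F)$ by definition.
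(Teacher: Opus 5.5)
Your proposal is correct and follows the same route as the paper: you write $\mathrm{Lift}(1 \to G \leftarrow F)$ as a pullback, commute $\mathrm{coSp}$ (itself a limit) past it, and identify the coalgebra terms via the previous lemma. Your Steps 1 and 3 spell out two points the paper leaves implicit: that the structure functors are compatible with $\Sigma$, and that the identification $\mathrm{coSp}(\mathrm{coAlg}\,F) \simeq \mathrm{coAlg}\,P^1F$ is natural in $F$.
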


\begin{proof}
The category of lifts is defined by a pullback diagram in $\mathrm{Cat}_*^\omega$. Since costabilization is defined by a limit construction and therefore preserves limits, applying $\mathrm{coSp}$ to the defining diagram yields
\[
\mathrm{coSp}(\mathrm{Lift}(1 \to G \leftarrow F))
\simeq
\mathrm{Lift}(1 \to P^1G \leftarrow P^1F)
\]
where we use the previous lemma to identify the coalgebra terms.
\end{proof}

We are now ready to show that the costabilization of the metastable category is the category of spectra equipped with a nullhomotopy of the Tate diagonal.

\begin{lem}
	\label{cosp}
The costabilization of the metastable category is given by
\[
\mathrm{coSp}(P_2\mathcal{S}_*) \simeq \mathcal{Sp}_{\Delta=0} := \mathrm{Null}(1 \xlongrightarrow{\Delta} (-^{\otimes 2})^{tC_2})
\]
\end{lem}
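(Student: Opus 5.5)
We apply the preceding lemma to the defining span of the metastable category. It gives
\[
\mathrm{coSp}(P_2\mathcal{S}_*) \simeq \mathrm{Lift}\bigl(1 \to P^1(-^{\otimes 2})^{tC_2} \leftarrow P^1(-^{\otimes 2})^{hC_2}\bigr).
\]
It then suffices to compute the two dual first approximations and the map between them. The Tate power construction $(-^{\otimes 2})^{tC_2}$ is exact by \cite[prop.~III.1.1]{NikolausScholze}. So $\Sigma^n((\Omega^n X)^{\otimes 2})^{tC_2} \simeq (X^{\otimes 2})^{tC_2}$ naturally, the tower defining $P^1$ is constant, and $P^1(-^{\otimes 2})^{tC_2} \simeq (-^{\otimes 2})^{tC_2}$. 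The Tate diagonal $\Delta$ is unchanged under this identification, since the comparison maps are the canonical ones and $\Delta$ is compatible with suspension.

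The key computation is $P^1(-^{\otimes 2})^{hC_2} \simeq 0$. We use the fiber sequence $(-^{\otimes 2})_{hC_2} \to (-^{\otimes 2})^{hC_2} \to (-^{\otimes 2})^{tC_2}$. Since $P^1$ is a sequential limit, it is exact on fiber sequences of functors, so it suffices to show that the induced map $P^1(-^{\otimes 2})_{hC_2} \to P^1(-^{\otimes 2})^{hC_2}$ is zero. A cleaner route is to show directly that $P^1(-^{\otimes 2})_{hC_2} \simeq P^1(-^{\otimes 2})^{hC_2}$, or that both vanish.

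For homotopy orbits we have $\Sigma^n((\Omega^n X)^{\otimes 2})_{hC_2} \simeq \Sigma^{-n}(X^{\otimes 2} \otimes \mathbb{S}^{-n\rho})_{hC_2}$. Under this identification, the transition maps are induced by the inclusions $\mathbb{S}^{-(n+1)\rho} \to \mathbb{S}^{-n\rho}$ (the $C_2$-equivariant Euler class maps), together with the shift. The limit over $n$ of $\Sigma^{-n}(Y\otimes\mathbb{S}^{-n\rho})_{hC_2}$ can be compared with $Y^{tC_2}$ shifted. This is the Lin/Tate-type phenomenon: $\lim_n (Y\otimes S^{-n\rho})_{hC_2}$ is $\Sigma^{-1} Y^{tC_2}$. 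The extra $\Sigma^{-n}$ in each term, however, sends everything to zero in the limit for connectivity reasons only when $X$ is bounded below, which is not enough in general.

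So instead we work at the level of homotopy fixed points directly. Here $\Sigma^n((\Omega^n X)^{\otimes 2})^{hC_2} \simeq \Sigma^{-n}(X^{\otimes 2}\otimes \mathbb{S}^{-n\rho})^{hC_2}$. Each transition map factors through multiplication by the Euler class of $\rho$ twisted with $\Sigma^{-1}$, i.e. through $\mathbb{S}^{-1}\otimes\mathbb{S}^{\rho}$-type maps. Equivalently, the transition map is the composite of the suspension comparison for $(-^{\otimes2})^{hC_2}$, which vanishes on the $2$-homogeneous layer by a standard diagonal argument: the map $\Sigma F(\Omega X)\to F(X)$ for a quadratic $F$ kills the cross effect. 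This is the main obstacle. I would first prove that for a $2$-homogeneous functor $D(X)=(X^{\otimes2}\otimes E)_{hC_2}$, the map $\Sigma D(\Omega X)\to D(X)$ is null after one further iteration, i.e. $P^1D\simeq 0$. This is dual to the classical fact $P_1 D \simeq 0$ and is proved dually, via the Tate vanishing on induced objects.

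Granting $P^1(-^{\otimes 2})_{hC_2}\simeq 0$ and $P^1$ exact, the fiber sequence gives $P^1(-^{\otimes 2})^{hC_2}\simeq P^1(-^{\otimes 2})^{tC_2}\simeq(-^{\otimes 2})^{tC_2}$. This cannot be right, since it would make the costabilization equal to $\mathcal{Sp}$. So the correct computation must be that $P^1(-^{\otimes2})^{hC_2}\simeq 0$: homotopy fixed points commute with the limit, and $\lim_n\Sigma^n((\Omega^nX)^{\otimes2})^{hC_2}$ is the fixed points of a limit of $C_2$-spectra whose transition maps are null on underlying spectra, because the underlying functor $X\mapsto X^{\otimes 2}$ has $P^1=0$. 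Namely, $\Sigma(\Omega X)^{\otimes 2}\to X^{\otimes 2}$ factors through the diagonal of $\Sigma^{-1}$, which is null. I would verify this carefully, pulling $(-)^{hC_2}$ out of the limit. Then the span becomes $1\to(-^{\otimes2})^{tC_2}\leftarrow 0$, and by definition the lifts are $\mathrm{Null}(1\xrightarrow{\Delta}(-^{\otimes 2})^{tC_2}) = \mathcal{Sp}_{\Delta=0}$.
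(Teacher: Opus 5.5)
Your final paragraph is a correct proof, and it is the paper's argument. You pull $(-)^{hC_2}$ out of the limit. The tower $\Sigma^n(\Omega^nX)^{\otimes 2}\simeq\mathbb{S}^{-n\rho}\otimes X^{\otimes 2}$ has transition maps induced by the Euler class $a_\rho$ (coming from the diagonal $S^1\to S^1\wedge S^1$). On underlying spectra these maps are $X^{\otimes 2}$ tensored with an element of $\pi_{-1}\mathbb{S}=0$, hence null. By the Milnor sequence the limit therefore has contractible underlying spectrum, so it vanishes in $\mathrm{Fun}(BC_2,\mathcal{Sp})$, and the span becomes $1\to(-^{\otimes 2})^{tC_2}\leftarrow 0$. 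Stated this way it is more precise than the paper's one-line display, which drops the sign twist and leaves the transition maps implicit.

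The order you chose matters. After taking homotopy fixed points the transition maps are no longer null: for $X=\mathbb{S}$ the first one is $\mathrm{map}(\mathbb{RP}^\infty,\mathbb{S})\to\mathrm{map}(\mathbb{RP}^\infty_+,\mathbb{S})$, which is split injective.

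Everything before that paragraph contains false statements and should be deleted, not kept as steps you ``would first prove''.

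First, there is no residual $\Sigma^{-n}$. Since $(\mathbb{S}^{-n})^{\otimes 2}\simeq\mathbb{S}^{-n}\otimes\mathbb{S}^{-n\rho}$ equivariantly, $\Sigma^n\bigl((\Omega^nX)^{\otimes 2}\bigr)_{hC_2}\simeq(\mathbb{S}^{-n\rho}\otimes X^{\otimes 2})_{hC_2}$, and likewise for fixed points. Your connectivity worry is an artefact of that error.

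Second, and more seriously, the proposed lemma that $P^1$ kills the homogeneous layer $(X^{\otimes 2}\otimes E)_{hC_2}$ is false. Apply the exact functor $P^1$ to the norm sequence and use the vanishing you prove at the end. This gives $P^1(-^{\otimes 2})_{hC_2}\simeq\Sigma^{-1}(-^{\otimes 2})^{tC_2}$. At $X=\mathbb{S}$ this is $\Sigma^{-1}\mathbb{S}^\wedge_2\neq 0$, which is exactly the Lin's-theorem identification the paper uses in its Mahowald invariant subsection. The correct dual of ``$P_1$ kills homogeneous functors'' is that $P^1$ kills \emph{cohomogeneous} functors, built from homotopy fixed points. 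That is what the paper means by calling $(-^{\otimes 2})^{hC_2}$ $2$-cohomogeneous.

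This explains the contradiction you ran into. $P^1$ of the norm sequence is $\Sigma^{-1}T\to 0\to T$, with $T=(-^{\otimes 2})^{tC_2}$ and connecting map an equivalence; it is not $0\to T\to T$.

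Your opening reduction, ``it suffices that $P^1(\mathrm{nm})$ is zero'', is also invalid. A zero first map in a fiber sequence only makes $P^1(-^{\otimes 2})^{hC_2}$ a summand of $P^1(-^{\otimes 2})^{tC_2}$; it does not make it vanish.
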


\begin{proof}
By the previous lemma we have
\[
\mathrm{coSp}(P_2\mathcal{S}_*)
\simeq
\mathrm{Lift}(1 \to P^1(-^{\otimes 2})^{tC_2} \leftarrow P^1(-^{\otimes 2})^{hC_2})
\]
We claim that $P^1(-^{\otimes 2})^{hC_2} \simeq *$ or, equivalently, in the language of dual calculus, that the functor $(-^{\otimes 2})^{hC_2}$ is $2$-cohomogeneous. Indeed,
\[
P^1(-^{\otimes 2})^{hC_2}
\simeq
\mathrm{lim}_n \Sigma^n(\mathbb{S}^{-2n} \otimes -^{\otimes 2})^{hC_2}
\simeq
(\mathrm{lim}_n \mathbb{S}^{-n} \otimes -^{\otimes 2})^{hC_2}
\simeq
*
\]
where we use that homotopy fixed points commute with limits, since they are limits themselves. On the other hand, the Tate power is exact, so $P^1(-^{\otimes 2})^{tC_2} \simeq (-^{\otimes 2})^{tC_2}$. It follows that the pullback defining the category of lifts reduces to
\[
\mathrm{coSp}(P_2\mathcal{S}_*)
\simeq
\mathrm{Lift}(1 \xrightarrow{\Delta} (-^{\otimes 2})^{tC_2} \leftarrow 0)
\]
and hence to the category of nullhomotopies of the Tate diagonal, as claimed.
\end{proof}

\begin{rem}
	Many of the examples of spectra with metastable structure discussed in the first section - such as $2$-acyclic spectra (ex. \ref{2inv}), the Brown--Comenetz dual (ex. \ref{0square}), and monochromatic spectra (ex. \ref{monochrom}) - arise naturally from the costabilization of the metastable category.
\end{rem}

It is worth contrasting this with the classical cases. The costabilization of the category of pointed spaces is trivial, while spectra are already stable and hence equivalent to their own costabilization. In contrast, the metastable category is intermediate: its costabilization is nontrivial and consists of those spectra for which the Tate diagonal is nullhomotopic, thereby eliminating a substantial portion of spectra. This situation is summarized by the following diagram
\[\begin{tikzcd}
	{*} & {\mathcal{Sp}_{\Delta =0}} & {\mathcal{Sp}} \\
	{\mathcal{S}_*} & {P_2\mathcal{S}_*} & {\mathcal{Sp}} \\
	{\mathcal{Sp}} & {\mathcal{Sp}} & {\mathcal{Sp}}
	\arrow[from=1-1, to=1-2]
	\arrow["{\Sigma_\infty}", from=1-1, to=2-1]
	\arrow[from=1-2, to=1-3]
	\arrow["{\Sigma_\infty}", from=1-2, to=2-2]
	\arrow["{\Sigma_\infty}", equals, from=1-3, to=2-3]
	\arrow[from=2-1, to=2-2]
	\arrow["{\Sigma^\infty}", from=2-1, to=3-1]
	\arrow[from=2-2, to=2-3]
	\arrow["{\Sigma^\infty}", from=2-2, to=3-2]
	\arrow["{\Sigma^\infty}", equals, from=2-3, to=3-3]
	\arrow[equals, from=3-1, to=3-2]
	\arrow[equals, from=3-2, to=3-3]
\end{tikzcd}\]
which is induced by applying costabilization (top row) and stabilization (bottom row) to the diagram of categories
\[
\mathcal{S}_* \longrightarrow P_2\mathcal{S}_* \longrightarrow \mathcal{Sp}
\]
\subsection{Splittings and desuspensions}

In this subsection we study certain colimits in the metastable category $P_2\mathcal{S}_*$. We first give a criterion for an object to split as a coproduct and then establish a criterion for an object to desuspend. Finally, we relate the desuspension criterion to the theory of co-H objects and show that in the metastable category suspensions coincide with co-H objects.

From the discussion in the previous subsections, the colimit of a diagram $(X_i)$ in $P_2\mathcal{S}_*$ is computed on underlying spectra, and the metastable structure is obtained by taking the colimit of the defining lifts. Concretely, the induced maps fit into the diagram
\[
\begin{tikzcd}
	& {\mathrm{colim}_i((X_i)^{\otimes 2})^{hC_2}} & {((\mathrm{colim}_i X_i)^{\otimes 2})^{hC_2}} \\
	{\mathrm{colim}_i X_i} & {\mathrm{colim}_i((X_i)^{\otimes 2})^{tC_2}} & {((\mathrm{colim}_i X_i)^{\otimes 2})^{tC_2}}
	\arrow[from=1-2, to=1-3]
	\arrow[from=1-2, to=2-2]
	\arrow[from=1-3, to=2-3]
	\arrow[from=2-1, to=1-2]
	\arrow[from=2-1, to=2-2]
	\arrow[from=2-2, to=2-3]
\end{tikzcd}
\]
Here the maps on the left are obtained by taking the colimit of the diagrams defining the metastable structures on the $X_i$, while the maps on the right are the canonical colimit comparison maps. Since $(-^{\otimes 2})^{tC_2}$ is exact, the bottom-right comparison map is an equivalence for finite colimits.

\begin{lem}
	\label{coprodp2}
Let $X \in P_2\mathcal{S}_*$. Then $X$ is a coproduct if and only if the underlying spectrum splits as
\[
X \simeq X_1 \oplus X_2
\]
and the composite
\[
X \simeq X_1 \oplus X_2 
\longrightarrow 
((X_1 \oplus X_2)^{\otimes 2})^{hC_2} 
\longrightarrow 
X_1 \otimes X_2
\]
is null.
\end{lem}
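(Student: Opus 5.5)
Both directions go through the description of coproducts in $P_2\mathcal{S}_*$ as colimits computed on underlying spectra, with the metastable structure on the colimit given by the composite of the colimit of the lifts with the comparison map. The key input is the decomposition of the tensor square of a sum:
\[
((X_1 \oplus X_2)^{\otimes 2})^{hC_2} \simeq (X_1^{\otimes 2})^{hC_2} \oplus (X_2^{\otimes 2})^{hC_2} \oplus X_1 \otimes X_2 .
\]
It comes from the $C_2$-equivariant splitting $(X_1\oplus X_2)^{\otimes 2} \simeq X_1^{\otimes 2} \oplus X_2^{\otimes 2} \oplus (X_1\otimes X_2 \oplus X_2 \otimes X_1)$. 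The last summand is induced from the trivial subgroup, so its homotopy fixed points are $X_1 \otimes X_2$. This is the homotopy-fixed-point analogue of the homotopy-orbit decomposition already used in the classification examples. The map to $X_1\otimes X_2$ in the statement is the projection onto this third summand.

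For the forward direction, suppose $X \simeq X_1 \sqcup X_2$ in $P_2\mathcal{S}_*$. The forgetful functor preserves colimits, so the underlying spectrum is $X_1 \oplus X_2$. The metastable structure is then the composite of $L_1 \oplus L_2 \colon X_1\oplus X_2 \to (X_1^{\otimes 2})^{hC_2} \oplus (X_2^{\otimes 2})^{hC_2}$ with the comparison map. I will check that this comparison map is exactly the inclusion of the first two summands in the decomposition above; it is induced by the inclusions $X_i \to X_1\oplus X_2$, which makes this clear. Hence the projection to $X_1\otimes X_2$ vanishes.

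For the converse, let $L$ be the structure on $X_1\oplus X_2$ with vanishing cross component. Restricting along the inclusions $X_i \to X_1\oplus X_2$ and projecting to $(X_i^{\otimes 2})^{hC_2}$ gives maps $L_i$. I then check that the $L_i$ are metastable structures on $X_i$: since $\Delta$ is natural and additive, and $(-^{\otimes 2})^{tC_2}$ is exact, we have $((X_1\oplus X_2)^{\otimes 2})^{tC_2} \simeq (X_1^{\otimes 2})^{tC_2}\oplus (X_2^{\otimes 2})^{tC_2}$, because the cross term has vanishing Tate construction. The required commutativity of $L_i$ with $\Delta_{X_i}$ is then the restricted component of the commutativity for $L$.

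Next I compare $L$ with the coproduct structure $L_1\sqcup L_2$. The two agree on the $(X_i^{\otimes 2})^{hC_2}$ components by construction, and both have null cross component. So they agree as maps, but I must also match the homotopies witnessing the lift of $\Delta$. Here I will use the description $P_2\mathcal{S}_* \simeq \mathrm{Sect}((-^{\otimes 2})^{C_2}\to 1)$ from lemma \ref{p2equiv}. A structure is a section $X \to (X^{\otimes 2})^{C_2}$, and the genuine fixed points split the same way: $(X^{\otimes 2})^{C_2}\simeq (X_1^{\otimes 2})^{C_2}\oplus(X_2^{\otimes 2})^{C_2}\oplus X_1\otimes X_2$, compatibly with the maps to $X$. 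In this formulation a section is just a map with no extra homotopy data up to the equivalence of spaces of sections, so I compare components and the space of sections splits as a product.

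The main obstacle is this coherence step: making sure that the section condition only constrains the first two components and leaves the cross component free. This follows because the cross summand maps to zero in $\Phi^{C_2}$. Once that is verified, the space of sections is the product of the spaces of sections of the $X_i$ with $\mathrm{Map}(X, X_1\otimes X_2)$. Nullity of the cross component then places $L$ in the component of $L_1\sqcup L_2$, which exhibits $X$ as a coproduct.
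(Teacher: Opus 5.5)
Your forward direction is fine and is the paper's argument. The coproduct structure is $L_1\oplus L_2$ followed by the comparison map, which is the inclusion of the first two summands of $((X_1\oplus X_2)^{\otimes 2})^{hC_2}$, so the cross component vanishes.

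The converse breaks at ``so they agree as maps''. A map $X_1\oplus X_2\to (X_1^{\otimes 2})^{hC_2}\oplus (X_2^{\otimes 2})^{hC_2}$ is a $2\times 2$ matrix, and you have only matched its diagonal entries. The off-diagonal entries $X_1\to (X_2^{\otimes 2})^{hC_2}$ and $X_2\to (X_1^{\otimes 2})^{hC_2}$ vanish for $L_1\sqcup L_2$. For a general $L$, however, they only have to become null after $\mathrm{can}$, because the Tate diagonal of a sum is diagonal. So they are of the form $\mathrm{nm}\circ g$ with $g\colon X_i\to (X_j^{\otimes 2})_{hC_2}$ arbitrary, and your hypothesis says nothing about them. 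The same factors are missing from your section count. Write $\Gamma_Y$ for the space of sections of $(Y^{\otimes 2})^{C_2}\to Y$ and $X=X_1\oplus X_2$. Then
\[\Gamma_{X}\simeq \Gamma_{X_1}\times\Gamma_{X_2}\times \mathrm{Map}(X_1,(X_2^{\otimes 2})_{hC_2})\times\mathrm{Map}(X_2,(X_1^{\otimes 2})_{hC_2})\times\mathrm{Map}(X,X_1\otimes X_2).\]
The reason is that a component $X_1\to (X_2^{\otimes 2})^{C_2}$ only has to map to zero in the geometric fixed points; it does not have to vanish.

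This gap cannot be closed, because the converse is false as stated. The paper's own proof has the same omission: it factors $L$ through $(X_1^{\otimes 2})^{hC_2}\oplus(X_2^{\otimes 2})^{hC_2}$ and tacitly reads that factorization as $L_1\oplus L_2$. Here is a counterexample.
\begin{itemize}
\item Take $X_1=X_2=\mathbb{S}$ with the spacelike structures, and add $\mathrm{nm}\circ\pi$ in the slot $X_1\to (X_2^{\otimes 2})^{hC_2}$. The cross component is still null.
\item On $\pi_0$ the induced diagonal is $e_1\mapsto e_1\otimes e_1+2\,e_2\otimes e_2$ and $e_2\mapsto e_2\otimes e_2$. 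A direct check shows that the only nonzero $v\in\mathbb{Z}^2$ with $\Delta(v)\in\mathbb{Z}\,v\otimes v$ are $v=\pm e_2$.
\item The diagonal is natural under equivalences in $P_2\mathcal{S}_*$. A coproduct of two objects with underlying spectrum $\mathbb{S}$ has a basis of $\pi_0$ consisting of such vectors.
\end{itemize}
Hence this object is not such a coproduct, for this splitting or any other.

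What your section-model argument does prove, essentially verbatim, is a corrected statement. In it the off-diagonal components must also vanish, as elements of $[X_i,(X_j^{\otimes 2})_{hC_2}]$. Equivalently, the inclusions $X_j\to X$ lift to maps in $P_2\mathcal{S}_*$ for suitable structures on the $X_j$; conservativity of the forgetful functor then gives $X\simeq X_1\sqcup X_2$.
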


\begin{proof}
	Consider two objects $X_1$ and $X_2$ in $P_2\mathcal{S}_*$. Their coproduct is given by 
\[
\begin{tikzcd}[cramped]
	& {(X_1^{\otimes 2})^{hC_2} \oplus (X_2^{\otimes 2})^{hC_2}} & {((X_1 \oplus X_2)^{\otimes 2})^{hC_2}} & {(X_1^{\otimes 2})^{hC_2} \oplus (X_2^{\otimes 2})^{hC_2} \oplus (X_1 \otimes X_2)} \\
	{X_1 \oplus X_2} & {(X_1^{\otimes 2})^{tC_2} \oplus (X_2^{\otimes 2})^{tC_2}} & {((X_1 \oplus X_2)^{\otimes 2})^{tC_2}}
	\arrow[from=1-2, to=1-3]
	\arrow[from=1-2, to=2-2]
	\arrow[equals, from=1-3, to=1-4]
	\arrow[from=1-3, to=2-3]
	\arrow[from=2-1, to=1-2]
	\arrow[from=2-1, to=2-2]
	\arrow[equals, from=2-2, to=2-3]
\end{tikzcd}
\]
The identification on the bottom row follows from exactness of the Tate power construction, while the identification in the top row follows from the canonical $C_2$-equivariant decomposition
\[
(X_1 \oplus X_2)^{\otimes 2}
\simeq
X_1^{\otimes 2}
\oplus
X_2^{\otimes 2}
\oplus
C_{2+} \otimes (X_1 \otimes X_2)
\]
Now we want to reverse-engineer this. From the diagram we immediately see that a coproduct $X$ in $P_2\mathcal{S}_*$ must have a split underlying spectrum. This also follows from the fact that the forgetful functor preserves colimits. If $X \simeq X_1 \oplus X_2$ as spectra, then we obtain the diagram above except for the diagonal arrow. To produce the diagonal it suffices that the composite
\[
X_1 \oplus X_2 \longrightarrow ((X_1 \oplus X_2)^{\otimes 2})^{hC_2} \longrightarrow X_1 \otimes X_2
\]
is null, since the rightmost summand is the cofiber of the map
\[
(X_1^{\otimes 2})^{hC_2} \oplus (X_2^{\otimes 2})^{hC_2}
\longrightarrow
((X_1 \oplus X_2)^{\otimes 2})^{hC_2}
\]
This completes the proof.
\end{proof}

\begin{ex}
	Consider the images of the product of two spheres $S^n \times S^n$ and of the wedge $S^n \vee S^n \vee S^{2n}$ under the functor $\Sigma^\infty_2\colon \mathcal{S}_* \to P_2\mathcal{S}_*$. Both objects have underlying spectrum
	\[
	\mathbb{S}^n \oplus \mathbb{S}^n \oplus \mathbb{S}^{2n}
	\]
	so they are indistinguishable via the functor $\Sigma^\infty\colon \mathcal{S}_* \to \mathcal{Sp}$.

	They are different in the metastable category. The wedge of spheres is a coproduct in $P_2\mathcal{S}_*$ since the functor from spaces preserves colimits. The product of spheres is not. To see this, recall that
	\[
	\mathbb{Z}^*(S^n \times S^n) \cong \mathbb{Z}[\alpha, \beta]/(\alpha^2, \beta^2)
	\]
	with $|\alpha| = |\beta| = n$. The mixed summand in the Künneth decomposition detects the cup product $\alpha\beta$. Since $\alpha\beta \neq 0$, the mixed component of the diagonal is nontrivial, hence the product of spheres does not split in the metastable category.
\end{ex}

\begin{lem}
	\label{suspensionp2}
Let $X \in P_2\mathcal{S}_*$. Then $X$ is a suspension if and only if the diagonal map
\[
X \longrightarrow (X^{\otimes 2})^{hC_2} \longrightarrow X \otimes X
\]
is null.
\end{lem}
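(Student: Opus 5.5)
We follow the strategy of lemma \ref{coprodp2}: first compute what a suspension looks like in $P_2\mathcal{S}_*$, then reverse-engineer it. A suspension $\Sigma Y$ is the pushout of $0 \leftarrow Y \to 0$. Colimits are created by the forgetful functor, and the structure map on a colimit is the colimit of the structure maps followed by the comparison map. So the underlying spectrum of $\Sigma Y$ is $\Sigma Y$, and its metastable structure is the composite
\[
\Sigma Y \longrightarrow \Sigma (Y^{\otimes 2})^{hC_2} \longrightarrow ((\Sigma Y)^{\otimes 2})^{hC_2}.
\]
Here the second map is the colimit comparison for the pushout. Since $(-^{\otimes 2})^{tC_2}$ is exact, the comparison on Tate constructions is an equivalence.

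\textbf{Forward direction.} The comparison map $\Sigma (Y^{\otimes 2})^{hC_2} \to ((\Sigma Y)^{\otimes 2})^{hC_2}$ is $(-)^{hC_2}$ applied to the equivariant comparison $\Sigma(Y^{\otimes 2}) \to (\Sigma Y)^{\otimes 2} \simeq \mathbb{S}^{\rho+1}\otimes Y^{\otimes 2}$. This is the map induced by $S^1 \to S^{1+\rho}$, which is null non-equivariantly. The plan is to show that the composite with $i\colon ((\Sigma Y)^{\otimes 2})^{hC_2} \to (\Sigma Y)^{\otimes 2}$ factors through this underlying map. Concretely, the square with $i$ commutes by naturality of $i$ with respect to colimit comparisons, and the underlying comparison $\Sigma(Y\otimes Y) \to \Sigma Y \otimes \Sigma Y$ is null because it is the suspension-in-one-coordinate diagonal $S^1 \to S^1\wedge S^1$, which is nullhomotopic. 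Hence the diagonal $X \to X\otimes X$ is null.

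\textbf{Converse, and the main obstacle.} This is where the real work lies. We use the fiber sequence of $C_2$-spectra $S(\rho)_+ \otimes Z \to Z \to S^\rho\otimes Z$ with $Z = X^{\otimes 2}$, where $S(\rho)_+ = C_{2+}$. Applying $(-)^{hC_2}$ gives a fiber sequence
\[
X^{\otimes 2} \xrightarrow{\mathrm{nm}\text{-type}} (X^{\otimes 2})^{hC_2} \longrightarrow (S^\rho \otimes X^{\otimes 2})^{hC_2},
\]
whose first map is the transfer, and composing with $i$ gives $1+\tau$. We want to exhibit $X \simeq \Sigma Y$ with $Y = \Sigma^{-1}X$, which amounts to lifting $\Sigma^{-1}X \to \Sigma^{-1}(X^{\otimes 2})^{hC_2}$ to a map $\Sigma^{-1}X \to ((\Sigma^{-1}X)^{\otimes 2})^{hC_2} \simeq (\mathbb{S}^{-1-\rho}\otimes X^{\otimes 2})^{hC_2}$ compatible with the Tate diagonals. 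Compatibility is automatic: the Tate term is exact, so compatibility reduces to the given one.

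The key step is to identify the obstruction to lifting along $(S^{-\rho}\otimes X^{\otimes 2})^{hC_2} \to (X^{\otimes 2})^{hC_2}$ with the restriction $i$. The cofiber sequence $S(\rho)_+ \to S^0 \to S^\rho$, desuspended by $\rho$ and tensored with $X^{\otimes 2}$, gives
\[
(S^{-\rho}\otimes X^{\otimes 2})^{hC_2} \to (X^{\otimes 2})^{hC_2} \to (C_{2+}\otimes X^{\otimes 2})^{hC_2} \simeq X^{\otimes 2},
\]
where the last map is precisely $i$. This is the step I expect to need the most care. So the structure map lifts exactly when $X \to (X^{\otimes 2})^{hC_2} \xrightarrow{i} X^{\otimes 2}$ is null.

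Finally, we verify that the chosen lift, viewed as a metastable structure $Y \to (Y^{\otimes 2})^{hC_2}$, suspends back to the original structure on $X$. This holds by construction, since the comparison map in the forward computation is exactly the map $(S^{-\rho}\otimes X^{\otimes 2})^{hC_2}\to (X^{\otimes 2})^{hC_2}$ used above. This gives $X\simeq \Sigma Y$ in $P_2\mathcal{S}_*$. The forward direction also follows from this same fiber sequence, because composing with $i$ kills the image of the lift.
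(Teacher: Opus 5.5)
Your proof is correct and is essentially the paper's argument. Both compute $\Sigma Y$ via the colimit comparison map. Both then use the cofiber sequence $C_{2+}\to S^0\to S^\rho$ (dualized, or equivalently desuspended by $\rho$) to identify the obstruction to lifting along $(\Sigma^{-\rho}X^{\otimes 2})^{hC_2}\to (X^{\otimes 2})^{hC_2}$ with the forgetful map $i$. Your separate forward-direction argument, via the null underlying map $\mathbb{S}^1\to\mathbb{S}^{1+\rho}$, is a harmless extra: the paper reads off both directions from that single fiber sequence.
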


\begin{proof}
	Let $Y$ be a spectrum with a metastable structure. Its suspension in $P_2\mathcal{S}_*$ is given by 
\[\begin{tikzcd}
	& {\Sigma(Y^{\otimes 2})^{hC_2}} & {((\Sigma Y)^{\otimes 2})^{hC_2}} \\
	{\Sigma Y} & {\Sigma(Y^{\otimes 2})^{tC_2}} & {((\Sigma Y)^{\otimes 2})^{tC_2}}
	\arrow[from=1-2, to=1-3]
	\arrow[from=1-2, to=2-2]
	\arrow[from=1-3, to=2-3]
	\arrow[from=2-1, to=1-2]
	\arrow[from=2-1, to=2-2]
	\arrow[equals, from=2-2, to=2-3]
\end{tikzcd}\]
	We now reverse-engineer this description. Let $X \in P_2\mathcal{S}_*$ and let $Y$ be the spectrum $\Sigma^{-1}X$. Then $X$ desuspends in $P_2\mathcal{S}_*$ if and only if the metastable structure on $X$ is induced from one on $Y$. This holds precisely when there exists a dotted arrow
\[\begin{tikzcd}
	{(\Sigma^{-\rho}X^{\otimes 2})^{hC_2}} & {\Sigma((\Sigma^{-1}X)^{\otimes 2})^{hC_2}} & {(X^{\otimes 2})^{hC_2}} \\
	& X & {(X^{\otimes 2})^{tC_2}}
	\arrow[equals, from=1-1, to=1-2]
	\arrow[from=1-2, to=1-3]
	\arrow[from=1-3, to=2-3]
	\arrow[dashed, from=2-2, to=1-2]
	\arrow[from=2-2, to=1-3]
	\arrow[from=2-2, to=2-3]
\end{tikzcd}\]
	where $\rho$ denotes the sign representation. It therefore suffices to understand the cofiber of the top horizontal map. Recall the cofiber sequence
	$$C_{2+} \simeq S(\rho)_+ \longrightarrow S^0 \longrightarrow S^\rho$$
	where $S(V)$ denotes the sphere inside the representation $V$. Taking duals, smashing with $X^{\otimes 2}$, and passing to fixed points yields the sequence
	$$(\Sigma^{-\rho}X^{\otimes 2})^{hC_2} \longrightarrow (X^{\otimes 2})^{hC_2} \longrightarrow (C_{2+} \otimes X^{\otimes 2})^{hC_2} \simeq X^{\otimes 2}$$
	Hence the lift exists if and only if the composite
	$$X \longrightarrow (X^{\otimes 2})^{hC_2} \longrightarrow X \otimes X$$
	is null.
\end{proof}

\begin{ex}
	For every $n > 0$, the diagonal map on $\mathbb{S}^n$ is null for degree reasons. This agrees with the fact that the sphere $S^n$ in $P_2\mathcal{S}_*$ is a suspension for $n > 0$. For $n = 0$, the diagonal map on the exotic $0$-sphere $S^{0,k}$ has degree $k$, which is always an odd integer and in particular nonzero. Hence no exotic $0$-sphere desuspends. This can also be seen from the fact that the spectrum $\mathbb{S}^{-1}$ does not admit a metastable structure.

	From the proof of the lemma above, we also see that the metastable structure on $\mathbb{S}^n$ is obtained by suspending $n$ times the composite
$$
\mathbb{S} \longrightarrow \mathbb{S}^{hC_2} \longrightarrow (\mathbb{S}^{n\rho})^{hC_2}
$$
where the first map is the spacelike metastable structure on $\mathbb{S}$, namely the inclusion induced by equipping $\mathbb{S}$ with the trivial $C_2$-action.
\end{ex}

\begin{ex}
	The image of $S^1/2^n$ does not desuspend in $P_2\mathcal{S}_*$. Indeed, its $\mathbb{Z}/2^n$-cohomology has basis $\{e_1, e_2\}$ with $|e_i| = i$ and nontrivial cup product given by \cite[proof of thm.~7.1]{coeff}
	$$e_1 \cup e_1 = \binom{2^n + 1}{2} e_2 = 2^{n-1} e_2 \neq 0$$
	This result is less immediate than the previous one, since at this stage we do not know whether $\mathbb{S}/2^n$ admits a metastable structure for $n > 1$.
\end{ex}

We conclude this subsection by relating the previous lemma to the general behavior of $2$-excisive categories with respect to counital comonoids, or co-H objects.

Recall that in a pointed category $C$, a co-H object consists of an object $X$ in $C$ together with a factorization of the diagonal

\[\begin{tikzcd}
	& {X \vee X} \\
	X & {X \times X}
	\arrow[from=1-2, to=2-2]
	\arrow[dashed, from=2-1, to=1-2]
	\arrow[from=2-1, to=2-2]
\end{tikzcd}\]
This is equivalent to specifying a counital comultiplication $X \to X \vee X$. This notion coincides with that of a comonoid over the $\mathbb{A}_2$ operad, which in turn is equivalent to a coalgebra over $\mathbb{A}_2$ with respect to the cocartesian monoidal structure. We therefore write $\mathrm{coMon}_{\mathbb{A}_2} C$ for the collection of co-H objects in $C$. A colimit-preserving functor $C \to D$ induces a functor
 $$\mathrm{coMon}_{\mathbb{A}_2}C \to \mathrm{coMon}_{\mathbb{A}_2}D$$

Applying this to our context, we immediately obtain a functor
\[
\mathrm{coMon}_{\mathbb{A}_2}\mathcal{S}_*
\longrightarrow
\mathrm{coMon}_{\mathbb{A}_2}P_2\mathcal{S}_*
\]
Now, a co-H structure on $X$ in $\mathcal{S}_*$ - that is, a co-H space - determines a choice of nullhomotopy of the diagonal $X \to X \wedge X$. This induces a nullhomotopy of $
\Sigma^\infty X \to \Sigma^\infty X \otimes \Sigma^\infty X$, and hence, by the previous lemma, a choice of desuspension structure on the image of $X$ in the metastable category $P_2\mathcal{S}_*$. In other words, the functor above factors as

\[\begin{tikzcd}
	& {P_2\mathcal{S}_*} \\
	{\mathrm{coMon}_{\mathbb{A}_2}\mathcal{S}_*} & {\mathrm{coMon}_{\mathbb{A}_2}P_2\mathcal{S}_*}
	\arrow["\Sigma", from=1-2, to=2-2]
	\arrow[dashed, from=2-1, to=1-2]
	\arrow[from=2-1, to=2-2]
\end{tikzcd}\]
This result can also be obtained in another way, using the following.

\begin{lem}
	The suspension functor
	$$P_2\mathcal{S}_* \xlongrightarrow{\Sigma} \mathrm{coMon}_{\mathbb{A}_2}P_2\mathcal{S}_*$$
	induces an equivalence.
\end{lem}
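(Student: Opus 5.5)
We show that $\Sigma\colon P_2\mathcal{S}_* \to \mathrm{coMon}_{\mathbb{A}_2}P_2\mathcal{S}_*$ is essentially surjective and fully faithful. Both halves reduce, via the descriptions of suspensions and coproducts in lemmas \ref{suspensionp2} and \ref{coprodp2}, to statements about lifts of metastable structures through fiber sequences of spectra.

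\textbf{Comultiplication on a suspension.} For any $Y$, the suspension $\Sigma Y$ carries the pinch comultiplication $\Sigma Y \to \Sigma Y \vee \Sigma Y$, since $\Sigma$ preserves colimits in $P_2\mathcal{S}_*$. This gives the functor in the statement. Its composite with the forgetful functor to $P_2\mathcal{S}_*$ is $\Sigma$ itself.

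\textbf{Essential surjectivity.} Let $X$ be a co-H object with comultiplication $c\colon X \to X \vee X$. Compute $X \vee X$ by lemma \ref{coprodp2}: its underlying spectrum is $X \oplus X$, and its metastable structure has vanishing mixed component into $X \otimes X$. The counit conditions force the underlying map of $c$ to be $(1,1)\colon X \to X \oplus X$. Naturality of the metastable structure along $c$ gives a commutative square. In it, the composite
\[X \to ((X\oplus X)^{\otimes 2})^{hC_2} \to X \otimes X\]
along the top is null, because the structure on $X \vee X$ has no mixed term. Along the other side, the same composite is the mixed component of $((1,1)^{\otimes 2})^{hC_2}$ applied to the structure map of $X$. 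The cross term of $(a+b)^{\otimes 2}$ is the norm-type map
\[(X^{\otimes 2})^{hC_2} \to X^{\otimes 2} \to (C_{2+}\otimes X^{\otimes 2})^{hC_2} \simeq X^{\otimes 2},\]
so this composite equals the diagonal $X \to (X^{\otimes 2})^{hC_2} \to X \otimes X$. Hence the diagonal is null, and lemma \ref{suspensionp2} shows that $X \simeq \Sigma Y$.

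\textbf{Agreement of comultiplications.} We still need the given $c$ to agree with the pinch map on $\Sigma Y$. On underlying spectra both are $(1,1)$, because a counital comultiplication on a spectrum is unique up to homotopy. Since the forgetful functor is conservative, we must compare the metastable homotopies. The space of lifts is a torsor under $\mathrm{Map}(X,(X^{\otimes 2})_{hC_2})$-type data. The co-H condition kills exactly the mixed part, and the remaining ambiguity should be absorbed into the choice of desuspension $Y$.

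\textbf{Full faithfulness.} We compare mapping spaces. A map of co-H objects $\Sigma Y \to \Sigma Y'$ is a map in $P_2\mathcal{S}_*$ together with coherence data making it commute with comultiplication. Using the description of maps in the metastable category as pullbacks of mapping spaces (spectral maps, lifts, homotopies), one checks that this coherence data is equivalent to a desuspension of the map, i.e.\ to a point of $\mathrm{Map}_{P_2}(Y,Y')$. Both sides are governed by the fiber sequence $(\Sigma^{-\rho}X^{\otimes 2})^{hC_2} \to (X^{\otimes 2})^{hC_2} \to X^{\otimes 2}$ from the proof of lemma \ref{suspensionp2}.

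\textbf{Main obstacle.} The hardest step is the coherent identification in full faithfulness and in the agreement of comultiplications. I would attack it by exhibiting both $P_2\mathcal{S}_*$ and $\mathrm{coMon}_{\mathbb{A}_2}P_2\mathcal{S}_*$ as categories of lifts over $\mathcal{Sp}$. For the first, use lemma \ref{p2equiv} with the endofunctor $\Sigma(-^{\otimes 2})_{hC_2}$ conjugated by $\Sigma$. For the second, show that $\mathrm{coMon}_{\mathbb{A}_2}$ of a category of lifts is the category of lifts for the span with the mixed term nullified; this identification is itself the point that needs a real argument. The functor $\Sigma$ then becomes a map of pullback diagrams that is an equivalence on each vertex, using stability of $\mathcal{Sp}$ (where $\mathrm{coMon}_{\mathbb{A}_2}\mathcal{Sp} \simeq \mathcal{Sp}$) and the identification $(\Sigma^{-\rho}(-)^{\otimes 2})^{hC_2} \simeq \Sigma^{-1}((\Sigma -)^{\otimes 2})^{hC_2}$.
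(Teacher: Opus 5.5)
Your essential-surjectivity computation is fine as far as it goes. Counitality forces the underlying map of $c$ to be $(1,1)$. The cross term of $((1,1)^{\otimes 2})^{hC_2}$ into $(C_{2+}\otimes X^{\otimes 2})^{hC_2}\simeq X^{\otimes 2}$ is the forgetful map $i$. Comparing with the vanishing mixed component of $X\vee X$ (lemma \ref{coprodp2}) then gives $\Delta_X\simeq 0$, so $X$ desuspends by lemma \ref{suspensionp2}.

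But this only proves the object-level implication ``co-H $\Rightarrow$ suspension'' of lemma \ref{suspensionp2gen}. The actual content of the statement is only asserted (``should be absorbed'', ``one checks''). That content is that $c$ is equivalent, as a comultiplication, to the pinch map on some $\Sigma Y$, and that $\mathrm{Map}_{P_2\mathcal{S}_*}(Y,Y')\to\mathrm{Map}_{\mathrm{coMon}_{\mathbb{A}_2}}(\Sigma Y,\Sigma Y')$ is an equivalence. Your last paragraph concedes that the key identification of $\mathrm{coMon}_{\mathbb{A}_2}$ of a category of lifts is not established.

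Lemma \ref{suspensionp2} cannot supply this. It concerns the existence of a lift for a single object. A map of co-H objects, by contrast, carries a homotopy $c'\circ f\simeq(f\vee f)\circ c$ together with compatibilities with the counits, and nothing in your sketch shows this space of data is $\mathrm{Map}(Y,Y')$. Since the analogous functor $\mathcal{S}_*\to\mathrm{coMon}_{\mathbb{A}_2}\mathcal{S}_*$ is not an equivalence, some global property of $P_2\mathcal{S}_*$ has to enter, and your argument never isolates it.

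The missing input is $2$-excisiveness. The paper argues as follows:
\begin{enumerate}
\item Since $P_2\mathcal{S}_*$ is $2$-excisive (lemma \ref{forgetstabilization}), the functor $L_2\colon P_2\mathcal{S}_*\to T_2P_2\mathcal{S}_*$ is an equivalence \cite[prop.~2.16]{Gijsapprox}.
\item It constructs a functor $\mathrm{coMon}_{\mathbb{A}_2}P_2\mathcal{S}_*\to T_2P_2\mathcal{S}_*$. This sends $c\colon X\to X\vee X$ to the reduced punctured $3$-cube with $*$ on the singletons, $X$ on the doubletons, and $X\vee X$ on top via $i_1,i_2,c$.
\item This cube is strongly cocartesian by counitality. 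This is checked on cofibers in $\mathcal{Sp}$, since the forgetful functor creates colimits.
\item Precomposing this functor with $\Sigma$ gives $L_2$.
\item The equivalence then follows from Fuentes-Keuthan's identification $\mathrm{coMon}^{gp}_{\mathbb{A}_2}C\simeq T_2C$ \cite{Fuentes}, the grouplike condition being automatic in $P_2\mathcal{S}_*$.
\end{enumerate}
This handles all the coherence at once. To complete your route you would essentially have to reprove that identification.
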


Before discussing the proof, we record the immediate corollary of this lemma together with the previous one, which characterizes suspensions in the metastable category.

\begin{lem}
	\label{suspensionp2gen}
	Let $X \in P_2\mathcal{S}_*$. Then the following are equivalent:
	\begin{enumerate}
		\item $X$ is a suspension.
		\item $X$ is a co-H object.
		\item $X \to (X^{\otimes 2})^{hC_2} \to X \otimes X$ is null.
	\end{enumerate}
\end{lem}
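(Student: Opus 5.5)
The equivalence of (1) and (3) is exactly lemma \ref{suspensionp2}, so we only need to connect (2) with the other two conditions. We will prove the cycle (1) $\Rightarrow$ (2) $\Rightarrow$ (3). Together with lemma \ref{suspensionp2} this closes the loop.

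For (1) $\Rightarrow$ (2), suppose $X \simeq \Sigma Y$ in $P_2\mathcal{S}_*$. In any pointed category with finite colimits, a suspension $\Sigma Y$ carries a canonical co-H structure. It is given by the pinch map $\Sigma Y \to \Sigma Y \vee \Sigma Y$, which comes from collapsing the equator of the pushout square presenting $\Sigma Y$. Counitality holds because each composite with a fold onto a factor is homotopic to the identity, by the usual pushout argument. This is precisely the statement that the suspension functor lands in $\mathrm{coMon}_{\mathbb{A}_2}P_2\mathcal{S}_*$, so no metastable input is needed.

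For (2) $\Rightarrow$ (3), let $c\colon X \to X \vee X$ be a counital comultiplication in $P_2\mathcal{S}_*$. The coproduct in $P_2\mathcal{S}_*$ is computed as in lemma \ref{coprodp2}. Its underlying spectrum is $X \oplus X$, and its metastable structure has vanishing mixed component in $X \otimes X$. We then apply naturality of the metastable structure along the map $c$, which is a morphism of lifts, and compose with the forgetful map $(-^{\otimes 2})^{hC_2} \to (-)^{\otimes 2}$. This gives a commutative square. In it, the diagonal $d_X\colon X \to X \otimes X$ followed by $c^{\otimes 2}$ equals $c$ followed by the diagonal of $X \vee X$. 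By the decomposition $(X\oplus X)^{\otimes 2} \simeq X^{\otimes 2} \oplus X^{\otimes 2} \oplus (X\otimes X) \oplus (X \otimes X)$, the latter diagonal is $d_X \oplus d_X$ on the first two summands and zero on the mixed summands.

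Now write the underlying map as $c = (c_1, c_2)$, where counitality forces $c_1 \simeq c_2 \simeq \mathrm{id}$. Project $c^{\otimes 2} \circ d_X$ onto the mixed summand $X_{(1)} \otimes X_{(2)}$. The result is $(c_1 \otimes c_2)\circ d_X \simeq d_X$. On the other side the projection is zero, hence $d_X \simeq 0$, which is (3).

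The main obstacle is the bookkeeping in this last step. One has to check that the projection of $c^{\otimes 2}$ onto the mixed summand really is $c_1 \otimes c_2$, rather than a symmetrized version involving the swap. One also has to justify that the non-equivariant mixed component of the metastable structure on a coproduct vanishes, and not merely its $C_2$-equivariant version. For the second point I would use the $C_2$-equivariant splitting with the induced summand $C_{2+}\otimes(X_1\otimes X_2)$, as in the proof of lemma \ref{coprodp2}. After forgetting the action, this summand becomes the two mixed summands, and both receive zero.
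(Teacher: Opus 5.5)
Your proof is correct, but it handles condition (2) differently from the paper.

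\textbf{The paper's route.} The paper obtains (1)$\Leftrightarrow$(2) from the stronger fact that $\Sigma\colon P_2\mathcal{S}_*\to\mathrm{coMon}_{\mathbb{A}_2}P_2\mathcal{S}_*$ is an equivalence. Since $P_2\mathcal{S}_*$ is $2$-excisive, $L_2\colon P_2\mathcal{S}_*\to T_2P_2\mathcal{S}_*$ is an equivalence. A co-H object $X\to X\vee X$ is turned into a special strongly cocartesian punctured $3$-cube. The equivalence is then quoted from Fuentes-Keuthan, with only essential surjectivity sketched.

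\textbf{Your route.} You close the cycle (1)$\Rightarrow$(2)$\Rightarrow$(3)$\Rightarrow$(1) by a direct computation. It uses naturality of the metastable structure along $c$ and the explicit coproduct from lemma \ref{coprodp2}. This is the intrinsic version of the paper's informal remark that a co-H space has null diagonal.

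The two points you flag are harmless:
\begin{enumerate}
\item The $X_{(1)}\otimes X_{(2)}$-component of $c\otimes c$ is $(p_1\otimes p_2)\circ(c\otimes c)=(p_1c)\otimes(p_2c)$. There is no symmetrization, because you work after forgetting the action.
\item The forgetful map $i$ respects the $C_2$-equivariant splitting by naturality. Hence both non-equivariant mixed components of the diagonal of $X\vee X$ vanish.
\end{enumerate}

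\textbf{What each route buys.} Yours is more elementary and self-contained: it needs neither $2$-excisiveness nor an external theorem. The paper's gives more. Lemma \ref{suspensionp2} produces a desuspension only after a choice of nullhomotopy, so your argument matches the existence of a co-H structure with the existence of a desuspension. The equivalence of categories instead shows that co-H structures and desuspensions correspond functorially, which is the form of the Berstein--Hilton statement.
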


\begin{rem}
	The reader should compare this result with the Berstein--Hilton theorem \cite[thm.~A]{coHsusp}, which states that for a space $X$ with cells only in the metastable range, the data of a co-H structure is equivalent to the data of a suspension structure.
\end{rem}

We now give the proof.

\begin{proof}
	The first thing to observe is that, since $P_2\mathcal{S}_*$ is $2$-excisive, the functor $L_2$ is an equivalence \cite[prop.~2.16]{Gijsapprox}. It therefore suffices to construct the dotted arrow in the diagram
\[\begin{tikzcd}
	{P_2\mathcal{S}_*} & {T_2P_2\mathcal{S}_*} \\
	& {\mathrm{coMon}_{\mathbb{A}_2}P_2\mathcal{S}_*}
	\arrow["{L_2}", from=1-1, to=1-2]
	\arrow["\Sigma"', from=1-1, to=2-2]
	\arrow["\simeq"', dashed, from=2-2, to=1-2]
\end{tikzcd}\]
	In order to do this, to each co-H object
	$$X \xlongrightarrow{\Delta} X \vee X$$
	we associate the diagram in $T_2P_2\mathcal{S}_*$ given by
	\[\begin{tikzcd}[cramped]
	&& {*} & \\
	& {*} && X \\
	{*} && X \\
	& X && {X \vee X}
	\arrow[from=1-3, to=2-4]
	\arrow[from=1-3, to=3-3]
	\arrow[from=2-2, to=2-4]
	\arrow[from=2-2, to=4-2]
	\arrow["\Delta"{description}, from=2-4, to=4-4]
	\arrow[from=3-1, to=3-3]
	\arrow[from=3-1, to=4-2]
	\arrow["{i_2}"{description}, hook, from=3-3, to=4-4]
	\arrow["{i_1}"{description}, hook, from=4-2, to=4-4]
\end{tikzcd}\]
	where $i_j$ denote the inclusion maps. This punctured cube is evidently reduced. It remains to verify that it is strongly cocartesian. This is clear for the bottom square, so we only need to check the remaining two. By symmetry, it suffices to verify this for one of them. Consider the following diagram, obtained by taking horizontal cofibers
\[\begin{tikzcd}
	{*} & X & X \\
	X & {X \vee X} & X
	\arrow[from=1-1, to=1-2]
	\arrow[from=1-1, to=2-1]
	\arrow[equals, from=1-2, to=1-3]
	\arrow["\Delta", from=1-2, to=2-2]
	\arrow[equals, from=1-2, to=2-3]
	\arrow[dashed, from=1-3, to=2-3]
	\arrow["{i_1}"', from=2-1, to=2-2]
	\arrow["{\pi_2}"', from=2-2, to=2-3]
\end{tikzcd}\]
	The diagonal arrow is the identity by the definition of a co-H object, hence the induced map on cofibers is an equivalence. It follows that the left square is a pushout, since the forgetful functor $P_2\mathcal{S}_* \to \mathcal{Sp}$ creates colimits and the category of spectra is stable.

	Hence we obtain a functor
	\[
	\mathrm{coMon}_{\mathbb{A}_2}P_2\mathcal{S}_*
	\longrightarrow
	T_2P_2\mathcal{S}_*
	\]
	Moreover, it is immediate that precomposing with the suspension functor recovers the functor $L_2$.

	It remains to prove that this functor is an equivalence. We sketch the essential surjectivity to provide some intuition, and then refer to the literature for the general statement.

	Consider a punctured $3$-cube
\[\begin{tikzcd}[cramped]
	&& {X_3} & \\
	& {X_2} && {X_{23}} \\
	{X_1} && {X_{13}} \\
	& {X_{12}} && {X_{123}}
	\arrow[from=1-3, to=2-4]
	\arrow[from=1-3, to=3-3]
	\arrow[from=2-2, to=2-4]
	\arrow[from=2-2, to=4-2]
	\arrow[from=2-4, to=4-4]
	\arrow[from=3-1, to=3-3]
	\arrow[from=3-1, to=4-2]
	\arrow[from=3-3, to=4-4]
	\arrow[from=4-2, to=4-4]
\end{tikzcd}\]
	The condition of being special implies $X_1 \simeq X_2 \simeq X_3 \simeq *$. Since the cube is strongly cocartesian, all faces are pushouts. In particular, we may identify the bottom square with
\[\begin{tikzcd}
	{*} & {X_{13}} \\
	{X_{12}} & {X_{12} \vee X_{13} \simeq X_{123}}
	\arrow[from=1-1, to=1-2]
	\arrow[from=1-1, to=2-1]
	\arrow["{i_2}", hook, from=1-2, to=2-2]
	\arrow["{i_1}"', hook, from=2-1, to=2-2]
	\arrow["\lrcorner"{anchor=center, pos=0.125, rotate=180}, draw=none, from=2-2, to=1-1]
\end{tikzcd}\]
	Thus $X_{123} \simeq X_{12} \vee X_{13}$ and the remaining edge gives a map
	$$X_{23} \xlongrightarrow{\Delta} X_{12} \vee X_{13}$$
	which will play the role of the comultiplication. Since all faces are pushouts, taking horizontal cofibers yields the diagram
\[\begin{tikzcd}
	{*} & {X_{23}} & {X_{23}} \\
	{X_{12}} & {X_{12} \vee X_{13}} & {X_{13}}
	\arrow[from=1-1, to=1-2]
	\arrow[from=1-1, to=2-1]
	\arrow[equals, from=1-2, to=1-3]
	\arrow["\Delta", from=1-2, to=2-2]
	\arrow["\simeq", dashed, from=1-3, to=2-3]
	\arrow["{i_1}"', hook, from=2-1, to=2-2]
	\arrow["\lrcorner"{anchor=center, pos=0.125, rotate=180}, draw=none, from=2-2, to=1-1]
	\arrow["{\pi_2}"', from=2-2, to=2-3]
\end{tikzcd}\]
	This identifies $X_{23}$ with $X_{13}$ and establishes one of the counitality conditions. By symmetry, applying the same argument to the other face identifies $X_{23}$ with $X_{12}$ and yields the second counitality condition. Combining these identifications we obtain $X_{12} \simeq X_{13}$, and hence all three doubleton vertices may be identified with a single object $X$. Under these identifications the map $\Delta$ becomes a map $$X \xlongrightarrow{\Delta} X \vee X$$ satisfying the counitality conditions. This shows that the cube lies in the image of the functor constructed above.
\end{proof}

The general result was proven by Fuentes-Keuthan in \cite{Fuentes}. To state it, we recall some notation. For a category $C$, the category of $\mathbb{A}_m$-comonoids is defined as
$$\mathrm{coMon}_{\mathbb{A}_m}C := \mathrm{Mon}_{\mathbb{A}_m}(C^{\mathrm{op}})^{\mathrm{op}}$$

Explicitly, $\mathbb{A}_m$-comonoids in $C$ are functors $\Delta_m \to C$ that satisfy the Segal condition and a unitality condition (see \cite[def.~1.3]{Fuentes} for a complete definition). We also define the category of $\mathbb{A}_m$-cogroups - or grouplike $\mathbb{A}_m$-comonoids - by
$$\mathrm{coMon}_{\mathbb{A}_m}^{gp}C := \mathrm{Mon}^{gp}_{\mathbb{A}_m}(C^{\mathrm{op}})^{\mathrm{op}}$$

Explicitly, these are $\mathbb{A}_m$-comonoids that satisfy the additional grouplike condition, which requires certain canonical squares associated to the comultiplication to be pushouts (see \cite[def.~1.5]{Fuentes}).

With this notation in place, we state the main result of Fuentes-Keuthan \cite[thm.~1.22 and thm.~2.5]{Fuentes}.

\begin{thm}
	Let $C$ be a pointed compactly generated category. Then there is an equivalence
	$$\mathrm{coMon}_{\mathbb{A}_m}^{gp}C \xlongrightarrow{\sim} T_mC$$
	such that the diagram
\[\begin{tikzcd}
	C & {T_mC} \\
	& {\mathrm{coMon}^{gp}_{\mathbb{A}_m}C}
	\arrow["{L_m}", from=1-1, to=1-2]
	\arrow["\Sigma"', from=1-1, to=2-2]
	\arrow["\simeq"', from=2-2, to=1-2]
\end{tikzcd}\]
	commutes. In particular, $C$ is $m$-excisive if and only if the suspension functor
	$$C \xlongrightarrow{\Sigma} \mathrm{coMon}_{\mathbb{A}_m}^{gp}C$$
	induces an equivalence.
\end{thm}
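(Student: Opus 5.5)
The plan is to build an explicit functor $\Phi\colon \mathrm{coMon}^{gp}_{\mathbb{A}_m}C \to T_mC$ that generalizes the construction used above for $m=2$, and then to prove that $\Phi$ is an equivalence. For this I would dualize throughout: set $D := C^{\mathrm{op}}$, so that $\mathrm{coMon}^{gp}_{\mathbb{A}_m}C = \mathrm{Mon}^{gp}_{\mathbb{A}_m}(D)^{\mathrm{op}}$. Under this dualization, special strongly cocartesian punctured cubes in $C$ become special strongly cartesian "co-punctured" cubes in $D$. The statement then becomes a Segal-type comparison between grouplike $\mathbb{A}_m$-monoids and special strongly cartesian cubes. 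This is a truncated analogue of the classical equivalence between grouplike $\mathbb{A}_\infty$-monoids and Čech nerves.

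\textbf{Step 1 (the combinatorial functor).} I would define a functor of indexing categories relating $P_0(m+1)$ to the truncated simplex category $\Delta_{\le m}$. A nonempty subset $S\subseteq\{0,\dots,m\}$ is sent to the ordinal $[\,|S|-1\,]$, using the order-preserving identification of $S$ with $[|S|-1]$. Inclusions $S\subseteq S'$ are sent to the corresponding coface maps. Pulling an $\mathbb{A}_m$-comonoid $X_\bullet\colon \Delta_{\le m}\to C$ back along this functor gives a punctured cube whose value on $S$ is $X_{|S|-1}\simeq X^{\vee(|S|-1)}$. In particular it is $*$ on singletons and $X$ on doubletons. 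For $m=2$ this recovers exactly the cube written down in the proof above, with $\Delta$ on the edge $\{2,3\}\subseteq\{1,2,3\}$.

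\textbf{Step 2 (the image lies in $T_mC$).} Such a cube is reduced by unitality. I would then check that it is strongly cocartesian. Faces not involving the "$0$-th" direction are pushouts by the Segal condition, since $X_k\simeq X_{k_1}\vee X_{k_2}$. Faces involving the comultiplication are pushouts precisely because of the grouplike condition of \cite[def.~1.5]{Fuentes}. This is the same cofiber argument used above, where $\pi_2\circ\Delta=\mathrm{id}$ forces the relevant square to be a pushout, but now run inductively on face dimension. Compatibility with $L_m$ then follows from the observation that the canonical cogroup structure on $\Sigma Y$ produces the cube $S\mapsto \bigvee_{|S|-1}\Sigma Y = Y * S$.

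\textbf{Step 3 (equivalence; the main obstacle).} A strongly cocartesian punctured cube with terminal singletons is left Kan extended from its restriction to subsets of size $\le 2$. So an object of $T_mC$ amounts to doubleton values $X_{ij}$ together with the structure maps they induce into higher vertices. The substance of the proof is to show that the counitality data force all $X_{ij}$ to be canonically equivalent to a single object $X$, and that the higher faces then assemble into a coherent $\Delta_{\le m}$-diagram, as sketched above in the case $m=2$. I expect this coherence to be the hard part. I would first try to realize both categories as full subcategories of $\mathrm{Fun}(P_0(m+1),C)$ and $\mathrm{Fun}(\Delta_{\le m},C)$ respectively. I would then show that restriction along the functor of Step 1, together with left Kan extension back, are mutually inverse on the relevant full subcategories. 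Both directions reduce to checking pointwise that the Kan extension formulas agree, and the Segal and grouplike conditions should make the relevant colimits finite wedges.

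The final claim then follows quickly. By \cite[prop.~2.16]{Gijsapprox}, $C$ is $m$-excisive if and only if $L_m\colon C\to T_mC$ is an equivalence. By the commutative triangle, this holds if and only if $\Sigma\colon C\to\mathrm{coMon}^{gp}_{\mathbb{A}_m}C$ is an equivalence.
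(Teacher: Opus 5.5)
\textbf{The gap is in Step 3, which carries the whole content of the theorem.} For context, the paper does not prove this statement. It quotes it from Fuentes-Keuthan \cite[thm.~1.22 and thm.~2.5]{Fuentes}, and the only argument in the text is the $m=2$ sketch for $P_2\mathcal{S}_*$ given just before it.

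Your Steps 1--2 reasonably generalize that sketch. One small slip: with the order-preserving identification, the comultiplication $d^1$ sits on the edge $\{1,3\}\subseteq\{1,2,3\}$, not on $\{2,3\}$. Your deduction of the final sentence from \cite[prop.~2.16]{Gijsapprox} and the triangle is fine.

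Step 3 starts from a false reduction. A special punctured cube is strongly cocartesian when every $2$-face with nonempty initial vertex is a pushout. This is \emph{not} the same as being left Kan extended from subsets of size $\le 2$. Already $L_2Y$ shows this:
\begin{itemize}
\item its top vertex is $Y*\{1,2,3\}\simeq\Sigma Y\vee\Sigma Y$;
\item the left Kan extension of its restriction to subsets of size $\le 2$ has top vertex $\Sigma Y\vee\Sigma Y\vee\Sigma Y$, with all three edges summand inclusions.
\end{itemize}
In general the condition is overdetermined. For each $k$, the subsets containing $k$ form a strongly cocartesian full $m$-cube with initial vertex $*$. So each singleton separately identifies the top vertex with $\bigvee_{i\neq k}F(\{k,i\})$. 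The comultiplication is encoded precisely in how these $m+1$ identifications differ. Describing an object of $T_mC$ as ``doubleton values plus the maps they induce'' throws exactly this information away.

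\textbf{The proposed inverse also fails.} Your functor $\phi\colon P_0(m+1)\to\Delta_{\le m}$ only sees injective maps, so $\Phi$ forgets the codegeneracies, i.e.\ the counit. Left Kan extension along $\phi$ does not undo restriction:
\begin{itemize}
\item Since $[0]$ is terminal, $\phi_{/[0]}\simeq P_0(m+1)$ has a terminal object. Hence $(\phi_!F)([0])\simeq F(\{1,\dots,m+1\})$, which is an $m$-fold wedge rather than the zero object required by the Segal condition.
\item Even working only with injective maps, the Kan extension in degree $1$ is $\bigvee_{i<j}F(\{i,j\})$, not a single object $X$.
\end{itemize}

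What is missing is a mechanism that extracts from a cube in $T_mC$ three things: one object $X$, coherent identifications $F(S)\simeq X^{\vee(|S|-1)}$, and the codegeneracies. In the paper's $m=2$ sketch this is done by the cofiber argument. There, the composite $X_{23}\xrightarrow{\Delta}X_{12}\vee X_{13}\xrightarrow{\pi_2}X_{13}$ is an equivalence, which at once identifies $X_{23}\simeq X_{13}$ and yields counitality. Making such identifications coherent for all $m$ is exactly what Fuentes-Keuthan's theorem supplies. You flag this as the hard part but give no argument that would carry it out, so as written the proposal does not establish the equivalence.
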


\begin{rem}
	This is slightly weaker than what we proved for $P_2\mathcal{S}_*$. Indeed, in this case we dropped the grouplike condition. We can do this because the inclusion
	$$\mathrm{coMon}^{gp}_{\mathbb{A}_2} P_2\mathcal{S}_* \hookrightarrow \mathrm{coMon}_{\mathbb{A}_2} P_2\mathcal{S}_*$$
	is an equivalence of categories. This was implicit in the proof above, but for clarity we repeat the argument here. An $\mathbb{A}_2$-comonoid $X$ is grouplike whenever the diagrams
\[\begin{tikzcd}
	{*} & X \\
	X & {X \vee X}
	\arrow[from=1-1, to=1-2]
	\arrow[from=1-1, to=2-1]
	\arrow["\Delta", from=1-2, to=2-2]
	\arrow["{i_j}"', from=2-1, to=2-2]
\end{tikzcd}\]
	for $j = 1, 2$ are pushouts. By counitality, the induced map on horizontal cofibers is an equivalence. Hence this diagram is cocartesian whenever $C$ admits a functor to a stable category that creates colimits, as is the case for $P_2\mathcal{S}_*$
\end{rem}

We conclude this subsection by observing that Fuentes-Keuthan's result provides yet another description of the metastable category as obtained by formally inverting suspension on grouplike co-H spaces
$$P_2\mathcal{S}_* \simeq \mathrm{colim}\big(\mathcal{S}_* \xlongrightarrow{\Sigma} \mathrm{coMon}_{\mathbb{A}_2}^{gp}\mathcal{S}_* \xlongrightarrow{\Sigma} \mathrm{coMon}_{\mathbb{A}_2}^{gp}\mathrm{coMon}_{\mathbb{A}_2}^{gp}\mathcal{S}_* \longrightarrow \cdots\big)$$

\section{EHP sequences}

In this section we develop computational tools for the metastable category. The main structural result is that every object $X$ of $P_2\mathcal{S}_*$ fits into a fiber sequence
$$
X \xlongrightarrow{E^n} \Omega^n\Sigma^n X \xlongrightarrow{H^n} \Omega^\infty\Sigma^\infty(S(n\rho)_+ \otimes X^{\otimes 2})_{hC_2}
$$
where $\rho$ denotes the sign representation.

The case $n=\infty$ gives the canonical resolution
$$
X \xlongrightarrow{E^\infty} \Omega^\infty\Sigma^\infty X \xlongrightarrow{H^\infty} \Omega^\infty\Sigma^\infty X^{\otimes 2}_{hC_2}
$$
which should be thought of as the analogue of a resolution of a $2$-step nilpotent group in terms of abelian groups. We refer to the map $H^\infty$ as the James--Hopf map, since it induces the classical James--Hopf map after passing to pointed spaces.

The case $n=2$ gives a description of the fiber of the double suspension in terms of the swap map $\tau$,
$$
X \xlongrightarrow{E^2} \Omega^2\Sigma^2 X \xlongrightarrow{H^2} \Omega^\infty\Sigma^\infty X^{\otimes 2}/(1-\tau)
$$
and this will be used to prove an exponent theorem. 

The case $n=1$ gives the metastable EHP sequence
$$
X \xlongrightarrow{E} \Omega\Sigma X \xlongrightarrow{H} \Omega^\infty\Sigma^\infty X^{\otimes 2}
$$
from which we derive a spectral sequence with $E^1$-page
$$
E^1_{t,m}:=
\begin{cases}
	\pi_tX & m=0 \\
	\pi_{t-m+1}^sX^{\otimes 2} & 0<m\leq n \\
	0 & m>n
\end{cases}
$$
converging to $\pi_{n+t}\Sigma^nX$.

In order to use this spectral sequence to compute homotopy groups of spheres in the metastable category, we first study the spheres $S^0$ and $S^1$. Introducing the bigraded homotopy groups
$$
\pi_{n,j}X := [S^{n,j},X]
$$
we show that the bigraded homotopy groups of the exotic $0$-spheres $S^{0,k}$ are given by
$$
\pi_{n,j}S^{0,k} \cong
\begin{cases}
	\mathbb{Z}/2 & \text{if } n=0 \text{ and } j \nmid k \\
	\mathbb{Z}/2 \times \{0,k/j\} & \text{if } n=0 \text{ and } j \mid k \\
	\pi_{n+1}\mathbb{RP}^\infty_{-1} \oplus \pi_{n+1}(\mathbb{S}/k) & \text{if } n>0
\end{cases}
$$
while the bigraded homotopy groups of $S^1$ are given by
$$
\pi_{n,j}S^1 \cong
\begin{cases}
	0 & \text{if } n=0 \\
	\mathbb{Z} & \text{if } n=1 \\
	\pi_{n+1}\mathbb{Sp}^2 \oplus \pi_{n-1}\mathbb{S}[\sfrac{1}{2}] & \text{if } n>1
\end{cases}
$$

These calculations provide the first explicit examples in the metastable category and serve as input for the spectral sequence above. We then use this information to compute low-dimensional examples and, as an application, prove that no sphere is a loop space, except possibly $S^7$.

The section is organized as follows. 

In the first subsection we prove the canonical resolution, namely the case $n=\infty$, by means of a general result on categories of lifts. 

In the second subsection we prove the general $n$-fold resolution and analyze in detail the cases $n=1$ and $n=2$. 

In the third subsection we study the James--Hopf map $H^\infty$, prove a metastable version of the Kahn--Priddy theorem, and compute the bigraded homotopy groups of $S^{0,k}$. 

In the fourth subsection we refine this analysis, prove the reduced Kahn--Priddy theorem, and compute the bigraded homotopy groups of $S^1$.

\subsection{Canonical resolution}

In this subsection we prove that every object $X$ of $P_2\mathcal{S}_*$ fits into a fiber sequence
$$
X \xlongrightarrow{E^\infty} \Omega^\infty\Sigma^\infty X \xlongrightarrow{H^\infty} \Omega^\infty\Sigma^\infty X^{\otimes 2}_{hC_2}
$$
which we call the \emph{canonical resolution}. Here $E^\infty$ is the stabilization map, namely the unit of the adjunction $\Sigma^\infty \dashv \Omega^\infty$, while $H^\infty$ is the \emph{James--Hopf map}. Since $\Sigma^\infty$ is symmetric monoidal and colimit-preserving, we will freely identify $(\Sigma^\infty X)^{\otimes 2}_{hC_2}$ with $\Sigma^\infty(X^{\otimes 2})_{hC_2}$.

The proof proceeds in three steps. We first establish an analogous fiber sequence in the category of coalgebras. We then upgrade it to a fiber sequence in a category of lifts. Finally, we specialize to the metastable category and use the fact that the forgetful functor coincides with stabilization.

We begin with coalgebras. Let $F$ be an accessible reduced endofunctor of spectra. By lemma \ref{propcoalg}, the forgetful functor
$$
\mathrm{coAlg}\,F \longrightarrow \mathcal{Sp}
$$
creates colimits. In particular, it admits a right adjoint, which we denote by
\[
\begin{tikzcd}
	{\underline{\,\,}: \mathrm{coAlg}\,F} & {\mathcal{Sp}: \mathrm{cofree}}
	\arrow[""{name=0, anchor=center, inner sep=0}, shift left, from=1-1, to=1-2]
	\arrow[""{name=1, anchor=center, inner sep=0}, shift left, from=1-2, to=1-1]
	\arrow["\dashv"{anchor=center, rotate=-90}, draw=none, from=0, to=1]
\end{tikzcd}
\]
and refer to it as the \emph{cofree} functor. With this notation in place, we prove the following.

\begin{lem}
	Let $F$ be an accessible reduced endofunctor of $\mathcal{Sp}$. Then any $F$-coalgebra $X$ fits into a fiber sequence
	$$
	X \longrightarrow \mathrm{cofree}\,\underline{X} \longrightarrow \mathrm{cofree}\,F\underline{X}
	$$
\end{lem}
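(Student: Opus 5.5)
The plan is to test the fiber sequence against mapping spaces. Since $\mathrm{coAlg}\,F$ is presentable, a sequence in it is a fiber sequence once, for every $F$-coalgebra $Y$, applying $\mathrm{Map}_{\mathrm{coAlg}\,F}(Y,-)$ yields a fiber sequence of spaces. By the adjunction between the forgetful and cofree functors, the last two terms become
$$
\mathrm{Map}_{\mathrm{coAlg}\,F}(Y,\mathrm{cofree}\,\underline{X}) \simeq \mathrm{Map}_{\mathcal{Sp}}(\underline{Y},\underline{X}),
\qquad
\mathrm{Map}_{\mathrm{coAlg}\,F}(Y,\mathrm{cofree}\,F\underline{X}) \simeq \mathrm{Map}_{\mathcal{Sp}}(\underline{Y},F\underline{X}).
$$
The first map $X \to \mathrm{cofree}\,\underline{X}$ is the unit of the adjunction. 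The second map $\mathrm{cofree}\,\underline{X} \to \mathrm{cofree}\,F\underline{X}$ should be the difference of two maps, both adjoint to maps $\underline{\mathrm{cofree}\,\underline{X}} \to F\underline{X}$:
\begin{enumerate}
\item the composite of the counit $\underline{\mathrm{cofree}\,\underline{X}} \to \underline{X}$ with the structure map $\underline{X} \to F\underline{X}$;
\item the composite of the coalgebra structure map $\underline{\mathrm{cofree}\,\underline{X}} \to F\underline{\mathrm{cofree}\,\underline{X}}$ with $F$ applied to the counit.
\end{enumerate}
The difference makes sense because $\mathrm{Map}_{\mathcal{Sp}}(-,-)$ is an infinite loop space. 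On mapping spaces, a map $f\colon \underline{Y} \to \underline{X}$ is then sent to the difference of $s_X\circ f$ and $F(f)\circ s_Y$, where $s_X$ and $s_Y$ denote the structure maps.

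The key step is identifying the mapping space in coalgebras. From the pullback definition of $\mathrm{coAlg}\,F$ via $\mathrm{Fun}(\Delta^1,\mathcal{Sp})$, as in \cite[prop.~II.1.5]{NikolausScholze}, which identifies mapping spaces in lax equalizers, $\mathrm{Map}_{\mathrm{coAlg}\,F}(Y,X)$ is the equalizer of the two maps
$$
\mathrm{Map}_{\mathcal{Sp}}(\underline{Y},\underline{X}) \rightrightarrows \mathrm{Map}_{\mathcal{Sp}}(\underline{Y},F\underline{X}),
\qquad f \mapsto s_X\circ f, \qquad f \mapsto F(f)\circ s_Y.
$$
Since both are maps of infinite loop spaces, that is, of spectra after passing to mapping spectra, this equalizer is the fiber of their difference. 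Combining with the adjunction identifications gives the desired fiber sequence of mapping spaces for every $Y$, naturally in $Y$. The Yoneda lemma then yields the fiber sequence in $\mathrm{coAlg}\,F$, provided the maps above are the ones induced by the constructed morphisms.

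The main obstacle is exactly this compatibility. One must check that, under the adjunction equivalences, postcomposition with the morphism $\mathrm{cofree}\,\underline{X}\to\mathrm{cofree}\,F\underline{X}$ built from the two adjoint maps agrees with $f\mapsto s_X f - F(f)s_Y$. It must also agree naturally in $Y$, and not merely pointwise.

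A subtlety is that $F$ is not assumed additive, so $F(f)$ is not a map of infinite loop spaces in $f$. I would handle this by noting that the equalizer of two maps of spaces $A \rightrightarrows B$ with $B$ an infinite loop space is always the fiber of their pointwise difference $A\to B$, which needs no additivity of the maps.

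The natural transformation in $Y$ then comes from the two composites described above, using naturality of the counit and of the structure maps. I would construct everything at the level of the functors $\mathrm{Map}(\underline{(-)},\underline{X})$ and $\mathrm{Map}(\underline{(-)},F\underline{X})$ on $\mathrm{coAlg}\,F^{\mathrm{op}}$, which are representable by $\mathrm{cofree}\,\underline{X}$ and $\mathrm{cofree}\,F\underline{X}$. The two natural transformations between them are representable, so their difference is represented by a map of cofree coalgebras; the difference is taken in the group structure that $\mathrm{cofree}$ inherits as a right adjoint from spectra.
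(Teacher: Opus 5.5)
Your proposal is correct and follows essentially the same route as the paper: identify $\mathrm{Map}_{\mathrm{coAlg}\,F}(T,X)$ as the equalizer of $f\mapsto L_X\circ f$ and $f\mapsto F(f)\circ L_T$ via the pullback/arrow-category description (as in \cite[prop.~II.1.5]{NikolausScholze}), rewrite it as the fiber of the difference, and conclude by the cofree adjunction and Yoneda. You also observe that the equalizer of two maps of spaces into an infinite loop space is the fiber of their pointwise difference, with no additivity needed. This is more careful than the paper's phrase ``computed at the level of mapping spectra'', because $f\mapsto F(f)\circ L_T$ is not additive for non-additive $F$, which is exactly why $H^\infty$ later fails to be additive.
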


\begin{proof}
	We will show that for every $F$-coalgebra $T$ there is a fiber sequence of mapping spaces
	$$
	\mathrm{Map}_{\mathrm{coAlg}\,F}(T, X) \longrightarrow \mathrm{Map}_{\mathcal{Sp}}(\underline{T}, \underline{X}) \longrightarrow \mathrm{Map}_{\mathcal{Sp}}(\underline{T}, F\underline{X})
	$$
	Adjunction then identifies this with a fiber sequence
	$$
	\mathrm{Map}_{\mathrm{coAlg}\,F}(T, X) \longrightarrow \mathrm{Map}_{\mathrm{coAlg}\,F}(T, \mathrm{cofree}\,\underline{X}) \longrightarrow \mathrm{Map}_{\mathrm{coAlg}\,F}(T, \mathrm{cofree}\,F\underline{X})
	$$
	and the claim follows by Yoneda. To prove the existence of the first fiber sequence, consider the diagram
\[\begin{tikzcd}
	{\mathrm{Map}_{\mathrm{coAlg}\,F}(T, X)} & {\mathrm{Map}_{\mathrm{Fun}(\Delta^1, \mathcal{Sp})}(L_T, L_X)} & {\mathrm{Map}_{\mathcal{Sp}}(\underline{T}, F\underline{X})} \\
	{\mathrm{Map}_{\mathcal{Sp}}(\underline{T}, \underline{X})} & {\mathrm{Map}_{\mathcal{Sp}}(\underline{T}, \underline{X}) \times \mathrm{Map}_{\mathcal{Sp}}(F\underline{T}, F\underline{X})} & {\mathrm{Map}_{\mathcal{Sp}}(\underline{T}, F\underline{X}) \times \mathrm{Map}_{\mathcal{Sp}}(\underline{T}, F\underline{X})}
	\arrow[from=1-1, to=1-2]
	\arrow[from=1-1, to=2-1]
	\arrow[from=1-2, to=1-3]
	\arrow[from=1-2, to=2-2]
	\arrow["\Delta", from=1-3, to=2-3]
	\arrow[""{name=0, anchor=center, inner sep=0}, "{(1, F)}"', from=2-1, to=2-2]
	\arrow[""{name=1, anchor=center, inner sep=0}, "{L_{X_*} \times L_T^*}"', from=2-2, to=2-3]
	\arrow["\lrcorner"{anchor=center, pos=0.125}, draw=none, from=1-1, to=0]
	\arrow["\lrcorner"{anchor=center, pos=0.125}, draw=none, from=1-2, to=1]
\end{tikzcd}\]
	The left square is a pullback because $\mathrm{coAlg}\,F$ is defined as a pullback, so mapping spaces in $\mathrm{coAlg}\,F$ are computed as pullbacks. The right square is the pullback defining mapping spaces in the arrow category. Therefore the outer square is again a pullback. It follows that
\[\begin{tikzcd}
	{\mathrm{Map}_{\mathrm{coAlg}\,F}(T, X) \simeq \mathrm{eq}\Bigl(\mathrm{Map}_{\mathcal{Sp}}(\underline{T}, \underline{X}) } & {\mathrm{Map}_{\mathcal{Sp}}(\underline{T}, F\underline{X})\Bigr)}
	\arrow["{L_{X*}}"', shift right, from=1-1, to=1-2]
	\arrow["{L_T^* \circ F}", shift left, from=1-1, to=1-2]
\end{tikzcd}\]
	The equalizer can be computed at the level of mapping spectra, and therefore as the fiber
$$
\mathrm{Map}_{\mathrm{coAlg}\,F}(T, X) \simeq \mathrm{fib}\Bigl(\mathrm{Map}_{\mathcal{Sp}}(\underline{T}, \underline{X}) \xlongrightarrow{L_T^* \circ F - L_{X*}} \mathrm{Map}_{\mathcal{Sp}}(\underline{T}, F\underline{X})\Bigr)
$$
This gives the claimed fiber sequence. This description also appears in \cite[prop. II.1.5]{NikolausScholze}.
\end{proof}

\begin{rem}
	\label{Hopfexplicitcoalg}
	The proof above gives an explicit description of the map on homotopy classes induced by
	$$
	\mathrm{cofree}\,\underline X \longrightarrow \mathrm{cofree}\,F\underline X
	$$
	If $T$ and $X$ are $F$-coalgebras, a map $\alpha : \underline T \to \underline X$ is sent to the difference
	$F\alpha \circ L_T - L_X \circ \alpha$.

	Indeed, a map of $F$-coalgebras from $T$ to $X$ is precisely a map $\alpha : \underline T \to \underline X$ such that
\[\begin{tikzcd}
	{\underline T} & {F\underline T} \\
	{ \underline X} & {F\underline X}
	\arrow["{L_T}", from=1-1, to=1-2]
	\arrow["\alpha"', from=1-1, to=2-1]
	\arrow["{F\alpha}", from=1-2, to=2-2]
	\arrow["{L_X}"', from=2-1, to=2-2]
\end{tikzcd}\]
	commutes. In an abelian category one would express this by the equation
	$F\alpha \circ L_T - L_X \circ \alpha = 0$, and the fiber sequence above is the homotopical analogue of that condition.

	In particular, the map $\mathrm{cofree}\,\underline X \to \mathrm{cofree}\,F\underline X$ is not obtained by applying the cofree functor to a map of spectra $\underline X \to F\underline X$. In this sense, it is not induced by a stable map.
\end{rem}

We now pass from coalgebras to categories of lifts. As in the case of coalgebras, the forgetful functor to spectra admits a right adjoint by \ref{proplift}, which we again denote by \emph{cofree}.

\begin{lem}
	Let $1 \to G \leftarrow F$ be a span of natural transformations between accessible reduced endofunctors of $\mathcal{Sp}$. Then any object $X$ of $\mathrm{Lift}(1 \to G \leftarrow F)$ fits into a fiber sequence
	$$
	X \longrightarrow \mathrm{cofree}\,\underline{X} \longrightarrow \mathrm{cofree}\,E\underline{X}
	$$
	where $E$ denotes the fiber of the map $F \to G$.
\end{lem}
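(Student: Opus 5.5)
We follow the coalgebra argument of the previous lemma, replacing the single structure map by the data of a lift. Let $X$ be an object of $\mathrm{Lift}(1 \to G \leftarrow F)$, with underlying spectrum $\underline X$ and structure map $L_X \colon \underline X \to F\underline X$ lifting $\underline X \to G\underline X$. The plan is to show that, for every object $T$, there is a natural fiber sequence of mapping spaces
$$
\mathrm{Map}_{\mathrm{Lift}}(T, X) \longrightarrow \mathrm{Map}_{\mathcal{Sp}}(\underline T, \underline X) \longrightarrow \mathrm{Map}_{\mathcal{Sp}}(\underline T, E\underline X).
$$
Adjunction against the cofree functor, which exists by lemma \ref{proplift}, then rewrites the last two terms as maps into $\mathrm{cofree}\,\underline X$ and $\mathrm{cofree}\,E\underline X$. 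The Yoneda lemma then gives the claimed fiber sequence, exactly as in the coalgebra case.

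To compute $\mathrm{Map}_{\mathrm{Lift}}(T,X)$, we use the defining pullback $\mathrm{Lift} = \mathcal{Sp} \times_{\mathrm{coAlg}\,G} \mathrm{coAlg}\,F$. Mapping spaces in a pullback of categories are pullbacks of mapping spaces, so
$$
\mathrm{Map}_{\mathrm{Lift}}(T,X) \simeq \mathrm{Map}_{\mathcal{Sp}}(\underline T,\underline X) \times_{\mathrm{Map}_{\mathrm{coAlg}\,G}(T,X)} \mathrm{Map}_{\mathrm{coAlg}\,F}(T,X).
$$
Here we used that $\mathcal{Sp} \to \mathrm{coAlg}\,1$ is an equivalence onto coalgebras whose structure map is the identity. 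For the other two terms we insert the fiber description from the previous lemma, which identifies each of them with a fiber of a map of mapping spectra. Writing $A = \mathrm{map}(\underline T,\underline X)$, we get
$$
\mathrm{map}_{\mathrm{coAlg}\,F}(T,X) \simeq \mathrm{fib}\bigl(A \xrightarrow{\,d_F\,} \mathrm{map}(\underline T, F\underline X)\bigr),\qquad
\mathrm{map}_{\mathrm{coAlg}\,G}(T,X) \simeq \mathrm{fib}\bigl(A \xrightarrow{\,d_G\,} \mathrm{map}(\underline T, G\underline X)\bigr).
$$
The differences are $d_F = F\alpha\circ L_T - L_X\circ\alpha$ and $d_G$, the analogous difference for $G$. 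The pullback of spectra then becomes the total fiber of a square. Its corners are $A$, $A$, $\mathrm{map}(\underline T,F\underline X)$ and $\mathrm{map}(\underline T, G\underline X)$, the two copies of $A$ are joined by the identity, and the map $F\underline X \to G\underline X$ is the given transformation. Taking fibers in the identity direction first, the total fiber is $\mathrm{fib}\bigl(A \to \mathrm{map}(\underline T, \mathrm{fib}(F\underline X \to G\underline X))\bigr) = \mathrm{fib}\bigl(A \to \mathrm{map}(\underline T, E\underline X)\bigr)$, which is the required sequence after applying $\Omega^\infty$.

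The main obstacle is making this rigorous rather than heuristic. One must check that the map $d_G$ is the composite of $d_F$ with $F\to G$, that the map $\mathrm{coAlg}\,1 \to \mathrm{coAlg}\,G$ induces the corresponding term $\alpha\mapsto G\alpha\circ \Delta_T - \Delta_X\circ\alpha$, and that the homotopies identifying the lifts are coherent. We expect this to work because the whole situation is linear: all mapping spaces involved are infinite loop spaces of mapping spectra, and the comparison maps come from the spectral Yoneda/stable structure. The first thing to try is to rewrite $\mathrm{Lift}$ as $\mathrm{Null}$ or $\mathrm{Sect}$, using the earlier lemma identifying the three descriptions, so that a single structure map suffices. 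With $\mathrm{Sect}(P\to 1)$, $P = 1\times_G F$, the object is a $P$-coalgebra whose composite to $1$ is the identity. The fiber sequence then follows by applying the coalgebra lemma to $P$ and taking the fiber of $P\underline X \to \underline X$, which is $E\underline X$.
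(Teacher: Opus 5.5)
Your reduction to a natural fiber sequence of mapping spaces (then adjunction and Yoneda), the pullback formula $\mathrm{Map}_{\mathrm{Lift}}(T,X)\simeq\mathrm{Map}_{\mathcal{Sp}}(\underline T,\underline X)\times_{\mathrm{Map}_{\mathrm{coAlg}\,G}(T,X)}\mathrm{Map}_{\mathrm{coAlg}\,F}(T,X)$, and the use of the coalgebra lemma all match the paper. However, the decisive computation is wrong as written.

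The square you describe has $A\xrightarrow{=}A$ on one side and $\mathrm{map}(\underline T,F\underline X)\to\mathrm{map}(\underline T,G\underline X)$ on the other. Its total fiber is $\mathrm{fib}\bigl(0\to\mathrm{map}(\underline T,E\underline X)\bigr)\simeq\Omega\,\mathrm{map}(\underline T,E\underline X)$, because taking fibers in the identity direction kills $A$ rather than leaving it. Computed in the other direction, the same total fiber is the fiber of $\mathrm{Map}_{\mathrm{coAlg}\,F}(T,X)\to\mathrm{Map}_{\mathrm{coAlg}\,G}(T,X)$. So your square gives exactly the paper's intermediate fiber sequence $\mathrm{Map}_{\mathrm{coAlg}\,F}(T,X)\to\mathrm{Map}_{\mathrm{coAlg}\,G}(T,X)\to\mathrm{Map}_{\mathcal{Sp}}(\underline T,E\underline X)$, which the paper obtains by taking vertical fibers of the same diagram extended by $\mathrm{Map}(\underline T,\Sigma E\underline X)$. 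It does not give the mapping space of $\mathrm{Lift}$. The corner $\mathrm{Map}_{\mathcal{Sp}}(\underline T,\underline X)$ of the pullback, which is where the lift data lives, has silently dropped out of your computation.

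The missing step is a base change along $\mathrm{Map}_{\mathcal{Sp}}(\underline T,\underline X)\to\mathrm{Map}_{\mathrm{coAlg}\,G}(T,X)$. This map is a section over $\mathrm{Map}(\underline T,\underline X)$. It encodes the nullhomotopy of $d_G$ coming from naturality of $1\to G$, together with the identification of the $G$-coalgebra structure on $X$ with the one induced from $\mathcal{Sp}$. Paste two squares: the one defining $\mathrm{Map}_{\mathrm{Lift}}(T,X)$, and the pullback square exhibiting $\mathrm{Map}_{\mathrm{coAlg}\,F}(T,X)$ as the fiber of $\mathrm{Map}_{\mathrm{coAlg}\,G}(T,X)\to\mathrm{Map}(\underline T,E\underline X)$. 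This gives $\mathrm{Map}_{\mathrm{Lift}}(T,X)\simeq\mathrm{fib}\bigl(\mathrm{Map}(\underline T,\underline X)\to\mathrm{Map}(\underline T,E\underline X)\bigr)$, which is the paper's ``composing pullback squares''. Equivalently, the square whose total fiber you want has $0$ in place of your second copy of $A$, commuting via that nullhomotopy.

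Relatedly, your claim that $\mathcal{Sp}\to\mathrm{coAlg}\,1$ is an equivalence onto coalgebras with identity structure map is false. Between such objects, $\mathrm{Map}_{\mathrm{coAlg}\,1}\simeq\Omega^\infty\mathrm{fib}(A\xrightarrow{0}A)\simeq\Omega^\infty(A\oplus\Omega A)$. You do not need this claim for the pullback formula, which follows directly from the definition of $\mathrm{Lift}$. It does, however, break your $\mathrm{Sect}$ shortcut. If the claim were true, $\mathrm{Map}_{\mathrm{Sect}}(T,X)$ would be $\Omega^\infty\mathrm{fib}\bigl(A\to\mathrm{map}(\underline T,P\underline X)\bigr)$. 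The passage from $P\underline X$ to $E\underline X$ is precisely the effect of pulling back along the non-fully-faithful functor $\mathcal{Sp}\to\mathrm{coAlg}\,1$. So the $\mathrm{Sect}$ model is just the case $G=1$ of the same computation, and it needs the same base-change step.
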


\begin{proof}
	As before, it is enough to show that for every object $T$ of $\mathrm{Lift}(1 \to G \leftarrow F)$ there is a fiber sequence of mapping spaces
	$$
	\mathrm{Map}_{\mathrm{Lift}(1 \to G \leftarrow F)}(T, X)
	\longrightarrow
	\mathrm{Map}_{\mathcal{Sp}}(\underline{T}, \underline{X})
	\longrightarrow
	\mathrm{Map}_{\mathcal{Sp}}(\underline{T}, E\underline{X})
	$$
	The claim then follows by adjunction and Yoneda.

	We now compute the first mapping space as a pullback. Since $\mathrm{Lift}(1 \to G \leftarrow F)$ is defined as a pullback, its mapping spaces are given by pullbacks as well:
\[\begin{tikzcd}
	{\mathrm{Map}_{\mathrm{Lift}(1 \to G \leftarrow F)}(T, X)} & {\mathrm{Map}_{\mathrm{coAlg}\,F}(T, X)} \\
	{\mathrm{Map}_\mathcal{Sp}(\underline{T}, \underline{X})} & {\mathrm{Map}_{\mathrm{coAlg}\,G}(T, X)}
	\arrow[from=1-1, to=1-2]
	\arrow[from=1-1, to=2-1]
	\arrow[from=1-2, to=2-2]
	\arrow[""{name=0, anchor=center, inner sep=0}, from=2-1, to=2-2]
	\arrow["\lrcorner"{anchor=center, pos=0.125}, draw=none, from=1-1, to=0]
\end{tikzcd}\]
	Here we are slightly abusing notation, since the two maps on the right also forget part of the structure. It therefore remains to prove that there is a fiber sequence
	$$
\mathrm{Map}_{\mathrm{coAlg}\,F}(T, X)
\longrightarrow
\mathrm{Map}_{\mathrm{coAlg}\,G}(T, X)
\longrightarrow
\mathrm{Map}_{\mathcal{Sp}}(\underline{T}, E\underline{X})
$$
and the result will then follow by composing pullback squares. This fiber sequence follows from the previous lemma by taking vertical fibers in the following diagram of fiber sequences:

\[\begin{tikzcd}
	{\mathrm{Map}_\mathcal{Sp}(\underline{T}, \underline{X})} & {\mathrm{Map}_\mathcal{Sp}(\underline{T}, \underline{X})} & {*} \\
	{\mathrm{Map}_\mathcal{Sp}(\underline{T}, F\underline{X})} & {\mathrm{Map}_\mathcal{Sp}(\underline{T}, G\underline{X})} & {\mathrm{Map}_\mathcal{Sp}(\underline{T}, \Sigma E\underline{X})}
	\arrow[equals, from=1-1, to=1-2]
	\arrow[from=1-1, to=2-1]
	\arrow[from=1-2, to=1-3]
	\arrow[from=1-2, to=2-2]
	\arrow[from=1-3, to=2-3]
	\arrow[from=2-1, to=2-2]
	\arrow[from=2-2, to=2-3]
\end{tikzcd}\]
	The lemma follows.
\end{proof}

\begin{rem}
	\label{Hopfexplicitlift}
	The composite
	$$
	\mathrm{cofree}\,\underline{X} \longrightarrow \mathrm{cofree}\,E\underline{X} \longrightarrow \mathrm{cofree}\,F\underline{X}
	$$
	coincides with the map encoding the $F$-coalgebra structure on $X$. In particular, this composite admits an explicit description on homotopy classes. By contrast, the first map is less explicit. Indeed, the choice of lift determines a nullhomotopy of the composite
	$$
	\mathrm{cofree}\,\underline{X} \longrightarrow \mathrm{cofree}\,F\underline{X} \longrightarrow \mathrm{cofree}\,G\underline{X}
	$$
	and this nullhomotopy in turn determines a lift to $\mathrm{cofree}\,E\underline{X}$.
\end{rem}

We are now ready to prove the canonical resolution in the metastable category.

\begin{lem}
	\label{canonicalres}
	Any object $X$ of $P_2\mathcal{S}_*$ fits into a fiber sequence
	$$
	X \xlongrightarrow{E^\infty} \Omega^\infty\Sigma^\infty X \xlongrightarrow{H^\infty} \Omega^\infty\Sigma^\infty X^{\otimes 2}_{hC_2}
	$$
\end{lem}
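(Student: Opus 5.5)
The plan is to specialize the previous lemma to the span
\[
1 \xlongrightarrow{\Delta} (-^{\otimes 2})^{tC_2} \xlongleftarrow{\mathrm{can}} (-^{\otimes 2})^{hC_2}
\]
which defines $P_2\mathcal{S}_*$. Both endofunctors are accessible and reduced, as checked when we showed that $P_2\mathcal{S}_*$ is presentable. The fiber $E$ of $\mathrm{can}$ is identified, via the norm cofiber sequence, with the homotopy orbits $(-^{\otimes 2})_{hC_2}$. The previous lemma therefore gives, for every $X \in P_2\mathcal{S}_*$, a fiber sequence
\[
X \longrightarrow \mathrm{cofree}\,\underline{X} \longrightarrow \mathrm{cofree}\,(\underline{X}^{\otimes 2})_{hC_2}
\]
in $P_2\mathcal{S}_*$, where $\mathrm{cofree}$ is right adjoint to the forgetful functor $P_2\mathcal{S}_* \to \mathcal{Sp}$.

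The remaining step is to identify this sequence with the one in the statement. By lemma \ref{forgetstabilization}, the forgetful functor exhibits $\mathcal{Sp}$ as the stabilization of $P_2\mathcal{S}_*$. Hence it is the functor $\Sigma^\infty \colon P_2\mathcal{S}_* \to \mathcal{Sp}$, and its right adjoint $\mathrm{cofree}$ is $\Omega^\infty$. Consequently
\[
\mathrm{cofree}\,\underline{X} \simeq \Omega^\infty\Sigma^\infty X,
\qquad
\mathrm{cofree}\,(\underline{X}^{\otimes 2})_{hC_2} \simeq \Omega^\infty\Sigma^\infty X^{\otimes 2}_{hC_2},
\]
using the convention that $(\Sigma^\infty X)^{\otimes 2}_{hC_2}$ and $\Sigma^\infty (X^{\otimes 2})_{hC_2}$ are identified. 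The first map in the fiber sequence is the unit of the adjunction; this should be checked by tracing the Yoneda argument of the previous lemma. Under the identification above, the unit is $E^\infty$. The second map is then defined to be $H^\infty$.

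The main obstacle is that the unit $X \to \mathrm{cofree}\,\underline{X}$ must really be the map produced by the mapping-space fiber sequence. On $\mathrm{Map}(T,-)$ that map is the forgetful map $\mathrm{Map}_{P_2}(T,X) \to \mathrm{Map}_{\mathcal{Sp}}(\underline T,\underline X)$. Under adjunction this is postcomposition with the unit, so by Yoneda it should be the unit. A secondary check is that the identification $\mathrm{fib}(\mathrm{can}) \simeq (-^{\otimes 2})_{hC_2}$ is natural in $X$. This holds because the norm cofiber sequence is a cofiber sequence of functors.
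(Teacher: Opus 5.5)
Your proposal is correct and follows the paper's proof. You specialize the fiber sequence for categories of lifts to the span defining $P_2\mathcal{S}_*$, identify $\mathrm{fib}(\mathrm{can})$ with $(-^{\otimes 2})_{hC_2}$, and use lemma \ref{forgetstabilization} to identify $\mathrm{cofree}$ with $\Omega^\infty$. The extra checks you flag are left implicit in the paper and are handled correctly: that the first map is the unit, and that the norm cofiber sequence is natural.
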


\begin{proof}
	Since $P_2\mathcal{S}_*$ is defined as the category of lifts associated to the span
	\[
1 \xlongrightarrow{\Delta} (-^{\otimes 2})^{tC_2}
\xlongleftarrow{\mathrm{can}} (-^{\otimes 2})^{hC_2}
\]
	the previous lemma gives, for any $X$ of $P_2\mathcal{S}_*$, a fiber sequence
	$$
	X \longrightarrow \mathrm{cofree}\,\underline{X} \longrightarrow \mathrm{cofree}\,\underline{X}^{\otimes 2}_{hC_2}
	$$
	Now the forgetful functor $P_2\mathcal{S}_* \to \mathcal{Sp}$ coincides with stabilization by lemma \ref{forgetstabilization}, so its right adjoint is $\Omega^\infty$. Therefore the fiber sequence becomes
	$$
	X \longrightarrow \Omega^\infty\Sigma^\infty X \longrightarrow \Omega^\infty(\Sigma^\infty X)^{\otimes 2}_{hC_2}
	$$
	and we can remove the parentheses since $\Sigma^\infty$ is symmetric monoidal and colimit-preserving by lemma \ref{forgetmonoidal}.
\end{proof}

\begin{rem}
	\label{Hopfexplicitp2}
	Using remarks \ref{Hopfexplicitcoalg} and \ref{Hopfexplicitlift}, the composite of the James--Hopf map with the norm map
	$$
	\Omega^\infty\Sigma^\infty X \xlongrightarrow{H^\infty} \Omega^\infty\Sigma^\infty X^{\otimes 2}_{hC_2} \xlongrightarrow{\mathrm{nm}} \Omega^\infty\Sigma^\infty (X^{\otimes 2})^{hC_2}
	$$
	induces a map
	$$
	[\Sigma^\infty T, \Sigma^\infty X] \longrightarrow [\Sigma^\infty T, \Sigma^\infty (X^{\otimes 2})^{hC_2}]
	$$
	given by
	$$
	\alpha \longmapsto (\alpha \otimes \alpha)^{hC_2} \circ L_T - L_X \circ \alpha
	$$
	where $L_T$ and $L_X$ denote the metastable structures. We will return to this later.
\end{rem}

\begin{rem}
	\label{Hopfgoodwillie}
	Let $X$ be a pointed space. Evaluating the canonical resolution at $\Sigma^\infty_2X$ gives a fiber sequence
	$$
	\Sigma^\infty_2X \xlongrightarrow{E^\infty} \Omega^\infty\Sigma^\infty\Sigma^\infty_2X \xlongrightarrow{H^\infty} \Omega^\infty\Sigma^\infty(\Sigma^\infty_2X)^{\otimes 2}_{hC_2}
	$$
	in $P_2\mathcal{S}_*$. Applying the right adjoint $\Omega^\infty_2$ yields a fiber sequence
	$$
	\Omega^\infty_2\Sigma^\infty_2X \longrightarrow \Omega^\infty_2\Omega^\infty\Sigma^\infty\Sigma^\infty_2X \longrightarrow \Omega^\infty_2\Omega^\infty\Sigma^\infty(\Sigma^\infty_2X)^{\otimes 2}_{hC_2}
	$$
	in $\mathcal{S}_*$. Using the equivalence $\Sigma^\infty \circ \Sigma^\infty_2 \simeq \Sigma^\infty$, this identifies with
	$$
	\Omega^\infty_2\Sigma^\infty_2X \longrightarrow \Omega^\infty\Sigma^\infty X \longrightarrow \Omega^\infty\Sigma^\infty X^{\otimes 2}_{hC_2}
	$$
	On the other hand, $\Sigma^\infty_2$ exhibits $P_2\mathcal{S}_*$ as the 2-excisive approximation of $\mathcal{S}_*$ by \cite[ex.~3.1]{Gijsapprox}, so the unit of the adjunction $\Sigma^\infty_2 \dashv \Omega^\infty_2$ corresponds to the natural transformation $I \to P_2I$. We therefore recover the standard fiber sequence
	$$
	P_2I(X) \longrightarrow P_1I(X) \simeq \Omega^\infty\Sigma^\infty X \longrightarrow BD_2I(X) \simeq \Omega^\infty\Sigma^\infty X^{\otimes 2}_{hC_2}
	$$
	from Goodwillie calculus. In particular, our James--Hopf map $H^\infty$ agrees with the usual James--Hopf map, also called the stable Hopf map.
\end{rem}

We conclude by describing the canonical resolution for metastable spheres.

\begin{ex}
	For any sphere $S^{n, k}$ in $P_2\mathcal{S}_*$ there is a fiber sequence
	$$
	S^{n, k} \longrightarrow \Omega^\infty \mathbb{S}^n \longrightarrow \Omega^\infty \Sigma^{n}\mathbb{RP}^\infty_n
	$$
	This follows from the canonical resolution, since the underlying spectrum of $S^{n,k}$ is $\mathbb{S}^n$ and
	$$
	(S^n)^{\otimes 2}_{hC_2} \simeq \Sigma^n S^{n\rho}_{hC_2} \simeq \Sigma^nRP^\infty_n
	$$
	The dependence on $k$ when $n = 0$ is encoded in the James--Hopf map.
\end{ex}

\subsection{The metastable EHP sequence}

In this subsection we study the fiber of the $n$-fold suspension map in the metastable category. More precisely, we show that every object $X$ of $P_2\mathcal{S}_*$ fits into a fiber sequence
$$
X \xlongrightarrow{E^n} \Omega^n\Sigma^n X \xlongrightarrow{H^n} \Omega^\infty\Sigma^\infty(S(n\rho)_+ \otimes X^{\otimes 2})_{hC_2}
$$

This recovers a result of Milgram. Specializing to $n = 1$, we obtain the metastable \emph{EHP sequence}
$$
X \xlongrightarrow{E} \Omega\Sigma X \xlongrightarrow{H} \Omega^\infty\Sigma^\infty X^{\otimes 2}
$$
which we will later use to construct a spectral sequence for computing homotopy groups in the metastable category, and in particular the homotopy groups of spheres.

Specializing instead to $n = 2$, we obtain a description of the fiber of the double suspension
$$
X \xlongrightarrow{E^2} \Omega^2\Sigma^2 X \xlongrightarrow{H^2} \Omega^\infty\Sigma^\infty X^{\otimes 2}/(1 - \tau)
$$
in terms of the swap map $\tau$. We will use this later to prove an exponent theorem.

To state the main result, recall that $\rho$ denotes the sign representation. We write $S(V)$ for the unit sphere in a representation $V$, and $S^V$ for its one-point compactification. Thus $S(V)$ has dimension one less than $V$, whereas $S^V$ has the same dimension as $V$. These are related by the cofiber sequence $$S(V)_+ \longrightarrow S^0 \longrightarrow S^V$$
We are now ready to prove the main result of this subsection.

\begin{lem}
	\label{intermediateres}
	For any object $X$ of $P_2\mathcal{S}_*$ there is a fiber sequence
	$$
	X \xlongrightarrow{E^n} \Omega^n\Sigma^n X \xlongrightarrow{H^n} \Omega^\infty\Sigma^\infty(S(n\rho)_+ \otimes X^{\otimes 2})_{hC_2}
	$$
\end{lem}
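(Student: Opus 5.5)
The plan is to compare the canonical resolution of $X$ (lemma \ref{canonicalres}) with that of $\Sigma^n X$, and to identify the fiber of $E^n$ by a diagram of fiber sequences. Apply $\Omega^n$ to the canonical resolution of $\Sigma^n X$. Since $\Omega^n$ preserves fiber sequences and $\Omega^n\Omega^\infty\Sigma^\infty\Sigma^n \simeq \Omega^\infty\Sigma^\infty$, this gives a fiber sequence
$$
\Omega^n\Sigma^n X \longrightarrow \Omega^\infty\Sigma^\infty X \longrightarrow \Omega^\infty\Sigma^{\infty-n}\bigl((\Sigma^n X)^{\otimes 2}\bigr)_{hC_2} \simeq \Omega^\infty\Sigma^\infty (S^{n\rho}\otimes X^{\otimes 2})_{hC_2}
$$
using $(\Sigma^n X)^{\otimes 2} \simeq S^{n}\otimes S^{n\rho}\otimes X^{\otimes 2}$ equivariantly. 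Naturality of the canonical resolution applied to the unit $E^n\colon X\to\Omega^n\Sigma^nX$ (or rather to its adjoint $\Sigma^nX \to \Sigma^nX$) gives a map of fiber sequences from the resolution of $X$ to this one. On the middle terms it is the identity of $\Omega^\infty\Sigma^\infty X$, because the forgetful functor is stabilization and $E^n$ becomes an equivalence after $\Sigma^\infty$. On the right-hand terms it should be the map induced by the inclusion of fixed points $S^0 \to S^{n\rho}$, that is, by the equivariant cofiber sequence $S(n\rho)_+ \to S^0 \to S^{n\rho}$ tensored with $X^{\otimes 2}$ and passed to homotopy orbits.

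Granting this, the $3\times 3$ lemma in the stable setting (all terms except the left column are infinite loop objects, and fibers commute with fibers) identifies $\mathrm{fib}(E^n)$ with $\mathrm{fib}$ of the induced map on base terms, shifted by one. The map of base terms is $\Omega^\infty\Sigma^\infty$ applied to $(X^{\otimes 2})_{hC_2}\to(S^{n\rho}\otimes X^{\otimes 2})_{hC_2}$. Its fiber is $\Omega^\infty\Sigma^\infty(S(n\rho)_+\otimes X^{\otimes 2})_{hC_2}$, and taking one more loop accounts for the shift. This yields a sequence $\Omega\,\Omega^\infty\Sigma^\infty(S(n\rho)_+\otimes X^{\otimes2})_{hC_2} \to X \to \Omega^n\Sigma^nX$. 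I would rather phrase it directly: since the middle map is an equivalence, the square formed by $X \to \Omega^n\Sigma^nX$ over the base map is a pullback. Hence the fiber of $E^n$ equals the fiber of the base map, which is the loop of the cofiber term. Re-indexing then gives the stated sequence with $H^n$ the composite $\Omega^n\Sigma^nX \to \Omega^\infty\Sigma^\infty(S^{n\rho}\otimes X^{\otimes 2})_{hC_2}$, lifted. One needs to check that $\Omega^n\Sigma^nX$ maps to the cofiber-level term compatibly. To do this I would use the pullback square and the rotated sequence $(X^{\otimes2})_{hC_2}\to (S^{n\rho}\otimes X^{\otimes2})_{hC_2}\to \Sigma(S(n\rho)_+\otimes X^{\otimes2})_{hC_2}$, and track the degree shift carefully, possibly replacing the resolution of $\Sigma^nX$ by its $\Omega$-shifted version so that the indices match.

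The main obstacle is identifying the map on right-hand terms, i.e. showing that the James--Hopf maps of $X$ and of $\Sigma^nX$ are compatible via $S^0\to S^{n\rho}$. For this I would use the explicit description in the proof of lemma \ref{suspensionp2}: the metastable structure on $\Sigma^nX$ is obtained from that on $X$ by composing with $(X^{\otimes2})^{hC_2}\to(S^{n\rho}\otimes X^{\otimes2})^{hC_2}$, suitably suspended. The fiber $E$ of $\mathrm{can}$ is natural in the endofunctor. So the lemma giving fiber sequences in categories of lifts should be applied to the morphism of spans induced by $S^0\to S^{n\rho}$. Then remark \ref{Hopfexplicitlift} identifies the induced map on $\mathrm{cofree}\,E\underline{X}$ as the homotopy orbit map. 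As a sanity check, $n=\infty$ recovers lemma \ref{canonicalres}, since $S(\infty\rho)$ is contractible and free.
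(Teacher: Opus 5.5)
Your proposal follows the paper's proof. Both apply the canonical resolution to $\Sigma^n X$, loop $n$ times, compare the result with the resolution of $X$ via $S(n\rho)_+ \to S^0 \to S^{n\rho}$, and take fibers. The compatibility of James--Hopf maps that you single out as the main obstacle is exactly the commutativity the paper uses without comment, and your morphism-of-spans argument is a sensible way to justify it.

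The degree-shift bookkeeping is unnecessary, though. Let $C = \Omega^\infty\Sigma^\infty(S(n\rho)_+ \otimes X^{\otimes 2})_{hC_2}$ be the fiber of the base map. That compatibility gives $\Omega^n\Sigma^n X \simeq \Omega^\infty\Sigma^\infty X \times_{\Omega^\infty\Sigma^\infty X^{\otimes 2}_{hC_2}} C$, so the projection to $C$ is $H^n$ and its fiber is directly $X$, with no re-indexing needed.
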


\begin{proof}
	By the canonical resolution, every object $Y$ of $P_2\mathcal{S}_*$ fits into a fiber sequence
	$$
	Y \longrightarrow \Omega^\infty\Sigma^\infty Y \longrightarrow \Omega^\infty\Sigma^\infty Y^{\otimes 2}_{hC_2}
	$$
	Applying this to $Y = \Sigma^n X$ gives a fiber sequence
	$$
	\Sigma^n X \longrightarrow \Omega^\infty\Sigma^{\infty+n} X \longrightarrow \Omega^\infty\Sigma^{\infty+n}(S^{n\rho} \otimes X^{\otimes 2})_{hC_2}
	$$
	where we use the $C_2$-equivariant equivalence $(S^n)^{\otimes 2} \simeq \Sigma^n S^{n\rho}$. Applying $\Omega^n$ yields a fiber sequence
	$$
	\Omega^n\Sigma^n X \longrightarrow \Omega^\infty\Sigma^\infty X \longrightarrow \Omega^\infty\Sigma^\infty(S^{n\rho} \otimes X^{\otimes 2})_{hC_2}
	$$
	Consider the cofiber sequence
	$$
	S(n\rho)_+ \longrightarrow S^0 \longrightarrow S^{n\rho}
	$$
	Smashing with $X^{\otimes 2}$, taking homotopy orbits, and applying $\Omega^\infty\Sigma^\infty$ gives a diagram of fiber sequences
\[\begin{tikzcd}
	{\Omega^\infty\Sigma^\infty X} & {\Omega^\infty\Sigma^\infty X} & {*} \\
	{\Omega^\infty\Sigma^\infty X^{\otimes 2}_{hC_2}} & {\Omega^\infty\Sigma^\infty (S^{n\rho} \otimes X^{\otimes 2})_{hC_2}} & {\Omega^\infty\Sigma^\infty (\Sigma S(n\rho)_+ \otimes X^{\otimes 2})_{hC_2}}
	\arrow[equals, from=1-1, to=1-2]
	\arrow[from=1-1, to=2-1]
	\arrow[from=1-2, to=1-3]
	\arrow[from=1-2, to=2-2]
	\arrow[from=1-3, to=2-3]
	\arrow[from=2-1, to=2-2]
	\arrow[from=2-2, to=2-3]
\end{tikzcd}\]
	The claimed fiber sequence is obtained by taking vertical fibers in this diagram.
\end{proof}

\begin{rem}
	For $n \to \infty$, the lemma recovers the canonical resolution, since $\mathrm{colim}_n S(n\rho) \simeq *$.
\end{rem}

\begin{rem}
	In \cite[thm. 1.11]{Milgram}, Milgram proves that for a space $X$ in the metastable range, the fiber $F_n$ of the map $X \to \Omega^n\Sigma^nX$ is equivalent to
	$$
	F_n \simeq \Omega(S^{n-1}\ltimes_{C_2} X^{\wedge 2})
	$$
	where $C_2$ acts on $S^{n-1}$ by the antipodal action, the orbits are strict, and $\ltimes$ denotes the half-smash product. This follows from our result. Indeed, our description gives
	$$
	F_n \simeq \Omega^{\infty+1}\Sigma^\infty(S(n\rho)_+ \otimes X^{\otimes 2})_{hC_2}
	$$
	and the claim follows since $S(n\rho) \simeq S^{n-1}$ with the antipodal action, $Y_+ \wedge Z \simeq Y \ltimes Z$, and homotopy orbits agree with strict orbits because the action is free. Moreover, since $X$ is in the metastable range, the space $X^{\wedge 2}$ lies in the stable range, so it agrees with its suspension spectrum. 
\end{rem}

\begin{ex}
	\label{msuspsn}
	Taking $X = S^n$ and replacing $n$ by $m$ in the previous lemma gives a fiber sequence
	$$
	S^n \longrightarrow \Omega^m S^{m+n} \longrightarrow \Omega^\infty\Sigma^{n}\mathbb{RP}_n^{m+n-1}
	$$
	Indeed, by the previous lemma the rightmost term is equivalent to
	$$
	\Omega^\infty\Sigma^\infty(S(m\rho)_+ \otimes X^{\otimes 2})_{hC_2} \simeq \Omega^\infty\Sigma^{\infty + n}(S(m\rho)_+ \otimes S^{n\rho})_{hC_2}
	$$
	so it suffices to show that
	$$
	(S(m\rho)_+ \otimes S^{n\rho})_{hC_2} \simeq RP^{m+n-1}_n
	$$
	This follows by passing to orbits in the cofiber sequence
	$$
	S(n\rho)_+ \longrightarrow S((m+n)\rho)_+ \longrightarrow S^{n\rho} \otimes S(m\rho)_+
	$$
	which is a special case of the cofiber sequence
	$$
	S(U)_+ \longrightarrow S(U \oplus V)_+ \longrightarrow S^U \otimes S(V)_+
	$$
	for representations $U$ and $V$.
\end{ex}

As a special case of the previous lemma, we obtain the EHP sequence. In this sense, the following lemma makes precise the slogan that in the metastable range there is always an EHP sequence. 

\begin{lem}
	\label{EHP}
	For any object $X$ of $P_2\mathcal{S}_*$ there is a fiber sequence
	$$
	X \xlongrightarrow{E} \Omega\Sigma X \xlongrightarrow{H} \Omega^\infty\Sigma^\infty X^{\otimes 2}
	$$
\end{lem}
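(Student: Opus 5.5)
This is the case $n=1$ of lemma \ref{intermediateres}, so the plan is to specialize that fiber sequence and identify its third term. With $n=1$, lemma \ref{intermediateres} gives a fiber sequence
$$
X \xlongrightarrow{E} \Omega\Sigma X \xlongrightarrow{H} \Omega^\infty\Sigma^\infty(S(\rho)_+ \otimes X^{\otimes 2})_{hC_2}.
$$
It then suffices to produce an equivalence of spectra
$$
(S(\rho)_+ \otimes X^{\otimes 2})_{hC_2} \simeq X^{\otimes 2},
$$
natural in $X$, and to apply $\Omega^\infty\Sigma^\infty$. Composing $H^1$ with this equivalence defines $H$.

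For the identification, note that $S(\rho)$ is the unit sphere in the sign representation. This is the two-point space $\{\pm 1\}$ with $C_2$ acting by swapping the points, so $S(\rho)_+ \simeq C_{2+}$ as pointed $C_2$-spaces. This is the same identification used in the proof of lemma \ref{suspensionp2}. The standard shearing (induction) equivalence for a free orbit gives
$$
C_{2+} \otimes Y \simeq \mathrm{Ind}_e^{C_2}\mathrm{Res}^{C_2}_e Y
$$
for any $Y$ with $C_2$-action. Since $(\mathrm{Ind}_e^{C_2} Z)_{hC_2} \simeq Z$, we get $(C_{2+}\otimes Y)_{hC_2} \simeq Y$, the underlying spectrum with its action forgotten. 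Taking $Y = X^{\otimes 2}$ with the swap action produces the claimed equivalence.

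The only step needing care is this shearing equivalence. One must check that the diagonal action on $C_{2+}\otimes X^{\otimes 2}$ really is equivalent to the induced action, so that the swap action on $X^{\otimes 2}$ drops out after taking orbits. This is routine: the untwisting map $(g, y) \mapsto (g, g^{-1}y)$ makes sense in $\mathrm{Fun}(BC_2, \mathcal{Sp})$ because $C_{2+}\otimes(-)$ is left adjoint to restriction along $e \to C_2$. I therefore expect no real obstacle. The lemma is a formal specialization of lemma \ref{intermediateres}.
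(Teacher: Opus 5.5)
Your proposal is correct and follows the paper's own proof: the paper also specializes lemma \ref{intermediateres} to $n=1$ and uses that $S(\rho)\simeq C_2$ is a free $C_2$-space, so that $(S(\rho)_+\otimes X^{\otimes 2})_{hC_2}\simeq X^{\otimes 2}$. One small correction to your justification of the shearing step: $C_{2+}\otimes(-)$ is not itself the left adjoint of restriction. The right justification is the projection formula $C_{2+}\otimes Y\simeq \mathrm{Ind}_e^{C_2}\mathrm{Res}^{C_2}_e Y$, where $\mathrm{Ind}_e^{C_2}$ is the left adjoint of restriction, and the conclusion you draw from it is standard and correct.
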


\begin{proof}
	This follows from the previous lemma, since $S(\rho) \simeq C_2$ is a free $C_2$-space.
\end{proof}

\begin{rem}
	Using the identification of $P_2\mathcal{S}_*$ as the $2$-excisive approximation of $\mathcal{S}_*$, exactly as in remark \ref{Hopfgoodwillie}, we obtain a fiber sequence
	$$
	P_2I(X) \longrightarrow \Omega P_2I(\Sigma X) \longrightarrow P_1I(X^{\otimes 2})
	$$
	This fiber sequence appears in the work of Behrens \cite[cor. 2.1.4, $m=1$]{Behrens2010} when $X$ is a sphere. His general statement, for $m \geq 1$, only applies to spheres, whereas the special case $m = 1$ holds for any pointed space.
\end{rem}

We now use the EHP sequence to construct a spectral sequence for computing homotopy groups in the metastable category. Before doing so, we need to define these homotopy groups.

For an object $X$ of $P_2\mathcal{S}_*$, define the $n$th homotopy group of $X$ by $$\pi_nX := [S^n, X]$$
as in any symmetric monoidal category, since $S^n \simeq \Sigma^nS^0 \simeq \Sigma^n\mathbb{1}$. If $X$ is a pointed space, then the homotopy groups of the metastable spectrum $\Sigma^\infty_2X$ coincide with those of the space $P_2I(X)$. Indeed,
$$
\pi_n \Sigma^\infty_2 X = [S^n, \Sigma^\infty_2 X] = [\Sigma^\infty_2S^n, \Sigma^\infty_2 X] = [S^n, \Omega^\infty_2\Sigma^\infty_2 X] = \pi_nP_2I(X)
$$
Note also that $\pi_nX$ is always an abelian group for $n \geq 2$, since $S^n$ is a suspension. On the other hand, $\pi_1X$ is in general a non-abelian group, while $\pi_0X$ is only a set, exactly as in the case of spaces. An example of non-abelian fundamental group is given by \cite[thm. 6.8]{nilpotent}
$$
\pi_1(S^1 \vee S^1) = \pi_1P_2I(S^1 \vee S^1) \cong F_2/\Gamma_3F_2 \cong \mathrm{Heis}(\mathbb{Z})
$$
where $F_2 := \mathbb{Z} * \mathbb{Z}$ is the free group on two generators, $\Gamma_n$ denotes the lower central series, and $\mathrm{Heis}(\mathbb{Z})$ is the discrete Heisenberg group, equivalently the free $2$-step nilpotent group on two generators.

Our main computational tool for homotopy groups is the following spectral sequence.

\begin{lem}
	\label{metSS}
	For any object $X$ of $P_2\mathcal{S}_*$, there is a spectral sequence with $E^1$-page
	$$
	E^1_{t,m}:=
	\begin{cases}
		\pi_tX & m=0 \\
		\pi_{t-m+1}^sX^{\otimes 2} & 0<m\leq n \\
		0 & m>n
	\end{cases}
	$$
	with differentials of bidegree $(-1,-r)$
	\[
	d^r:E^r_{t,m}\longrightarrow E^r_{t-1,m-r}
	\]
	and converging to $\pi_{n+t}\Sigma^nX$.
\end{lem}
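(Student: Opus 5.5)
We build the spectral sequence from the tower of iterated EHP sequences, in the same way as the classical EHP spectral sequence. Fix $n$ and consider the tower of loop-suspension maps
$$
X \xlongrightarrow{E} \Omega\Sigma X \xlongrightarrow{\Omega E} \Omega^2\Sigma^2 X \longrightarrow \cdots \longrightarrow \Omega^n\Sigma^n X
$$
where the $m$-th map is $\Omega^{m-1}$ applied to the suspension $E\colon \Sigma^{m-1}X \to \Omega\Sigma^m X$. Applying lemma \ref{EHP} to the object $\Sigma^{m-1}X$ of $P_2\mathcal{S}_*$ and then $\Omega^{m-1}$ (a right adjoint, hence preserving fiber sequences) gives fiber sequences
$$
\Omega^{m-1}\Sigma^{m-1}X \longrightarrow \Omega^m\Sigma^m X \longrightarrow \Omega^{m-1}\Omega^\infty\Sigma^\infty(\Sigma^{m-1}X)^{\otimes 2}.
$$
Since $\Sigma^\infty$ is symmetric monoidal by lemma \ref{forgetmonoidal}, we have $(\Sigma^{m-1}X)^{\otimes 2}\simeq \Sigma^{2m-2}X^{\otimes 2}$ as plain spectra; the $C_2$-action plays no role here. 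So the cofiber of the $m$-th stage is $\Omega^\infty\Sigma^{m-1}\Sigma^\infty X^{\otimes 2}$. We then apply $\pi_{n+t}(\Omega^n\Sigma^n(-))$, or more precisely $[S^{t},-]$ in the $P_2\mathcal{S}_*$-valued tower. We set $Y_m:=\Omega^{n-m}\Sigma^{m}X$, which has $\pi_t Y_m=\pi_{n+t-m}\Sigma^mX$... it is cleaner to use the filtration $F_m:=\Omega^m\Sigma^mX$ and read homotopy in degree $t$ of $\Omega^n\Sigma^nX$.

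The intended indexing is the standard exact couple. The filtration is $F_0=X\subset F_1\subset\cdots\subset F_n=\Omega^n\Sigma^nX$. The $m$-th layer is the fiber of $F_m \to \Omega^{m-1}\Omega^\infty\Sigma^\infty\Sigma^{2m-2}X^{\otimes 2}$. Its contribution to $\pi_t$ is $\pi_t\Omega^{m-1}\Omega^\infty\Sigma^{2m-2}\Sigma^\infty X^{\otimes 2}=\pi^s_{t+m-1-2m+2}X^{\otimes 2}=\pi^s_{t-m+1}X^{\otimes 2}$, which matches $E^1_{t,m}$ for $0<m\le n$. The term $E^1_{t,0}=\pi_tX$ is the bottom of the filtration, and $E^1_{t,m}=0$ for $m>n$ because the tower stops. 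The long exact sequences
$$
\cdots\to\pi_t F_{m-1}\to\pi_tF_m\to E^1_{t,m}\to\pi_{t-1}F_{m-1}\to\cdots
$$
assemble into an exact couple. The differential $d^r$ goes from $E^1_{t,m}$ through the boundary into $\pi_{t-1}F_{m-1}$, is lifted $r-1$ stages down, and is then projected; this gives bidegree $(-1,-r)$. Since the filtration has finite length $n+1$, the spectral sequence degenerates at $E^{n+1}$ and converges strongly to $\pi_tF_n=\pi_{n+t}\Sigma^nX$, the last identification by adjunction $S^{n+t}\simeq\Sigma^nS^t$.

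The main obstacle is the low-degree bookkeeping. For $t\le1$ the groups $\pi_t$ are only sets or nonabelian groups. There the exact couple must be read as a fringed, Bousfield--Kan-style spectral sequence, so the claim of exactness in degrees $0$ and $1$ has to be interpreted as exactness of pointed sets or groups. I would handle this by noting that the relevant fiber sequences are principal: they are fibers of maps into infinite loop objects $\Omega^\infty(-)$. This gives actions of $E^1_{t+1,m}$ on $\pi_tF_{m-1}$, and these actions make the usual argument go through. A second, minor check is that the looped sequences are fiber sequences in $P_2\mathcal{S}_*$, so that $[S^t,-]$ turns them into long exact sequences. This holds because $[S^t,-]$ corepresents mapping spaces and fiber sequences in $P_2\mathcal{S}_*$ give fiber sequences of mapping spaces.
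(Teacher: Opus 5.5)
Your proposal is correct and follows the paper's argument: filter $\Omega^n\Sigma^nX$ by the iterated suspensions $\Omega^m\Sigma^mX$, identify each successive layer with $\Omega^\infty\Sigma^{\infty+m-1}X^{\otimes 2}$ using the EHP sequence for $\Sigma^{m-1}X$, and take the exact couple of homotopy groups; your remarks on indexing, finite-length convergence and the fringed low-degree terms make explicit what the paper leaves implicit. The only slip is in wording: the $m$-th layer is the base $\Omega^\infty\Sigma^{\infty+m-1}X^{\otimes 2}$ of the fiber sequence $F_{m-1}\to F_m\to \Omega^\infty\Sigma^{\infty+m-1}X^{\otimes 2}$, not its fiber (which is $F_{m-1}$), although your computation already uses the base correctly.
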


\begin{proof}
	Consider the filtration of $\Omega^n\Sigma^nX$ induced by the successive suspension maps
\[\begin{tikzcd}
	X & {\Omega \Sigma X} & {\Omega^2 \Sigma^2 X} & {\Omega^3\Sigma^3X } & \cdots & {\Omega^n\Sigma^nX} \\
	& {\Omega^\infty \Sigma^\infty X^{\otimes 2}} & {\Omega^\infty \Sigma^{\infty+1} X^{\otimes 2}} & {\Omega^\infty \Sigma^{\infty+2} X^{\otimes 2}}
	\arrow["E", from=1-1, to=1-2]
	\arrow["E", from=1-2, to=1-3]
	\arrow["H", from=1-2, to=2-2]
	\arrow["E", from=1-3, to=1-4]
	\arrow["H", from=1-3, to=2-3]
	\arrow["E", from=1-4, to=1-5]
	\arrow["H", from=1-4, to=2-4]
	\arrow["E", from=1-5, to=1-6]
\end{tikzcd}\]
	Every corner in this diagram is a fiber sequence by the EHP sequence. Applying $\pi_0$ to this filtration gives a spectral sequence, and the description of the $E^1$-page is immediate from the fibers.
\end{proof}

Concretely, the first row of this spectral sequence is given by $\pi_*X$, while every higher row is given by $\pi_*^sX^{\otimes 2}$, shifted one step to the right each time. If $X$ is connective, the spectral sequence takes the following form:

\[\begin{tikzcd}[cramped]
	&& 0 & 1 & 2 & \cdots & t & \cdots & \\
	& 0 & {\pi_0X} & {\pi_1X} & {\pi_2X} & \cdots & {\pi_tX} & \cdots \\
	& 1 & {\pi_0^sX^{\otimes 2}} & {\pi_1^sX^{\otimes 2}} & {\pi_2^sX^{\otimes 2}} & \cdots & {\pi_t^sX^{\otimes 2}} & \cdots \\
	& 2 && {\pi_0^sX^{\otimes 2}} & {\pi_1^sX^{\otimes 2}} & \cdots & {\pi_{t-1}^sX^{\otimes 2}} & \cdots \\
	& 3 &&& {\pi_0^sX^{\otimes 2}} & \cdots & {\pi_{t-2}^sX^{\otimes 2}} & \cdots \\
	& \cdots &&&& \cdots & \cdots & \cdots \\
	{} & \implies & {\pi_n\Sigma^nX} & {\pi_{n+1}\Sigma^nX} & {\pi_{n+2}\Sigma^nX} & \cdots & {\pi_{n+t}\Sigma^nX} & \cdots & {}
	\arrow[dashed, from=3-5, to=2-4]
	\arrow[dashed, from=4-4, to=2-3]
	\arrow[dashed, from=5-5, to=3-4]
	\arrow[shift left=5, no head, from=7-1, to=7-9]
\end{tikzcd}\]
If $X$ is connective, we may also let $n \to \infty$ and still obtain a convergent spectral sequence, since in this case there is a vanishing line.

The differentials are easy to describe. Since the filtration is induced by filtering $X^{\otimes 2}_{hC_2}$ by the skeleta of $BC_{2+}$, the differentials between positive rows, that is, between rows with $m \neq 0$, are exactly the differentials in the homotopy orbits spectral sequence for $X^{\otimes 2}_{hC_2}$. In particular, these are stable differentials. The only genuinely metastable differentials are therefore those landing in the $0$th row.

\begin{ex}
	\label{metSSspheres}
	For $X = S^0$, the spectral sequence has $E^1$-page
	$$
	E^1_{t,m}:=
	\begin{cases}
		\pi_tS^0 & m=0 \\
		\pi_{t-m+1}^s & 0<m\leq n \\
		0 & m>n
	\end{cases}
	$$
	and converges to $\pi_{n+t}S^n$.

	At this point, however, we do not yet know the homotopy groups of $S^0$, so we cannot write down the entire $E^1$-page explicitly. We will compute them in the next subsection. The remaining rows are already determined: they coincide with the homotopy orbits spectral sequence, equivalently the Atiyah--Hirzebruch spectral sequence, for the spectrum $\mathbb{RP}^\infty_+$.
\end{ex}

Besides the case $n=1$, another special case of the general description is $n=2$. This gives the following explicit description of the fiber of the double suspension.

\begin{lem}
	\label{doublesuspp2}
	For any object $X$ of $P_2\mathcal{S}_*$ there is a fiber sequence
	$$
	X \xlongrightarrow{E^2} \Omega^2\Sigma^2X \xlongrightarrow{H^2} \Omega^\infty\Sigma^\infty X^{\otimes 2}/(1 - \tau)
	$$
	where $\tau$ denotes the swap map on $X^{\otimes 2}$.
\end{lem}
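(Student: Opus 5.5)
This is the case $n=2$ of lemma \ref{intermediateres}. That lemma gives a fiber sequence
$$
X \xlongrightarrow{E^2} \Omega^2\Sigma^2 X \xlongrightarrow{H^2} \Omega^\infty\Sigma^\infty(S(2\rho)_+ \otimes X^{\otimes 2})_{hC_2},
$$
so the only remaining task is an identification of spectra,
$$
(S(2\rho)_+ \otimes X^{\otimes 2})_{hC_2} \simeq X^{\otimes 2}/(1-\tau),
$$
where the right-hand side denotes the cofiber of $1-\tau\colon X^{\otimes 2}\to X^{\otimes 2}$. I would prove it by giving $S(2\rho)$ an explicit free $C_2$-cell structure.

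The space $S(2\rho)$ is the circle $S^1$ with the antipodal action, i.e.\ rotation by $\pi$. It has an equivariant cell structure with one free $0$-cell $C_2\times e^0$ (two antipodal points) and one free $1$-cell $C_2\times e^1$ (the two arcs joining them). This gives a cofiber sequence of pointed $C_2$-spaces
$$
C_{2+} \longrightarrow S(2\rho)_+ \longrightarrow \Sigma C_{2+}.
$$
The attaching map of the free $1$-cell is the map $C_{2+}\to C_{2+}$ recording the two endpoints of an arc, $e \mapsto 1 - \sigma$ on the level of the cellular chain complex $\mathbb{Z}[C_2]\xrightarrow{1-\sigma}\mathbb{Z}[C_2]$ (the chain complex computing $H_*(S^1)$). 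So $S(2\rho)_+$ is the cofiber of a map $C_{2+}\to C_{2+}$ which, as an equivariant stable map, corresponds to the element $1-\sigma\in \pi_0^{C_2}$ of maps $\Sigma^\infty C_{2+}\to\Sigma^\infty C_{2+}$ in the naive category, i.e.\ to $1-\sigma\in\mathbb{Z}[C_2]$ acting by right multiplication.

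Next, tensor with $X^{\otimes 2}$ and take homotopy orbits. We have $(C_{2+}\otimes Y)_{hC_2}\simeq Y$ for any spectrum $Y$ with $C_2$-action, and under this identification the map induced by $\sigma\in\mathbb{Z}[C_2]$ becomes the action of the generator on $Y$; here that is $\tau$. Since homotopy orbits are exact, the cofiber sequence becomes
$$
X^{\otimes 2} \xlongrightarrow{1-\tau} X^{\otimes 2} \longrightarrow (S(2\rho)_+\otimes X^{\otimes 2})_{hC_2}.
$$
This gives the identification, and applying $\Omega^\infty\Sigma^\infty$ finishes the proof. A useful sanity check is $X=\mathbb{S}^n$: we should recover $\Sigma^n\mathbb{RP}^{n+1}_n$ from example \ref{msuspsn}, and indeed $1-\tau = 1-(-1)^n$ gives a Moore spectrum or $\mathbb{S}^{2n}\oplus\mathbb{S}^{2n+1}$, matching the two-cell complex $\mathbb{RP}^{n+1}_n$.

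The main obstacle is the identification of the attaching map, in particular the sign: is it $1-\sigma$ or $1+\sigma$? For the free circle it must be $1-\sigma$, since $H_0(S^1)=\mathbb{Z}$ forces the augmentation to kill the boundary. Orientation conventions for $\sigma$ only change the map by an automorphism and do not affect the cofiber. One must also check that the naive equivariant map really is determined by its class in $\mathbb{Z}[C_2]$. This holds because $\mathrm{map}(C_{2+},C_{2+})^{C_2}$-style mapping spectra in $\mathcal{Sp}^{BC_2}$ are $\mathbb{S}[C_2]$, whose $\pi_0$ is $\mathbb{Z}[C_2]$, so no higher homotopy interferes.
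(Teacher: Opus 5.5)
Your proposal is correct and is essentially the paper's argument. Both specialize lemma~\ref{intermediateres} to $n=2$ and use the free $C_2$-cell structure on $S(2\rho)\simeq S^1$ (one free $0$-cell, one free $1$-cell attached via the identity and the swap) to identify $(S(2\rho)_+\otimes X^{\otimes 2})_{hC_2}$ with the cofiber of $1-\tau$. The only difference is cosmetic: the paper writes the cell attachment as a pushout that becomes the coequalizer of $1$ and $\sigma$ after homotopy orbits, which avoids your sign and $\pi_0\mathbb{S}[C_2]$ bookkeeping, whereas you go through the skeletal cofiber sequence and identify the connecting map in $\mathbb{Z}[C_2]$.
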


\begin{proof}
	We first claim that for any $C_2$-space $Y$ there is an equivalence
\[\begin{tikzcd}
	{(S(2\rho)_+\otimes Y)_{hC_2}\simeq \mathrm{coeq}(Y } & {Y)}
	\arrow["1", shift left, from=1-1, to=1-2]
	\arrow["\sigma"', shift right, from=1-1, to=1-2]
\end{tikzcd}\]
	where $\sigma$ denotes the automorphism of $Y$ induced by the $C_2$-action. Once this is proved, the lemma follows immediately, since stable coequalizers are computed by cofibers.

	To prove the claim, note that $S(2\rho)\simeq S^1$ with the antipodal action. As a $C_2$-complex, it has one free $0$-cell $C_{2+}$ and one free $1$-cell $C_{2+}\times D^1$. Its attaching map
	$$
	C_{2+}\sqcup C_{2+}=C_{2+}\times S^0 \longrightarrow C_{2+}
	$$
	is given by the identity on one component and by the swap map on the other. Thus we have a pushout
\[\begin{tikzcd}
	{C_{2+} \sqcup C_{2+}} & {C_{2+} \times S^0} & {C_{2+}} \\
	{C_{2+}} & {C_{2+}\times D^1} & {S(2\rho)_+}
	\arrow[equals, from=1-1, to=1-2]
	\arrow["{(1, \sigma)}", curve={height=-18pt}, from=1-1, to=1-3]
	\arrow["\nabla"', from=1-1, to=2-1]
	\arrow[from=1-2, to=1-3]
	\arrow[from=1-2, to=2-2]
	\arrow[from=1-3, to=2-3]
	\arrow[equals, from=2-1, to=2-2]
	\arrow[from=2-2, to=2-3]
	\arrow["\lrcorner"{anchor=center, pos=0.125, rotate=180}, draw=none, from=2-3, to=1-2]
\end{tikzcd}\]
	Smashing with $Y$ and taking homotopy orbits gives
\[\begin{tikzcd}
	{Y \sqcup Y \simeq ((C_{2+} \sqcup C_{2+}) \otimes Y)_{hC_2}} & {(C_{2+} \otimes Y)_{hC_2} \simeq Y} \\
	{Y \simeq (C_{2+} \otimes Y)_{hC_2}} & {(S(2\rho)_+ \otimes Y)_{hC_2}}
	\arrow[""{name=0, anchor=center, inner sep=0}, "{(1, \sigma)}", from=1-1, to=1-2]
	\arrow["\nabla"', from=1-1, to=2-1]
	\arrow[from=1-2, to=2-2]
	\arrow[from=2-1, to=2-2]
	\arrow["\lrcorner"{anchor=center, pos=0.125, rotate=180}, draw=none, from=2-2, to=0]
\end{tikzcd}\]
	which is precisely the coequalizer diagram of $(1,\sigma)$ on $Y$.
\end{proof}

\begin{ex}
	If $X = S^n$, then the swap map $\tau$ acts on $\mathbb{S}^{2n}$ by the identity when $n$ is even, and by $-1$ when $n$ is odd. Hence, if $n$ is even, we obtain a fiber sequence
	$$
	S^n \longrightarrow \Omega^2S^{n+2} \longrightarrow \Omega^\infty(\mathbb{S}^{2n} \oplus \mathbb{S}^{2n+1})
	$$
	whereas if $n$ is odd, we obtain a fiber sequence
	$$
	S^n \longrightarrow \Omega^2S^{n+2} \longrightarrow \Omega^\infty\mathbb{S}^{2n}/2
	$$
	This agrees with the description of example \ref{msuspsn}, since
	$$
	RP_n^{n+1} \simeq
	\begin{cases}
		S^n \oplus S^{n+1} & n \text{ even} \\
		S^n/2 & n \text{ odd}
	\end{cases}
	$$
\end{ex}

We conclude the subsection by using this fiber sequence to obtain an exponent result.

\begin{lem}
	\label{exponentp2}
	Let $X$ be an object of $P_2\mathcal{S}_*$ such that the swap map on $X^{\otimes 2}$ is given by $-1$. Then multiplication by $4^n$ on $\Omega^{2n}\Sigma^{2n}X$ factors as
\[\begin{tikzcd}
	& X \\
	{\Omega^{2n}\Sigma^{2n}X} & {\Omega^{2n}\Sigma^{2n}X}
	\arrow[from=1-2, to=2-2]
	\arrow[dashed, from=2-1, to=1-2]
	\arrow["{4^n}"', from=2-1, to=2-2]
\end{tikzcd}\]
\end{lem}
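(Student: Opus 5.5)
We argue by induction on $n$, with the case $n=1$ carrying all of the content. For the base case we use the double suspension sequence of lemma \ref{doublesuspp2},
$$
X \xlongrightarrow{E^2} \Omega^2\Sigma^2X \xlongrightarrow{H^2} \Omega^\infty\Sigma^\infty X^{\otimes 2}/(1-\tau).
$$
When $\tau=-1$ we have $1-\tau = 2$, so the target is $\Omega^\infty\Sigma^\infty (X^{\otimes 2}/2)$. The cofiber of multiplication by $2$ has exponent $4$: the identity of $\mathbb{S}/2$ has order $4$ in general. Hence $4\cdot H^2 \simeq \Omega^\infty(4)\circ H^2$ is null, as a map of infinite loop objects.

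The next step is to move the factor $4$ across $H^2$. We want $H^2\circ 4 \simeq 4\circ H^2$, where $4$ on the source $\Omega^2\Sigma^2X$ is the double loop multiplication. The cleanest route is to note that $\Omega^2\Sigma^2X$ is a double loop object and that $H^2$ can be arranged as $\Omega^2$ of a map, namely the map $\Sigma^2X\to \Omega^{\infty-2}\Sigma^\infty(\ldots)$ obtained by looping the canonical resolution as in the proof of lemma \ref{intermediateres}. Indeed, that proof constructs $H^n$ as a map of $n$-fold loop objects: it is $\Omega^n$ applied to the James--Hopf map of $\Sigma^nX$, followed by a map of infinite loop spaces. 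A map of the form $\Omega^2 f$ is an $H$-map for the loop structure, so it commutes with multiplication by $4$ in the group $[-,\Omega^2\Sigma^2X]$.

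It follows that $H^2\circ 4 = 4\circ H^2 = 0$ on $\Omega^2\Sigma^2X$. By the fiber sequence, the map $4$ then lifts through $E^2\colon X\to\Omega^2\Sigma^2X$, which is exactly the claimed factorization for $n=1$.

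For the inductive step, apply the $n=1$ case to $\Sigma^{2n-2}X$. Its swap map is still $-1$, because the swap on $(\Sigma^{2k}X)^{\otimes 2}$ is $\tau$ twisted by the swap on $\mathbb{S}^{4k}$, and that swap is $+1$. This gives a factorization
$$
4\colon \Omega^2\Sigma^{2n}X \to \Sigma^{2n-2}X \xlongrightarrow{E^2} \Omega^2\Sigma^{2n}X.
$$
Applying $\Omega^{2n-2}$ yields a map $\Omega^{2n}\Sigma^{2n}X\to \Omega^{2n-2}\Sigma^{2n-2}X$ whose composite with $\Omega^{2n-2}E^2$ is $4$. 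We then compose with the inductive factorization of $4^{n-1}$ through $X$. What remains is to check that $\Omega^{2n-2}E^2\circ E^{2n-2}=E^{2n}$, and that the multiplications by $4$ and by $4^{n-1}$ commute with these looped maps. Both hold because all the maps involved are $(2n-2)$-fold loop maps.

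The step we expect to be the main obstacle is the identification of $H^2$ as a double loop map compatible with the $H$-structure, so that $4$ commutes with it. If that fails, the fallback is to argue on mapping spaces out of an arbitrary $T$. The fiber sequence gives an exact sequence of groups $[T,X]\to[T,\Omega^2\Sigma^2X]\to[T,\Omega^\infty\Sigma^\infty X^{\otimes2}/2]$, and the second map is a homomorphism since it comes from $\Omega^2$ of a map. Killing by $4$ in the target group, together with Yoneda naturality, then yields the factorization.
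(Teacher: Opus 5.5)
Your outline matches the paper's: the double suspension sequence with $1-\tau=2$, the fact that $4$ kills $Y/2$, moving the $4$ across $H^2$, and induction by looping. Your inductive bookkeeping is fine, since $\Omega^{2n-2}E^2$ is a loop map and $\Omega^{2n-2}E^2\circ E^{2n-2}=E^{2n}$. The gap is the step you flagged yourself, and your justification for it is false: $H^2$ is not $\Omega^2$ of a map $f\colon\Sigma^2X\to W$. For any such $f$, the composite $\Omega^2f\circ E^2$ is the adjoint of $f$. Since $H^2\circ E^2\simeq 0$, this would force $f\simeq 0$ and hence $H^2\simeq 0$. But already for $X=S^1$, $H^2$ maps $\pi_2$ onto $\pi_2(\mathbb{S}^2/2)\cong\mathbb{Z}/2$, because the next term $\pi_1S^1\cong\mathbb{Z}$ is torsion-free. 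In the proof of lemma \ref{intermediateres}, the $n$-fold loop map is $\Omega^nH^\infty_{\Sigma^nX}$, whose source is $\Omega^\infty\Sigma^\infty X$, not $\Omega^n\Sigma^nX$. The map $H^n$ is only the induced map on vertical fibres, and it lifts $H^\infty_X\circ E^\infty$, where $H^\infty_X$ is not additive.

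In fact $H^2$ is not an $H$-map. Take $a,b\in[T,\Omega^2\Sigma^2X]$ with underlying stable maps $\alpha,\beta$. By property (4) of the James--Hopf map, the defect $H^2(a+b)-H^2(a)-H^2(b)$ maps to $\pm\pi\circ(\alpha\otimes\beta)\circ\Delta_T$ under $X^{\otimes 2}/2\simeq(S(2\rho)_+\otimes X^{\otimes 2})_{hC_2}\to X^{\otimes 2}_{hC_2}$. Now let $X=S^1$, $T=\Sigma^\infty_2(S^1\times S^1)$, and $a,b$ the two projections followed by $E^2$. Then $H^2a=H^2b=0$, while the defect term is the collapse onto the top cell followed by the bottom cell of $\Sigma\mathbb{RP}^\infty$, which is nonzero. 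So the induced map on $[T,-]$ is a homomorphism when $T$ is a co-H object, which is why your argument does give the conclusion on $\pi_k$ for $k\ge 1$. It is not a homomorphism in general, and factoring the map $4$ needs $T=\Omega^2\Sigma^2X$ with $\gamma=\mathrm{id}$. Your fallback fails for the same reason, and since your induction routes everything through $n=1$, the proof is incomplete exactly there.

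The paper's proof makes the same move, justified only by the remark that the target is an infinite loop space. In its arrangement $H^2$ is a $(2n-2)$-fold loop map for $n\ge 2$, so the difficulty there also sits only in the first step. To close it you must control the defect $D:=H^2\circ 2-2H^2$. Writing $4=2+2$ and using naturality of the defect in the source gives $H^2\circ 4=4H^2+2D+D\circ 2$, so you need $2D+D\circ 2=0$ in $X^{\otimes 2}/2$. One route would be to show that the defect of $H^2$ is bilinear and factors through $\iota\colon X^{\otimes 2}\to X^{\otimes 2}/2$, as property (4) suggests. The correction is then $6D$, and $2D=0$ because $\iota\circ 2=0$.
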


\begin{proof}
	We argue by induction on $n$. It is enough to construct the dotted map in the diagram
\[\begin{tikzcd}
	&& X \\
	& {\Omega^{2n-2}\Sigma^{2n-2}X} & {\Omega^{2n-2}\Sigma^{2n-2}X} \\
	{\Omega^{2n}\Sigma^{2n}X} & {\Omega^{2n}\Sigma^{2n}X} & {\Omega^{2n}\Sigma^{2n}X}
	\arrow[from=1-3, to=2-3]
	\arrow[from=2-2, to=1-3]
	\arrow["{4^{n-1}}", from=2-2, to=2-3]
	\arrow[from=2-2, to=3-2]
	\arrow[from=2-3, to=3-3]
	\arrow[dashed, from=3-1, to=2-2]
	\arrow["4"', from=3-1, to=3-2]
	\arrow["{4^{n-1}}"', from=3-2, to=3-3]
\end{tikzcd}\]
	By assumption, the swap map on $X^{\otimes 2}$ is $-1$. The same is therefore true for $(\Sigma^{2n-2}X)^{\otimes 2}$, since the swap map on $(S^{2k})^{\otimes 2}$ is the identity for every $k$. Applying the previous lemma to $\Sigma^{2n-2}X$, we obtain a fiber sequence
	$$
	\Omega^{2n-2}\Sigma^{2n-2}X \xlongrightarrow{E^2} \Omega^{2n}\Sigma^{2n}X \xlongrightarrow{H^2} \Omega^\infty\Sigma^{\infty+2n-2}X^{\otimes 2}/2
	$$

	Thus it suffices to show that the composite
	$$
	\Omega^{2n}\Sigma^{2n}X \xlongrightarrow{4} \Omega^{2n}\Sigma^{2n}X \xlongrightarrow{H^2} \Omega^\infty\Sigma^{\infty+2n-2}X^{\otimes 2}/2
	$$
	is null. Since the target is an infinite loop space, this is equivalent to asking that the composite
	$$
	\Omega^{2n}\Sigma^{2n}X \xlongrightarrow{H^2} \Omega^\infty\Sigma^{\infty+2n-2}X^{\otimes 2}/2 \xlongrightarrow{4} \Omega^\infty\Sigma^{\infty+2n-2}X^{\otimes 2}/2
	$$
	be null. But for any spectrum $Y$, multiplication by $4$ on $Y/2$ is null. This gives the dotted arrow.
\end{proof}

We thus obtain a metastable analogue of James' exponent theorem \cite{EHP}; see also \cite[cor. 4.5.3]{unstablemethods}.

\begin{lem}
	Multiplication by $4^n$ on $\Omega^{2n}S^{2n+1}$ factors as
\[\begin{tikzcd}
	& {S^1} \\
	{\Omega^{2n}S^{2n+1}} & {\Omega^{2n}S^{2n+1}}
	\arrow[from=1-2, to=2-2]
	\arrow[dashed, from=2-1, to=1-2]
	\arrow["{4^n}"', from=2-1, to=2-2]
\end{tikzcd}\]
\end{lem}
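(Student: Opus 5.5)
The plan is to apply lemma \ref{exponentp2} to $X = S^1$, viewed as an object of $P_2\mathcal{S}_*$. Since $\Sigma^{2n}S^1 \simeq S^{2n+1}$ in $P_2\mathcal{S}_*$ (the functor $\Sigma^\infty_2$ preserves colimits, hence suspensions), we have $\Omega^{2n}\Sigma^{2n}S^1 \simeq \Omega^{2n}S^{2n+1}$. Under this identification, the vertical map $S^1 \to \Omega^{2n}S^{2n+1}$ in the statement is the iterated suspension $E^{2n}$. The statement then becomes exactly the conclusion of lemma \ref{exponentp2}. So the only thing to check is its hypothesis: the swap map on $(S^1)^{\otimes 2}$ must be $-1$.

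The underlying spectrum of $S^1$ is $\mathbb{S}^1$, and the swap map on $\mathbb{S}^1 \otimes \mathbb{S}^1 \simeq \mathbb{S}^2$ has degree $(-1)^{1\cdot 1} = -1$. This is the sign already used in the example following lemma \ref{doublesuspp2}, where $\tau$ acts by $-1$ on $\mathbb{S}^{2n}$ for odd $n$. The proof of lemma \ref{exponentp2} only uses the swap on the underlying spectrum $X^{\otimes 2}$, since the fiber sequence of lemma \ref{doublesuspp2} involves $\Omega^\infty\Sigma^\infty X^{\otimes 2}/(1-\tau)$. The hypothesis is therefore a statement about the underlying spectrum, and it holds.

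There is no real obstacle. The only point needing care is that the induction inside lemma \ref{exponentp2} applies lemma \ref{doublesuspp2} to $\Sigma^{2n-2}S^1 \simeq S^{2n-1}$, and uses that $(S^{2n-1})^{\otimes 2}$ again has swap $-1$, with cofiber $\mathbb{S}^{4n-2}/2$. This is consistent with the odd case of the example following lemma \ref{doublesuspp2}. With that, the factorization of $4^n$ through $S^1$ follows directly.
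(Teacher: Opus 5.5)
Your proposal is correct and is essentially the paper's proof: the paper also applies lemma \ref{exponentp2} to $X = S^1$. The paper checks only that the swap map on $\mathbb{S}^1 \otimes \mathbb{S}^1$ is $-1$, exactly as you do; your identification $\Sigma^{2n}S^1 \simeq S^{2n+1}$ and your check on the inductive step ($\mathbb{S}^{4n-2}/2$ as the target) are consistent with that argument.
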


\begin{proof}
	This is the previous lemma applied to $X = S^1$, since the swap map on $\mathbb{S}^1 \otimes \mathbb{S}^1$ is $-1$.
\end{proof}

\begin{rem}
	Unlike in the classical case, this does not imply that multiplication by $4^n$ is null on $\Omega^{2n+2}S^{2n+1}$. The reason is that in the metastable category one has $\Omega^2S^1 \not\simeq *$, so in particular the base case $n=0$ already fails.
\end{rem}

\subsection{The 0-sphere}

In this subsection we compute the homotopy groups of $S^0$, and more generally of the exotic spheres $S^{0,k}$. Our main result is the equivalence
$$
\Omega S^{0,k} \simeq \Omega^{\infty+2}(\mathbb{RP}^\infty_{-1} \oplus \mathbb{S}/k)
$$
From this we obtain all positive homotopy groups and deduce that the exotic spheres $S^{0,k}$ are pairwise distinct. We then determine all maps
$$
S^{0,j} \longrightarrow S^{0,k}
$$
showing that there are two such maps if $j \nmid k$, and four if $j \mid k$. In particular, $\pi_0S^0$ has four elements: the null map, the identity, a nontrivial automorphism, and a nontrivial map lifting the null map.

The main tool is the James--Hopf map $H^\infty$. While this paper was being written, a paper of Klein appeared \cite{kleinnew} in which some of the properties we need are established explicitly. We had been using these properties implicitly throughout, and it seems preferable to isolate and organize them here. This also provides a different perspective on Klein's result.

We begin by fixing some notation.

For a spectrum $Y$ with $C_2$-action, we write $i \colon Y^{hC_2} \to Y$ for the forgetful map, which may also be obtained by taking fixed points of the diagonal map $Y \to Y \times Y = C_{2+} \otimes Y$. Similarly, we write $\pi \colon Y \to Y_{hC_2}$ for the projection map, obtained by passing to orbits of the fold map $C_{2+} \otimes Y = Y \vee Y \to Y$. We denote by $\mathrm{tr} \colon Y_{hC_2} \to Y$ the transfer map, defined as the composite $i \circ \mathrm{nm}$, where $\mathrm{nm}$ is the norm map. Equivalently, $\mathrm{tr}$ is obtained by passing to orbits of the diagonal map $Y \to C_{2+} \otimes Y$, as in the diagram
\[\begin{tikzcd}
	{Y_{hC_2}} & {(C_{2+}\otimes Y)_{hC_2}} \\
	{Y^{hC_2}} & {(C_{2+}\otimes Y)^{hC_2}}
	\arrow[from=1-1, to=1-2]
	\arrow["{\mathrm{nm}}"', from=1-1, to=2-1]
	\arrow["{\mathrm{nm}}", equals, from=1-2, to=2-2]
	\arrow["i"', from=2-1, to=2-2]
\end{tikzcd}\]
Recall also that the composite $\mathrm{tr} \circ \pi \colon Y \to Y$ is given by $1+\sigma$, where $\sigma$ denotes the automorphism of $Y$ induced by the $C_2$-action. Indeed, $\mathrm{tr} \circ \pi = i \circ \mathrm{nm} \circ \pi = 1+\sigma$ by the defining property of the norm map. In particular, if the action is trivial, then $\mathrm{tr} \circ \pi$ is multiplication by $2$.

For an object $X$ of $P_2\mathcal{S}_*$, we write $L_X \colon X \to (X^{\otimes 2})^{hC_2}$ for the metastable structure, and $\Delta_X := i \circ L_X \colon X \to X^{\otimes 2}$ for the induced diagonal map.

With this notation in place, we can state and prove the main results of Klein \cite[thm. A, cor. C]{kleinnew}. 

\begin{lem}
	Let $X$ and $Y$ be objects of $P_2\mathcal{S}_*$. Then the James--Hopf map
	$$
	H^\infty \colon [\Sigma^\infty X, \Sigma^\infty Y] \longrightarrow [\Sigma^\infty X, \Sigma^\infty Y^{\otimes 2}_{hC_2}]
	$$
	satisfies the following properties:
	\begin{enumerate}
		\item $H^\infty(\alpha) = 0$ if and only if $\alpha \in \mathrm{im}\,E^\infty$
		\item $\mathrm{nm}\,H^\infty(\alpha) = (\alpha \otimes \alpha)^{hC_2} \circ L_X - L_Y \circ \alpha$
		\item $\mathrm{tr}\,H^\infty(\alpha) = (\alpha \otimes \alpha) \circ \Delta_X - \Delta_Y \circ \alpha$
		\item $H^\infty(\alpha + \beta) = H^\infty(\alpha) + H^\infty(\beta) + \pi \circ (\alpha \otimes \beta) \circ \Delta_X$
		\item $H^\infty(\beta \circ \alpha) = H^\infty(\beta) \circ \alpha + (\beta \otimes \beta)_{hC_2} \circ H^\infty(\alpha)$
	\end{enumerate}
\end{lem}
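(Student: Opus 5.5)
The plan is to derive all five properties from the explicit description of the canonical resolution in Lemma \ref{canonicalres}, together with Remarks \ref{Hopfexplicitcoalg}, \ref{Hopfexplicitlift} and \ref{Hopfexplicitp2}. Throughout, we identify $[\Sigma^\infty X, \Sigma^\infty Y]$ with $\pi_0\mathrm{Map}_{P_2\mathcal{S}_*}(X, \Omega^\infty\Sigma^\infty Y)$ by adjunction, and we compute $H^\infty$ on homotopy classes via the fiber sequence of mapping spaces
\[
\mathrm{Map}_{P_2}(X,Y)\to \mathrm{Map}_{\mathcal{Sp}}(X,Y)\to \mathrm{Map}_{\mathcal{Sp}}(X,Y^{\otimes 2}_{hC_2})
\]
constructed in the proof of the lifts lemma.

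Property (1) is exactness of this fiber sequence on $\pi_0$. Property (2) is Remark \ref{Hopfexplicitp2}: the composite $\mathrm{nm}\circ H^\infty$ is the coalgebra-structure map for $F=(-^{\otimes 2})^{hC_2}$, sending $\alpha\mapsto F\alpha\circ L_X - L_Y\circ\alpha$ as in Remark \ref{Hopfexplicitcoalg}. Property (3) then follows by postcomposing (2) with $i$. We use $\mathrm{tr}=i\circ\mathrm{nm}$, the naturality $i\circ(\alpha\otimes\alpha)^{hC_2}=(\alpha\otimes\alpha)\circ i$, and $\Delta=i\circ L$.

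For (4) and (5), (2) alone only determines $H^\infty$ modulo the image of $\Sigma^{-1}(Y^{\otimes 2})^{tC_2}\to Y^{\otimes 2}_{hC_2}$, so we need an argument at the level of spaces. The cleanest route is to write $H^\infty(\alpha)$ as the difference of two nullhomotopies. The map $\alpha$ gives two lifts of $\Delta_{tC_2}\circ\alpha$ through $\mathrm{can}$, namely $(\alpha\otimes\alpha)^{hC_2}L_X$ and $L_Y\alpha$. These come with the canonical homotopy from naturality of the Tate diagonal, since $\mathrm{can}\circ(\alpha\otimes\alpha)^{hC_2}L_X\simeq(\alpha\otimes\alpha)^{tC_2}\Delta_X\simeq\Delta_Y\alpha$. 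Two lifts of the same map differ by a map to the fiber $Y^{\otimes 2}_{hC_2}$, and that difference is $H^\infty(\alpha)$.

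With this description, (5) reduces to functoriality. For $\beta\circ\alpha$, insert the intermediate lift $(\beta\otimes\beta)^{hC_2}L_Y\alpha$. The difference of the first two lifts is $(\beta\otimes\beta)_{hC_2}H^\infty(\alpha)$, and the difference of the last two is $H^\infty(\beta)\alpha$. Differences of lifts add, and the Tate-diagonal homotopies compose by naturality.

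For (4), the key step is the equivariant expansion
\[
(\alpha+\beta)^{\otimes 2}=\alpha^{\otimes 2}+\beta^{\otimes 2}+(\alpha\otimes\beta+\beta\otimes\alpha).
\]
Here the cross term is the induced map $C_{2+}\otimes(\alpha\otimes\beta)$, which on homotopy fixed points becomes $\mathrm{nm}\circ\pi\circ(\alpha\otimes\beta)\circ i$. Applied to $L_X$, this gives $\mathrm{nm}\,\pi(\alpha\otimes\beta)\Delta_X$. Since this term is already lifted through $\mathrm{nm}$, its Tate part vanishes canonically, and the Tate diagonal is additive because it is exact. Hence the difference-of-lifts for $\alpha+\beta$ equals the sum of those for $\alpha$ and $\beta$, plus $\pi(\alpha\otimes\beta)\Delta_X$.

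The main obstacle is making this difference-of-lifts identification of $H^\infty$ rigorous and compatible with the fiber sequence of Remark \ref{Hopfexplicitlift}. One must check that the nullhomotopies coming from the lift match the canonical ones, and that the cross-term nullhomotopy in (4) is the canonical one. If this proves delicate, the fallback is to reduce to the universal case, taking $X$ to be a sum of copies of $\Sigma^\infty_2$ of spheres or $Y$ cofree, where $\mathrm{nm}$ is injective enough for (2) to determine $H^\infty$.
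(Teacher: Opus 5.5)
Your proposal follows the paper's proof. Parts (1)--(3) are argued identically, and for (4) and (5) the paper likewise computes $\mathrm{nm}\,H^\infty$ via (2), rewrites the cross term $(\alpha\otimes\beta+\beta\otimes\alpha)^{hC_2}\circ L_X$ as $\mathrm{nm}\circ\pi\circ(\alpha\otimes\beta)\circ\Delta_X$ through $C_{2+}\otimes Y^{\otimes 2}$, and inserts the intermediate term $(\beta\otimes\beta)^{hC_2}\circ L_Y\circ\alpha$. It then asserts that the Tate-level nullhomotopies agree, or alternatively defers to Klein's argument in the section model of lemma \ref{p2equiv}; your difference-of-lifts description of $H^\infty(\alpha)$ is a more explicit form of that comparison, and it correctly handles the fact you flag that $\mathrm{nm}$ is not injective, so (2) alone would not suffice.
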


\begin{proof}
	We verify the five assertions in order.
	\begin{enumerate}
		\item The first claim is immediate from the canonical fiber sequence
		$$
		X \xlongrightarrow{E^\infty} \Omega^\infty\Sigma^\infty X \xlongrightarrow{H^\infty} \Omega^\infty\Sigma^\infty X^{\otimes 2}_{hC_2}
		$$

		\item Consider the composite
		$$
		\Omega^\infty\Sigma^\infty X \xlongrightarrow{H^\infty} \Omega^\infty\Sigma^\infty X^{\otimes 2}_{hC_2} \xlongrightarrow{\mathrm{nm}} \Omega^\infty\Sigma^\infty (X^{\otimes 2})^{hC_2}
		$$
		By remark \ref{Hopfexplicitp2}, this induces on homotopy classes the map
		$$
		\alpha \longmapsto (\alpha \otimes \alpha)^{hC_2} \circ L_X - L_Y \circ \alpha
		$$
		which is exactly the desired formula.

		\item This follows immediately from (2). Indeed, using $\mathrm{tr} = i \circ \mathrm{nm}$ and $\Delta = i \circ L$, together with the naturality identity
		$$
		i \circ (\alpha \otimes \alpha)^{hC_2} \simeq (\alpha \otimes \alpha) \circ i
		$$
		we obtain
		$$
		\mathrm{tr}\,H^\infty(\alpha)
		= i \circ \mathrm{nm}\,H^\infty(\alpha)
		= i \circ (\alpha \otimes \alpha)^{hC_2} \circ L_X - i \circ L_Y \circ \alpha
		= (\alpha \otimes \alpha) \circ \Delta_X - \Delta_Y \circ \alpha
		$$
		as claimed.
		
		\item By (2), we compute
		\begin{align*}
			\mathrm{nm}\,H^\infty(\alpha + \beta)
			&= ((\alpha + \beta)^{\otimes 2})^{hC_2} \circ L_X - L_Y \circ (\alpha + \beta) \\
			&= (\alpha^{\otimes 2})^{hC_2} \circ L_X + (\beta^{\otimes 2})^{hC_2} \circ L_X + (\alpha \otimes \beta + \beta \otimes \alpha)^{hC_2} \circ L_X - L_Y \circ \alpha - L_Y \circ \beta \\
			&= \mathrm{nm}\,H^\infty(\alpha) + \mathrm{nm}\,H^\infty(\beta) + (\alpha \otimes \beta + \beta \otimes \alpha)^{hC_2} \circ L_X
		\end{align*}
		It therefore remains to identify the last term.
		
		Note that the map $\alpha \otimes \beta + \beta \otimes \alpha$ is given by the composite
		$$
		X^{\otimes 2} \xlongrightarrow{(\alpha \otimes \beta,\beta \otimes \alpha)} C_{2+} \otimes Y^{\otimes 2} \xlongrightarrow{\nabla} Y^{\otimes 2}
		$$
		Applying the norm map to this, we obtain the diagram
\[\begin{tikzcd}
	{(X^{\otimes 2})_{hC_2}} && {Y^{\otimes 2}} & {(Y^{\otimes 2})_{hC_2}} \\
	{(X^{\otimes 2})^{hC_2}} && {Y^{\otimes 2}} & {(Y^{\otimes 2})^{hC_2}}
	\arrow["{(\alpha\otimes\beta, \beta\otimes\alpha)_{hC_2}}", from=1-1, to=1-3]
	\arrow["{\mathrm{nm}}"', from=1-1, to=2-1]
	\arrow["\pi", from=1-3, to=1-4]
	\arrow["{\mathrm{nm}}"', equals, from=1-3, to=2-3]
	\arrow["{\mathrm{nm}}"', from=1-4, to=2-4]
	\arrow["{(\alpha\otimes\beta, \beta\otimes\alpha)^{hC_2}}"', from=2-1, to=2-3]
	\arrow[from=2-3, to=2-4]
\end{tikzcd}\]
		Hence
		$$
		(\alpha \otimes \beta + \beta \otimes \alpha)^{hC_2} \simeq \mathrm{nm} \circ \pi \circ (\alpha \otimes \beta,\beta \otimes \alpha)^{hC_2}
		$$
		To conclude, consider the diagram
\[\begin{tikzcd}
	{(X^{\otimes 2})^{hC_2}} && {(C_{2+} \otimes Y^{\otimes 2})^{hC_2}} & {Y^{\otimes 2}} \\
	{X^{\otimes 2}} && {C_{2+} \otimes Y^{\otimes 2}}
	\arrow["{(\alpha\otimes\beta, \beta\otimes\alpha)^{hC_2}}", from=1-1, to=1-3]
	\arrow["i"', from=1-1, to=2-1]
	\arrow[equals, from=1-3, to=1-4]
	\arrow["i"', from=1-3, to=2-3]
	\arrow["{(\alpha\otimes\beta, \beta\otimes\alpha)}"', from=2-1, to=2-3]
	\arrow["{\pi_1}"', from=2-3, to=1-4]
\end{tikzcd}\]
		which implies
		$$
		(\alpha \otimes \beta,\beta \otimes \alpha)^{hC_2} \circ L_X \simeq (\alpha \otimes \beta) \circ i \circ L_X = (\alpha \otimes \beta) \circ \Delta_X
		$$
		Putting everything together, we obtain
		$$
		\mathrm{nm}\,H^\infty(\alpha + \beta)
		=
		\mathrm{nm}\,H^\infty(\alpha) + \mathrm{nm}\,H^\infty(\beta) + \mathrm{nm} \circ \pi \circ (\alpha \otimes \beta) \circ \Delta_X
		$$
Since the two maps have the same image under the norm map, it remains only to compare the specified nullhomotopies after passage to the Tate power. These nullhomotopies are obtained functorially from the lift data, and the same calculation as above applies after passage to the Tate power. Hence the two lifts coincide.

Equivalently, one can avoid the bookkeeping of the homotopies by passing to the section model. By lemma \ref{p2equiv}, there is an equivalence
\[
P_2\mathcal{S}_* \simeq \operatorname{Sect}\bigl((-^{\otimes 2})^{C_2}\to 1\bigr)
\]
Under this equivalence, $H^\infty$ identifies with the corresponding map for sections, and the formula above is exactly the identity proved in \cite[thm. A(ii)]{kleinnew}.

	\item Let $\alpha \colon \Sigma^\infty X \to \Sigma^\infty Y$ and $\beta \colon \Sigma^\infty Y \to \Sigma^\infty Z$. Applying the norm map to $H^\infty(\beta \circ \alpha)$, we compute
\begin{align*}
	\mathrm{nm}\, H^\infty(\beta \circ \alpha)
	&= ((\beta \circ \alpha) \otimes (\beta \circ \alpha))^{hC_2} \circ L_X - L_Z \circ (\beta \circ \alpha) \\
	&= (\beta \otimes \beta)^{hC_2} \circ (\alpha \otimes \alpha)^{hC_2} \circ L_X - L_Z \circ \beta \circ \alpha \\
	&= (\beta \otimes \beta)^{hC_2} \circ (\mathrm{nm}\, H^\infty(\alpha) + L_Y \circ \alpha) - L_Z \circ \beta \circ \alpha \\
	&= (\beta \otimes \beta)^{hC_2} \circ \mathrm{nm}\, H^\infty(\alpha) + (\beta \otimes \beta)^{hC_2} \circ L_Y \circ \alpha - L_Z \circ \beta \circ \alpha \\
	&= (\beta \otimes \beta)^{hC_2} \circ \mathrm{nm}\, H^\infty(\alpha) + \mathrm{nm}\,H^\infty(\beta) \circ \alpha \\
	&= \mathrm{nm}\big((\beta \otimes \beta)_{hC_2} \circ H^\infty(\alpha) + H^\infty(\beta) \circ \alpha\big)
\end{align*}
Since the two maps have the same image under the norm map, it remains only to compare the specified nullhomotopies after passage to the Tate power. These nullhomotopies are obtained functorially from the lift data, and the same calculation as above applies after passage to the Tate power. Hence the two lifts coincide.

Equivalently, one can avoid the bookkeeping of the homotopies by passing to the section model using lemma \ref{p2equiv}, as in (4), and following the proof of \cite[thm. A(iv)]{kleinnew}. \qedhere
\end{enumerate}
\end{proof}

Now that we have established the basic properties of the James--Hopf map, we can begin the computation of the homotopy groups of $S^0$. The following is a stronger refinement of the classical Kahn--Priddy theorem \cite{KP}.

\begin{lem}
	If $X$ is a suspension in $P_2\mathcal{S}_*$, then the James--Hopf map
	$$
	H^\infty \colon [\Sigma^\infty X, \mathbb{S}] \longrightarrow [\Sigma^\infty X, \mathbb{RP}^\infty_+]
	$$
	associated to the $0$-sphere $S^0 = S^{0,1}$ is a split injection.
\end{lem}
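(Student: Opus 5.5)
The plan is to produce an explicit left inverse to $H^\infty$ using the transfer, via property (3) of the preceding lemma. For $S^0 = S^{0,1}$ we have $(\mathbb{S}^{\otimes 2})_{hC_2} \simeq \mathbb{RP}^\infty_+$ with trivial swap action. The induced diagonal $\Delta_{S^0}\colon \mathbb{S} \to \mathbb{S}$ is the identity by lemma \ref{spacelikes0}, and $\mathrm{tr}\colon \mathbb{RP}^\infty_+ \to \mathbb{S}$ is the classical transfer. Applying (3) with $Y = S^0$ gives, for every $\alpha\colon \Sigma^\infty X \to \mathbb{S}$,
$$
\mathrm{tr}\, H^\infty(\alpha) = (\alpha \otimes \alpha)\circ \Delta_X - \alpha .
$$

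The key step is to use the hypothesis that $X$ is a suspension. By lemma \ref{suspensionp2}, equivalently lemma \ref{suspensionp2gen}, the induced diagonal $\Delta_X = i \circ L_X \colon \Sigma^\infty X \to \Sigma^\infty X \otimes \Sigma^\infty X$ is null. The first term then vanishes, and we get $\mathrm{tr}\, H^\infty(\alpha) = -\alpha$. Hence $-\mathrm{tr}_*\colon [\Sigma^\infty X, \mathbb{RP}^\infty_+] \to [\Sigma^\infty X, \mathbb{S}]$ is a left inverse of $H^\infty$, so $H^\infty$ is injective with a retraction.

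It remains to check that $H^\infty$ is a homomorphism, since "split injection" should be understood in the category of abelian groups. Property (4) gives
$$
H^\infty(\alpha+\beta) = H^\infty(\alpha) + H^\infty(\beta) + \pi\circ(\alpha\otimes\beta)\circ\Delta_X ,
$$
and the cross term again vanishes because $\Delta_X$ is null. So $H^\infty$ is additive, and since $\mathrm{tr}_*$ is additive, $H^\infty$ is a split monomorphism of abelian groups.

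The main obstacle I expect is a bookkeeping point: is the left side of (3) computed with the spacelike diagonal of degree exactly $1$ on $\mathbb{S}$, and is it compatible with the identification $(\mathbb{S}^{\otimes2})_{hC_2}\simeq \mathbb{RP}^\infty_+$? A sign or degree ambiguity is harmless, because for $S^{0,k}$ the same argument gives $\mathrm{tr}\,H^\infty(\alpha) = -k\alpha$, and for $k=1$ this is invertible. I would also note that the null-homotopy of $\Delta_X$ needs to hold only on homotopy classes, which is what lemma \ref{suspensionp2} provides. No coherence beyond $\pi_0$ is needed, because both (3) and (4) are statements about homotopy classes.
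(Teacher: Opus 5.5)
Your proof is correct and follows the paper's argument: apply property (3) with $\Delta_{S^0} = 1$ and $\Delta_X \simeq 0$ (lemma \ref{suspensionp2}) to get $\mathrm{tr}\,H^\infty(\alpha) = -\alpha$, so $-\mathrm{tr}_*$ is a retraction. Your additivity check via property (4) is a detail the paper leaves implicit. It is also automatic, because $X$ is a suspension and so the map induced by $H^\infty$ on $[X,-]$ is a homomorphism by Eckmann--Hilton.
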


\begin{proof}
	By (3) of the previous lemma, we have
	$$
	\mathrm{tr}\,H^\infty(\alpha) = (\alpha \otimes \alpha) \circ \Delta_X - \Delta_{S^0} \circ \alpha
	$$
	Since $X$ is a suspension, the diagonal map $\Delta_X$ is null by lemma \ref{suspensionp2}. On the other hand, $\Delta_{S^0}$ is the identity. Therefore
	$$
	\mathrm{tr}\,H^\infty(\alpha) = -\alpha
	$$
	This exhibits $-\mathrm{tr}$ as a retraction to $H^\infty$, so $H^\infty$ is a split injection.
\end{proof}

The previous lemma immediately upgrades to a statement at the level of spaces, before passing to homotopy groups.

\begin{lem}
	The composite 
	$$\Omega^{\infty + 1} \mathbb{S} \xlongrightarrow{\Omega H^\infty} \Omega^{\infty+1}\mathbb{RP}^\infty_+ \xlongrightarrow{\Omega \mathrm{tr}} \Omega^{\infty+1} \mathbb{S}$$
	is an equivalence.
\end{lem}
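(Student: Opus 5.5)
The plan is to deduce the space-level statement from the previous lemma by a Yoneda argument over a generating family of test objects. The composite $\Omega\mathrm{tr}\circ\Omega H^\infty$ is a self-map of the infinite loop space $\Omega^{\infty+1}\mathbb{S}$. To show it is an equivalence, it suffices to show that it induces a bijection on $[\,S^k, -\,]$ for all $k \geq 0$ at every basepoint component. Since $\Omega^{\infty+1}\mathbb{S}$ is a loop space, all components are equivalent and it is enough to check homotopy groups at the base point, i.e.\ to show $\pi_k$ of the composite is the identity or at least an isomorphism.

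First, identify these homotopy groups in metastable terms. By the canonical resolution (lemma \ref{canonicalres}) applied to $S^0$, and the adjunction $\Sigma^\infty \dashv \Omega^\infty$ with the forgetful functor identified with stabilization (lemma \ref{forgetstabilization}), a map $S^k \to \Omega^{\infty+1}\mathbb{S}$ in $\mathcal{S}_*$ corresponds to a class in $[\Sigma^\infty S^{k+1}, \mathbb{S}]$. The looped James--Hopf map $\Omega H^\infty$ then corresponds to $H^\infty$ evaluated on $X = S^{k+1}$, viewed as an object of $P_2\mathcal{S}_*$ via $\Sigma^\infty_2$. Concretely, I would pass through the adjunction $\Sigma^\infty_2 \dashv \Omega^\infty_2$ and note that maps $\Sigma^\infty_2 S^{k+1} \to \Omega^\infty\Sigma^\infty(-)$ are maps of spectra out of $\mathbb{S}^{k+1}$. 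Here the looping is what lets us use $X = S^{k+1}$, a suspension, rather than $S^k$, which for $k=0$ is not one. I would check carefully that looping $H^\infty$ is compatible with this identification, using that $H^\infty$ is a map into an infinite loop space so $\Omega H^\infty$ on $\pi_k$ agrees with $H^\infty$ on $\pi_{k+1}$.

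Second, apply the previous lemma with $X = S^{k+1}$, which is a suspension in $P_2\mathcal{S}_*$. Its proof gives $\mathrm{tr}\circ H^\infty(\alpha) = -\alpha$ for every $\alpha \in [\mathbb{S}^{k+1},\mathbb{S}]$, since $\Delta_{S^{k+1}}$ is null and $\Delta_{S^0}=1$. Thus on $\pi_k$ the composite $\Omega\mathrm{tr}\circ\Omega H^\infty$ acts as $-1$, which is an isomorphism. Whitehead's theorem then gives the equivalence.

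The main obstacle is the bookkeeping in the first step. One must ensure that the map on homotopy groups induced by the space-level map $\Omega H^\infty$ really is the function $H^\infty$ on stable homotopy classes described in remark \ref{Hopfexplicitp2} and the formula lemma, including sign and group-structure compatibility. This matters because $H^\infty$ itself is not additive, by property (4). After one loop, however, the relevant $\pi_k$ is computed with $S^{k+1}$, whose diagonal is null, so the cross term $\pi\circ(\alpha\otimes\beta)\circ\Delta_X$ vanishes and $H^\infty$ becomes a homomorphism there. I would verify this first.
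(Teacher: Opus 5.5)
\textbf{The gap: the final Whitehead step.} Your computation on test maps is correct, but it does not prove the lemma as stated. Here $\Omega^{\infty+1}\mathbb{S}$ and $\Omega^{\infty+1}\mathbb{RP}^\infty_+$ are objects of $P_2\mathcal{S}_*$: $\Omega^\infty$ is the right adjoint of the forgetful functor $P_2\mathcal{S}_*\to\mathcal{Sp}$, and $H^\infty$ is the morphism of $P_2\mathcal{S}_*$ coming from the canonical resolution. The lemma is also used there, for instance in the pullbacks computing $\Omega S^{0,k}$ in lemma \ref{modelS0P2}.

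Bijectivity on $[S^k,-]$ for $k\geq 0$ only shows that the map becomes a weak equivalence after applying $\Omega^\infty_2$. Whitehead's theorem is not available in $P_2\mathcal{S}_*$: spheres do not detect equivalences there, and $\Omega^\infty_2$ is not conservative. For example, take $\Sigma^{-1}\mathbb{Q}$ with its trivial metastable structure (example \ref{2inv}). It is a nonzero object. But $(\Sigma^{-1}\mathbb{Q})^{\otimes 2}_{hC_2}\simeq 0$, since the swap acts by $-1$ on $\Sigma^{-2}\mathbb{Q}$. So the canonical resolution identifies this object with $\Omega^\infty\Sigma^{-1}\mathbb{Q}$, and all of $[S^k,-]$ and $[S^{0,j},-]$ vanish on it.

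\textbf{The fix: use all suspensions as test objects.} Property (3) of the James--Hopf lemma, and hence the Kahn--Priddy lemma, holds for an arbitrary suspension $\Sigma T$ in $P_2\mathcal{S}_*$, not only for spheres. Take any $T$ and use
$[T,\Omega^{\infty+1}\mathbb{S}]\cong[\Sigma T,\Omega^\infty\mathbb{S}]\cong[\Sigma^\infty\Sigma T,\mathbb{S}]$.
Under this identification, postcomposition with $\Omega(\mathrm{tr}\circ H^\infty)$ is $\alpha\mapsto\mathrm{tr}\,H^\infty(\alpha)=-\alpha$, because $\Delta_{\Sigma T}=0$ and $\Delta_{S^0}=1$.

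Now take $T=\Omega^{\infty+1}\mathbb{S}$ and let $\alpha$ be the counit. Yoneda then gives $\Omega(\mathrm{tr}\circ H^\infty)\simeq -1$ directly, and this is an equivalence. This is the paper's route: the preceding lemma is stated for arbitrary suspensions precisely so that it ``upgrades before passing to homotopy groups''. It is also the form $\Omega(\mathrm{tr}\circ H^\infty)\simeq-\Omega^{\infty+1}\Delta$ used later in the paper.

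Your worry about the additivity of $H^\infty$ is then unnecessary. The map $\alpha\mapsto-\alpha$ is a bijection whether or not $H^\infty$ is known to be a homomorphism.
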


We can now ask what happens for the other exotic $0$-spheres. Denote by $H_k^\infty$ the James--Hopf map associated to $S^{0,k}$. If $X$ is again a suspension, then the same argument as above gives
$$
\mathrm{tr}\,H_k^\infty(\alpha)
=
(\alpha \otimes \alpha) \circ \Delta_X - \Delta_{S^{0,k}} \circ \alpha
=
-k\alpha
$$
Hence $H_k^\infty(\alpha)=0$ implies $k\alpha=0$. The converse also holds, as we now show.

To state the result, recall that the fiber of the transfer map $\mathrm{tr}$ is the spectrum $\mathbb{RP}^\infty_{-1}$, defined by
$$
\mathbb{RP}^\infty_{-1}:=\mathbb{S}^{-\rho}_{hC_2}
$$
This follows by passing to homotopy orbits in the cofiber sequence
$$
\mathbb{S}^{-\rho} \longrightarrow \mathbb{S} \xlongrightarrow{\Delta} C_{2+}\otimes \mathbb{S}
$$
which is the Spanier--Whitehead dual of the classical cofiber sequence
$$
C_{2+} \simeq S(\rho)_+ \xlongrightarrow{\nabla} S^0 \longrightarrow S^\rho
$$
See also \cite[ex.~6.3]{kuhnoverview}. The spectrum $\mathbb{RP}^\infty_{-1}$ is not a suspension spectrum.

\begin{lem}
	\label{modelS0P2}
	There is an equivalence
	$$
	\Omega S^{0,k} \simeq \Omega^{\infty+2}(\mathbb{RP}^\infty_{-1} \oplus \mathbb{S}/k)
	$$
	In particular,
	$$
	\Omega S^0 \simeq \Omega^{\infty+2}\mathbb{RP}^\infty_{-1}
	$$
\end{lem}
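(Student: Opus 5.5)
Loop the canonical resolution (lemma \ref{canonicalres}) for $S^{0,k}$, then identify the looped James--Hopf map with something explicit. Applying $\Omega$ to the fiber sequence
$$
S^{0,k} \xlongrightarrow{E^\infty} \Omega^\infty\mathbb{S} \xlongrightarrow{H^\infty_k} \Omega^\infty\mathbb{RP}^\infty_+
$$
gives a fiber sequence $\Omega S^{0,k} \to \Omega^{\infty+1}\mathbb{S} \to \Omega^{\infty+1}\mathbb{RP}^\infty_+$. The key point is that on the source $\Omega^{\infty+1}\mathbb{S}$ the map $\Omega H^\infty_k$ should be an infinite loop map. Heuristically, the quadratic correction term $\pi\circ(\alpha\otimes\beta)\circ\Delta_X$ in property (4) vanishes when $X$ is a suspension, so $H^\infty_k$ is additive on maps out of suspensions. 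More precisely, I would identify $\Omega H^\infty_k$ with $\Omega^{\infty+1}$ of a map of spectra $h_k\colon \mathbb{S}\to\mathbb{RP}^\infty_+$, determined by evaluating on the universal class in $[\Sigma^\infty X,\mathbb{S}]$ for $X$ ranging over suspensions (Yoneda on the full subcategory of suspensions, where everything is additive).

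Next, pin down $h_k$ using the splitting. By the computation preceding the lemma, $\mathrm{tr}\circ h_k = -k$. For $k=1$, the Kahn--Priddy lemma just proved shows $-\mathrm{tr}$ retracts $h_1$. Writing $\mathbb{RP}^\infty_+\simeq \mathbb{S}\oplus\mathbb{RP}^\infty_{-1}$ via $h_1$ and the fiber $\mathbb{RP}^\infty_{-1}\to\mathbb{RP}^\infty_+$ of $\mathrm{tr}$, the component of $h_k$ on the $\mathbb{S}$ summand is $\mathrm{tr}\circ h_k\cdot(-1)=k$. The remaining task is to show that the $\mathbb{RP}^\infty_{-1}$-component of $h_k$ vanishes (or can be absorbed by an automorphism). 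For this I would use the formula $L' = L + m\,\mathrm{nm}\circ\pi$ from lemma \ref{exotics0}, with $k=2m+1$. Property (2) shows $\mathrm{nm}\,H^\infty_k(\alpha)=\mathrm{nm}\,H^\infty_1(\alpha) - m\,\mathrm{nm}\,\pi\alpha$, which suggests $h_k = h_1 - m\pi$. Then $\mathrm{tr}\,h_k=-1-2m=-k$, consistent with the above, and $\pi$ decomposes in the splitting as $(-2,\ast)$. After a change of basis the map becomes $\mathbb{S}\xrightarrow{(k,0)}\mathbb{S}\oplus\mathbb{RP}^\infty_{-1}$, possibly up to an automorphism of the target fixing the $\mathbb{S}$-cofiber.

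Given this, the fiber of $\Omega^{\infty+1}h_k$ is $\Omega^{\infty+1}$ of $\mathrm{fib}(h_k)$. Since $\mathrm{cofib}(h_k)\simeq \mathbb{RP}^\infty_{-1}\oplus\mathbb{S}/k$, we have $\mathrm{fib}(h_k)\simeq\Sigma^{-1}(\mathbb{RP}^\infty_{-1}\oplus\mathbb{S}/k)$, which yields $\Omega^{\infty+2}(\mathbb{RP}^\infty_{-1}\oplus\mathbb{S}/k)$. For $k=1$ the term $\mathbb{S}/1$ vanishes, giving the second formula.

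The main obstacle is the first step: rigorously upgrading $\Omega H^\infty_k$ to a stable map and verifying that its $\mathbb{RP}^\infty_{-1}$-component is zero. Knowing only the composites with $\mathrm{nm}$ and $\mathrm{tr}$ determines $h_k$ up to the kernel of $\mathrm{nm}_*$, that is, up to maps into $\Sigma^{-1}(\mathbb{S}^{tC_2})$-type terms. I would handle this by working in the section model of lemma \ref{p2equiv}, where the Tate nullhomotopies are built in, comparing the structures $L$ and $L+m\,\mathrm{nm}\pi$ directly.
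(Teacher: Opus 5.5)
Your proposal has a genuine gap, and it sits in the step you flag as the main obstacle. The stable map $h_k\colon\mathbb{S}\to\mathbb{RP}^\infty_+$ with $\Omega H^\infty_k\simeq\Omega^{\infty+1}h_k$ cannot exist, so no refinement of the argument will produce it.

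Here is why. Suppose it existed. Then $\Omega\mathrm{tr}\circ\Omega H^\infty_k\simeq\Omega^{\infty+1}(\mathrm{tr}\circ h_k)$ with $\mathrm{tr}\circ h_k\in[\mathbb{S},\mathbb{S}]\cong\mathbb{Z}$. Since $\pi_0\mathbb{RP}^\infty_+\cong\mathbb{Z}$ and $\mathrm{tr}$ is multiplication by $2$ on $\pi_0$, this degree is even, so it kills $\eta\in\pi_1\mathbb{S}$. But property (3) with $X=S^1$ (where $\Delta_{S^1}=0$) gives $\mathrm{tr}\,H^\infty_k(\eta)=-k\eta=\eta\neq 0$, since $k$ is odd.

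For the same reason, the splitting $\mathbb{RP}^\infty_+\simeq\mathbb{S}\oplus\mathbb{RP}^\infty_{-1}$ of spectra that you use is false. The cofiber sequence $\mathbb{RP}^\infty_{-1}\to\mathbb{RP}^\infty_+\xrightarrow{\mathrm{tr}}\mathbb{S}$ does not split, and Kahn--Priddy gives a retraction only after applying $\Omega^{\infty+1}$.

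The Yoneda heuristic fails because additivity on suspensions only makes $\Omega H^\infty_k$ an H-map in $P_2\mathcal{S}_*$. Being induced by a stable map would also require $H^\infty(\alpha\circ\gamma)=H^\infty(\alpha)\circ\gamma$ for all stable $\gamma$. Property (5) shows this fails by the term $(\alpha\otimes\alpha)_{hC_2}\circ H^\infty(\gamma)$ whenever $\gamma$ does not desuspend.

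The paper avoids this by never trying to make $\Omega H^\infty_k$ stable. It only identifies the composite $\Omega\mathrm{tr}\circ\Omega H^\infty_k$ with the stable map $\Omega^{\infty+1}(-k)$, which is exactly what property (3) delivers. It then pastes pullbacks. The fiber of $\Omega\mathrm{tr}$ is $\Omega^{\infty+1}\mathbb{RP}^\infty_{-1}$, and the fiber of $-k$ is $\Omega^{\infty+2}\mathbb{S}/k$. Hence $\Omega S^{0,k}$ is the fiber of the induced map $\Omega^{\infty+2}\mathbb{S}/k\to\Omega^{\infty+1}\mathbb{RP}^\infty_{-1}$. That map is null because $k$ is odd while $\mathbb{RP}^\infty_{-1}$ is $2$-local, so the fiber sequence splits as a product.

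This nullity argument is the correct replacement for your step ``the $\mathbb{RP}^\infty_{-1}$-component of $h_k$ vanishes''. It works precisely because it never requires $\Omega H^\infty_k$ itself to be a stable map.
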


\begin{proof}
	As above, point (3) of the lemma and the vanishing $\Delta_{\Sigma X} = 0$ imply that
	$$
	\Omega(\mathrm{tr} \circ H^\infty) \simeq - \Omega^{\infty+1}\Delta_{S^{0,k}} = -k
	$$
	Iterating pullbacks, we obtain the diagram
\[\begin{tikzcd}
	{\Omega^{\infty+2}\mathbb{S}/k} & {\Omega^{\infty+1}\mathbb{RP}^\infty_{-1}} & {*} \\
	{\Omega^{\infty+1}\mathbb{S}} & {\Omega^{\infty+1}\mathbb{RP}^\infty_+} & {\Omega^{\infty+1}\mathbb{S}}
	\arrow[from=1-1, to=1-2]
	\arrow[from=1-1, to=2-1]
	\arrow["\lrcorner"{anchor=center, pos=0.125}, draw=none, from=1-1, to=2-2]
	\arrow[from=1-2, to=1-3]
	\arrow[from=1-2, to=2-2]
	\arrow["\lrcorner"{anchor=center, pos=0.125}, draw=none, from=1-2, to=2-3]
	\arrow[from=1-3, to=2-3]
	\arrow["{\Omega H^\infty}"', from=2-1, to=2-2]
	\arrow["{\Omega^{\infty+1}\Delta \simeq -k}"', curve={height=24pt}, from=2-1, to=2-3]
	\arrow["{\Omega\mathrm{tr}}"', from=2-2, to=2-3]
\end{tikzcd}\]
	The horizontal fiber of the left-hand pullback square therefore fits into a fiber sequence
	$$
	\Omega S^{0,k} \longrightarrow \Omega^{\infty+2}\mathbb{S}/k \longrightarrow \Omega^{\infty+1}\mathbb{RP}^\infty_{-1}
	$$
	Since $k$ is odd and $\mathbb{RP}^\infty_{-1}$ is $2$-local, the rightmost map is null. It follows that the fiber sequence splits, and hence
	$$
	\Omega S^{0,k} \simeq \Omega^{\infty+2}(\mathbb{S}/k \oplus \mathbb{RP}^\infty_{-1})
	$$
	This proves the claim.
\end{proof}

\begin{rem}
	In particular, if $j \neq k$ are positive odd integers, then $S^{0,j} \not\simeq S^{0,k}$.
\end{rem}

The previous lemma gives an explicit description of $\pi_nS^{0,k}$ for $n \geq 1$. It remains to determine $\pi_0$. For completeness, we will in fact compute all maps $S^{0,j} \to S^{0,k}$, and therefore introduce the bigraded homotopy groups
$$
\pi_{n,j}X := [S^{n,j},X]
$$
This refinement is only relevant in degree $n=0$, since $S^{n,j} \simeq S^n$ for every $n>0$. Note also that the bigrading is invisible on infinite loop spaces, since
$$
\pi_{n,j}\Omega^\infty X \cong [\Sigma^\infty S^{n,j},X] \cong [\mathbb{S}^n,X] = \pi_nX
$$
for every spectrum $X$.

\begin{lem}
	\label{homgroupsS0}
	The bigraded homotopy groups of $S^{0,k}$ are given by
	$$
	\pi_{n,j}S^{0,k} \cong
	\begin{cases}
		\mathbb{Z}/2 & \text{if } n = 0 \text{ and } j \nmid k \\
		\mathbb{Z}/2 \times \{0,k/j\} & \text{if } n = 0 \text{ and } j \mid k \\
		\pi_{n+1}\mathbb{RP}^\infty_{-1} \oplus \pi_{n+1}(\mathbb{S}/k) & \text{if } n > 0
	\end{cases}
	$$
	More precisely, the underlying map of spectra $\mathbb{S} \to \mathbb{S}$ induced by a map $S^{0,j} \to S^{0,k}$ has degree either $0$ or $k/j$, and each such map admits exactly two lifts to $P_2\mathcal S_*$.
\end{lem}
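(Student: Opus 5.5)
The case $n>0$ follows from lemma~\ref{modelS0P2}. For $n>0$ we have $S^{n,j}\simeq S^n\simeq \Sigma S^{n-1}$, so
\[
\pi_{n,j}S^{0,k}\cong \pi_{n-1}\Omega S^{0,k}\cong \pi_{n-1}\Omega^{\infty+2}(\mathbb{RP}^\infty_{-1}\oplus\mathbb{S}/k)\cong \pi_{n+1}\mathbb{RP}^\infty_{-1}\oplus\pi_{n+1}(\mathbb{S}/k),
\]
using that the bigrading is invisible on (infinite) loop spaces. The real content is therefore the case $n=0$, which I will treat with the canonical resolution of lemma~\ref{canonicalres} applied to $S^{0,k}$:
\[
S^{0,k}\xrightarrow{E^\infty}\Omega^\infty\mathbb{S}\xrightarrow{H^\infty_k}\Omega^\infty\mathbb{RP}^\infty_+ .
\]
Mapping $S^{0,j}$ into this fiber sequence gives an exact sequence of pointed sets
\[
[S^{0,j},\Omega S^{0,k}]\to \pi_1\mathbb{S}\to \pi_1\mathbb{RP}^\infty_+\to \pi_{0,j}S^{0,k}\to \pi_0\mathbb{S}=\mathbb{Z}\xrightarrow{H^\infty}\pi_0\mathbb{RP}^\infty_+ .
\]
By part (1) of the James--Hopf lemma, the image of $\pi_{0,j}S^{0,k}\to\mathbb{Z}$ consists of exactly those $\alpha$ with $H^\infty(\alpha)=0$.

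\textbf{Step 1: which degrees lift.} I compute $H^\infty(\alpha)$ for $\alpha\in\mathbb{Z}$ via its image under $\mathrm{nm}$ and $\mathrm{tr}$. By parts (2) and (3), $\mathrm{tr}\,H^\infty(\alpha)=\alpha^2 j-k\alpha$. Since $\pi_0\mathbb{RP}^\infty_+=\mathbb{Z}$ and $\mathrm{tr}$ is injective on $\pi_0$ (the composite $\mathrm{tr}\circ\pi=2$, and $\pi_0\mathbb{RP}^\infty_{-1}$ is torsion, so $\pi_0$ of the fiber sequence $\mathbb{RP}^\infty_{-1}\to\mathbb{RP}^\infty_+\to\mathbb{S}$ forces injectivity), we get $H^\infty(\alpha)=0$ iff $\alpha(j\alpha-k)=0$, i.e.\ $\alpha=0$ or $\alpha=k/j$ with $j\mid k$. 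To justify the injectivity I would read off $\pi_0\mathbb{RP}^\infty_{-1}\to\pi_0\mathbb{RP}^\infty_+\to\pi_0\mathbb{S}$ directly: $\pi_0\mathbb{RP}^\infty_+\cong\mathbb{Z}$ maps to $\mathbb{Z}$ by $2$. Alternatively I can use part (4) to show that $H^\infty$ on $\pi_0$ is the quadratic function $\alpha\mapsto\binom{\alpha}{2}j\cdot\pi+\alpha H^\infty(1)$, with $\mathrm{tr}\,H^\infty(1)=j-k$.

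\textbf{Step 2: counting lifts.} This is the main obstacle. The fiber of $\pi_{0,j}S^{0,k}\to\mathbb{Z}$ over a lifted point is the orbit set of $\pi_1\mathbb{RP}^\infty_+\cong\mathbb{Z}/2$ modulo the action twisted by $\pi_1\mathbb{S}$ through the loop of $H^\infty$ at that basepoint. Since $\pi_0$ here is only a set, I will run the long exact sequence of fibers at each lifted basepoint, or equivalently use that $\Omega^\infty$-targets make $\mathrm{Map}(S^{0,j},-)$ of the fiber a torsor. I need to show that the relevant map $\pi_1\mathbb{S}=\mathbb{Z}/2\{\eta\}\to\pi_1\mathbb{RP}^\infty_+$ is zero, or at least has cokernel $\mathbb{Z}/2$. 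The map induced on $\pi_1$ by the derivative of $H^\infty$ at $\alpha$ should be $\beta\mapsto H^\infty(\beta)+\pi\circ(\alpha\otimes\beta)\Delta$, by part (4). Now $\pi_1\mathbb{RP}^\infty_+\cong\mathbb{Z}/2\oplus\mathbb{Z}/2$, generated by $\eta$ on the bottom cell and by the $1$-cell. From lemma~\ref{modelS0P2} we know $\pi_1\mathbb{S}\to\pi_1\mathbb{RP}^\infty_+$ is split injective at the basepoint (Kahn--Priddy), so the cokernel is $\mathbb{Z}/2$. At $\alpha=k/j$ the linear correction term $\alpha j\,\pi\eta=k\,\pi\eta$ is again injective modulo the same splitting, because $k$ is odd. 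So each fiber has exactly two elements, giving $\mathbb{Z}/2$ when $j\nmid k$ and $\mathbb{Z}/2\times\{0,k/j\}$ when $j\mid k$. The delicate point is identifying the $\pi_1$-action honestly, rather than pretending $\pi_0$ is a group; I would first try to verify it by translating basepoints using the explicit composition formula of part (5).
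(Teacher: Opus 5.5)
Your argument is the paper's. The case $n>0$ comes from lemma~\ref{modelS0P2}. For $n=0$ you map $S^{0,j}$ into the canonical resolution, find the kernel on $\pi_0$ from $\mathrm{tr}\,H^\infty_k(\alpha)=\alpha(j\alpha-k)$ together with injectivity of $\mathrm{tr}$ on $\pi_0\mathbb{RP}^\infty_+\cong\mathbb{Z}$, and count the basepoint fiber via Kahn--Priddy.

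The one real difference is your Step 2. The paper reads ``exactly two lifts over each kernel element'' directly off its displayed extension. But $\pi_{0,j}S^{0,k}$ is only a pointed set, so exactness there controls only the fiber over $0$. The fiber over $k/j$ therefore genuinely needs the separate argument you supply, and here your proposal is more careful than the paper.

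Your derivative formula is right. Write $\Sigma^\infty(S^{0,j}\otimes\Sigma^\infty_2 S^1_+)\simeq\mathbb{S}\oplus\mathbb{S}^1$, with projections $p,q$ induced by the space-level maps $S^1_+\to S^0$ and $S^1_+\to S^1$. Parts (4) and (5) of the James--Hopf lemma, together with $H^\infty(p)=H^\infty(q)=0$, give $H^\infty(\alpha p+\beta q)=H^\infty(\alpha)\,p+\bigl(H^\infty(\beta)+j\alpha\,\pi\circ\beta\bigr)q$.

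Two corrections are needed in how you finish. First, you need the induced map on $\pi_1$ at $\alpha$ to be injective, not zero: zero would leave four lifts. Relatedly, $\pi_1\mathbb{RP}^\infty_+\cong(\mathbb{Z}/2)^2$, not $\mathbb{Z}/2$ as in your first sentence of Step 2. Second, injectivity at $\alpha=k/j$ does not follow from $k\,\pi\circ\eta\neq 0$, since two nonzero classes can cancel. The correct reason is that the retraction annihilates the correction term: $\mathrm{tr}(k\,\pi\circ\eta)=2k\eta=0$. Hence $\mathrm{tr}\bigl(H^\infty(\eta)+k\,\pi\circ\eta\bigr)=-k\eta=\eta\neq 0$, and the oddness of $k$ is used only in this last step.
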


\begin{proof}
	For $n>0$, the formula follows from the previous lemma, since $S^{n,j} \simeq S^n$ for every $j$. It therefore remains to determine $\pi_{0,j}S^{0,k}$.

Recall that the canonical resolution for $S^{0,k}$ is
$$
S^{0,k} \xlongrightarrow{E^\infty} \Omega^\infty\mathbb{S} \xlongrightarrow{H_k^\infty} \Omega^\infty\mathbb{RP}^\infty_+
$$
Mapping out of $S^{0,j}$ yields an exact sequence
$$
0 \longrightarrow \pi_1\mathbb{S} \longrightarrow \pi_1\mathbb{RP}^\infty_+ \longrightarrow \pi_{0,j}S^{0,k} \longrightarrow \pi_0\mathbb{S} \xlongrightarrow{\pi_{0,j}H_k^\infty} \pi_0\mathbb{RP}^\infty_+
$$
The first map is injective by the previous lemma. Since $\pi_1\mathbb{S} \cong \mathbb{Z}/2$ and $\pi_1\mathbb{RP}^\infty_+ \cong \mathbb{Z}/2 \oplus \mathbb{Z}/2$, it follows that $\pi_{0,j}S^{0,k}$ fits into an extension
$$
0 \longrightarrow \mathbb{Z}/2 \longrightarrow \pi_{0,j}S^{0,k} \longrightarrow \ker(\pi_{0,j}H_k^\infty) \longrightarrow 0
$$
Thus it remains to determine the kernel of $\pi_{0,j}H_k^\infty$. To compute this kernel, consider the diagram
\[\begin{tikzcd}
	& {\Omega^\infty \mathbb{S}} & \\
	{\Omega^\infty \mathbb{S}} & {\Omega^\infty \mathbb{RP}^\infty_+} & {\Omega^\infty \mathbb{S}}
	\arrow["{\Omega^\infty\pi}"', from=1-2, to=2-2]
	\arrow["2", from=1-2, to=2-3]
	\arrow["{H^\infty_k}"', from=2-1, to=2-2]
	\arrow["{\Omega^\infty\mathrm{tr}}"', from=2-2, to=2-3]
\end{tikzcd}\]
Passing to $\pi_{0, j}$, we obtain

\[\begin{tikzcd}
	& {\mathbb{Z}} & \\
	{\mathbb{Z}} & {\mathbb{Z}} & {\mathbb{Z}}
	\arrow[equals, from=1-2, to=2-2]
	\arrow["2", from=1-2, to=2-3]
	\arrow["{\pi_{0,j}H^\infty_k}"', from=2-1, to=2-2]
	\arrow["{\mathrm{tr}}"', from=2-2, to=2-3]
\end{tikzcd}\]
In particular,
$$
2\pi_{0,j}H_k^\infty(\alpha)
=
\mathrm{tr}\,\pi_{0,j}H_k^\infty(\alpha)
=
(\alpha \otimes \alpha) \circ \Delta_{S^{0,j}} - \Delta_{S^{0,k}} \circ \alpha
=
\alpha(j\alpha-k)
$$
Thus, $\pi_{0,j}H_k^\infty(\alpha)=0$ if and only if either $\alpha=0$ or $j\alpha=k$. In particular,
$$
\ker(\pi_{0,j}H_k^\infty)=
\begin{cases}
\{0\} & \text{if } j \nmid k \\
\{0,k/j\} & \text{if } j \mid k
\end{cases}
$$
Since $\pi_{0,j}S^{0,k}$ fits into an extension
$$
0 \longrightarrow \mathbb{Z}/2 \longrightarrow \pi_{0,j}S^{0,k} \longrightarrow \ker(\pi_{0,j}H_k^\infty) \longrightarrow 0
$$
it follows that each element in the kernel admits exactly two lifts to $\pi_{0,j}S^{0,k}$. This gives the claimed description of $\pi_{0,j}S^{0,k}$, and in particular shows that the underlying map of spectra has degree either $0$ or $k/j$.
\end{proof}

\begin{rem}
	In particular, $\pi_0S^0$ has four elements: the null map, the identity, a nontrivial automorphism, and a nontrivial lift of the null map. It follows that
	$$
	\pi_1\mathrm{Pic}(P_2\mathcal{S}_*) \cong \pi_0\mathrm{Aut}(\mathbb{1}) \cong \mathbb{Z}/2
	$$
\end{rem}

\begin{ques}
	Does this computation extend to an equivalence
	$$
	\mathrm{Pic}(P_2\mathcal{S}_*) \simeq \Omega^\infty \mathbb{RP}^\infty_{-1}
	$$
\end{ques}

\begin{rem}
	This result also shows that, although $\Omega S^{0,k}$ is an infinite loop space, the object $S^{0,k}$ itself is never a loop space. Indeed, if $S^{0,k} \simeq \Omega X$, then $\pi_{0,j}S^{0,k} \cong \pi_1X$ would be independent of $j$, contrary to the calculation above.
\end{rem}

In particular, the previous lemma shows that there is always a map $S^{0,j} \to S^{0,jk}$ lifting the degree $k$ map on the sphere spectrum. Since stabilization is conservative, it follows that $S^{0,j} \simeq S^{0,jk}$ after inverting $k$. As $k$ ranges over all odd integers, we conclude that all exotic spheres become equivalent to $S^0$ after $2$-localization.

\subsection{The 1-sphere}

In this subsection we compute the homotopy groups of $S^1$. Our main result is the equivalence
$$
\Omega^2S^1 \simeq \Omega^{\infty+3}\mathbb{Sp}^2 \times \Omega^{\infty+1}\mathbb{S}[\sfrac{1}{2}]
$$
where $\mathbb{Sp}^2$ denotes the $2$-local symmetric square spectrum. From this we determine all homotopy groups except $\pi_0$ and $\pi_1$, which we show to be $0$ and $\mathbb{Z}$ respectively. We then use this computation as the starting point for the calculation of homotopy groups of spheres in the metastable category via the spectral sequence associated to the EHP sequence. As an application, we prove that no sphere is a loop space, except possibly $S^7$.

To prove this, we refine our analysis of the James--Hopf map. We have seen that the composite
$$
\Omega^\infty \mathbb{S} \xlongrightarrow{H^\infty} \Omega^\infty\mathbb{RP}^\infty_+ \xlongrightarrow{\mathrm{tr}} \Omega^\infty \mathbb{S}
$$
becomes an equivalence after looping once. At first sight this may seem unsurprising, since there is an obvious inclusion of spectra $\mathbb{S} \hookrightarrow \mathbb{RP}^\infty_+$. The point, however, is that the James--Hopf map does not identify $\Omega^\infty\mathbb{S}$ with this summand. Rather, most of its image lies in the reduced part $\Omega^\infty\mathbb{RP}^\infty$. We now make this precise.

Define the reduced James--Hopf map $\tilde{H}^\infty$ to be the composite
$$
\Omega^\infty \mathbb{S} \xlongrightarrow{H^\infty} \Omega^\infty\mathbb{RP}^\infty_+ \twoheadrightarrow \Omega^\infty\mathbb{RP}^\infty
$$
and let $\psi$ denote the composite
$$
\Omega^\infty \mathbb{S} \xlongrightarrow{H^\infty} \Omega^\infty\mathbb{RP}^\infty_+ \twoheadrightarrow \Omega^\infty \mathbb{S}
$$
Thus
$$
H^\infty = (\psi,\tilde{H}^\infty) \colon \Omega^\infty \mathbb{S} \longrightarrow \Omega^\infty \mathbb{S} \times \Omega^\infty\mathbb{RP}^\infty \simeq \Omega^\infty\mathbb{RP}^\infty_+
$$
Similarly, define the reduced transfer $\tilde{\mathrm{tr}}$ to be the composite
$$
\mathbb{RP}^\infty \hookrightarrow \mathbb{RP}^\infty_+ \xrightarrow{\mathrm{tr}} \mathbb{S}
$$

The other composite, namely
$$
\mathbb{S}\hookrightarrow \mathbb{RP}^\infty_+ \xrightarrow{\mathrm{tr}} \mathbb{S}
$$
corresponds to multiplication by $2$. Hence the transfer is given by
$$
\mathrm{tr} = 2 + \tilde{\mathrm{tr}} \colon \mathbb{S} \oplus \mathbb{RP}^\infty \simeq \mathbb{RP}^\infty_+ \to \mathbb{S}
$$
Therefore
$$
\mathrm{tr} \circ H^\infty = 2\psi + \tilde{\mathrm{tr}} \circ \tilde{H}^\infty
$$

We are now ready to prove the reduced Kahn--Priddy theorem.

\begin{lem}
	If $X$ is a suspension in $P_2\mathcal{S}_*$, then the kernel of the reduced James--Hopf map
	$$
	\tilde{H}^\infty \colon [\Sigma^\infty X, \mathbb{S}] \longrightarrow [\Sigma^\infty X, \mathbb{RP}^\infty]
	$$
	consists precisely of the infinitely $2$-divisible elements.
\end{lem}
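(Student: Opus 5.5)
The plan rests on two identities for $\alpha\colon \Sigma^\infty X \to \mathbb{S}$ when $X$ is a suspension. The first is the trace formula, point (3) of the lemma on the James--Hopf map. By lemma \ref{suspensionp2} we have $\Delta_X \simeq 0$, and $\Delta_{S^0}$ is the identity, so (3) gives $\mathrm{tr}\,H^\infty(\alpha) = -\alpha$. Combined with the decomposition $\mathrm{tr}\circ H^\infty = 2\psi + \tilde{\mathrm{tr}}\circ\tilde H^\infty$, this yields
\[
\alpha = -2\psi(\alpha) - \tilde{\mathrm{tr}}\,\tilde H^\infty(\alpha).
\]
The second is additivity. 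By point (4), the cross term $\pi\circ(\alpha\otimes\beta)\circ\Delta_X$ vanishes, so $H^\infty$, and hence also $\psi$ and $\tilde H^\infty$, are group homomorphisms on $[\Sigma^\infty X,\mathbb{S}]$. In particular the kernel $K$ of $\tilde H^\infty$ is a subgroup.

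\emph{Step 1 (kernel $\subseteq$ divisible).} If $\tilde H^\infty(\alpha)=0$, the displayed identity gives $\alpha = 2\beta$ with $\beta := -\psi(\alpha)$. To iterate, I need $\beta$ itself to lie in $K$, possibly after correcting $\beta$ by an element killed by $2$.

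Additivity gives $0=\tilde H^\infty(\alpha) = -2\tilde H^\infty(\psi(\alpha))$, so $\tilde H^\infty(\beta)$ is $2$-torsion. I would first try to show directly that $\tilde H^\infty\circ\psi$ vanishes on $K$. For $\alpha\in K$ the map $H^\infty(\alpha)$ factors through the bottom summand $\mathbb{S}\hookrightarrow\mathbb{RP}^\infty_+$, namely $H^\infty(\alpha)=\iota\circ\psi(\alpha)$. Applying the composition formula (5) to $\alpha = 2\beta$, viewed as $\beta$ followed by the degree-$2$ self-map of $\mathbb{S}$, should express $H^\infty(2\beta)$ in terms of $H^\infty(\beta)$ and $H^\infty(2)$. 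Here $H^\infty(2)\in\pi_0\mathbb{RP}^\infty_+$ is computable from $\mathrm{tr}\,H^\infty(n)=n^2-n$, as in the proof of lemma \ref{homgroupsS0}. Comparing the reduced components should then force $\tilde H^\infty(\beta)=0$.

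If this direct route fails, the fallback is to replace $\beta$ by $\beta+\gamma$, where $\gamma = \tilde{\mathrm{tr}}\,\tilde H^\infty(\beta)$-type corrections are $2$-torsion and are chosen to kill the $2$-torsion class $\tilde H^\infty(\beta)$, using that $\tilde{\mathrm{tr}}$ is split by Kahn--Priddy on loops.

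Once $K$ is shown to satisfy $K\subseteq 2K$, induction shows every element of $K$ is $2^k$-divisible for all $k$.

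\emph{Step 2 (divisible $\subseteq$ kernel).} If $\alpha = 2^k\beta_k$ for all $k$, additivity gives $\tilde H^\infty(\alpha) = 2^k\tilde H^\infty(\beta_k)$, so $\tilde H^\infty(\alpha)$ is infinitely $2$-divisible in $[\Sigma^\infty X,\mathbb{RP}^\infty]$. It then suffices to know that this group has no nonzero infinitely $2$-divisible elements.

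For that, I would use that $\mathbb{RP}^\infty$ is $2$-complete with all homotopy groups finite $2$-groups. Hence $[\Sigma^\infty X,\mathbb{RP}^\infty] \cong \lim_k [\Sigma^\infty X, \mathbb{RP}^\infty/2^k]$ modulo a $\lim^1$ term. Each $[\Sigma^\infty X,\mathbb{RP}^\infty/2^k]$ has exponent dividing $2^{k+1}$, so an infinitely divisible element maps to zero in every stage. The $\lim^1$ term must then be controlled, for instance via a Mittag-Leffler argument or by reducing to finite $X$ through a colimit argument.

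The main obstacle is Step 1: showing that the halving $\beta=-\psi(\alpha)$ can be chosen inside $K$ again. This is where the composition formula must be used carefully, and where the unreduced $\mathbb{S}$-summand of $\mathbb{RP}^\infty_+$ interacts nontrivially with the reduced part. Step 2 is more routine, apart from the $\lim^1$ issue for infinite $X$.
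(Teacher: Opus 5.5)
Your Step~1 has a genuine gap, and it is the one you flag yourself: you never show that the half $\beta=-\psi(\alpha)$, or a corrected half, lies in $K$ again. Neither proposed route achieves this.

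Route (a) carries no new information. Since $H^\infty(2)=\pi$ in $\pi_0\mathbb{RP}^\infty_+$ and $(2\otimes 2)_{hC_2}=2+\pi\circ\mathrm{tr}$, formula (5) gives $H^\infty(2\beta)=\pi\beta+2H^\infty(\beta)+\pi\,\mathrm{tr}\,H^\infty(\beta)=2H^\infty(\beta)$. This is just additivity together with $\mathrm{tr}\circ H^\infty=-1$, so it cannot force the $2$-torsion class $\tilde H^\infty(\beta)$ to vanish.

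Route (b) would need control of $\tilde H^\infty\circ\tilde{\mathrm{tr}}$ on $[\Sigma^\infty X,\mathbb{RP}^\infty]$. Kahn--Priddy only controls the other composite, $\tilde{\mathrm{tr}}\circ\tilde H^\infty=-1-2\psi$.

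Note also that the intermediate goal $K\subseteq 2K$ is stronger than the lemma itself. Granted the lemma, it would assert that $\bigcap_k 2^k[\Sigma^\infty X,\mathbb{S}]$ is $2$-divisible, which is not automatic for an abelian group.

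The missing idea is that the halves need not stay in $K$. The torsion observation you already made is enough once you iterate the identity $\gamma=-2\psi(\gamma)-\tilde{\mathrm{tr}}\,\tilde H^\infty(\gamma)$ (valid for every $\gamma$) at $\gamma=\psi^k(\alpha)$, and let the accumulated power of $2$ absorb the error term. If $\alpha=(-2)^k\psi^k(\alpha)$, additivity of $\tilde H^\infty$ gives
\begin{align*}
\alpha=(-2)^k\psi^k(\alpha) &= (-2)^{k+1}\psi^{k+1}(\alpha)-\tilde{\mathrm{tr}}\,\tilde H^\infty\bigl((-2)^k\psi^k(\alpha)\bigr)\\
&= (-2)^{k+1}\psi^{k+1}(\alpha)-\tilde{\mathrm{tr}}\,\tilde H^\infty(\alpha)=(-2)^{k+1}\psi^{k+1}(\alpha).
\end{align*}
By induction, $\alpha=(-2)^k\psi^k(\alpha)$ for all $k$. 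This is exactly the paper's chain $\alpha=-2\psi(\alpha)=4\psi^2(\alpha)=-8\psi^3(\alpha)=\cdots$, with the error terms made explicit.

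For the converse, the paper simply appeals to $2$-completeness of $\mathbb{RP}^\infty$. Your reduction via additivity is the right one. Your $\lim^1$ caveat is legitimate for infinite $X$, but for finite $\Sigma^\infty X$ the group $[\Sigma^\infty X,\mathbb{RP}^\infty]$ is a finite $2$-group and the point is immediate.
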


\begin{proof}
	Every infinitely $2$-divisible element lies in the kernel, since the codomain is $2$-complete. Conversely, let $\alpha$ lie in the kernel of $\tilde{H}^\infty$. Then
	$$
	\mathrm{tr}(H^\infty(\alpha)) = 2\psi(\alpha)
	$$
	Since $X$ is a suspension, the Kahn--Priddy theorem gives $\mathrm{tr} \circ H^\infty = -1$. Hence
	$$
	\alpha = -2\psi(\alpha) = 4\psi^2(\alpha) = -8\psi^3(\alpha) = \cdots
	$$
	which shows that $\alpha$ is divisible by every power of $2$.
\end{proof}

The previous lemma immediately upgrades to a statement at the level of spaces, before passing to homotopy groups.

\begin{lem}
	The composite
	$$
	\Omega^{\infty+1}\mathbb{S} \xlongrightarrow{\Omega \tilde{H}^\infty} \Omega^{\infty+1}\mathbb{RP}^\infty \xlongrightarrow{\Omega \tilde{\mathrm{tr}}} \Omega^{\infty+1}\mathbb{S}
	$$
	is a $2$-local equivalence.
\end{lem}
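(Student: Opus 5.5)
The plan is to mimic the argument that upgraded the metastable Kahn--Priddy theorem to a space-level statement, using the previous lemma (the reduced Kahn--Priddy theorem) as input. Let $f := \Omega\tilde{\mathrm{tr}} \circ \Omega\tilde{H}^\infty \colon \Omega^{\infty+1}\mathbb{S} \to \Omega^{\infty+1}\mathbb{S}$. Since both source and target are infinite loop spaces whose homotopy groups are finitely generated in each degree, it suffices to show that $\pi_n f$ is an isomorphism after $2$-localization for every $n \ge 0$. Note that $f$ is not a priori an infinite loop map, because $H^\infty$ is not stable (remark \ref{Hopfexplicitcoalg}). However, $\Omega H^\infty$ is a loop map, so $f$ is an H-map, and a map of simple spaces inducing $\pi_*\otimes\mathbb{Z}_{(2)}$-isomorphisms is a $2$-local equivalence.

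To compute $\pi_n f$, I will identify $\pi_n\Omega^{\infty+1}\mathbb{S}$ with $[\Sigma^\infty S^{n+1}, \mathbb{S}]$, using that $S^{n+1}$ is a suspension in $P_2\mathcal{S}_*$. On such classes the decomposition $\mathrm{tr}\circ H^\infty = 2\psi + \tilde{\mathrm{tr}}\circ\tilde{H}^\infty$ holds, together with $\mathrm{tr}\circ H^\infty = -1$ from the Kahn--Priddy lemma (point (3) plus $\Delta_{S^{n+1}}=0$). Therefore
$$\tilde{\mathrm{tr}}\,\tilde{H}^\infty(\alpha) = -\alpha - 2\psi(\alpha).$$
The key claim is that $\alpha \mapsto -\alpha-2\psi(\alpha)$ is an automorphism of $\pi_{n+1}\mathbb{S}\otimes\mathbb{Z}_{(2)}$. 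Since $\psi$ is additive up to the cross term $\pi\circ(\alpha\otimes\beta)\circ\Delta_X$ of point (4), and that term vanishes because $\Delta_X = 0$ for suspensions, $\psi$ is a homomorphism on these groups. For $n+1>0$ the group $\pi_{n+1}\mathbb{S}_{(2)}$ is a finite $2$-group, so $1+2\psi$ is unipotent modulo $2$, hence invertible: its inverse is $\sum_i(-2\psi)^i$, a finite sum since $2^N$ kills the group. Thus $\pi_nf$ is $-(1+2\psi)$, an isomorphism $2$-locally. This is consistent with the previous lemma, whose kernel statement is exactly injectivity.

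The main obstacle is justifying that the homotopy-level formula really computes $\pi_n f$. The formula must be applied to classes coming from $S^{n+1}$ after looping, and it must be compatible with the loop structure. One also has to check that the splitting $\mathbb{RP}^\infty_+\simeq\mathbb{S}\oplus\mathbb{RP}^\infty$ interacts correctly with $\Omega$. I would handle this by noting that $\pi_n\Omega Y = [S^{n+1},Y]$ in $P_2\mathcal{S}_*$ for $Y$ the terms of the canonical resolution. This reduces everything to the lemma on properties of $H^\infty$ applied with $X = S^{n+1}$. I would then conclude $2$-local equivalence by Whitehead's theorem for the simple (H-)spaces involved.
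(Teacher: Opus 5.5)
Your proposal is correct and is essentially the paper's proof: you combine $\mathrm{tr}\circ H^\infty = 2\psi + \tilde{\mathrm{tr}}\circ\tilde{H}^\infty$ with the Kahn--Priddy identity $\mathrm{tr}\circ H^\infty=-1$ on suspensions to obtain $-1-2\psi$, and then observe that this is invertible on the finite $2$-groups $\pi_{n+1}\mathbb{S}_{(2)}$. The differences are cosmetic: you work with homotopy classes out of $S^{n+1}$ rather than with the looped maps, and you invert by a terminating geometric series instead of the paper's reduction-mod-$2$ argument; your additivity check for $\psi$ via point (4) is valid but not needed, since $\Omega\psi$ is already a loop map.
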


\begin{proof}
	From the identity
	$$
	\mathrm{tr} \circ H^\infty = 2\psi + \tilde{\mathrm{tr}} \circ \tilde{H}^\infty
	$$
	and the Kahn--Priddy theorem, we obtain
	$$
	\Omega \tilde{\mathrm{tr}} \circ \Omega \tilde{H}^\infty = -1 - 2\Omega\psi
	$$
	It therefore suffices to show that $-1-2\Omega\psi$ is a $2$-local equivalence. For $n \geq 1$, each homotopy group $\pi_n\mathbb{S}$ is a finite abelian group, and after localizing at $2$, an endomorphism of the form $-1+2f$ is automatically an automorphism, since it reduces to $-1$ modulo $2$. Therefore $-1-2\Omega\psi$ induces an isomorphism on all $2$-local homotopy groups, and hence is a $2$-local equivalence.
\end{proof}

We now identify the reduced James--Hopf map with the James--Hopf map for $S^1$. More precisely, $\tilde{H}^\infty$ coincides with $\Omega H^\infty_{S^1}$, as follows from the diagram of canonical resolutions below.

\[\begin{tikzcd}
	{S^0} & {\Omega^\infty\mathbb{S}} & {\Omega^\infty\mathbb{RP}_+} \\
	{\Omega S^1} & {\Omega^\infty\mathbb{S}} & {\Omega^\infty\mathbb{RP}}
	\arrow[from=1-1, to=1-2]
	\arrow[from=1-1, to=2-1]
	\arrow["{H^\infty_{S^0}}", from=1-2, to=1-3]
	\arrow[equals, from=1-2, to=2-2]
	\arrow[two heads, from=1-3, to=2-3]
	\arrow[from=2-1, to=2-2]
	\arrow["{\Omega H^\infty_{S^1}}"', from=2-2, to=2-3]
\end{tikzcd}\]
Moreover, recall that the cofiber of the reduced transfer $\tilde{\mathrm{tr}}$ is the symmetric square spectrum $\mathbb{Sp}^2$. This is made explicit in \cite{KPWelcher}.

With this notation in place, we can prove the following.

\begin{lem}
	There is a $2$-local equivalence
	$$
	\Omega^2S^1 \simeq \Omega^{\infty+3}\mathbb{Sp}^2
	$$
\end{lem}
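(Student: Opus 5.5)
Apply the canonical resolution (lemma \ref{canonicalres}) to $S^1$, loop it twice, and compare it with the reduced Kahn--Priddy equivalence, mirroring the proof of lemma \ref{modelS0P2}. The canonical resolution for $S^1$ has underlying spectrum $\mathbb{S}^1$ and $(\mathbb{S}^1)^{\otimes 2}_{hC_2}\simeq \Sigma\mathbb{RP}^\infty_1=\Sigma\mathbb{RP}^\infty$. Looping once gives a fiber sequence
$$
\Omega S^1 \longrightarrow \Omega^\infty\mathbb{S} \xlongrightarrow{\Omega H^\infty_{S^1}} \Omega^\infty\mathbb{RP}^\infty,
$$
and by the diagram of canonical resolutions above, $\Omega H^\infty_{S^1}\simeq\tilde H^\infty$. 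Looping once more,
$$
\Omega^2S^1\simeq \mathrm{fib}\bigl(\Omega^{\infty+1}\mathbb{S}\xlongrightarrow{\Omega\tilde H^\infty}\Omega^{\infty+1}\mathbb{RP}^\infty\bigr).
$$

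Next, work $2$-locally and form the pasting of pullbacks used for $S^{0,k}$. Take the horizontal composite $\Omega\tilde{\mathrm{tr}}\circ\Omega\tilde H^\infty$, which the previous lemma shows is a $2$-local equivalence. The right-hand square is the fiber of $\Omega\tilde{\mathrm{tr}}$, namely $\Omega^{\infty+1}\mathrm{fib}(\tilde{\mathrm{tr}})\simeq\Omega^{\infty+2}\mathbb{Sp}^2$, since the cofiber of $\tilde{\mathrm{tr}}$ is $\mathbb{Sp}^2$. The outer rectangle has fiber $*$ because the composite is an equivalence. Pasting then identifies the fiber of the left square: the fiber sequence
$$
\Omega^2S^1\longrightarrow \Omega^{\infty+1}\mathbb{S}\longrightarrow\Omega^{\infty+1}\mathbb{RP}^\infty
$$
has the first map $2$-locally null after the retraction, and it is equivalent to $\Omega$ of the fiber of $\Omega^{\infty+1}\mathbb{RP}^\infty\to\Omega^{\infty+1}\mathbb{S}$. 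Concretely, because $\Omega\tilde H^\infty$ admits the retraction $(\Omega\tilde{\mathrm{tr}}\circ\Omega\tilde H^\infty)^{-1}\circ\Omega\tilde{\mathrm{tr}}$, the space $\Omega^{\infty+1}\mathbb{RP}^\infty$ splits $2$-locally as $\Omega^{\infty+1}\mathbb{S}\times\Omega^{\infty+2}\mathbb{Sp}^2$, compatibly with $\Omega\tilde H^\infty$ being inclusion of the first factor. The fiber of an inclusion of a factor is the loop space of the complementary factor, which gives $\Omega^2S^1\simeq\Omega^{\infty+3}\mathbb{Sp}^2$.

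The main obstacle is making the splitting legitimate at the level of spaces. Here $\Omega\tilde H^\infty$ is not an infinite loop map; it is only a loop map, since the James--Hopf map is not additive (see property (4)). So we cannot split as spectra. The approach is to avoid needing $H$-space splittings: the product decomposition uses only the loop-space multiplication on $\Omega^{\infty+1}\mathbb{RP}^\infty$ (via the map $(r,\iota)$, where $r$ is the retraction and $\iota$ is the fiber inclusion of $\Omega\tilde{\mathrm{tr}}$), and checking that this is a $2$-local equivalence reduces to homotopy groups. On homotopy groups the splitting is an algebraic consequence of the retraction, since $\pi_*$ of a loop map is a homomorphism in positive degrees. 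Finally, the identification of the fiber of $\Omega\tilde H^\infty$ is again detected on homotopy groups through the long exact sequence, with $\pi_0$ handled by noting that all spaces involved are connected after looping.
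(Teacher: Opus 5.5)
Your first paragraph and the pasting argument at the start of your second paragraph are exactly the paper's proof. The composite $\Omega\tilde{\mathrm{tr}}\circ\Omega\tilde H^\infty$ is a ($2$-local) equivalence and the right-hand square is the fiber of $\Omega\tilde{\mathrm{tr}}$. Hence the left-hand square is a pullback, and comparing horizontal fibers gives $\Omega^2S^1\simeq\Omega\,\Omega^{\infty+2}\mathbb{Sp}^2$.

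That pasting step is already a complete argument, valid in any pointed category with finite limits. So the product splitting and your ``main obstacle'' paragraph are not needed.

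As written, that paragraph is also wrong in places:
\begin{itemize}
\item The objects are not connected: $\pi_0\Omega^{\infty+1}\mathbb{S}\cong\pi_1\mathbb{S}\cong\mathbb{Z}/2$, and similarly for $\Omega^{\infty+1}\mathbb{RP}^\infty$.
\item Homotopy groups $[S^{n,j},-]$ do not detect equivalences in $P_2\mathcal{S}_*$ in general. For instance, $\Omega^\infty\Sigma^{-1}\mathbb{Q}$ has all these groups zero. It is still nonzero, because $\Sigma^{-1}\mathbb{Q}$ itself carries a metastable structure and maps nontrivially into it.
\end{itemize}
If you want the splitting anyway, check it on $\mathrm{Map}(T,-)$ for every object $T$. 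There the classical argument for grouplike $H$-spaces applies, and you conclude by Yoneda.
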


\begin{proof}
	The composite
	$$
	\Omega^{\infty+1}\mathbb{S} \xlongrightarrow{\Omega^2 H^\infty_{S^1}} \Omega^{\infty+1}\mathbb{RP}^\infty \xlongrightarrow{\Omega \tilde{\mathrm{tr}}} \Omega^{\infty+1}\mathbb{S}
	$$
	is a $2$-local equivalence by the previous lemma, and the cofiber of $\tilde{\mathrm{tr}}$ is $\mathbb{Sp}^2$. Hence we obtain a diagram of pullbacks
\[\begin{tikzcd}
	{*} & {\Omega^{\infty+2}\mathbb{Sp}^2} & {*} \\
	{\Omega^{\infty+1}\mathbb{S}} & {\Omega^{\infty+1}\mathbb{RP}} & {\Omega^{\infty+1}\mathbb{S}}
	\arrow[from=1-1, to=1-2]
	\arrow[from=1-1, to=2-1]
	\arrow["\lrcorner"{anchor=center, pos=0.125}, draw=none, from=1-1, to=2-2]
	\arrow[from=1-2, to=1-3]
	\arrow[from=1-2, to=2-2]
	\arrow["\lrcorner"{anchor=center, pos=0.125}, draw=none, from=1-2, to=2-3]
	\arrow[from=1-3, to=2-3]
	\arrow["{\Omega^2 H^\infty_{S^1}}"', from=2-1, to=2-2]
	\arrow["{\tilde{\mathrm{tr}}}"', from=2-2, to=2-3]
\end{tikzcd}\]
	Taking horizontal fibers of the left-hand square, we obtain a fiber sequence
$$
\Omega^2S^1 \longrightarrow * \longrightarrow \Omega^{\infty+2}\mathbb{Sp}^2
$$
and hence a $2$-local equivalence
$$
\Omega^2S^1 \simeq \Omega^{\infty+3}\mathbb{Sp}^2
$$
as claimed.
\end{proof}

\begin{rem}
	Applying $\Omega^\infty_2 \colon P_2\mathcal{S}_* \to \mathcal{S}_*$ to the equivalence above, as in remark \ref{Hopfgoodwillie}, we obtain a $2$-local equivalence
	$$
	\Omega^2P_2I(S^1) \simeq \Omega^{\infty+3}\mathbb{Sp}^2
	$$
	In a companion paper \cite[thm. C]{NervoCircle} we show that
	$$
	\Omega P_2I(S^1) \simeq \mathbb{Z} \times \Omega^{\infty+2}\mathbb{Sp}^2
	$$
	At present we do not know how to lift this equivalence to the metastable category, that is, we do not know whether
	$$
	\Omega S^1 \simeq \mathbb{Z} \times \Omega^{\infty+2}\mathbb{Sp}^2
	$$
	The difficulty is that, unlike in spaces, there is no evident way to construct a map $\Omega^\infty\Sigma \mathbb{Z} \to S^1$ in $P_2\mathcal{S}_*$.
\end{rem}

Before turning to the integral computation, we use the $2$-local description of $S^1$ obtained above to recover another description of the homotopy groups of $S^0$. This will provide a useful complement to the analysis of the previous subsection, and will allow us to express the homotopy groups of $S^0$ in terms of those of $S^1$ together with the stable homotopy groups.

\begin{lem}
	\label{S0split}
	There is a $2$-local equivalence
	$$
	\Omega S^0 \simeq \Omega^2 S^1 \times \Omega^{\infty+2}\mathbb{S} \simeq \Omega^{\infty+3}\mathbb{Sp}^2 \times \Omega^{\infty+2}\mathbb{S} \simeq \Omega^{\infty+2}\mathbb{RP}^\infty
	$$
\end{lem}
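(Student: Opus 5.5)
We prove the three equivalences from left to right; each is a $2$-local splitting argument.

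\textbf{First equivalence.} Compare the canonical resolutions of $S^0$ and $\Omega S^1$ through the diagram preceding the previous lemma. Looping once gives a map of fiber sequences
$$
\begin{array}{ccccc}
\Omega S^0 & \longrightarrow & \Omega^{\infty+1}\mathbb{S} & \xrightarrow{\Omega H^\infty} & \Omega^{\infty+1}\mathbb{RP}^\infty_+ \\
\downarrow && \| && \downarrow \\
\Omega^2 S^1 & \longrightarrow & \Omega^{\infty+1}\mathbb{S} & \xrightarrow{\Omega \tilde H^\infty} & \Omega^{\infty+1}\mathbb{RP}^\infty
\end{array}
$$
Here the right vertical map is the projection $\mathbb{RP}^\infty_+ \simeq \mathbb{S}\oplus\mathbb{RP}^\infty \to \mathbb{RP}^\infty$. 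Its fiber is $\Omega^{\infty+1}\mathbb{S}$, and the composite $\Omega^{\infty+1}\mathbb{S}\to\Omega^{\infty+1}\mathbb{RP}^\infty_+$ is the summand inclusion. Taking vertical fibers, the fiber of $\Omega S^0\to\Omega^2S^1$ is the fiber of $*\to\Omega^{\infty+1}\mathbb{S}$, that is $\Omega^{\infty+2}\mathbb{S}$. This gives a fiber sequence
$$
\Omega^{\infty+2}\mathbb{S}\longrightarrow \Omega S^0\longrightarrow \Omega^2S^1 .
$$
To split it, write $\Omega S^0$ as the fiber of $(\Omega\psi,\Omega\tilde H^\infty)$. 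The map $\Omega S^0\to\Omega^{\infty+2}\mathbb{S}$ we want should be a retraction of the inclusion $\Omega^{\infty+2}\mathbb{S}\to\Omega S^0$, which is the boundary map of the $\psi$-component.

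It is cleaner to use Lemma \ref{modelS0P2}, which gives $\Omega S^0\simeq\Omega^{\infty+2}\mathbb{RP}^\infty_{-1}$, together with the previous lemma. With both in hand, everything is an infinite loop space, and it suffices to exhibit a splitting of spectra.

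\textbf{Splitting of spectra.} Use the cofiber sequence obtained by passing to homotopy orbits in $\mathbb{S}^{-\rho}\to\mathbb{S}\to C_{2+}\otimes\mathbb{S}$:
$$
\mathbb{RP}^\infty_{-1}\longrightarrow\mathbb{RP}^\infty_+\xrightarrow{\mathrm{tr}}\mathbb{S}.
$$
Restrict $\mathrm{tr}$ to the two summands, $\mathrm{tr}=2+\tilde{\mathrm{tr}}$. The reduced Kahn--Priddy lemma says $\Omega\tilde{\mathrm{tr}}\circ\Omega\tilde H^\infty$ is a $2$-local equivalence, so $\tilde{\mathrm{tr}}$ is $2$-locally split surjective on $\Omega^{\infty+1}$.

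Now compare $\mathrm{fib}(\mathrm{tr})$ with $\mathrm{fib}(\tilde{\mathrm{tr}})\oplus\mathbb{S}$ through the shear automorphism of $\mathbb{S}\oplus\mathbb{RP}^\infty$. Precompose $\mathrm{tr}$ with $(a,b)\mapsto(a,\,b-s(2a))$, where $s$ is a $2$-local section of $\tilde{\mathrm{tr}}$. The resulting map becomes $(0,\tilde{\mathrm{tr}})$. This requires $s$ at the spectrum level in positive degrees. Since all fibers involved are infinite loop spaces after looping, it suffices to work with $\Omega^{\infty+1}$ and the section $\Omega\tilde H^\infty\circ(\Omega\tilde{\mathrm{tr}}\Omega\tilde H^\infty)^{-1}$. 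The shear is then an equivalence of infinite loop spaces, hence deloopable as needed.

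This yields
$$
\Omega^{\infty+1}\mathbb{RP}^\infty_{-1}\simeq \Omega^{\infty+1}\mathbb{S}\times\Omega^{\infty+1}\mathrm{fib}(\tilde{\mathrm{tr}}) = \Omega^{\infty+1}\mathbb{S}\times\Omega^{\infty+2}\mathbb{Sp}^2 .
$$
Looping once more gives the first two equivalences, with $\Omega^2S^1\simeq\Omega^{\infty+3}\mathbb{Sp}^2$ from the previous lemma.

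\textbf{Last equivalence.} Apply the same argument to the cofiber sequence $\mathbb{RP}^\infty\xrightarrow{\tilde{\mathrm{tr}}}\mathbb{S}\to\mathbb{Sp}^2$. Since $\tilde{\mathrm{tr}}$ is $2$-locally split surjective in positive degrees, we get
$$
\Omega^{\infty+2}\mathbb{RP}^\infty\simeq\Omega^{\infty+2}\mathbb{S}\times\Omega^{\infty+3}\mathbb{Sp}^2 .
$$
Beware that the section produced by Kahn--Priddy is only an infinite-loop map when it comes from a spectrum map. Here $\Omega\tilde H^\infty$ is not a loop map a priori, since James--Hopf maps are not infinite loop maps. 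For this reason we only claim equivalences of spaces, which is all the statement requires; the splitting follows from the fiber sequence plus a space-level retraction.

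\textbf{Main obstacle.} The hardest step is making the splitting rigorous when $\Omega\tilde H^\infty$ is not an H-map. The plan is to avoid needing an H-map at all. Given a fiber sequence of spaces $F\to E\xrightarrow{p} B$ together with a map $r\colon B\to E$ such that $p\circ r$ is an equivalence, we have $E\simeq F\times B$ whenever $E$ is a loop space and $F\to E$ is a loop map: use the multiplication on $E$ to form $F\times B\to E$, $(f,b)\mapsto f\cdot r(b)$, and check it is an equivalence on homotopy groups. We apply this with the loop structures available on $\Omega S^0$ and on $\Omega^{\infty+2}\mathbb{RP}^\infty$.
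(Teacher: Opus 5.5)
Your argument is correct, but it takes a different route from the paper's.

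\textbf{The paper's route.} The paper never uses Lemma~\ref{modelS0P2} here. It loops the EHP sequence $S^0\to\Omega S^1\to\Omega^\infty\mathbb{S}$ to obtain $\Omega^{\infty+2}\mathbb{S}\xrightarrow{\Omega\delta}\Omega S^0\xrightarrow{\Omega E}\Omega^2S^1$. This is the fiber sequence you set up in your first paragraph and then abandoned. The paper then constructs an explicit section of $\Omega E$ as the composite $\Omega^2S^1\to\Omega^{\infty+2}\mathbb{RP}^\infty\to\Omega^{\infty+2}\mathbb{RP}^\infty_+\to\Omega S^0$. The three maps are the section from the previous lemma, the summand inclusion, and the boundary map of the canonical resolution of $S^0$. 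It checks the section property through the map of canonical resolutions from $S^0$ to $\Omega S^1$. The splitting then follows from loop multiplication on $\Omega S^0$.

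\textbf{Your route.} You use $\Omega S^0\simeq\Omega^{\infty+2}\mathbb{RP}^\infty_{-1}$ and split $\mathrm{fib}(\Omega^{\infty+1}\mathrm{tr})$ by the shear $(a,b)\mapsto(a,b-s(2a))$. This only needs $\Omega^{\infty+1}\tilde{\mathrm{tr}}$ to be an H-map and $s$ to be a space-level section.

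\textbf{What each buys.} Your approach is more self-contained. It also makes the equivalence $\Omega^{\infty+2}\mathbb{RP}^\infty_{-1}\simeq\Omega^{\infty+2}\mathbb{RP}^\infty$ (the paper's next remark) transparent. However, it produces only abstract equivalences. The paper's argument shows more: the looped suspension $\Omega E$ itself is split, so the decomposition is realized by the EHP maps $E$ and $\delta$. That is the form the paper emphasizes afterwards.

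\textbf{Corrections to the write-up.}
\begin{itemize}
\item Drop the claims that ``it suffices to exhibit a splitting of spectra'' and that the shear is ``an equivalence of infinite loop spaces, hence deloopable.'' The section $s$ is built from $\tilde H^\infty$, so there is no reason for it or the shear to be an infinite loop map. You partly correct this later yourself. Nothing requires delooping, since looping a space-level equivalence is harmless.
\item $\Omega\tilde H^\infty$ is a loop map, being the loop of a map; it is just not an infinite loop map.
\item Your final ``main obstacle'' paragraph is superfluous, because the shear argument already avoids any multiplicativity of $s$.
\end{itemize}
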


\begin{proof}
	Consider the metastable EHP sequence
	$$
	S^0 \longrightarrow \Omega S^1 \longrightarrow \Omega^\infty \mathbb{S}
	$$
	We claim that the map $\Omega S^0 \to \Omega^2 S^1$ admits a retraction $r$. This will imply the result, since the composite
	$$
	\Omega^2S^1 \times \Omega^{\infty+2}\mathbb{S} \xlongrightarrow{r \times \Omega\delta} \Omega S^0 \times \Omega S^0 \xlongrightarrow{m} \Omega S^0
	$$
	is an equivalence; see, for example, \cite[lem. 3.12]{Sanath}.

	By the previous lemma, we have a section
	$$
	\Omega^2 S^1 \longrightarrow \Omega^{\infty+2}\mathbb{RP}^\infty
	$$
	Hence we can form the composite
	$$
	\Omega^2 S^1 \longrightarrow \Omega^{\infty+2}\mathbb{RP}^\infty \longrightarrow \Omega^{\infty+2}\mathbb{RP}^\infty_+ \longrightarrow \Omega S^0
	$$
	which provides the desired retraction, as shown in the diagram below.
\[\begin{tikzcd}
	{\Omega^2S^1} & {\Omega^{\infty+2}\mathbb{RP}^\infty} & {\Omega^{\infty+2}\mathbb{RP}^\infty_+} & {\Omega S^0} \\
	&& {\Omega^{\infty+2}\mathbb{RP}^\infty} & {\Omega^2 S^1}
	\arrow[from=1-1, to=1-2]
	\arrow["\simeq"{description}, curve={height=30pt}, from=1-1, to=2-4]
	\arrow[from=1-2, to=1-3]
	\arrow[equals, from=1-2, to=2-3]
	\arrow[from=1-3, to=1-4]
	\arrow[from=1-3, to=2-3]
	\arrow[from=1-4, to=2-4]
	\arrow[from=2-3, to=2-4]
\end{tikzcd}\]
	This proves the first equivalence. The remaining two are immediate from the previous lemma.
\end{proof}

\begin{rem}
	This also shows that
	$$
	\Omega^{\infty+2}\mathbb{RP}^\infty_{-1} \simeq \Omega^{\infty+2}\mathbb{RP}^\infty
	$$
	This is already suggested by the Adams spectral sequence, and it should hold with fewer loops, although we do not know a reference. Of course, such an equivalence cannot be induced by a map of spectra.
\end{rem}

\begin{rem}
	The main point of this lemma is not the precise form of the splitting, but rather that, apart from $\pi_1$, the homotopy groups of $S^1$ inject into those of $S^0$, and that the homotopy groups of $S^0$ can in turn be recovered from those of $S^1$ together with the stable homotopy groups.
\end{rem}

Unlike $S^0$, the circle $S^1$ is not itself $2$-local. We therefore complete the picture integrally. This is easy, since after inverting $2$ we have the following.

\begin{lem}
	After inverting $2$, there is an equivalence
	$$
	S^1 \simeq \Omega^\infty \mathbb{S}^1
	$$
\end{lem}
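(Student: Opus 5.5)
The plan is to use the canonical resolution (lemma \ref{canonicalres}) for $S^1$ and show that, after inverting $2$, the James--Hopf target vanishes. The canonical resolution gives a fiber sequence
$$
S^1 \xlongrightarrow{E^\infty} \Omega^\infty\mathbb{S}^1 \xlongrightarrow{H^\infty} \Omega^\infty\Sigma^\infty (S^1)^{\otimes 2}_{hC_2}.
$$
By the computation of homotopy orbits of spheres, $(\mathbb{S}^1)^{\otimes 2}_{hC_2} \simeq \Sigma\mathbb{RP}^\infty_1 = \Sigma\mathbb{RP}^\infty$. The spectrum $\mathbb{RP}^\infty$ is $2$-local: its integral homology is $2$-torsion in positive degrees, and it is connective. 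Hence $\mathbb{RP}^\infty[\sfrac12]\simeq *$.

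The key steps, in order, are as follows.

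First, I will make precise what ``after inverting $2$'' means in $P_2\mathcal{S}_*$. One option is to apply the localization $-\otimes\mathbb{S}[\sfrac12]$ on underlying spectra, using example \ref{2inv}: spectra with $2$ invertible carry the trivial metastable structure, so the Tate term vanishes. The other option is to use the conservative forgetful functor and check the claim on homotopy groups $\pi_n$ tensored with $\mathbb{Z}[\sfrac12]$, for $n\geq 1$. I will take the second option. Mapping $S^{n}$ into the fiber sequence gives a long exact sequence
$$
\pi_{n+1}\Sigma\mathbb{RP}^\infty \to \pi_n S^1 \to \pi_n\mathbb{S}^1 \to \pi_n\Sigma\mathbb{RP}^\infty.
$$
The outer terms are $2$-primary torsion, so $E^\infty$ becomes an isomorphism after inverting $2$.

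Second, I will treat low degrees separately. Since $\pi_0S^1$ is a set and $\pi_1S^1$ may be nonabelian, I will argue on the looped sequence
$$
\Omega S^1 \to \Omega^{\infty+1}\mathbb{S}^1 \to \Omega^{\infty+1}\Sigma\mathbb{RP}^\infty,
$$
whose base is $0$-connected with $2$-torsion homotopy. For $\pi_1$, the relevant boundary term is $\pi_1\Sigma\mathbb{RP}^\infty = \pi_0\mathbb{RP}^\infty = 0$. For $\pi_2$, it is $\pi_2\Sigma\mathbb{RP}^\infty=\pi_1\mathbb{RP}^\infty=\mathbb{Z}/2$. So $\pi_1S^1\to\pi_1\mathbb{S}^1=\mathbb{Z}$ is surjective, and injective up to a $2$-group. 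I will also use that $\pi_0 S^1=*$, by connectivity, since $S^1$ is a suspension.

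Third, I will conclude that the unit map $S^1\to\Omega^\infty\mathbb{S}^1$ is a $\mathbb{Z}[\sfrac12]$-equivalence in the sense that its fiber $\Omega^{\infty+1}\Sigma\mathbb{RP}^\infty = \Omega^\infty\mathbb{RP}^\infty$ has $2$-local, $2$-torsion homotopy. Equivalently, I will conclude that after applying the $2$-inverting localization, where $H^\infty$ lands in a contractible object, $E^\infty$ becomes an equivalence.

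The main obstacle is making ``inverting $2$'' rigorous in the non-stable category $P_2\mathcal{S}_*$. I would first try defining it as localization with respect to maps inducing isomorphisms on $\pi_*(-)\otimes\mathbb{Z}[\sfrac12]$, in the nilpotent style. Then I would check that $E^\infty$ is such a map by the long exact sequence above, with $\pi_1$ handled as an abelian group, since it is central-extension-free here, being $\mathbb{Z}$ up to $2$-torsion. Finally, I would verify that $\Omega^\infty\mathbb{S}^1[\sfrac12]$ is local, which holds because it is the image of a local spectrum under $\Omega^\infty$.
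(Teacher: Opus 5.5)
This is exactly the paper's proof, which consists only of the canonical resolution $S^1\to\Omega^\infty\mathbb{S}^1\to\Omega^\infty\Sigma\mathbb{RP}^\infty$ together with $\mathbb{RP}^\infty[\sfrac{1}{2}]\simeq *$, and your opening paragraph and third step already contain it. The $\pi_*\otimes\mathbb{Z}[\sfrac{1}{2}]$-localization scaffolding is unnecessary and its last step fails as stated, because the sets $[S^{n,j},-]$ do not detect equivalences in $P_2\mathcal{S}_*$: the map $0\to\Omega^\infty\Sigma^{-5}\mathbb{Q}$ induces bijections on all of them but is not an equivalence, so $\Omega^\infty$ of an $\mathbb{S}[\sfrac{1}{2}]$-local spectrum need not be local for that class. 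If you want a precise formulation, apply the canonical resolution to the telescope $S^1[\sfrac{1}{2}]:=\mathrm{colim}(S^1\xrightarrow{2}S^1\xrightarrow{2}\cdots)$ of $\Sigma^\infty_2$ of the degree-$2$ map; its underlying spectrum is $\mathbb{S}^1[\sfrac{1}{2}]$ because the forgetful functor creates colimits, so the James--Hopf target is $\Omega^\infty\Sigma\mathbb{RP}^\infty[\sfrac{1}{2}]\simeq *$ and $E^\infty$ is an equivalence.
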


\begin{proof}
	It follows from the canonical resolution
	$$
	S^1 \longrightarrow \Omega^\infty \mathbb{S}^1 \longrightarrow \Omega^\infty\Sigma\mathbb{RP}^\infty
	$$
	and the fact that $\mathbb{RP}^\infty[\sfrac{1}{2}]$ is trivial.
\end{proof}

We thus obtain an integral description of $\Omega^2S^1$. Although the symmetric square spectrum is not itself $2$-local, we continue to write $\mathbb{Sp}^2$ for its $2$-localization, in order to avoid heavier notation.

\begin{lem}
	\label{modelS1P2}
	There is an equivalence
	$$
	\Omega^2S^1 \simeq \Omega^{\infty+3}\mathbb{Sp}^2 \times \Omega^{\infty+1}\mathbb{S}[\sfrac{1}{2}]
	$$
\end{lem}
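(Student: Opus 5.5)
The plan is to combine the $2$-local equivalence $\Omega^2S^1 \simeq \Omega^{\infty+3}\mathbb{Sp}^2$ with the $2$-inverted equivalence $S^1 \simeq \Omega^\infty\mathbb{S}^1$ through an arithmetic fracture square. First, $\Omega^2S^1$ is an infinite loop space. Looping the canonical resolution of $S^1$ twice gives a fiber sequence
$$
\Omega^2S^1 \longrightarrow \Omega^{\infty+1}\mathbb{S} \xlongrightarrow{\Omega^2H^\infty_{S^1}} \Omega^{\infty+1}\mathbb{RP}^\infty
$$
in $P_2\mathcal{S}_*$. Since $S^1$ is a suspension, the Kahn--Priddy arguments above apply after looping. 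Concretely, I would identify $\Omega^2H^\infty_{S^1}$ with $\Omega\tilde H^\infty$, using the diagram of canonical resolutions relating $S^0$ and $\Omega S^1$. By the reduced Kahn--Priddy lemma, the composite $\Omega\tilde{\mathrm{tr}}\circ\Omega\tilde H^\infty = -1-2\Omega\psi$.

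Second, I would work prime by prime on homotopy groups, but more cleanly with the fracture square
$$
\Omega^2S^1 \simeq \Omega^2S^1_{(2)} \times_{(\Omega^2S^1)_{\mathbb{Q}}} \Omega^2S^1[\sfrac12].
$$
This is legitimate because $\Omega^2S^1$ is a fiber of a map between infinite loop spaces, so it is a nilpotent (indeed simple) object whose homotopy groups are those of the spectrum-level fiber. Equivalently, I would first identify $\Omega^2S^1 \simeq \Omega^\infty F$, where $F$ is the fiber of a map of spectra $\Sigma\mathbb{S}\to\Sigma\mathbb{RP}^\infty$ realizing $\Omega^2H^\infty_{S^1}$. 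Producing this deloop is the main obstacle, since by Remark \ref{Hopfexplicitcoalg} $H^\infty$ is not a stable map. I would sidestep it by observing that after two loops the map is a map of $\mathbb{E}_2$-spaces. It is only the homotopy groups, together with the local pieces, that need to be assembled, and both local pieces are already known to be infinite loop spaces.

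The local pieces are as follows. $2$-locally, the previous lemma gives $\Omega^{\infty+3}\mathbb{Sp}^2$. After inverting $2$, the previous lemma gives $\Omega^2S^1 \simeq \Omega^{\infty+2}\mathbb{S}^1 = \Omega^{\infty+1}\mathbb{S}[\sfrac12]$. Rationally, both sides are trivial: $\pi_*\mathbb{S}$ is torsion in positive degrees, so $\Omega^{\infty+1}\mathbb{S}_\mathbb{Q}\simeq *$, and $\mathbb{Sp}^2_\mathbb{Q}$ is concentrated in degrees where the triple loop kills it. With a contractible rational corner, the fracture pullback becomes a product of the $2$-local and $2$-inverted pieces, which gives the claimed equivalence
$$
\Omega^2S^1 \simeq \Omega^{\infty+3}\mathbb{Sp}^2 \times \Omega^{\infty+1}\mathbb{S}[\sfrac12].
$$

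I expect two places to need care. The first is checking that the rational corner is contractible; it suffices to verify this on $\pi_0$ and $\pi_1$ of $\mathbb{Sp}^2$, since $\pi_{*+3}$ is finite. The second is checking that the fracture square is valid in $P_2\mathcal{S}_*$ and not just in spaces. For the second point I would argue via the canonical resolution: the object is a fiber of maps between $\Omega^\infty$ of spectra, and localizations of spectra commute with $\Omega^\infty$ on connected covers.
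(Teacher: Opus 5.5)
Your argument is essentially the paper's: the fracture square, with the $2$-local piece $\Omega^{\infty+3}\mathbb{Sp}^2$ and the $2$-inverted piece $\Omega^{\infty+1}\mathbb{S}[\sfrac{1}{2}]$ taken from the two preceding lemmas, and a contractible rational corner turning the pullback into a product.

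The one step you assert rather than prove is that $\Omega^{\infty+3}\mathbb{Sp}^2$ is rationally trivial. Your suggested check ``on $\pi_0$ and $\pi_1$ of $\mathbb{Sp}^2$, since $\pi_{*+3}$ is finite'' presupposes exactly the finiteness that needs proving, and $\pi_0$ and $\pi_1$ play no role after three loops. The paper gets this step from the cofiber sequence $\mathbb{RP}^\infty \xrightarrow{\tilde{\mathrm{tr}}} \mathbb{S} \to \mathbb{Sp}^2$. Since $\mathbb{RP}^\infty$ is rationally trivial, this gives $\mathbb{Q}\otimes\tau_{>0}\mathbb{Sp}^2 \simeq \mathbb{Q}\otimes\tau_{>0}\mathbb{S} \simeq *$, which is what you need.

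You can drop the delooping discussion at the start. The same goes for the a priori claim that $\Omega^2S^1$ is an infinite loop space, which your $\mathbb{E}_2$ remark does not establish anyway. It is the product decomposition itself that exhibits $\Omega^2S^1$ as an infinite loop space.
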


\begin{proof}
	It follows from the $2$-local and $2$-inverted descriptions above together with the fracture square. It therefore suffices to show that both factors are rationally trivial. Consider the cofiber sequence
	$$
	\mathbb{S} \longrightarrow \mathbb{Sp}^2 \longrightarrow \mathbb{RP}^\infty
	$$
	Passing to connective covers gives
	$$
	\tau_{>0}\mathbb{S} \longrightarrow \tau_{>0}\mathbb{Sp}^2 \longrightarrow \mathbb{RP}^\infty
	$$
	Since $\mathbb{RP}^\infty$ is rationally trivial, we obtain
	$$
	\mathbb{Q} \otimes \tau_{>0}\mathbb{Sp}^2 \simeq \mathbb{Q} \otimes \tau_{>0}\mathbb{S} \simeq *
	$$
	This proves the claim.
\end{proof}

This determines the homotopy groups $\pi_nS^1$ for $n>1$, so it remains to compute $\pi_0$ and $\pi_1$.

\begin{lem}
	\label{homgroupsS1}
	The bigraded homotopy groups of $S^1$ are given by
	$$
	\pi_{n,j}S^1 \cong
	\begin{cases}
		0 & \text{if } n = 0 \\
		\mathbb{Z} & \text{if } n = 1 \\
		\pi_{n+1}\mathbb{Sp}^2 \oplus \pi_{n-1}\mathbb{S}[\sfrac{1}{2}] & \text{if } n > 1
	\end{cases}
	$$
\end{lem}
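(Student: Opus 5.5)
The plan is to split by degree. The case $n>1$ is read off from lemma \ref{modelS1P2}, and $\pi_0$ and $\pi_1$ are handled separately using the canonical resolution of $S^1$.

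\textbf{The case $n>1$.} Since $S^{n,j}\simeq S^n$ for $n>0$, the bigrading is irrelevant there. We have $\pi_n S^1\cong \pi_{n-2}\Omega^2 S^1$ for $n\ge 2$. By lemma \ref{modelS1P2},
$$\pi_{n-2}\Omega^2 S^1\cong \pi_{n-2}\Omega^{\infty+3}\mathbb{Sp}^2\oplus \pi_{n-2}\Omega^{\infty+1}\mathbb{S}[\sfrac12],$$
which equals $\pi_{n+1}\mathbb{Sp}^2\oplus\pi_{n-1}\mathbb{S}[\sfrac12]$. This gives the third case.

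\textbf{The case $n=0,1$.} I would map $S^{n,j}$ into the canonical resolution (lemma \ref{canonicalres})
$$S^1\to\Omega^\infty\mathbb{S}^1\xrightarrow{H^\infty}\Omega^\infty\Sigma\mathbb{RP}^\infty,$$
using that $(\mathbb S^1)^{\otimes 2}_{hC_2}\simeq\Sigma\mathbb{RP}^\infty_1=\Sigma\mathbb{RP}^\infty$. Maps from $S^{0,j}$ into infinite loop spaces do not see $j$. This yields the long exact sequence
$$\pi_2\mathbb S^1\to\pi_2\Sigma\mathbb{RP}^\infty\to\pi_{0,j}S^1\to\pi_0\mathbb S^1=0,$$
continued to the right by
$$\pi_1\mathbb S^1=\mathbb Z\xrightarrow{H^\infty}\pi_1\Sigma\mathbb{RP}^\infty=\pi_0\mathbb{RP}^\infty=0.$$
This immediately gives $\pi_1 S^1\to\mathbb Z$ surjective. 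Its kernel is the cokernel of $\pi_2\mathbb S^1=\pi_1\mathbb S\to \pi_1\mathbb{RP}^\infty=\mathbb Z/2$.

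So the key step is to show that $H^\infty:\pi_1\mathbb S\to\pi_1\mathbb{RP}^\infty$ (equivalently $\pi_2\mathbb{S}^1\to\pi_2\Sigma\mathbb{RP}^\infty$) is surjective, i.e.\ an isomorphism $\mathbb Z/2\to\mathbb Z/2$. For this I would use the identification $\tilde H^\infty\simeq\Omega H^\infty_{S^1}$ together with the reduced Kahn--Priddy statement. The composite $\Omega\tilde{\mathrm{tr}}\circ\Omega\tilde H^\infty$ is a $2$-local equivalence, so on $\pi_1$ the composite $\pi_1\mathbb S\to\pi_1\mathbb{RP}^\infty\to\pi_1\mathbb S$ is an isomorphism of $\mathbb Z/2$. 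Hence $\pi_1\mathbb S\to\pi_1\mathbb{RP}^\infty\cong\mathbb Z/2$ is injective, thus bijective. The degrees need care here: $\pi_{2}$ of $\Omega^\infty\mathbb S^1$ corresponds to $\pi_1$ of $\Omega^{\infty+1}\mathbb S^1$. The same surjectivity also kills $\pi_{0,j}S^1$, since that group is the cokernel of the same map (shifted by one: $\pi_1$ versus $\pi_2$). I would recheck this indexing carefully.

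\textbf{Remaining issues.} For $\pi_1$ we get an extension with trivial kernel, so $\pi_1S^1\cong\mathbb Z$. Because $\pi_1$ may a priori be nonabelian, I would note that an exact sequence of groups with trivial kernel onto $\mathbb Z$ still gives an isomorphism. For $\pi_0$, exactness of pointed sets $\pi_1\Omega^\infty\Sigma\mathbb{RP}^\infty\to\pi_{0,j}S^1\to\pi_0\Omega^\infty\mathbb S^1=0$ shows that $\pi_{0,j}S^1$ is the orbit set of the action of $\pi_1\Sigma\mathbb{RP}^\infty=\pi_0\mathbb{RP}^\infty=0$, hence trivial. With that indexing $\pi_0$ is even easier than expected.

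I expect the main obstacle to be the bookkeeping of degrees and the identification of the relevant component of $H^\infty$ with the reduced Kahn--Priddy map on $\pi_1$.
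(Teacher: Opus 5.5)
Your proposal is correct and is essentially the paper's proof. The case $n>1$ is read off from lemma \ref{modelS1P2}, and $\pi_{0,j}$, $\pi_1$ come from the long exact sequence of the canonical resolution. Surjectivity of $\pi_2\mathbb{S}^1\to\pi_2\Sigma\mathbb{RP}^\infty$ is supplied by reduced Kahn--Priddy through $\tilde H^\infty\simeq\Omega H^\infty_{S^1}$.

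One correction: your first displayed sequence, and the claim that $\pi_{0,j}S^1$ is the cokernel of the same map shifted by one, are misindexed. In fact $\pi_{0,j}S^1$ sits between $\pi_1\Sigma\mathbb{RP}^\infty=0$ and $\pi_0\mathbb{S}^1=0$, which is what you correctly use in your final paragraph.
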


\begin{proof}
	Consider the canonical resolution
	$$
	S^1 \longrightarrow \Omega^\infty \mathbb{S}^1 \longrightarrow \Omega^\infty\Sigma\mathbb{RP}^\infty
	$$
	This induces a long exact sequence
	$$
	0 \to \pi_2\mathbb{S}^1 \to \pi_2\Sigma\mathbb{RP}^\infty \to \pi_1S^1 \to \pi_1\mathbb{S}^1 \to \pi_1\Sigma\mathbb{RP}^\infty \to \pi_{0,j}S^1 \to \pi_0\mathbb{S}^1 \to \pi_0\Sigma\mathbb{RP}^\infty
	$$
	where the first map is injective by reduced Kahn--Priddy, since the groups involved are $2$-local. This reduces to
	$$
	0 \to \mathbb{Z}/2 \to \mathbb{Z}/2 \to \pi_1S^1 \to \mathbb{Z} \to 0 \to \pi_{0,j}S^1 \to 0 \to 0
	$$
	and therefore $\pi_{0,j}S^1 \cong 0$ and $\pi_1S^1 \cong \mathbb{Z}$.
\end{proof}

\begin{rem}
	\label{random}
	This can also be seen in another way. Since the underlying spectrum of $S^1$ is $\mathbb{S}^1$, the object $S^1$ is connective. Moreover, \cite[thm. 6.8]{nilpotent} gives
	$$
	\pi_1S^1 \cong \pi_1P_2I(S^1) \cong \mathbb{Z}/\Gamma_3\mathbb{Z} = \mathbb{Z}
	$$
\end{rem}

With this information, we can now write down the $E_1$-page of the spectral sequence computing the $2$-local homotopy groups of spheres in $P_2\mathcal{S}_*$; see \ref{metSS} and \ref{metSSspheres}. Since the homotopy groups of $S^1$ can already be computed independently, we begin the spectral sequence at $S^1$ rather than at $S^0$. This is also more economical, since by lemma \ref{S0split} the homotopy groups of $S^0$ can be recovered from those of $S^1$ together with the stable homotopy groups. Thus the $E_1$-page is given by

$$
E^1_{t,m} :=
\begin{cases}
	\mathbb{Z} & m = 0,\ t = 0 \\
	\pi_{t+2}\mathbb{Sp}^2 & m = 0,\ t > 0 \\
	\pi_{t-m}^s & 0 < m < n \\
	0 & m > n
\end{cases}
$$
and converges to $\pi_{n+t}S^n$.

Apart from the zeroth row, this spectral sequence agrees with the Atiyah--Hirzebruch spectral sequence for $\mathbb{RP}^\infty$. The differentials landing in the zeroth row are induced by the reduced transfer. In particular, every class in the zeroth row except the copy of $\mathbb{Z}$ must eventually be hit by a differential. A companion paper computes and displays this spectral sequence in detail \cite[sec. 3.2]{NervoCircle}. The resulting 2-local homotopy groups are summarized in the following table.

\begin{center}
\newcommand{\green}[1]{\textcolor{green!60!black}{#1}}
\[
\setlength{\tabcolsep}{7pt}
\begin{tabular}{c|c|c|c|c|c|c|c}

 & $S^0$ & $\Omega S^1$ & $\Omega^2 S^2$ & $\Omega^3 S^3$ & $\Omega^4 S^4$ & $\Omega^5 S^5$ & $\Omega^6 S^6$ \\
\midrule
$\pi_0$
& $\bullet\bullet$
& $\green{\square}$
& $\green{\square}$
& $\green{\square}$
& $\green{\square}$
& $\green{\square}$
& $\green{\square}$
\\

$\pi_1$
& $\bullet$
&
& $\green{\square}$
& $\green{\bullet}$
& $\green{\bullet}$
& $\green{\bullet}$
& $\green{\bullet}$
\\

$\pi_2$
& $\bullet_3$
&
& $\green{\bullet}$
& $\green{\bullet}$
& $\green{\bullet}$
& $\green{\bullet}$
& $\green{\bullet}$
\\

$\pi_3$
& $\bullet$
& $\bullet$
& $\green{\bullet}$
& $\green{\bullet_2}$
& $\green{\square\ \bullet_2}$
& $\green{\bullet_3}$
& $\green{\bullet_3}$
\\

$\pi_4$
&
&
& $\green{\bullet_2}$
& $\green{\bullet}$
& $\green{\bullet\ \bullet}$
& $\green{\bullet}$
& 
\\

$\pi_5$
& $\bullet$
&
&
& $\green{\bullet}$
& $\green{\bullet\bullet}$
& $\green{\bullet}$
& $\green{\square}$
\\

$\pi_6$
& $\bullet\ \bullet_4$
& $\bullet$
&
&
& $\green{\bullet_3}$
& $\green{\bullet}$
& $\green{\bullet}$
\\

$\pi_7$
& $\bullet\ \bullet\ \bullet$
& $\bullet$
&
&
&
& $\green{\bullet}$
& $\green{\bullet_2}$
\\

$\pi_8$
& $\bullet\ \bullet\ \bullet\ \bullet$
& $\bullet$
& $\bullet_3$
& $\bullet$
& $\green{\bullet}$
& $\green{\bullet}$
& $\green{\bullet\bullet_3}$
\\

$\pi_9$
& $\bullet_3\ \bullet$
& $\bullet_3$
& $\bullet\ \bullet_2$
& $\bullet\ \bullet\ \bullet$
& $\green{\bullet\ \bullet\ \bullet}$
& $\green{\bullet\ \bullet\ \bullet}$
& $\green{\bullet\bullet\bullet}$
\\

$\pi_{10}$
& $\bullet_3$
&
& $\bullet\ \bullet$
& $\bullet\bullet_2$
& $\green{\bullet\ \bullet_2\ \bullet_3}$
& $\green{\bullet\bullet_3}$
& $\green{\bullet\bullet_3}$
\\

$\pi_{11}$
&
&
& $\bullet$
& $\bullet\bullet\bullet\bullet$
& $\bullet\bullet\bullet\bullet\bullet\bullet$
& $\green{\bullet\bullet\bullet_3}$
& $\green{\bullet_2\bullet_3}$
\\

$\pi_{12}$
&
&
& $\bullet_3$
& $\bullet\ \bullet$
& $\bullet\ \bullet\ \bullet\ \bullet\ \bullet$
& $\green{\bullet\bullet\bullet}$
& $\green{\bullet_4}$
\\

$\pi_{13}$
& $\bullet\bullet$
&
&
& $\bullet$
& $\bullet\bullet$
& $\green{\bullet\ \bullet}$
& $\green{\bullet}$
\\

$\pi_{14}$
& $\bullet \bullet \bullet_5$
& $\bullet$
&
&
& $\bullet_3$
& $\bullet$
& $\green{\bullet\bullet_2}$
\\

$\pi_{15}$
& $\bullet\bullet\bullet\bullet$
& $\bullet\ \bullet$
& $\bullet\ \bullet\ \bullet$
& $\bullet$
& $\bullet$
& $\bullet\ \bullet$
& $\green{\bullet\bullet_2}$
\\

$\pi_{16}$
& $\bullet\bullet\bullet\bullet\bullet\bullet$
& $\bullet\ \bullet$
& $\bullet\ \bullet\ \bullet_5$
& $\bullet\ \bullet\ \bullet$
& $\bullet\ \bullet$
& $\bullet\ \bullet$
& $\green{\bullet\bullet\bullet_3}$
\\

$\pi_{17}$
& $\bullet_3\bullet_3\ \bullet$
& $\bullet_3$
& $\bullet\ \bullet_2$
& $\bullet\ \bullet\ \bullet\ \bullet$
& $\bullet\ \bullet\ \bullet\ \bullet\ \bullet$
& $\bullet\bullet\bullet$
& $\bullet\bullet\bullet$
\\
\end{tabular}
\]
{\footnotesize The $2$-local homotopy groups of spheres in the metastable category}
\end{center}

We have highlighted in green the homotopy groups that agree with the unstable ones, that is, those for which the map
$$
\pi_kS^n \longrightarrow \pi_kP_2I(S^n)
$$
is an equivalence. By the generalized Freudenthal theorem proved in the companion paper \cite[thm. A]{NervoCircle}, this holds precisely for $k < 4n-1$. We emphasize, however, that the table does not determine the precise group structure of the black dots, since we do not resolve the $2$-extensions in the spectral sequence.

We conclude the subsection with the following application.

\begin{lem}
	\label{noloopspaces}
	No metastable sphere is a loop space except, possibly, $S^7$.
\end{lem}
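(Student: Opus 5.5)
Suppose $S^n \simeq \Omega Y$ for some object $Y$ of $P_2\mathcal{S}_*$. The plan is to derive a contradiction separately in each range of $n$, using the homotopy groups computed above.

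The cases $n=0$ and $n=1$ are disposed of directly. For $n=0$, and more generally for every exotic sphere $S^{0,k}$, this is the earlier remark: if $S^{0,k}\simeq \Omega Y$ then $\pi_{0,j}S^{0,k}\cong \pi_1 Y$ would be independent of $j$. This contradicts lemma \ref{homgroupsS0}, where the answer depends on whether $j\mid k$. For $n=1$, a loop object $\Omega Y$ has $\pi_0\Omega Y\cong \pi_1 Y$, which is a group, and has $\pi_1\Omega Y\cong\pi_2 Y$, which is abelian. That is consistent with $\pi_0 S^1=0$ and $\pi_1S^1=\mathbb{Z}$, so I would instead use the equivalence $\Omega S^1\simeq \Omega^2 Y$. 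By lemma \ref{modelS1P2}, $\Omega^2 S^1$ is an infinite loop space whose homotopy is $\pi_{n+1}\mathbb{Sp}^2\oplus \pi_{n-1}\mathbb{S}[\sfrac12]$. Then I would compare with the EHP sequence (lemma \ref{EHP}) for $Y$, namely $\Omega S^1 \to \Omega^2\Sigma\Omega^{-1}\ldots$. This is messy, so the cleaner route is the one used for all $n\geq 1$ below.

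The main argument for $n\geq 1$ is a Hopf invariant one style argument. If $S^n\simeq\Omega Y$, then $Y$ is connected with $\pi_{n+1}Y\cong\mathbb{Z}$, and the loop multiplication on $S^n$ gives a map $\mu\colon S^n\times S^n\to S^n$ of bidegree $(1,1)$ in $P_2\mathcal{S}_*$. Here the product is taken in $P_2\mathcal{S}_*$; by lemma \ref{coprodp2} its underlying spectrum is $\mathbb{S}^n\oplus\mathbb{S}^n\oplus\mathbb{S}^{2n}$, with mixed diagonal component the unit. The Hopf construction then yields an element of $\pi_{2n+1}S^{n+1}$ whose image under $H$ in the EHP sequence (lemma \ref{EHP}), namely in $\pi^s_{2n+1}\Sigma^{2n+2}\ldots$, i.e.\ in $\pi_0$ of the sphere in the appropriate degree, is a generator $\pm1$. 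So the key step is to show that the class in the $E^1$-page of the EHP spectral sequence (lemma \ref{metSS}) sitting in row $m=n$, column $t=n$ (a copy of $\pi_0^s=\mathbb{Z}$) is a permanent cycle of Hopf invariant one. For even $n$ the swap acts by $+1$ on $\mathbb{S}^{2n}$, so the transfer and the relation $\mathrm{tr}\circ H=(1+\tau)$ force $H$ to be even, giving a contradiction. This is the standard argument, transported using property (3) of the James--Hopf lemma together with $\Delta_{S^n}=0$. For odd $n$ I would read off the differentials on this $\mathbb{Z}$ from the spectral sequence of \cite{NervoCircle}, i.e.\ from the Atiyah--Hirzebruch differentials for $\mathbb{RP}^\infty$ plus the transfer differentials into row $0$. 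These are governed by the classical Adams/James pattern: the generator survives only if $n+1\in\{2,4,8\}$.

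The main obstacle is the odd case, i.e.\ pinning down the fate of the Hopf-invariant-one class. In the metastable spectral sequence its differentials are stable AHSS differentials for $\mathbb{RP}^\infty$ (equivalently, James periodicity and vector fields on spheres, which kill it unless $n+1$ is a power of $2$ dividing $8$), so $n\in\{1,3,7\}$ remain. Then $n=1,3$ must be excluded by hand. For $n=1$ I would use the computation $\Omega^2S^1$: a delooping would make $\pi_{*}S^1$ agree with $\pi_{*+1}Y$, and the table, for instance $\pi_3\Omega S^1\neq 0$ while the row-$0$ entries don't match a looping, should give the contradiction. For $n=3$ I would use the table again: a loop space structure would give an $H$-space splitting forcing $\pi_{k}S^3$ to contain $\pi_{k}S^3$-compatible Samelson products; concretely I would check that the Samelson product of $\iota_3$ with itself, detected in the metastable range ($k=6<4n-1$ fails only at high $k$), is nonzero in $\pi_6$, whereas a genuine loop structure from $P_2$ truncation forces it to vanish. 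Only $S^7$ escapes all these checks, hence the exception.
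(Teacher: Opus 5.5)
Your handling of the exotic $0$-spheres is fine: it is the remark following lemma \ref{homgroupsS0}, and it covers every $S^{0,k}$. The rest of the proposal has genuine gaps.

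\textbf{The reduction to $n\in\{1,3,7\}$ is not carried out.}
\begin{itemize}
\item Lemma \ref{coprodp2} is about coproducts. It says nothing about the categorical product $S^n\times S^n$ in $P_2\mathcal{S}_*$. The forgetful functor is a left adjoint, so you cannot read off that product's underlying spectrum.
\item To proceed you would have to precompose $\mu$ with the comparison map $\Sigma^\infty_2(S^n\times S^n)\to S^n\times S^n$ and build a Hopf construction inside $P_2\mathcal{S}_*$. You would then have to prove that its image under the map $H$ of lemma \ref{EHP} is $\pm1$. None of this is available in the paper.
\item Your even case appeals to property (3) of the James--Hopf lemma. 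That property concerns $H^\infty$, not the EHP map, and with $\Delta_{S^n}=0$ it gives no parity information.
\item The odd case is Adams' theorem. It cannot be read off from Atiyah--Hirzebruch differentials for $\mathbb{RP}^\infty$. You must cite it, together with the fact that $\pi_{2n+1}S^{n+1}$ lies in the range where the metastable and classical groups agree.
\end{itemize}

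\textbf{Your exclusions of $S^1$ and $S^3$ fail.} Classically $S^1\simeq\Omega\mathbb{CP}^\infty$ and $S^3\simeq\Omega\mathbb{HP}^\infty$. So any obstruction has to come from degrees outside the range $k<4n-1$ where $\pi_kS^n$ agrees with the classical group.
\begin{itemize}
\item For $n=1$, your sketch never produces a constraint on $Y$. The isomorphism $\pi_*S^1\cong\pi_{*+1}Y$ holds tautologically for any delooping.
\item For $n=3$, the argument rests on a false principle. A loop structure does not force Samelson products to vanish; homotopy commutativity would. In fact $\langle\iota_3,\iota_3\rangle$ generates $\pi_6S^3\cong\mathbb{Z}/12$ for the genuine loop space $S^3$, and $\pi_6$ lies inside the agreement range.
\end{itemize}

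\textbf{The missing idea is the one the paper uses.} If $X\simeq\Omega Y$, the triangle identity makes $\Omega\epsilon_Y$ a retraction of $E\colon X\to\Omega\Sigma X$. Hence $\pi_kX\to\pi_{k+1}\Sigma X$ is injective for every $k$. This one observation does all the work:
\begin{itemize}
\item It gives the Hopf invariant one restriction directly, with no H-space or Hopf construction: $\pi_{2n-1}S^n\to\pi_{2n}S^{n+1}$ fails to be injective unless $n\in\{0,1,3,7\}$.
\item It disposes of $S^0$, $S^1$ and $S^3$ via the table. The maps $\pi_1S^0\to\pi_2S^1$, $\pi_7S^1\to\pi_8S^2$ and $\pi_{19}S^3\to\pi_{20}S^4$ are not injective. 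All three lie beyond the agreement range, where a genuinely metastable obstruction can appear.
\end{itemize}
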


\begin{proof}
	Whenever an object $X$ is a loop space, the map $X \to \Omega \Sigma X$ admits a retraction. Indeed, if $X \simeq \Omega Y$, then the composite
	$$
	\Omega Y \xlongrightarrow{\eta_{\Omega Y}} \Omega \Sigma \Omega Y \xlongrightarrow{\Omega \epsilon_Y} \Omega Y
	$$
	is an equivalence by the triangle identities. In particular, the induced map
	$$
	\pi_kX \longrightarrow \pi_{k+1}\Sigma X
	$$
	is injective for every $k$.

	Now the map $\pi_{2n-1}S^n \to \pi_{2n}S^{n+1}$ agrees with the unstable suspension map, since we are in the convergent range. By the classical Hopf invariant one theorem \cite[thm.~A]{AdamsAtiyah}, this map fails to be injective unless $n=0,1,3,7$. These are therefore the only spheres that can possibly be loop spaces.

	It remains to rule out $S^0$, $S^1$, and $S^3$. From the table above we see that the map $\pi_1S^0 \to \pi_2S^1$ is not injective, and similarly the maps $\pi_7S^1 \to \pi_8S^2$ and $\pi_{19}S^3 \to \pi_{20}S^4$ are not injective. Hence $S^0$, $S^1$, and $S^3$ are not loop spaces. The only remaining possibility is $S^7$, which lies beyond the range of our computation.
\end{proof}

\section{Miscellanea}

In this section we collect a number of constructions and questions that lie somewhat outside the main development of the paper, but are nevertheless closely related to it.

In the first subsection, we introduce a generalized Mahowald invariant for maps of spectra. As an application, we obtain a corresponding generalization of Jones' theorem.

In the second subsection, we obtain the Hilton--Milnor splitting in the metastable category. We use it to compute the homotopy groups of $S^1 \vee S^1$.

The final two subsections are more speculative in nature.

In the third subsection, we discuss possible odd-primary analogues of the metastable category and explain why one of the candidate constructions appears to be the most promising.

In the final subsection, we outline a possible Adams-type spectral sequence for the metastable category, relating it to earlier ideas of Mahowald and Milgram.

\subsection{Mahowald invariant}

In this subsection we define a natural generalization of the Mahowald invariant \cite{Mahowaldinv} and relate it to the metastable category. Roughly speaking, this generalized Mahowald invariant can be viewed as a kind of twisted, maximally desuspended metastable structure. In particular, we will prove a generalized version of Jones' theorem \cite{Jonesthm}. The original theorem states that the Mahowald invariant of a map $\mathbb{S}^t \to \mathbb{S}$ raises the stem by at least $t$. Our generalization is the following: if $X$ admits a metastable structure, then the Mahowald invariant of a map $\Sigma^t X \to Y$ raises the stem by at least $t$.

We begin by recalling the classical definition.

Let $\alpha \colon \mathbb{S}^t \to \mathbb{S}^\wedge_2$. Classically, the Mahowald invariant $M(\alpha)$ of $\alpha$ is the coset of completions of the diagram
\[\begin{tikzcd}
	{\mathbb{S}^t} & {\mathbb{S}^{-N}} \\
	{\mathbb{S}^\wedge_2 \simeq \Sigma\mathbb{RP}^\infty_{-\infty}} & { \Sigma \mathbb{RP}^\infty_{-N-1}}
	\arrow["{M(\alpha)}", dashed, from=1-1, to=1-2]
	\arrow["\alpha"', from=1-1, to=2-1]
	\arrow[from=1-2, to=2-2]
	\arrow[from=2-1, to=2-2]
\end{tikzcd}\]
where $N$ is chosen to be minimal such that the composite along the lower path is nontrivial. The identification in the lower-left corner is Lin's theorem, which is equivalent to the case $p = 2$ of the Segal conjecture, since
$$
\mathbb{S}^\wedge_2 \simeq (\mathbb{S}^{\otimes 2})^{tC_2} \simeq P^1(\Sigma(-^{\otimes 2})_{hC_2})(\mathbb{S}) \simeq \mathrm{lim}_n\;\Sigma^{n+1}((\mathbb{S}^{-n})^{\otimes 2})_{hC_2} \simeq \mathrm{lim}_n\;\Sigma\mathbb{RP}^\infty_{-n} =: \Sigma\mathbb{RP}^\infty_{-\infty}
$$

The bottom horizontal map is induced by
$$
(Y^{\otimes 2})^{tC_2} \simeq \Sigma^{n+1}\big((\Sigma^{-(n+1)}Y)^{\otimes 2}\big)^{tC_2} \longrightarrow \Sigma^{n+2}\big((\Sigma^{-(n+1)}Y)^{\otimes 2}\big)_{hC_2} \simeq \Sigma(\Sigma^{-(n+1)\rho}Y^{\otimes 2})_{hC_2}
$$
in the case $Y = \mathbb{S}$. This suggests the following general definition.

\begin{definition}
	\label{newmahowald}
	Let $\alpha \colon X \to Y$ be a map of spectra. The \textit{Mahowald invariant} $M(\alpha)$ of $\alpha$ is the coset of completions of the diagram
\[\begin{tikzcd}
	X && {\Sigma^{-N} Y^{\otimes 2}} \\
	Y & {(Y^{\otimes 2})^{tC_2}} & { \Sigma (\Sigma^{-(N+1)\rho}Y^{\otimes 2})_{hC_2}}
	\arrow["{M(\alpha)}", dashed, from=1-1, to=1-3]
	\arrow["\alpha"', from=1-1, to=2-1]
	\arrow["{\Sigma\pi}", from=1-3, to=2-3]
	\arrow["\Delta"', from=2-1, to=2-2]
	\arrow[from=2-2, to=2-3]
\end{tikzcd}\]
where $N$ is chosen to be minimal such that the composite along the lower path is nontrivial. We write $|M(\alpha)|$ for this integer $N$.
\end{definition}

\begin{rem}
	Our notation $|M(\alpha)|$ differs from the classical notation in the literature, where $|M(\alpha)|$ denotes the stem of the map. If $\alpha$ is a map $\mathbb{S}^t \to \mathbb{S}$, then classically $|M(\alpha)| = t + N$. By contrast, our $|M(\alpha)|$ measures the shift, that is, the increase in the stem rather than the stem itself.
\end{rem}

\begin{rem}
	Note that the generalized Mahowald invariant defines a multivalued function
	$$
	M \colon [X, Y] \longrightarrow \bigoplus_n[\Sigma^n X, Y^{\otimes 2}]
	$$
	In contrast with the classical case, the codomain changes, since $Y^{\otimes 2}$ is not equivalent to $Y$ in general.
\end{rem}

The following lemma is immediate from the definition.

\begin{lem}
	Let $\alpha \colon X \to Y$ and $\beta \colon Y \to Z$ be maps of spectra. Then
	$$
	|M(\beta \circ \alpha)| \geq \max(|M(\alpha)|, |M(\beta)|)
	$$
\end{lem}

\begin{proof}
	This follows from naturality, which gives the diagram
\[\begin{tikzcd}
	X && \\
	Y & {(Y^{\otimes 2})^{tC_2}} & { \Sigma (\Sigma^{-n\rho}Y^{\otimes 2})_{hC_2}} \\
	Z & {(Z^{\otimes 2})^{tC_2}} & { \Sigma (\Sigma^{-n\rho}Z^{\otimes 2})_{hC_2}}
	\arrow["\alpha"', from=1-1, to=2-1]
	\arrow["\Delta"', from=2-1, to=2-2]
	\arrow["\beta"', from=2-1, to=3-1]
	\arrow[from=2-2, to=2-3]
	\arrow[from=2-2, to=3-2]
	\arrow[from=2-3, to=3-3]
	\arrow["\Delta"', from=3-1, to=3-2]
	\arrow[from=3-2, to=3-3]
\end{tikzcd}\]
	If for some $n$ the composite
	$$
	X \xrightarrow{\alpha} Y \xrightarrow{\Delta} (Y^{\otimes 2})^{tC_2} \longrightarrow \Sigma(\Sigma^{-n\rho}Y^{\otimes 2})_{hC_2}
	$$
	is trivial, then so is the corresponding composite for $\beta \circ \alpha$. Hence $|M(\beta \circ \alpha)| \geq |M(\alpha)|$. The same argument applied to $\beta$ gives $|M(\beta \circ \alpha)| \geq |M(\beta)|$, and the claim follows.
\end{proof}

To bring this closer to the main theme of the paper, we now introduce a more refined version of the Mahowald invariant.

\begin{definition}
	Let $\alpha \colon X \to Y$ be a map of spectra. The \textit{lift invariant} $L(\alpha)$ of $\alpha$ is the coset of completions of the diagram
\[\begin{tikzcd}
	&& {(\Sigma^{-N\rho}Y^{\otimes 2})^{hC_2}} \\
	X & Y & {(Y^{\otimes 2})^{tC_2}}
	\arrow["{\mathrm{can}}", from=1-3, to=2-3]
	\arrow["{L(\alpha)}", dashed, from=2-1, to=1-3]
	\arrow["\alpha"', from=2-1, to=2-2]
	\arrow["\Delta"', from=2-2, to=2-3]
\end{tikzcd}\]
	where $N$ is chosen to be maximal such that the lift exists. We write $|L(\alpha)|$ for this integer $N$.
\end{definition}

\begin{rem}
	For $\alpha = 1_X$ and $N = 0$, this recovers our notion of metastable structure.
\end{rem}

Although $M(\alpha)$ and $L(\alpha)$ take values in different sets, they determine the same integer. In particular, for every map $\alpha \colon X \to Y$, one has
$$
|M(\alpha)| = |L(\alpha)|
$$

If we denote this common integer by $N$ and write
$$
i \colon (\Sigma^{-N\rho}Y^{\otimes 2})^{hC_2} \longrightarrow \Sigma^{-N}Y^{\otimes 2}
$$
for the forgetful map, then the following lemma identifies the Mahowald invariant as the image of the lift invariant under $i$.

\begin{lem}
	Let $\alpha \colon X \to Y$ be a map of spectra. Then $M(\alpha) = i_*L(\alpha)$.
\end{lem}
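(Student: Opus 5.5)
The plan is to compare the two defining diagrams through one cofiber sequence of $C_2$-spectra. Set $W := \Sigma^{-N\rho}Y^{\otimes 2}$, where $N = |L(\alpha)| = |M(\alpha)|$. There is a norm cofiber sequence
$$
W_{hC_2} \xlongrightarrow{\mathrm{nm}} W^{hC_2} \xlongrightarrow{\mathrm{can}} W^{tC_2} \simeq (Y^{\otimes 2})^{tC_2},
$$
where the last identification uses that the Tate construction is insensitive to smashing with representation spheres of free-type cells. Concretely, the cofiber of $S(N\rho)_+ \to S^0$ is $S^{N\rho}$, and $(S(N\rho)_+\otimes -)^{tC_2}\simeq *$ because $S(N\rho)$ is a free $C_2$-space. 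The connecting map $W^{tC_2} \to \Sigma W_{hC_2}$ is then exactly the map $(Y^{\otimes 2})^{tC_2} \to \Sigma(\Sigma^{-N\rho}Y^{\otimes 2})_{hC_2}$ appearing in the Mahowald diagram. This is the same reshuffling already used in the classical discussion, where the Tate construction is rewritten via $\Sigma^{n+1}$-shifts.

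\emph{Step 1.} Check that the composite $X \to (Y^{\otimes 2})^{tC_2} \to \Sigma(\Sigma^{-n\rho}Y^{\otimes 2})_{hC_2}$ is null exactly when the lift $L$ at level $n$ exists. This is the fiber sequence above, and it is the same argument as lemma \ref{metnull}.

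\emph{Step 2.} Show that the forgetful map $i\colon W^{hC_2} \to \Sigma^{-N}Y^{\otimes 2}$ is compatible with the next stage. Use the cofiber sequence $S^{N\rho} \to S^{(N+1)\rho} \to \Sigma^{N} C_{2+}\otimes S^{\rho}$, which is the Spanier--Whitehead dual form of $S(\rho)_+\to S^0 \to S^\rho$ smashed with $S^{N\rho}$. This yields a map of cofiber sequences relating $\Sigma^{-(N+1)\rho}$ and $\Sigma^{-N\rho}$. The key input, exactly as in the proof of lemma \ref{suspensionp2}, is that applying $(-)^{hC_2}$ to the free term gives $\Sigma^{-N}Y^{\otimes 2}$ via $i$, while applying $(-)_{hC_2}$ and $\Sigma$ gives $\Sigma\pi$.

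\emph{Step 3.} Assemble the result. From Step 2 there is a commutative ladder whose top row is
$$
(\Sigma^{-(N+1)\rho}Y^{\otimes 2})^{hC_2} \to (\Sigma^{-N\rho}Y^{\otimes 2})^{hC_2} \xrightarrow{i} \Sigma^{-N}Y^{\otimes 2},
$$
whose bottom row is the corresponding homotopy-orbit row, and whose vertical maps are norms and connecting maps. Given a lift $\ell \in L(\alpha)$, the composite of $i\circ\ell$ with $\Sigma\pi$ equals the obstruction to lifting $\ell$ one level further, which is the bottom composite at level $N+1$. So $i\circ\ell$ is a completion, and $i_*L(\alpha)\subseteq M(\alpha)$. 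For the reverse inclusion, compare indeterminacies. Completions of the Mahowald diagram differ by maps $X \to \Sigma^{-N}Y^{\otimes 2}$ killed by $\Sigma\pi$, that is, by maps factoring through $\mathrm{fib}(\Sigma\pi)$. Lifts differ by elements of $[X,\mathrm{fib}(\mathrm{can})] = [X, W_{hC_2}]$. Using the long exact sequences of the ladder, I would show that the image under $i$ of $\mathrm{nm}_*[X,W_{hC_2}]$ (the lift indeterminacy) plus the classes coming from level $N+1$ exhausts the kernel of $(\Sigma\pi)_*$. That kernel should be exactly $i_*$ of lifts at level $N+1$ composed to level $N$, and these vanish because no lift exists at level $N+1$ by maximality, leaving only the lift indeterminacy.

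The main obstacle is this last indeterminacy comparison. It requires identifying $\mathrm{fib}(\Sigma\pi)$ with the image of $(\Sigma^{-(N+1)\rho}Y^{\otimes 2})^{hC_2}$ via $i$, and carefully tracking the signs and suspension conventions of the connecting map. If the full equality of cosets proves delicate, I would first establish $i_*L(\alpha)\subseteq M(\alpha)$ together with $|M(\alpha)|=|L(\alpha)|$, and then upgrade to equality by the $3\times 3$ lemma applied to the ladder.
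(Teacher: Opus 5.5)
\textbf{There is a genuine gap, in the reverse inclusion.}

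The inclusion $i_*L(\alpha)\subseteq M(\alpha)$ works once you know the square $\Sigma\pi\circ i\simeq\partial\circ\mathrm{can}$ commutes on $W^{hC_2}$. Here $\partial$ is the composite $W^{tC_2}\simeq(\Sigma^{-\rho}W)^{tC_2}\to\Sigma(\Sigma^{-\rho}W)_{hC_2}$. This $\partial$ is the map in the level-$N$ Mahowald diagram. It is not the connecting map $W^{tC_2}\to\Sigma W_{hC_2}$ from your first paragraph; that one is the level-$(N-1)$ map, which only governs existence of the level-$N$ lift, as in your Step 1.

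Since $L(\alpha)$ is a coset of $\mathrm{nm}_*[X,W_{hC_2}]$ and $M(\alpha)$ is a coset of $\ker(\Sigma\pi)_*$, equality amounts to $\ker(\Sigma\pi)_*=i_*\mathrm{nm}_*[X,W_{hC_2}]$. Your proposed mechanism for this does not work. You claim the kernel is ``$i_*$ of lifts at level $N+1$ composed to level $N$'' and that these vanish by maximality of $N$. You also name as the key obstacle identifying $\mathrm{fib}(\Sigma\pi)$ with the image of $(\Sigma^{-(N+1)\rho}Y^{\otimes 2})^{hC_2}$. But $(\Sigma^{-(N+1)\rho}Y^{\otimes 2})^{hC_2}$ is the fiber of $i$, since your top row is a cofiber sequence. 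So $i$ kills everything coming from level $N+1$ for formal reasons, not because of maximality. Moreover $\ker(\Sigma\pi)_*$ is generally nonzero, because it contains $i_*$ of the whole lift indeterminacy. For $\alpha=1_{\mathbb{S}}$ (with $N=0$) it is $2\mathbb{Z}$, which is exactly the indeterminacy in $M(1_{\mathbb{S}})=\{2k+1\}$. Maximality of $N$ plays no role in the comparison.

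The missing idea is the identification of $\mathrm{fib}(\Sigma\pi)$, which is the content of the paper's proof. Pass to homotopy orbits in the cofiber sequence $C_{2+}\otimes\Sigma^{-\rho}W\to\Sigma^{-\rho}W\to W$. This gives a cofiber sequence $\Sigma^{-1}W\xrightarrow{\pi}(\Sigma^{-\rho}W)_{hC_2}\to W_{hC_2}$. Hence $\mathrm{fib}(\Sigma\pi)\simeq W_{hC_2}$, and it maps to $\Sigma^{-N}Y^{\otimes 2}$ by the transfer $\mathrm{tr}=i\circ\mathrm{nm}$. The fiber of $\mathrm{can}$ is also $W_{hC_2}$, via $\mathrm{nm}$. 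So the square with corners $W^{hC_2}$, $\Sigma^{-N}Y^{\otimes 2}$, $W^{tC_2}$, $\Sigma(\Sigma^{-(N+1)\rho}Y^{\otimes 2})_{hC_2}$ induces an equivalence on vertical fibers, and is therefore a pullback. Completions of the Mahowald diagram are then precisely maps into this pullback lying over $\Delta\circ\alpha$, that is, lifts. This gives $M(\alpha)=i_*L(\alpha)$ directly, with no coset bookkeeping.

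In your ladder, this is just exactness of the bottom (homotopy-orbit) row at $\Sigma^{-N}Y^{\otimes 2}$, using that the right-hand vertical norm is an equivalence on the induced term. You had the right diagram but read the kernel off the wrong row.
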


\begin{proof}
	We claim that for any spectrum $Z$ with a $C_2$-action there is a pullback square
\[\begin{tikzcd}
	{Z^{hC_2}} & {Z \simeq \Sigma(\Sigma^{-1}Z)} \\
	{Z^{tC_2}} & {\Sigma(\Sigma^{-\rho}Z)_{hC_2}}
	\arrow["i", from=1-1, to=1-2]
	\arrow["{\mathrm{can}}"', from=1-1, to=2-1]
	\arrow["\lrcorner"{anchor=center, pos=0.125}, draw=none, from=1-1, to=2-2]
	\arrow["{\Sigma\pi}", from=1-2, to=2-2]
	\arrow[from=2-1, to=2-2]
\end{tikzcd}\]
	Applying this with $Z = \Sigma^{-N\rho}Y^{\otimes 2}$, the result follows immediately. Indeed, a choice of Mahowald invariant is precisely a map into the span on the right, while a choice of lift invariant is precisely a map into the pullback. This is summarized by the diagram
\[\begin{tikzcd}
	X && \\
	& {(\Sigma^{-N\rho}Y^{\otimes 2})^{hC_2}} & {\Sigma^{-N\rho}Y^{\otimes 2}} \\
	& {Y \simeq (\Sigma^{-N\rho}Y^{\otimes 2})^{tC_2}} & {\Sigma(\Sigma^{-(N+1)\rho}Y^{\otimes 2})_{hC_2}}
	\arrow["{L(\alpha)}"{description}, dashed, from=1-1, to=2-2]
	\arrow["{M(\alpha)}"{description}, dashed, from=1-1, to=2-3]
	\arrow["{\Delta \circ \alpha}"', from=1-1, to=3-2]
	\arrow["i", from=2-2, to=2-3]
	\arrow["{\mathrm{can}}"', from=2-2, to=3-2]
	\arrow["{\Sigma\pi}", from=2-3, to=3-3]
	\arrow[""{name=0, anchor=center, inner sep=0}, from=3-2, to=3-3]
	\arrow["\lrcorner"{anchor=center, pos=0.125}, draw=none, from=2-2, to=0]
\end{tikzcd}\]
	It therefore remains to prove that the first square is a pullback. For this, we compute the fiber of the right vertical map. Recall the cofiber sequence
	$$
	S(\rho)_+ = C_{2+} \longrightarrow S^0 \longrightarrow S^\rho
	$$
	Smashing with $\Sigma^{-\rho}Z$ gives a cofiber sequence
	$$
	C_{2+} \otimes \Sigma^{-\rho}Z \longrightarrow \Sigma^{-\rho}Z \longrightarrow Z
	$$
	and passing to homotopy orbits yields
	$$
	\Sigma^{-1}Z \xlongrightarrow{\pi} (\Sigma^{-\rho}Z)_{hC_2} \longrightarrow Z_{hC_2}
	$$
	Hence $Z_{hC_2} \simeq \mathrm{fib}(\Sigma \pi)$, which proves the claim.
\end{proof}

We now make the relation between the lift invariant, the Mahowald invariant, and metastable structures more explicit.

\begin{lem}
	Let $X$ be a spectrum, and let $N$ be the maximal integer such that $\Sigma^{-N}X$ admits a metastable structure. Then
	$$
	L(1_X) = \mathcal{M}_{\Sigma^{-N}X} = \{\text{metastable structures on }\Sigma^{-N}X\}
	$$
	and
	$$
	M(1_X) = \{\text{diagonals arising from metastable structures on }\Sigma^{-N}X\}
	$$
\end{lem}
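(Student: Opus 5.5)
The plan is to unwind the definition of the lift invariant for $\alpha = 1_X$ and then transport everything along the shift $\Sigma^{-N}$. By definition, $L(1_X)$ is the set of completions of the triangle
\[
X \xrightarrow{\Delta} (X^{\otimes 2})^{tC_2} \xleftarrow{\mathrm{can}} (\Sigma^{-N\rho}X^{\otimes 2})^{hC_2},
\]
with $N$ maximal such that a completion exists. The key step is an equivariant identification
\[
\Sigma^{-N\rho}X^{\otimes 2} \simeq \Sigma^{N}\bigl((\Sigma^{-N}X)^{\otimes 2}\bigr),
\]
coming from $(\mathbb{S}^{-N})^{\otimes 2} \simeq \Sigma^{-N}\mathbb{S}^{-N\rho}$ as $C_2$-spectra. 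This is the same identity $(S^n)^{\otimes 2}\simeq \Sigma^n S^{n\rho}$ used in Lemma \ref{intermediateres}.

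Taking homotopy fixed points, and using that $\Sigma^N$ commutes with $(-)^{hC_2}$ and $(-)^{tC_2}$ (trivial action on the suspension coordinate), the triangle becomes the $N$-fold suspension of
\[
\Sigma^{-N}X \xrightarrow{\Delta} \bigl((\Sigma^{-N}X)^{\otimes 2}\bigr)^{tC_2} \xleftarrow{\mathrm{can}} \bigl((\Sigma^{-N}X)^{\otimes 2}\bigr)^{hC_2}.
\]
This needs two checks:
\begin{enumerate}
\item the Tate diagonal of $X$ corresponds to $\Sigma^N$ of the Tate diagonal of $\Sigma^{-N}X$, which is exactness of the Tate power together with its compatibility with $\Delta$, as in the suspension-closure lemma of the first section;
\item the canonical maps correspond, which is naturality of $\mathrm{can}$.
\end{enumerate}
Since $\Sigma^N$ is an equivalence of spectra, lifts $X \to (\Sigma^{-N\rho}X^{\otimes 2})^{hC_2}$ through $\mathrm{can}$ correspond bijectively to lifts $\Sigma^{-N}X \to ((\Sigma^{-N}X)^{\otimes 2})^{hC_2}$. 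Equivalently, a lift exists for the integer $N$ exactly when $\Sigma^{-N}X$ admits a metastable structure. So maximality in the definition of $L$ matches maximality in the statement, and passing to components gives $L(1_X) = \mathcal{M}_{\Sigma^{-N}X}$. The delicate point is making sure the sign/orientation in the equivalence $\Sigma^{-N\rho}\Sigma^{N}\simeq$ (shift) is compatible with $\Delta$. I expect this bookkeeping to be the main obstacle. I would handle it by reducing, via the spectral Yoneda lemma, to the case $X=\mathbb{S}$, where the Tate diagonal is explicit.

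For the Mahowald invariant, I would apply the previous lemma, $M(\alpha) = i_*L(\alpha)$, with $|M|=|L|=N$. Under the identification above, the forgetful map $i\colon (\Sigma^{-N\rho}X^{\otimes 2})^{hC_2} \to \Sigma^{-N}X^{\otimes 2}$ becomes $\Sigma^N$ of the forgetful map $((\Sigma^{-N}X)^{\otimes 2})^{hC_2} \to (\Sigma^{-N}X)^{\otimes 2}$. Composing a metastable structure $L$ with the latter is by definition its induced diagonal $\Delta = i\circ L$. Hence $M(1_X)$ is the set of diagonals of metastable structures on $\Sigma^{-N}X$, viewed after suspending back via $\Sigma^N$.
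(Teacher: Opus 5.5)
Your proposal is correct and follows the paper's proof. The paper also identifies $((\Sigma^{-N}X)^{\otimes 2})^{hC_2} \simeq \Sigma^{-N}(\Sigma^{-N\rho}X^{\otimes 2})^{hC_2}$ and the corresponding Tate terms, removes the $\Sigma^{-N}$ to recover the diagram defining $L(1_X)$, and gets the statement about $M(1_X)$ from $M = i_*L$, using that $i$ composed with a metastable structure is its induced diagonal; your explicit check that the shift is compatible with $\Delta$ is a step the paper takes for granted.
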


\begin{rem}
	If $X$ admits a metastable structure, then $N$ is the maximal number of times that $X$ can be desuspended in $P_2\mathcal{S}_*$.
\end{rem}

\begin{proof}
	The second statement follows from the previous lemma, since composing a metastable structure $Y \to (Y^{\otimes 2})^{hC_2}$ with the forgetful map $(Y^{\otimes 2})^{hC_2} \to Y^{\otimes 2}$ gives exactly the diagonal arising from that metastable structure.

	For the first statement, note that the set of metastable structures on $\Sigma^{-N}X$ is the set of completions of the diagram
\[\begin{tikzcd}
	& {((\Sigma^{-N}X)^{\otimes 2})^{hC_2}} & {\Sigma^{-N}(\Sigma^{-N\rho}X^{\otimes 2})^{hC_2}} \\
	{\Sigma^{-N}X} & {((\Sigma^{-N}X)^{\otimes 2})^{tC_2}} & {\Sigma^{-N}(X^{\otimes 2})^{tC_2}}
	\arrow[equals, from=1-2, to=1-3]
	\arrow["{\mathrm{can}}", from=1-2, to=2-2]
	\arrow["{\mathrm{can}}", from=1-3, to=2-3]
	\arrow[dashed, from=2-1, to=1-2]
	\arrow["\Delta"', from=2-1, to=2-2]
	\arrow[equals, from=2-2, to=2-3]
\end{tikzcd}\]
	After removing $\Sigma^{-N}$ throughout, this is exactly the diagram defining the lift invariant of $1_{\Sigma^{-N}X}$.
\end{proof}

\begin{ex}
	We have seen that $\mathbb{S}^n$ admits a metastable structure only for $n \geq 0$. Hence
	$$
	L(1_{\mathbb{S}^n}) = \mathcal{M}_{\mathbb{S}} = \{L + k\,(\mathrm{nm}\circ \pi) \mid k\in\mathbb{Z}\} \subseteq [\mathbb{S}, \mathbb{S}^{hC_2}]
	$$
	where $L \colon \mathbb{S} \to \mathbb{S}^{hC_2}$ denotes the spacelike metastable structure on $\mathbb{S}$ and $\pi \colon \mathbb{S} \to \mathbb{RP}_+$ the inclusion. Moreover,
	$$
	M(1_{\mathbb{S}^n}) = \{2k + 1 \mid k\in\mathbb{Z}\} \subseteq [\mathbb{S}, \mathbb{S}]
	$$
	and the map $i$ sends $L + k\,(\mathrm{nm}\circ \pi)$ to $2k+1$. See lemma \ref{exotics0} and the preceding discussion.
\end{ex}

We now obtain the promised generalization of Jones' theorem.

\begin{lem}
	\label{newjones}
	Let $\alpha \colon X \to Y$ be a map of spectra. If $X$ or $Y$ is an $n$-fold suspension in $P_2\mathcal{S}_*$, then $|M(\alpha)| \geq n$.
\end{lem}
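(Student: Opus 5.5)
The plan is to use the identity $|M(\alpha)| = |L(\alpha)|$. It then suffices to produce, for every $N < n$, a lift of $\Delta \circ \alpha$ to $(\Sigma^{-N\rho}Y^{\otimes 2})^{hC_2}$. Equivalently, by the pullback square in the proof of $M(\alpha) = i_*L(\alpha)$, it suffices to show that the composite
$$X \xrightarrow{\alpha} Y \xrightarrow{\Delta} (Y^{\otimes 2})^{tC_2} \longrightarrow \Sigma(\Sigma^{-N\rho}Y^{\otimes 2})_{hC_2}$$
is null for all $N \le n-1$. Here I use the convention that $|M(\alpha)|$ is the first $N$ at which the composite into $\Sigma(\Sigma^{-(N+1)\rho}Y^{\otimes 2})_{hC_2}$ is nonzero. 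By naturality of the Tate diagonal, $\Delta_Y \circ \alpha \simeq (\alpha\otimes\alpha)^{tC_2}\circ \Delta_X$. Hence any lift of $\Delta_X$ to $(\Sigma^{-N\rho}X^{\otimes 2})^{hC_2}$, pushed forward along $(\alpha\otimes\alpha)^{hC_2}$, gives a lift for $\alpha$. The same is true of any lift of $\Delta_Y$ precomposed with $\alpha$. So both cases reduce to the case $\alpha = 1_Z$ with $Z \in \{X, Y\}$. This is the same monotonicity already proved for $|M(\beta\circ\alpha)|$, now phrased for $L$.

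The key step is therefore: if $Z \simeq \Sigma^n W$ in $P_2\mathcal{S}_*$, then $\Delta_Z$ lifts to $(\Sigma^{-n\rho}Z^{\otimes 2})^{hC_2}$. I would prove this as in the proof of lemma \ref{suspensionp2}. The metastable structure on $\Sigma W$ is obtained from $L_W$ by suspending and composing with the comparison map
$$\Sigma (W^{\otimes 2})^{hC_2} \simeq ((\Sigma W)^{\otimes 2}\otimes S^{-\rho})^{hC_2} \longrightarrow ((\Sigma W)^{\otimes 2})^{hC_2},$$
induced by $S^{-\rho}\to S^0$. Iterating $n$ times, using $(\Sigma^n W)^{\otimes 2} \simeq \Sigma^{n}S^{n\rho}\otimes W^{\otimes 2}$, shows that $L_Z$ factors as
$$Z = \Sigma^n W \xrightarrow{\Sigma^n L_W} \Sigma^n (W^{\otimes 2})^{hC_2} \simeq (\Sigma^{-n\rho} Z^{\otimes 2})^{hC_2} \longrightarrow (Z^{\otimes 2})^{hC_2}.$$
This matches the example after lemma \ref{suspensionp2}, where the structure on $\mathbb{S}^n$ is the $n$-fold suspension of the spacelike structure on $\mathbb{S}$. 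Composing with $\mathrm{can}$ gives a lift of $\Delta_Z$ through $\mathrm{can}\colon(\Sigma^{-n\rho}Z^{\otimes 2})^{hC_2}\to (\Sigma^{-n\rho}Z^{\otimes 2})^{tC_2}\simeq (Z^{\otimes 2})^{tC_2}$. The last identification uses $S^{-\rho}\to S^0$, which becomes an equivalence on Tate constructions because its cofiber is free.

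Then for every $N\le n$ there is a lift to level $N$, obtained by further composing along $S^{-n\rho}\to S^{-N\rho}$. This gives $|L(1_Z)|\ge n$, and hence, by the reduction above, $|L(\alpha)|\ge n$ and $|M(\alpha)|\ge n$.

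The main obstacle is bookkeeping. First, one must check that the identifications $(Y^{\otimes 2})^{tC_2}\simeq(\Sigma^{-N\rho}Y^{\otimes 2})^{tC_2}$ used in defining $M$ and $L$ are the ones induced by $S^{-N\rho}\to S^0$. Second, one must check that the iterated suspension of the metastable structure is compatible with these identifications and with the Tate diagonal. This second point needs the commutative square comparing $\Sigma\Delta_W$ with $\Delta_{\Sigma W}$, as in the closure-under-suspension lemma. I would handle both at once by working with the section model of lemma \ref{p2equiv}, where suspension is computed directly. An off-by-one between "$N$" and "$N+1$" in the definition of $M$ also needs care. If needed, I would resolve it by testing the argument on $\alpha=1_{\mathbb{S}^n}$ against Jones' classical theorem.
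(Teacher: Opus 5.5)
Your proposal is correct and is essentially the paper's argument: you reduce to $\alpha = 1_X$ or $1_Y$ by the same naturality/composition monotonicity, and then use that an $n$-fold desuspension in $P_2\mathcal{S}_*$ gives a lift of $\Delta_Z$ to $(\Sigma^{-n\rho}Z^{\otimes 2})^{hC_2}$. The paper takes that lift from the preceding lemma identifying $L(1_X)$ with $\mathcal{M}_{\Sigma^{-N}X}$, whereas you verify it directly by iterating the suspension description from lemma \ref{suspensionp2}.

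One small correction: your opening reduction (``for every $N<n$'') is off by one, since $|M(\alpha)|\geq n$ needs the lift at level $N=n$ itself, i.e.\ nullity into $\Sigma(\Sigma^{-n\rho}Y^{\otimes 2})_{hC_2}$. Your key step does produce exactly that lift, so the argument goes through.
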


\begin{proof}
	By the previous discussion, if $X$ or $Y$ is an $n$-fold suspension in $P_2\mathcal{S}_*$, then either $|M(1_X)| \geq n$ or $|M(1_Y)| \geq n$. Since
	$$
	\alpha = 1_Y \circ \alpha \circ 1_X
	$$
	the lemma on composition implies that $|M(\alpha)| \geq n$, as required.
\end{proof}

\begin{ex}
	As a special case of the previous lemma, if $X$ or $Y$ admits a metastable structure, then $|M(\alpha)| \geq 0$. In general, however, there is no reason for this number to be positive. For example, $|M(1_{\mathbb{S}/2})| = -1$, since $\mathbb{S}/2$ does not admit a metastable structure, whereas its suspension does.
\end{ex}

\begin{ex}
	Consider a map $\alpha \colon \mathbb{S}^t \to \mathbb{S}$. Since $\mathbb{S}^t$ is a $t$-fold suspension in $P_2\mathcal{S}_*$, it follows that $|M(\alpha)| \geq t$. Hence every element of $M(\alpha)$ lies in stem
	$$
	t + |M(\alpha)| \geq 2t
	$$
	which recovers the classical Jones theorem \cite{Jonesthm}.
\end{ex}

\subsection{Hilton--Milnor splitting}

In this subsection we obtain an analogue of the Hilton--Milnor splitting \cite{HiltonMilnor1, HiltonMilnor2} in the metastable category. This also recovers a result of \cite{GijsLukas}. We then use it to compute the homotopy groups of $S^1 \vee S^1$.

We begin by comparing the canonical resolution of $X_1 \vee X_2$ with the product of the canonical resolutions of $X_1$ and $X_2$.

\begin{lem}
	For any $X_1, X_2$ in $P_2\mathcal{S}_*$, there is a fiber sequence
	$$
	X_1 \vee X_2 \longrightarrow X_1 \times X_2 \longrightarrow \Omega^\infty\Sigma^\infty (X_1 \otimes X_2)
	$$
\end{lem}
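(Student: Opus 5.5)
We compare the canonical resolutions (lemma \ref{canonicalres}) of $X_1 \vee X_2$, $X_1$ and $X_2$. Since finite products in $P_2\mathcal{S}_*$ are limits, and $\Omega^\infty$ preserves limits, the product of the canonical resolutions of $X_1$ and $X_2$ is a fiber sequence
$$
X_1 \times X_2 \longrightarrow \Omega^\infty\Sigma^\infty(X_1 \oplus X_2) \longrightarrow \Omega^\infty\Sigma^\infty\bigl((X_1^{\otimes 2})_{hC_2} \oplus (X_2^{\otimes 2})_{hC_2}\bigr).
$$
The forgetful functor creates colimits (lemma \ref{proplift}), so the underlying spectrum of $X_1 \vee X_2$ is $X_1 \oplus X_2$. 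Its canonical resolution is therefore
$$
X_1 \vee X_2 \longrightarrow \Omega^\infty\Sigma^\infty(X_1 \oplus X_2) \longrightarrow \Omega^\infty\Sigma^\infty\bigl((X_1^{\otimes 2})_{hC_2} \oplus (X_2^{\otimes 2})_{hC_2} \oplus X_1 \otimes X_2\bigr).
$$
Here we use the equivariant decomposition of $(X_1 \oplus X_2)^{\otimes 2}$ from the proof of lemma \ref{coprodp2}, with $(C_{2+} \otimes X_1 \otimes X_2)_{hC_2} \simeq X_1 \otimes X_2$.

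The canonical map $X_1 \vee X_2 \to X_1 \times X_2$ induces a map between these two fiber sequences. On the middle terms it is the identity, since the underlying map $X_1 \oplus X_2 \to X_1 \oplus X_2$ is the identity. On the right-hand terms it is the projection $p$ that forgets the summand $X_1 \otimes X_2$. Taking fibers of the vertical maps (a $3\times 3$ diagram of fiber sequences) gives a fiber sequence
$$
\mathrm{fib}(X_1 \vee X_2 \to X_1 \times X_2) \longrightarrow * \longrightarrow \mathrm{fib}(p) \simeq \Omega^\infty\Sigma^\infty(X_1 \otimes X_2).
$$
Hence the fiber is $\Omega^{\infty+1}\Sigma^\infty(X_1 \otimes X_2)$. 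Equivalently, rotating the sequence yields the claimed fiber sequence
$$
X_1 \vee X_2 \to X_1 \times X_2 \to \Omega^\infty\Sigma^\infty(X_1 \otimes X_2).
$$
To make the rotation precise, take instead the pullback of the right-hand square: $X_1 \vee X_2$ is the fiber of $\Omega^\infty\Sigma^\infty(X_1\oplus X_2) \to \Omega^\infty\Sigma^\infty(\text{total})$, and the total target splits as a product. The fiber sequence then follows by pasting pullback squares, exactly as in the proof of lemma \ref{intermediateres}.

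The main obstacle is checking that the right-hand square commutes, i.e.\ that $p \circ H^\infty_{X_1 \vee X_2} \simeq H^\infty_{X_1} \times H^\infty_{X_2}$, and not merely up to some correction term. This should follow from naturality of the canonical resolution. It is functorial in $X$: the fiber sequence of lemma \ref{canonicalres} comes from the natural fiber sequence of cofree objects, so it is natural in maps of $P_2\mathcal{S}_*$. Applying this naturality to the two projections $X_1 \vee X_2 \to X_j$, which exist since $P_2\mathcal{S}_*$ is pointed, and noting that the induced maps on $(-^{\otimes 2})_{hC_2}$ kill the cross term, identifies $p \circ H^\infty$ with the product of the $H^\infty_{X_j}$. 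The remaining point is to confirm that $(\pi_1,\pi_2)\colon X_1 \vee X_2 \to X_1\times X_2$ is underlain by the identity. This holds because the forgetful functor preserves finite products as well as coproducts: it is a left adjoint into a stable category, and $P_2\mathcal{S}_*$ is $2$-excisive.
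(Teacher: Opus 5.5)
Your route is the paper's. You compare the canonical resolution of $X_1\vee X_2$ with the product of the canonical resolutions of $X_1$ and $X_2$, observe an equivalence on middle terms and the projection $p$ forgetting $\Omega^\infty\Sigma^\infty(X_1\otimes X_2)$ on the right, and extract the fiber sequence. The paper does the extraction by taking vertical fibers in a diagram whose bottom row is the split fiber sequence ending in $\Omega^\infty\Sigma^{\infty+1}(X_1\otimes X_2)$. Your pasting of pullbacks is the same argument, and it is the version you actually need: computing $\mathrm{fib}(X_1\vee X_2\to X_1\times X_2)$ and then ``rotating'' is not valid in the unstable category $P_2\mathcal{S}_*$. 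Your naturality argument for the commutativity of the right-hand square, via the maps $q_j\colon X_1\vee X_2\to X_j$, fills in a point the paper takes for granted.

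One claim in your proposal is false, however. The forgetful functor $P_2\mathcal{S}_*\to\mathcal{Sp}$ does \emph{not} preserve finite products, and $(q_1,q_2)\colon X_1\vee X_2\to X_1\times X_2$ is not underlain by the identity of $X_1\oplus X_2$. Being a left adjoint gives preservation of coproducts, not products, and $2$-excisiveness does not change this. Indeed, the forgetful functor is conservative, so product preservation would force $X_1\vee X_2\simeq X_1\times X_2$. By the very lemma you are proving, this would make $\Omega^{\infty+1}\Sigma^\infty(X_1\otimes X_2)$ contractible. But for $X_1=X_2=S^1$ this object has $\pi_1\cong\pi_2\mathbb{S}^2\cong\mathbb{Z}$; compare also $\pi_1(S^1\vee S^1)\cong\mathrm{Heis}(\mathbb{Z})$ with $\pi_1(S^1\times S^1)\cong\mathbb{Z}^2$.

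The same confusion is behind your sentence ``on the middle terms it is the identity, since the underlying map $X_1\oplus X_2\to X_1\oplus X_2$ is the identity''. Naturality along $(q_1,q_2)$ would map into the canonical resolution of the object $X_1\times X_2$. Its middle term $\Omega^\infty\Sigma^\infty(X_1\times X_2)$ is not $\Omega^\infty\Sigma^\infty X_1\times\Omega^\infty\Sigma^\infty X_2$.

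The fix is already in your last paragraph: define the comparison map as the product over $j$ of the naturality maps along $q_j$. Its left component is $(q_1,q_2)$ by definition. Its middle component $(\Omega^\infty\Sigma^\infty q_1,\Omega^\infty\Sigma^\infty q_2)$ is an equivalence, because $\Sigma^\infty$ preserves the coproduct and zero maps, so the $\Sigma^\infty q_j$ are the projections. Its right component is $p$. Drop the product-preservation claim and the argument goes through.
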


\begin{proof}
	The canonical resolution of $X_1 \vee X_2$ is
$$
X_1 \vee X_2 \longrightarrow \Omega^\infty \Sigma^\infty (X_1 \vee X_2) \longrightarrow \Omega^\infty \Sigma^\infty (X_1 \vee X_2)^{\otimes 2}_{hC_2} \simeq \Omega^\infty \Sigma^\infty \bigl((X_1)^{\otimes 2}_{hC_2} \vee (X_2)^{\otimes 2}_{hC_2} \vee X_1 \otimes X_2\bigr)
$$
Since $\Sigma^\infty$ preserves colimits and is symmetric monoidal, this becomes
$$
X_1 \vee X_2 \longrightarrow \Omega^\infty(\Sigma^\infty X_1 \oplus \Sigma^\infty X_2) \longrightarrow \Omega^\infty \bigl((\Sigma^\infty X_1)^{\otimes 2}_{hC_2} \oplus (\Sigma^\infty X_2)^{\otimes 2}_{hC_2} \oplus \Sigma^\infty (X_1 \otimes X_2)\bigr)
$$
	On the other hand, taking the product of the canonical resolutions of $X_1$ and $X_2$ gives a fiber sequence
$$
X_1 \times X_2 \longrightarrow \Omega^\infty\Sigma^\infty X_1 \times \Omega^\infty\Sigma^\infty X_2 \longrightarrow \Omega^\infty (\Sigma^\infty X_1)^{\otimes 2}_{hC_2} \times \Omega^\infty (\Sigma^\infty X_2)^{\otimes 2}_{hC_2}
$$
This is in turn equivalent to
$$
X_1 \times X_2 \longrightarrow \Omega^\infty(\Sigma^\infty X_1 \oplus \Sigma^\infty X_2) \longrightarrow \Omega^\infty \bigl((\Sigma^\infty X_1)^{\otimes 2}_{hC_2} \oplus (\Sigma^\infty X_2)^{\otimes 2}_{hC_2}\bigr)
$$
	The result now follows by taking vertical fibers in the following diagram of fiber sequences.
	\[
	\adjustbox{scale=0.9,center}{
\begin{tikzcd}
	{\Omega^\infty(\Sigma^\infty X_1 \oplus \Sigma^\infty X_2)} & {\Omega^\infty(\Sigma^\infty X_1 \oplus \Sigma^\infty X_2)} & {*} \\
	{ \Omega^\infty ((\Sigma^\infty X_1)^{\otimes 2}_{hC_2} \oplus (\Sigma^\infty X_2)^{\otimes 2}_{hC_2} \oplus \Sigma^\infty (X_1 \otimes X_2))} & { \Omega^\infty ((\Sigma^\infty X_1)^{\otimes 2}_{hC_2} \oplus (\Sigma^\infty X_2)^{\otimes 2}_{hC_2})} & { \Omega^\infty\Sigma^{\infty+1} (X_1 \otimes X_2)}
	\arrow[equals, from=1-1, to=1-2]
	\arrow[from=1-1, to=2-1]
	\arrow[from=1-2, to=1-3]
	\arrow[from=1-2, to=2-2]
	\arrow[from=1-3, to=2-3]
	\arrow[from=2-1, to=2-2]
	\arrow[from=2-2, to=2-3]
\end{tikzcd}}\]
	This proves the lemma.
\end{proof}

Looping the previous fiber sequence yields the metastable analogue of the Hilton--Milnor splitting.

\begin{lem}
	\label{hiltonmilnor}
	For any $X_1, X_2$ in $P_2\mathcal{S}_*$, there is an equivalence
	$$
	\Omega (X_1 \vee X_2) \simeq \Omega X_1 \times \Omega X_2 \times \Omega^{\infty + 2}\Sigma^\infty(X_1 \otimes X_2)
	$$
\end{lem}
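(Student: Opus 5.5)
The plan is to loop the fiber sequence of the previous lemma,
$$
X_1 \vee X_2 \longrightarrow X_1 \times X_2 \longrightarrow \Omega^\infty\Sigma^\infty (X_1 \otimes X_2),
$$
once, and then show that the resulting fiber sequence splits. Since $\Omega$ preserves fiber sequences, looping gives
$$
\Omega(X_1 \vee X_2) \longrightarrow \Omega X_1 \times \Omega X_2 \xlongrightarrow{\partial'} \Omega^{\infty+1}\Sigma^\infty(X_1 \otimes X_2).
$$
Rotating it once more to the left exhibits $\Omega(X_1 \vee X_2)$ as the total space of a fiber sequence with fiber $\Omega^{\infty+2}\Sigma^\infty(X_1\otimes X_2)$ and base $\Omega X_1 \times \Omega X_2$.

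The first key step is to show that the map $\partial'$ is null. The composite $X_1 \times X_2 \to \Omega^\infty\Sigma^\infty(X_1\otimes X_2)$ restricts to zero on each factor, because the inclusions $X_i \to X_1\vee X_2 \to X_1 \times X_2$ factor through the fiber. After looping, $\Omega X_1 \times \Omega X_2$ is a product of group objects, and the target is an infinite loop object. The looped map is therefore a map of group objects, so it is determined by its restrictions to $\Omega X_1$ and $\Omega X_2$, which are the loopings of those null restrictions. Hence $\partial'$ is null.

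A more concrete alternative is to produce a section of $\Omega(X_1 \vee X_2) \to \Omega X_1 \times \Omega X_2$ directly. Take $\Omega(\mathrm{inc}_1)\cdot\Omega(\mathrm{inc}_2)$, using the loop multiplication on $\Omega(X_1\vee X_2)$. Composed with the projection to $\Omega X_1 \times \Omega X_2$, this gives $(\mathrm{id},\ast)\cdot(\ast,\mathrm{id}) \simeq \mathrm{id}$.

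Given that section $s$, the second step is to conclude with the same splitting principle the paper used in lemma \ref{S0split}. For a fiber sequence of group objects $F \to E \to B$ in which $E \to B$ admits a section $s$, the map
$$
F \times B \xlongrightarrow{j \times s} E \times E \xlongrightarrow{m} E
$$
is an equivalence \cite[lem. 3.12]{Sanath}. Here $E=\Omega(X_1\vee X_2)$, $B = \Omega X_1\times\Omega X_2$ and $F=\Omega^{\infty+2}\Sigma^\infty(X_1\otimes X_2)$. This yields the claimed product decomposition.

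The main obstacle is that the splitting principle is stated for spaces, while we need it in $P_2\mathcal{S}_*$. I would reduce to spaces via Yoneda: map in from an arbitrary $T$ and apply $\mathrm{Map}(T,-)$. This preserves fiber sequences, loop objects and products, so we obtain a fiber sequence of grouplike $\mathbb{E}_1$-spaces with a section, natural in $T$. The map $m\circ(j\times s)$ then induces an equivalence on $\mathrm{Map}(T,-)$ for every $T$, and hence is an equivalence in $P_2\mathcal{S}_*$.
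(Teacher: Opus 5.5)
Your proposal is correct and is essentially the paper's proof: loop the fiber sequence of the previous lemma, split $\Omega(X_1\vee X_2)\to\Omega X_1\times\Omega X_2$ by the section $m\circ(\Omega i_1\times\Omega i_2)$, and conclude by the splitting principle of \cite[lem. 3.12]{Sanath}, which the paper only uses implicitly here and which your Yoneda reduction to spaces justifies explicitly. Your first step (showing $\partial'\simeq 0$) already suffices on its own, since the fiber of a null map $E\to B$ is $E\times\Omega B$, so it gives $\Omega(X_1\vee X_2)\simeq\Omega X_1\times\Omega X_2\times\Omega^{\infty+2}\Sigma^\infty(X_1\otimes X_2)$ without the section or \cite{Sanath}.
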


\begin{proof}
	By the previous lemma, there is a fiber sequence
	$$
	\Omega^{\infty + 2}\Sigma^\infty(X_1 \otimes X_2) \longrightarrow \Omega (X_1 \vee X_2) \longrightarrow \Omega X_1 \times \Omega X_2
	$$
	The right-hand map admits a section, given by
	$$
	\Omega X_1 \times \Omega X_2 \xlongrightarrow{\Omega i_1 \times \Omega i_2} \Omega (X_1 \vee X_2) \times \Omega (X_1 \vee X_2) \xlongrightarrow{m} \Omega (X_1 \vee X_2)
	$$
	where $i_j$ are the insertion maps into the coproduct and $m$ is the multiplication map induced by the loop space structure; see, for example, \cite[proof of thm. 3.1.]{Sanath}. This proves the lemma.
\end{proof}

Passing to spaces, we recover the corresponding splitting for the $2$-excisive approximation.

\begin{rem}
	Using the identification of $P_2\mathcal{S}_*$ with the $2$-excisive approximation of $\mathcal{S}_*$, exactly as in remark \ref{Hopfgoodwillie}, for any $X_1, X_2$ in $\mathcal{S}_*$ we obtain the splitting
	$$
	\Omega P_2I(X_1 \vee X_2) \simeq \Omega P_2I(X_1) \times \Omega P_2I(X_2) \times \Omega^2 P_1I(X_1 \wedge X_2)
	$$
	Applied to suspensions, this gives
	$$
	\Omega P_2I(\Sigma X_1 \vee \Sigma X_2) \simeq \Omega P_2I(\Sigma X_1) \times \Omega P_2I(\Sigma X_2) \times \Omega^2 P_1I(\Sigma X_1 \wedge \Sigma X_2)
	$$
	Since
	$$
	\Omega^2 P_1I(\Sigma X_1 \wedge \Sigma X_2) \simeq \Omega P_1I(\Sigma (X_1 \wedge X_2))
	$$
	this recovers a result of Brantner--Heuts \cite[thm. 1.2, $n=2$]{GijsLukas}. One may wonder why this gives a stronger statement, namely a splitting without suspending the factors. This reflects a general feature of calculus: polynomial functors are determined by their values on suspensions. Therefore, if two polynomial functors agree on suspensions, then they agree in general. We leave the details to the reader.
\end{rem}

We conclude this subsection by using the Hilton--Milnor splitting to compute the homotopy groups of $S^1 \vee S^1$.

\begin{ex}
	\label{homgroupsS1vS1}
	From the Hilton--Milnor splitting we obtain
	$$
	\Omega (S^1 \vee S^1) \simeq \Omega S^1 \times \Omega S^1 \times \Omega^\infty \mathbb{S}
	$$
	We already know from \cite[thm. 6.8]{nilpotent} that
	$$
	\pi_1(S^1 \vee S^1) \cong F_2/\Gamma_3F_2 \cong \mathrm{Heis}(\mathbb{Z})
	$$
	and, using the description of $S^1$ from lemma \ref{homgroupsS1}, we obtain
	$$
	\pi_{n, j}(S^1 \vee S^1) \cong
	\begin{cases}
		0 & n = 0 \\
		\mathrm{Heis}(\mathbb{Z}) & n = 1 \\
		\bigl(\pi_{n+1}\mathbb{Sp}^2 \oplus \pi_{n-1}\mathbb{S}[\sfrac{1}{2}]\bigr)^{\oplus 2} \oplus \pi_{n-1}\mathbb{S} & n > 1
	\end{cases}
	$$
	This example also shows that the Hilton--Milnor splitting cannot be upgraded to an equivalence of loop spaces, since
	$$
	\pi_1(S^1 \vee S^1) \cong \mathrm{Heis}(\mathbb{Z}) \not\cong \mathbb{Z}^3 \cong \pi_1S^1 \times \pi_1S^1 \times \pi_0\mathbb{S}
	$$
\end{ex}

\subsection{Odd primes}

Our definition of the metastable category implicitly singles out the prime $p = 2$. It is therefore natural to ask what happens at an odd prime, and to what extent the theory developed in this paper admits a $p$-local analogue. There are three natural candidates.

The first and most natural candidate is the category $P_p\mathcal{S}_*$. From the point of view of calculus, this is clearly the correct analogue, since it fits the philosophy of approximating the category of pointed spaces by polynomial approximations. The difficulty is that this category is already substantially more complicated to construct and study. Indeed, each $P_m\mathcal{S}_*$ is defined inductively in terms of $P_{m-1}\mathcal{S}_*$, so understanding $P_p\mathcal{S}_*$ requires understanding the entire tower $P_m\mathcal{S}_*$ for $m \leq p$. This complexity does not disappear even after working $p$-locally.

A second possibility is to define the category as spectra equipped with lifts
\[\begin{tikzcd}
	& {(X^{\otimes p})^{hC_p}} \\
	X & {(X^{\otimes p})^{tC_p}}
	\arrow["{\mathrm{can}}", from=1-2, to=2-2]
	\arrow[dashed, from=2-1, to=1-2]
	\arrow["\Delta"', from=2-1, to=2-2]
\end{tikzcd}\]
This construction still receives a colimit-preserving functor from $\mathcal{S}_*$, and hence admits a right adjoint. However, it does not seem to produce a meaningful approximation to spaces: the unit of this adjunction is not even a $p$-local $p$-excisive equivalence on basic examples such as spheres. For this reason, this candidate appears to be too weak to be of real interest.

The third candidate is to define the category as spectra equipped with lifts
\[\begin{tikzcd}
	& {((X^{\otimes p})^{hC_p})^{h\mathbb{F}_p^\times}} \\
	X & {((X^{\otimes p})^{tC_p})^{h\mathbb{F}_p^\times}}
	\arrow["{\mathrm{can}}", from=1-2, to=2-2]
	\arrow[dashed, from=2-1, to=1-2]
	\arrow["\Delta"', from=2-1, to=2-2]
\end{tikzcd}\]
This category is still different from the $p$-excisive approximation of spaces, but it appears to be the most promising odd-primary analogue. At least on odd-dimensional spheres, the unit of the resulting adjunction agrees with the $p$-local $p$-excisive approximation, even though this already fails on even-dimensional spheres.

Most of the arguments should then go through with only minor changes. In particular, one expects the existence of a colimit-preserving functor from spaces, stabilization and costabilization statements analogous to those at the prime $2$, a symmetric monoidal structure, exotic $S^0$-spheres, and a canonical resolution together with the corresponding intermediate sequences. Likewise, the basic computational outputs of the theory should remain available: one should still be able to analyze the cases of $S^0$ and $S^1$, and to construct odd-primary analogues of the spectral sequence and exponent results.

The main difference is that one must now work everywhere with the reduced regular representation of $C_p$, rather than with the sign representation of $C_2$. This makes the geometry less transparent, and one therefore has to be more careful precisely at the points where special properties of the sign representation were used in an essential way. In particular, the analogues of the single and double suspension results would have to be revisited, and similarly for the derivation of the EHP sequences and related calculations.

In summary, the first candidate is conceptually the right one but presently too difficult, the second is formally available but seems too weak to be interesting, and the third appears to retain enough structure to support a meaningful odd-primary theory.

\subsection{Adams spectral sequence}

It would be very desirable to develop an analogue of the Adams spectral sequence for the metastable category. The basic idea is already visible in the work of Mahowald \cite{Metastable}, in the case of spheres, and in Milgram's work \cite{Milgram}, for more general spaces. One would like to lift the Hopf map
$$
[\Sigma^\infty T, \Sigma^\infty X] \xlongrightarrow{H^\infty} [\Sigma^\infty T, \Sigma^\infty X^{\otimes 2}_{hC_2}]
$$
to a corresponding map on Adams spectral sequences
$$
\mathrm{Ext}_{\mathcal{A}_*}(\mathbb{F}_{2*}T, \mathbb{F}_{2*}X) \longrightarrow \mathrm{Ext}_{\mathcal{A}_*}(\mathbb{F}_{2*}T, \mathbb{F}_{2*}X^{\otimes 2}_{hC_2})
$$
At this level, however, the picture should be understood mainly as a guiding principle. In the sphere case, Mahowald's work fits this picture closely, whereas for general spaces Milgram's construction is more elaborate and is not simply given by a direct map of Adams spectral sequences of the form above.

In many cases, this map should still be explicit enough to be computable. Mahowald and Milgram use it for direct calculations, but one may hope for a more conceptual interpretation: the kernel and cokernel should assemble into a bigraded $\mathbb{F}_2$-vector space that behaves like the $E_2$-page of a spectral sequence computing $[T, X]$ in the metastable category. It would be very desirable to make this precise by constructing such a spectral sequence internally in the metastable category, and only afterwards recovering the Mahowald--Milgram calculations from it.

There is also a very natural candidate for such a construction, which works in any pointed presentable category that stabilizes to spectra. Let $C$ be such a category. The adjunction $\Sigma^\infty \dashv \Omega^\infty$ gives rise to the monad
$$
\Omega^\infty(\mathbb{F}_2 \otimes \Sigma^\infty -)
$$
on $C$. This in turn produces a cosimplicial resolution, and hence a Bousfield--Kan spectral sequence. One would then have to show that the resulting totalization agrees with $2$-completion for a suitable class of objects.

One expects the $E_2$-page to admit a corresponding algebraic description. Namely, the same adjunction gives rise to the comonad
$$
\mathbb{F}_2 \otimes \Sigma^\infty \Omega^\infty
$$
on $\mathbb{F}_2\text{-}\mathrm{Mod}$, and hence to a comonad $\Gamma$ on the homotopy category, which may be identified with the category of graded $\mathbb{F}_2$-modules. The $E_2$-page should then be given by an Ext-group in the category of $\Gamma$-coalgebras.

This recovers the classical stable and unstable Adams spectral sequences. Indeed, for $C = \mathcal{Sp}$ the comonad is simply $\Gamma(V) := \mathcal{A}_* \otimes V$, and $\Gamma$-coalgebras are precisely $\mathcal{A}_*$-comodules. For $C = \mathcal{S}_*$ the comonad is the free unstable coalgebra functor on graded $\mathbb{F}_2$-vector spaces, and $\Gamma$-coalgebras are unstable $\mathcal{A}_*$-coalgebras.

The natural question is then: what is $\Gamma$ when $C = P_2\mathcal{S}_*$? Answering this seems closely related to computing the $\mathbb{F}_2$-homology of the metastable Eilenberg--MacLane spaces $K(\mathbb{F}_2, n) := \Omega^\infty \Sigma^n \mathbb{F}_2$, which we have not attempted here. More generally, we do not yet understand the cofree functor $\Sigma^\infty \Omega^\infty$, of which this is a special case.

\begin{ques}
	Can one compute the $\mathbb{F}_2$-homology of the metastable Eilenberg--MacLane spaces $\Omega^\infty \Sigma^n \mathbb{F}_2$?
\end{ques}

The ideal outcome would be a usable algebraic description of the $E_2$-page, together with an effective way to implement it computationally. We leave this to future generations.

\sloppy
\bibliographystyle{alpha}
\bibliography{bib}

\end{document}